\newtheorem{thm}{Theorem}[section]
\newtheorem*{thm*}{Theorem}
\newtheorem{prop}[thm]{Proposition}
\newtheorem*{prop*}{Proposition}
\newtheorem*{cor*}{Corollary}
\newtheorem*{lem*}{Lemma}
\newtheorem*{oquest*}{Open Question}
\theoremstyle{remark}
\newtheorem{rmk}[thm]{Remark}
\theoremstyle{remark}
\newtheorem*{rmk*}{Remark}
\theoremstyle{definition}
\newtheorem{defn}[thm]{Definition}
\theoremstyle{definition}
\newtheorem{notat}[thm]{Notation}
\theoremstyle{definition}
\newtheorem{ass}[thm]{Assumption}
\theoremstyle{definition}
\theoremstyle{definition}
\newtheorem*{defn*}{Definition}
\theoremstyle{definition}
\newtheorem{ex}[thm]{Example}
\theoremstyle{definition}
\theoremstyle{definition}
\newtheorem{case}[thm]{Case}
\numberwithin{equation}{section}%numbers equations by section
\newcommand{\Z}{\mathbb{Z}}
\newcommand{\QQ}{\mathbb{Q}}
\newcommand{\R}{\mathbb{R}}
\DeclareMathOperator{\FFF}{\mathbb{F}}
\newcommand{\Sel}{\textup{Sel}}
\newcommand{\Gal}{\textup{Gal}}
\newcommand{\mfp}{\mathfrak{p}}
\newcommand{\mfq}{\mathfrak{q}}
\newcommand{\Vplac}{\mathscr{V}}
\newcommand{\Frob}{\textup{Frob}}
\newcommand{\Hom}{\textup{Hom}}
\newcommand{\msS}{\mathscr{S}}
\newcommand{\royarrow}{\,\xrightarrow{\,\quad}\,}
\newcommand{\mfh}{\mathfrak{h}}
\newcommand{\res}{\textup{res}}
\newcommand{\Cl}{\textup{Cl}\,}
\newcommand{\Ssm}{S_{\textup{sm}}}
\newcommand{\Smed}{S_{\textup{med}}}
\newcommand{\Slg}{S_{\textup{lg}}}
\newcommand{\Spar}{S_{\textup{par}}}
\newcommand{\ovp}{\overline{\mfp}}
\newcommand{\ovq}{\overline{\mfq}}
\newcommand{\inv}{\textup{inv}}
\newcommand{\XXfav}{\mathbb{X}^{\textup{fav}}}
\newcommand{\bfQ}{\mathbf{Q}}
\newcommand{\GOsim}{G_1/{\sim}}
\DeclareFontFamily{U}{wncy}{}
\DeclareFontShape{U}{wncy}{m}{n}{<->wncyr10}{}
\DeclareSymbolFont{mcy}{U}{wncy}{m}{n}
\DeclareMathSymbol{\Sha}{\mathord}{mcy}{"58}
\newcommand\restr[2]{{% we make the whole thing an ordinary symbol
  \left.\kern-\nulldelimiterspace % automatically resize the bar with \right
  #1 % the function
  \vphantom{\big|} % pretend it's a little taller at normal size
  \right|_{#2} % this is the delimiter
  }}
\DeclareMathOperator*{\rank}{rank}
\newcommand{\FrobF}[2]{\textup{Frob}_{#1}\,#2}
\newcommand{\ovQQ}{\overline{\mathbb{Q}}}
\newcommand{\mfB}{\mathfrak{B}}
\newcommand{\symb}[2]{\left[ #1,\, #2\right]}
\newcommand{\class}[1]{\left[#1\right]}
\newcommand{\spin}[1]{\symb{#1}{#1}}
\newcommand{\Zlz}{\mathbb{Z}_{\ell}[\xi]}
\newcommand{\msM}{\mathscr{M}}
\begin{document}
\title[Selmer groups in twist families II]{The distribution of $\ell^{\infty}$-Selmer groups in degree $\ell$ twist families II}

\author{Alexander Smith}
\email{asmith13@stanford.edu}
\date{\today}

\maketitle

\begin{abstract}
We continue the investigation of the distribution of $\ell^{\infty}$-Selmer groups in degree $\ell$ twist families of Galois modules over number fields begun in \cite{Smi22a}. Building off the work on higher Selmer groups in that part, we find conditions under which we can compute the distribution of the $\ell^{\infty}$-Selmer groups for a  given degree $\ell$ twist family. Along the way, we show that the average rank in the quadratic twist family of any given abelian variety over a number field is bounded.
\end{abstract}

\setcounter{tocdepth}{1}

\tableofcontents

\section{Introduction}
\label{sec:intro}
In the first part of this two-part paper \cite{Smi22a}, the author developed a method for controlling $2^{\infty}$-Selmer groups in the quadratic twist family of a given Galois module $N$ over a number field. The main result of that paper gave the distribution of $2^{\infty}$-Selmer groups in certain collections of twists known as grid classes subject to stringent conditions on the $2$-Selmer groups for the twists in the grid class.

The goal of this second part is to translate the results of \cite{Smi22a} into results on the distribution of $2^{\infty}$-Selmer groups in more natural families of twists. To this end, our main task is to find the distribution of $2$-Selmer groups in natural twist families. In the process of finding the distribution of $2$-Selmer groups, we will carve the natural families into grids of twists, a step that is also necessary to apply the results of \cite{Smi22a}.

Like \cite{Smi22a}, the results of this paper also apply to non-quadratic twist families. The methods that allow us to control the $2$-Selmer groups in quadratic twist families allow us to control the \emph{fixed point Selmer group} in more general contexts. To elaborate, choose some prime power $\ell^{k_0}$, and take $\xi$ to be the image of $x$ in the quotient ring
\begin{equation}
\label{eq:Zlz}
\Z_{\ell}[x]\Big/\left(1 + x^{\ell^{k_0 - 1}} + x^{2 \cdot \ell^{k_0 - 1}} + \dots + x^{(\ell - 1) \cdot \ell^{k_0 - 1}}\right).\end{equation}
If we are given a Galois module $N$ over the number field $F$ whose ring of continuous $G_F$-equivariant endomorphisms contains $\Zlz$, and if we are given $\chi$ in $\Hom_{\text{cont}}(G_F, \langle \xi \rangle)$, we may construct the twist $N^{\chi}$ of $N$ by $\chi$. Taking $\omega = \xi - 1$, there is an equivariant isomorphism
\[N^{\chi}[\omega]  \cong N[\omega]\]
between the submodules of $N$ and $N^{\chi}$ fixed by $\xi$. This isomorphism gives us control over the $\omega$-Selmer groups in the twist family of $N$. Since $N[\omega]$ is the collection of fixed points of $\xi$, we also refer to this Selmer group as the fixed point Selmer group. For a quadratic twist family, $\xi$ is $-1$, and the fixed point Selmer group is the $2$-Selmer group. A major goal of this paper is to find the distribution of fixed point Selmer groups in natural twist families of Galois modules.

Our interest in fixed point Selmer groups is driven by our interest in higher Selmer groups, and this is largely reflected in the main results of this paper. But our results on fixed point Selmer groups can be applied fruitfully without use of the higher theory. One such application is to prove the following boundedness result.
\begin{thm}
\label{thm:rank_bnd}
Take $\ell$ to be a rational prime, take $F$ to be a number field, and take $A$ to be an abelian variety over $F$.  For any $H > 0$,  take $X(H)$ to be the set of cyclic degree $\ell$ extensions $K$ of $F$ for which the absolute discriminant of $K$ is at most $H$.

Then there are real numbers $c, C > 0$ depending on $A/F$ and $\ell$ so, for any $m > 0$ and $H > C$ satisfying $m < c \cdot \log \log \log H$, we have
\[\sum_{K \in X(H)} \exp\left( m \cdot \rank(A/K)\right)\, \le\, \exp(Cm^2) \cdot \# X(H).\]
\end{thm}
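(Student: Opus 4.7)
The plan is to bound $\rank A(K)$ by the $\FFF_\ell$-dimension of a fixed-point Selmer group attached to a twist of an auxiliary Galois module, and then to apply the paper's distributional results on such Selmer groups. More precisely, set $k_0 = 1$, so $\xi$ is a primitive $\ell$th root of unity and $\omega = \xi - 1$, and take $N = T_\ell(A) \otimes_{\Z_\ell} \Z_\ell[\xi]$ with $G_F$ acting via $\rho \otimes 1$; right multiplication by $\Z_\ell[\xi]$ commutes with the $G_F$-action, so $N$ satisfies the hypotheses of the fixed-point framework described in the introduction. For each $K \in X(H)$, fix a character $\chi_K \colon G_F \to \langle \xi \rangle$ whose fixed field is $K$. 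Decomposing the Mordell--Weil space $A(K) \otimes \QQ_\ell$ under $\Gal(K/F)$ as $A(F) \otimes \QQ_\ell$ plus a $\QQ_\ell(\xi)$-vector space $V_{\chi_K}$, and bounding $\dim_{\QQ_\ell(\xi)} V_{\chi_K}$ via the Kummer map into $\Sel_\omega(N^{\chi_K})$, one obtains
\[ \rank A(K) - \rank A(F) \,\leq\, (\ell - 1) \cdot \dim_{\FFF_\ell} \Sel_\omega(N^{\chi_K}). \]
For $\ell = 2$ this is the classical inequality $\rank A^{\chi_K}(F) \leq \dim_{\FFF_2} \Sel_2(A^{\chi_K})$; for $\ell > 2$ it is the natural analog in the Galois-module formalism.

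The core task is then to establish the exponential-moment bound
\[ \sum_{K \in X(H)} \exp\!\bigl(t \cdot \dim_{\FFF_\ell} \Sel_\omega(N^{\chi_K})\bigr) \,\leq\, \exp(C_0 t^2) \cdot \#X(H) \qquad (0 \leq t \leq c_0 \log\log\log H), \]
valid for all $H > C_1$. This is the main content of the paper's fixed-point Selmer theory: partition $X(H)$ into grid classes in the sense of \cite{Smi22a}, compute the (nearly) exact distribution of $\omega$-Selmer groups on each grid class via the associated local-conditions matrix, average to obtain a discretized Cohen--Lenstra-type law with subgaussian tails, and sum over grid classes. The cutoff $\log\log\log H$ reflects how far one may descend into the filtration by auxiliary ramified primes before the grid-class error terms swamp the main term.

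Combining these inequalities with $s = (\ell - 1) m$ gives
\[ \sum_{K \in X(H)} \exp\!\bigl(m \rank A(K)\bigr) \,\leq\, e^{m \rank A(F)} \cdot e^{C_0 s^2} \cdot \#X(H), \]
and the linear term $m \rank A(F)$ is absorbed into $Cm^2$ by enlarging $C$ (valid for $m \geq 1$), giving the claimed bound $\exp(Cm^2) \#X(H)$ for $m \leq c \log\log\log H$.

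The main obstacle is the moment bound of the central display above: Step 1 is essentially formal and the final combination is bookkeeping, but controlling the exponential moment with exponent of order $t^2$ \emph{uniformly} up to $t \sim \log\log\log H$ is the technical heart of the fixed-point Selmer analysis. Each additional moment requires bounding ever-larger Selmer groups, which in turn requires deeper use of the auxiliary prime filtration, and the unusual triple-logarithm cutoff is precisely what survives after balancing these competing demands against the total number of twists of discriminant at most $H$.
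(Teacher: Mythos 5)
Your first step---the rank decomposition through the twisted abelian variety $(A\otimes\Z[\xi])^{\chi}$ and the bound of its rank by $(\ell-1)$ times an $\omega$-Selmer rank---matches the paper's route in Example \ref{ex:rough_av_twists}. The gap is in your central display: the exponential moment bound
\[\sum_{K\in X(H)}\exp\bigl(t\cdot\dim_{\FFF_\ell}\Sel^{\omega}(N^{\chi_K})\bigr)\,\le\,\exp(C_0t^2)\cdot\#X(H),\]
taken over \emph{all} $K$, is false for a general abelian variety $A$. By Proposition \ref{prop:tamagawa} one has $\#\Sel^{\omega}N^{\chi}\ge c\cdot\mathcal{T}_{N,T}(\chi)$ for every $G_F$-submodule $T$ of $N[\omega]$, and whenever some $T$ has large Tamagawa ratio on a positive-density set of twists (e.g.\ an elliptic curve with a rational $2$-isogeny, as in Example \ref{ex:Klag} and the Klagsbrun proposition following it), the $\omega$-Selmer rank exceeds $(\log\log H)^{1/2-\epsilon}$ for $100\%$ of the unfavored twists. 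Summing $\exp(t\cdot r_{\omega})$ with $t\asymp\log\log\log H$ over that half of the family already dwarfs $\exp(C_0t^2)\#X(H)$. So no grid-class computation can produce the subgaussian law you assert for $N^{\chi}$ itself over the full family; the distribution is genuinely heavy-tailed, which is exactly why Theorem \ref{thm:main_mmnt_coarse} bounds the \emph{normalized} quantity $\#\Sel^{\omega}(N^{\chi})/\max_T\mathcal{T}_{N,T}(\chi)$ rather than $\#\Sel^{\omega}(N^{\chi})$.

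The missing idea is the divide-and-conquer over favored quotients, Remark \ref{rmk:finite_favored}: there are finitely many submodules $T_1,\dots,T_k$ of $N=A[\ell^{\infty}]\otimes\Zlz$ such that every admissible $\chi$ is a favored twist of some quotient $N/T_i$, and since these quotients correspond to isogenous abelian varieties one still has $\rank\bigl((A\otimes\Z[\xi])^{\chi}/F\bigr)\le(\ell-1)\cdot r_{\omega}\bigl((N/T_i)^{\chi}\bigr)$ for each $i$. Choosing for each $\chi$ an index $i$ it favors makes the normalizing factor equal to $1$, and one then applies Theorem \ref{thm:main_mmnt_coarse} separately to each of the $k$ modules $N/T_i$ with $g\asymp m$ and sums the $k$ resulting bounds. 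Without this step your argument does not close; with it, the rest of your outline (the rank inequality and the final bookkeeping absorbing $m\cdot\rank(A/F)$ into $Cm^2$) is essentially the paper's proof.
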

In the quadratic case, similar boundedness results have been proved for most elliptic curves over $\QQ$  with full rational $2$-torsion \cite{Kane13} and for abelian varieties for which $A[3]$ has a filtration over $F$ with $1$-dimensional factors \cite{BKOS19}. Our unconditional result is a consequence of Theorem \ref{thm:main_mmnt_coarse}, as we will see in Example \ref{ex:rough_av_twists}.

Now, in some twist families, the average $\omega$-Selmer rank grows without bound as more and more twists are considered. If we wish to prove distributional results for the $\omega$-Selmer ranks in such families, we need to have a strategy for dealing with this complication. Our strategy is to reduce the scope of our results to \emph{favored twists}, which we will fully define in Section \ref{sec:results} but which we will define in a simple case now.

\begin{ex}[\cite{Klag16, KaKl17}]
\label{ex:Klag}
Given an abelian variety $A/\QQ$ and a nonzero integer $d$, we take $A^d$ to be the quadratic twist of $A$ corresponding to $\QQ(\sqrt{d})$. We will also take $r_{2^k}(A)$ to be the $2^k$-Selmer rank of $A$ as defined in \cite{Smi22a}.

Choose rational numbers $a, b$ such that $b$ and $a^2 - 4b$ are nonzero, and consider the ratioinal elliptic curves
\[A\colon y^2 = x(x^2 + ax + b)\quad\text{and}\quad A_0 \colon y^2 = x(x^2 - 2ax + (a^2 - 4b)).\]
Note that there is a rational isogeny of degree $2$ from $A$ to $A_0$. 

Take
\[K = \QQ(\sqrt{a^2 - 4b})\quad\text{and}\quad K_0 = \QQ(\sqrt{b}).\]
Given a squarefree integer $d$, we call $d$ \emph{favored} with respect to $A$ if, among the odd prime divisors $p$ of $d$ where $A$ has good reduction, the number of primes at which $K_0/\QQ$ splits is no larger than the number of primes at which $K/\QQ$ splits.
\end{ex}

The natural density of favored $d$ in the set of squarefree integers is $1/2$. On the half which is not favored, the $2$-Selmer ranks do not have a nice distribution.
\begin{prop}[\cite{Klag16}]
Take $A$, $K$, and $K_0$ as in Example \ref{ex:Klag}, and choose $\epsilon > 0$.  We assume $K$ and $K_0$ are distinct nontrivial extensions of $\QQ$. Then
\[\lim_{H \to \infty} \frac{\#\left \{d\in \Z^{\ne 0}\,:\,\, |d| \le H, \,\,d\,\text{ is not favored, and }\, \,r_2(A^d) > (\log \log H)^{1/2 - \epsilon}\right\} }{\#\left \{d\in \Z^{\ne 0}\,:\,\, |d| \le H, \,\,d\,\text{ is not favored }\right\}} = 1.\]
\end{prop}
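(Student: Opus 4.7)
The plan is to exploit the $2$-isogeny $\phi\colon A\to A_0$ together with its dual $\hat\phi\colon A_0\to A$, and to perform an isogeny descent on each twist $A^d$ separately. A standard Cassels-style exact sequence comparing $\phi$-, $\hat\phi$-, and $2$-descent gives the lower bound
\[r_2(A^d) \,\ge\, \bigl|\dim_{\FFFF_2}\Sel_\phi(A^d)\,-\,\dim_{\FFFF_2}\Sel_{\hat\phi}(A_0^d)\bigr|\,-\,O(1),\]
where the $O(1)$ depends only on $A$ and absorbs the contributions from the rational kernels $A^d[\phi]=\langle(0,0)\rangle$ and $A_0^d[\hat\phi]=\langle(0,0)\rangle$, from the archimedean places, and from primes of bad reduction.

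The next step is a local computation at odd primes $p\mid d$ of good reduction. For such $p$, the local image of the Kummer map for $\phi$-descent on $A^d$ is one-dimensional or zero-dimensional according to whether $K_0=\QQ(\sqrt{b})$ splits at $p$, and symmetrically for $\hat\phi$-descent on $A_0^d$ the determining field is $K=\QQ(\sqrt{a^2-4b})$. A Poitou--Tate / global Euler characteristic computation then yields, with $N_0(d)$ and $N(d)$ counting the primes $p\mid d$ of good reduction at which $K_0$ and $K$ respectively split,
\[\dim\Sel_\phi(A^d)\,-\,\dim\Sel_{\hat\phi}(A_0^d) \,=\, N_0(d)\,-\,N(d)\,+\,O(1),\]
uniformly in $d$. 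Combined with the previous display, for $d$ not favored (equivalently $N_0(d)>N(d)$) one obtains $r_2(A^d)\ge N_0(d)-N(d)-O(1)$.

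The conclusion then follows from an Erd\H{o}s--Kac-type theorem for the joint distribution of $(N_0(d),N(d))$ over squarefree $|d|\le H$. Since $K$ and $K_0$ are distinct nontrivial quadratic fields, Chebotarev's theorem applied to the compositum $KK_0$ shows that for a random rational prime $p$ the events ``$p$ splits in $K_0$'' and ``$p$ splits in $K$'' are independent, each of density $1/2$. Passing to prime divisors of $d$ and applying the Kubilius--Erd\H{o}s--Kac framework to the pair of additive functions $N_0,N$ on squarefree integers, one obtains $\omega(d)=(1+o(1))\log\log H$ on a density-one set, and the normalized difference $(N_0(d)-N(d))/\sqrt{\log\log H}$ converges in distribution to a standard normal. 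Restricting to non-favored $d$ replaces the normal limit by a half-normal, whence
\[\lim_{H\to\infty}\frac{\#\bigl\{|d|\le H\,:\, d\text{ sqfree, not favored, } N_0(d)-N(d)\le (\log\log H)^{1/2-\epsilon}\bigr\}}{\#\{|d|\le H\,:\, d\text{ sqfree, not favored}\}}\,=\,0,\]
which combined with the lower bound on $r_2(A^d)$ yields the proposition.

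The main obstacle will be the local Selmer analysis in the second paragraph: identifying the correct splitting fields governing each isogeny Selmer group and verifying that the error term is genuinely $O(1)$ uniformly in $d$ require careful bookkeeping of Tamagawa-type corrections and local Tate duality; this is essentially the content of the computation in \cite{Klag16}. Once the Selmer rank is shown to track $N_0(d)-N(d)$, the probabilistic conclusion is a routine application of a bivariate Erd\H{o}s--Kac theorem.
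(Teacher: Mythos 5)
Your argument is correct and is essentially the standard one. The paper does not prove this proposition itself (it is imported from \cite{Klag16}), but your isogeny-descent lower bound is precisely the specialization of Proposition \ref{prop:tamagawa} to $T=\ker(\phi)=\langle(0,0)\rangle$, for which $\mathcal{T}_{N,T}(\chi_d)=2^{N_0(d)-N(d)}$ exactly (each odd good prime $p\mid d$ contributes $\#A_0[2]^{\Frob_p}/\#A[2]^{\Frob_p}$, governed by splitting in $K_0$ and $K$), and your bivariate Erd\H{o}s--Kac step is the same probabilistic input the paper formalizes via Proposition \ref{prop:G1_model}; no gaps.
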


However, the distribution of $2$-Selmer ranks among the favored twists is the same as the distribution of $2$-Selmer ranks in the family of twists of an elliptic curve with no rational $2$-torsion.
\begin{thm}[{Cf. \cite[Theorem 1.5]{Smi22a}}]
\label{thm:case2}
Take $A/\QQ$, $K$, and $K_0$ as in Example \ref{ex:Klag}. We assume $K$ and $K_0$ are distinct nontrivial extensions of $\QQ$.

Define $P^{\textup{Alt}}$ as in \cite[Definition 1.4]{Smi22a}. Then, given $k \ge 1$ and any nonincreasing sequence
\[r_2 \ge r_4 \ge r_8 \ge  \dots\]
of nonnegative integers, we have
\begin{align*}
&\lim_{H \rightarrow \infty} \frac{\#\{d\in \Z^{\ne 0}\,:\,\, |d| \le H, \,\,d \,\text{ is favored and }\,r_{2^k}(A^d) = r_{2^k} \text{ for all } k \ge 1\}}{H} \\
& \qquad =  P^{\textup{Alt}}(r_2\,|\,\infty) \cdot \prod_{k= 2}^{\infty} P^{\textup{Alt}}(r_{2^{k}}\, |\, r_{2^{k-1}}).
\end{align*}
\end{thm}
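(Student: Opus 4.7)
The strategy has two layers. The outer layer reduces Theorem \ref{thm:case2} to a product of (i) the unconditional distribution of $r_2(A^d)$ over favored $d$ and (ii) for each $k \ge 2$, the distribution of $r_{2^k}(A^d)$ conditional on $r_{2^{k-1}}(A^d)$. The inner layer addresses each piece via the machinery of \cite{Smi22a}.

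The plan is to begin by partitioning the favored squarefree $d$ with $|d| \le H$ into grid classes, where each grid class groups together integers sharing the same local behavior at ``small'' primes (those below some slowly growing cutoff depending on $H$) and the same structural data controlling the Selmer conditions, while varying freely over the ``large'' prime divisors. On each such grid class, provided the $2$-Selmer structure is sufficiently generic, \cite[Theorem 1.5]{Smi22a} yields the conditional distribution of $r_{2^k}$ given $r_{2^{k-1}}$, matching $P^{\textup{Alt}}(r_{2^k} \mid r_{2^{k-1}})$. A separate argument rules out degenerate grid classes as a negligible fraction of the favored $d$.

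It remains to compute the distribution of $r_2(A^d)$ over favored $d$. Since $A$ admits a $\QQ$-rational $2$-isogeny $\phi \colon A \to A_0$ killing the point $(0,0)$, with dual $\hat\phi$, one has a standard exact sequence relating $\text{Sel}_2(A^d)$ to the $\phi$- and $\hat\phi$-Selmer groups. Each of the latter is computable via class field theory in terms of Legendre symbols indexed by the prime divisors of $d$, with local conditions depending on $K$ and $K_0$. Computing the moments of $2^{m r_2(A^d)}$ restricted to favored $d$ therefore amounts to estimating sums of Legendre symbols constrained by the favored inequality. For arbitrary squarefree $d$ such moments exhibit a bias coming from the rational $2$-torsion point $(0,0)$; the favored restriction is precisely designed so that this bias cancels, and the remaining moments match those of $P^{\textup{Alt}}(r_2 \mid \infty)$.

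The main obstacle is the moment computation over favored $d$: one needs power-saving error bounds for these constrained Legendre symbol sums uniform in the moment parameter $m$. This will require a careful joint Chebotarev argument tracking the splitting of $K$ and $K_0$ at the prime divisors of $d$, combined with bilinear-form sieve estimates in the spirit of the large sieve. A second, more technical obstacle is verifying that the ``generic'' grid classes carrying the bulk of the favored twists satisfy the hypotheses of \cite[Theorem 1.5]{Smi22a}, which will require analyzing the distribution of certain local invariants (Tamagawa-type numbers, ramification data at primes of bad reduction) across grid classes.
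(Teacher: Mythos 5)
Your outer architecture---carve the favored $d$ into grid classes, compute moments of $\#\Sel^{(2)}(A^d)$ over the large-prime coordinates using Chebotarev and bilinear symbol estimates, discard a negligible set of degenerate classes, and feed the surviving classes into the higher-Selmer machinery of part I---is essentially the paper's. Note, though, that the paper does not prove Theorem \ref{thm:case2} from scratch: it proves the general Theorem \ref{thm:main} for twistable modules in the alternating case and then obtains Theorem \ref{thm:case2} by verifying its hypotheses for $N = A[2^{\infty}]$ with $K = \QQ(A[2])$ quadratic. Those hypothesis checks are precisely your two standing assumptions: $K_0$ nontrivial (i.e.\ $b \notin (\QQ^{\times})^2$) makes $N$ potentially favored, and $K \ne K_0$ (i.e.\ $b(a^2-4b) \notin (\QQ^{\times})^2$) makes $N$ uncofavored; the extra-automorphism condition is automatic because the order-$3$ automorphism of $A[2]$ is not Galois-equivariant when $K/\QQ$ is quadratic.

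The genuine gap is in your route to the $2$-Selmer distribution. The descent exact sequence
\[0 \to A_0^d(\QQ)[\hat\phi]/\phi(A^d(\QQ)[2]) \to \Sel^{(\phi)}(A^d) \to \Sel^{(2)}(A^d) \to \Sel^{(\hat\phi)}(A_0^d) \to \Sha(A_0^d)[\hat\phi]\big/\phi(\Sha(A^d)[2]) \to 0\]
does not determine $\dim \Sel^{(2)}(A^d)$ from the $\phi$- and $\hat\phi$-Selmer groups: the image of $\Sel^{(2)}(A^d)$ in $\Sel^{(\hat\phi)}(A_0^d)$ (equivalently, the $\Sha$-quotient term) is extra data, and controlling it is a second-descent problem of comparable difficulty to the original. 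So computing moments of $2^{m\, r_2(A^d)}$ does not reduce to the $\phi$-descent Legendre-symbol sums, and this step of your plan would fail as stated. The paper avoids the issue by computing $\sum_{\chi}(\#\Sel^{\omega}N^{\chi})^m = \sum_{\chi}\#\Sel^{\omega}(N^{\oplus m})^{\chi}$ directly as a character sum over $\mathscr{R}\times\mathscr{M}$; the isogeny enters only through the Tamagawa ratio of the submodule generated by $(0,0)$ (which is what defines ``favored'') and through the classification of cofavored submodules of $N^{\oplus m}[\omega]$ (Proposition \ref{prop:cofav_form}), which is where $K \ne K_0$ is used. A secondary gap: the limiting moments grow like $2^{m(m+1)/2}$, so the moment problem is indeterminate; besides uniformity in $m$ growing with $H$ (which you anticipate), you need an interpolation argument in the style of Heath-Brown (the paper's Proposition \ref{prop:unicity_ranks}) to pass from moments to the rank distribution.
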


Even though Theorem \ref{thm:case2} only applies to favored $d$, it still can be used to find the distribution of $2^{\infty}$-Selmer coranks for the full twist family of  $A$. Specifically, we find that any $d$ that is not favored with respect to $A$ is favored with respect to the isogenous curve $A_0$. We also find that a negligible set of $d$ is favored for both $A$ and $A_0$. Since $r_{2^{\infty}}(A_0^d) = r_{2^{\infty}} (A^d)$, we can find the distribution of $2^{\infty}$-Selmer coranks across the full twist family of $A$ by applying Theorem \ref{thm:case2} to both $A$ and $A_0$. This proves the second case of \cite[Theorem 1.2]{Smi22a}. 

The notion of a favored twist generalizes substantially, and the natural extension of the divide-and-conquer approach seen above allows us to prove Theorem \ref{thm:rank_bnd}.

Once we restrict to favored twists, the distribution of fixed point Selmer ranks is still affected by the structure of the Galois module $N$. Our main result, Theorem \ref{thm:main}, puts a number of technical conditions on $N$ and its submodules. A module that does not satisfy these conditions will have a distinct distribution of $\omega$-Selmer groups in its favored twist family compared to any module that satisfies these conditions. We hope that future work will give us more information about the wide variety of distributions that can appear if these technical conditions do not hold.

\subsection{Our method}
We introduced the notion of a grid in \cite{Smi22a}. To briefly recap, a grid is defined from a product space $\prod_{s \in S} X_s$ together with some auxiliary data, where the $X_s$ are finite sets of primes of $\overline{\QQ}$ obeying some mild conditions. From the auxiliary data, we may associate any point $(\ovp_s)_{s \in S}$ in this product space with a twist $\chi$. As we proved in \cite[Proposition 4.8]{Smi22a}, the group $\Sel^{\omega} N^{\chi}$ can be calculated from the classes $\class{\ovp_s}$ for all $s \in S$, the spins $\spin{\ovp_s}$ for all $s \in S$, and the symbols $\symb{\ovp_s}{\ovp_t}$ for all distinct $s, t \in S$. We reprove this result in an explicit form in Section \ref{ssec:explicit_local}.

Using this, we may express a given moment of the size of $\Sel^{\omega} N^{\chi}$ for $\chi$ varying in a grid of twists as a sum of terms defined from classes, spins, and symbols. When a given term in this sum involves a symbol $\symb{\ovp_s}{\ovp_t}$, we can often prove that the term has negligble contribution to the overall sum either by using the Chebotarev density theorem or by using the bilinear equidistribution result for symbols proved as \cite[Theorem 5.2]{Smi22a}. Once we remove the terms proved to be negligible using this method, the remaining terms can be controlled using linear algebra.

Our need to invoke the Chebotarev density theorem and the bilinear equidistribution result pressures us to consider grids where the collections of primes $X_s$ are large relative to their height. But our need to carve natural families of twists into such grids pressures us in the opposite direction. Fortunately, there is a middle ground that works. We give one possible specification for such grids in Section \ref{sec:gridding}.

\subsection{An overview of this paper}

Section \ref{sec:results} gives the necessary notation to state Theorem \ref{thm:main}, the main result for the distribution of $\ell^{\infty}$-Selmer groups in twist families of this two-part paper. This result is proved by combining the results on $\omega$-Selmer groups of this paper with the results for higher Selmer groups in \cite{Smi22a}.

Theorem \ref{thm:main} is largely interesting for its applications to class groups and Selmer groups of abelian varieties. We go through some of these applications in Section \ref{sec:examples}. This is where the theorems mentioned in the introduction to \cite{Smi22a} are all proved.

Section \ref{sec:tamagawa} explores why the average $\omega$-Selmer rank in families of unfavored twists tends to infinity. This involves certain terms known as Tamagawa ratios, which also appear in the definition of cofavored submodules. The moments of sizes of Selmer groups in grids can be expressed as sums over the cofavored submodules of the powers of $N$, so we need to have a classification of these submodules. Using the technical conditions appearing in Theorem \ref{thm:main}, we are able to find such a classification.

Sections \ref{sec:character}, \ref{sec:ignorable}, and \ref{sec:moments} all concern the moments of the sizes of the $\omega$-Selmer group in grids of twists. Section \ref{sec:character} gives the initial reduction of such a moment to a sum of terms involving spins and symbols. In Section \ref{sec:ignorable}, the terms that cannot be proved to be negligible using analytic number theory are classified and roughly estimated. Section \ref{sec:moments} then gives a more precise estimate for these moments under the conditions of Theorem \ref{thm:main}.

The highly analytic Section \ref{sec:gridding} carves natural families of twists into grids where the estimates of the previous three sections apply. In Section \ref{sec:rough}, we use this work to give rough upper bounds for the moments of sizes of $\omega$-Selmer groups in natural twist families. These estimates are refined in Section \ref{sec:distribution}, where the work in \cite{Smi22a} is incorporated to prove the main result of this project, Theorem \ref{thm:main}.

\subsection*{Acknowledgements}
This project benefited from the input and support of many mathematicians, including those I have enumerated in the acknowledgements section of \cite{Smi22a}. I am grateful to all of them.

This research was partially conducted during the period the author served as a Clay
Research Fellow. Previously, the author was supported in part by National Science Foundation grant DMS-2002011.

\section{Main results}
\label{sec:results}
The goal of this section is to give the precise form of our main theorems.  We start by recalling the definitions of twistable modules and their Selmer groups from \cite{Smi22a}. We retain our notational conventions from that paper.

\begin{notat}
\label{notat:twistable}
Fix a power $\ell^{k_0}$ of a rational prime, take $\Zlz$ to be the corresponding quotient ring \eqref{eq:Zlz}, and take $\omega = \xi - 1$ and $\FFF = \langle \xi \rangle$. Also choose a number field $F$.

A \emph{twistable module} $N$ is then a discrete nonzero $G_F$-module isomorphic as a group to some finite power of $\QQ_{\ell}/\Z_{\ell}$ together with an action of $\Zlz$ on $N$ that commutes with $G_F$. Given such a module and a continuous homomorphism $\chi \in \Hom_{\text{cont}}(G_F, \FFF)$, we may define a twist $N^{\chi}$ of $N$ as in \cite[Definition 4.1]{Smi22a} .

 Fix some twistable module $N$, and take $\Vplac_0$ to be a finite set of places of $F$ containing the archimedean places and the places dividing $\ell$. We assume that the action of $G_F$ on $N$ is ramified only at primes in $\Vplac_0$.

For $v \in \Vplac_0$ and $\chi \in \Hom_{\text{cont}}(G_v, \FFF)$, fix an $\ell$-divisible subgroup $W_v(\chi)$  of $H^1(G_v, N^{\chi})$. We call the collection of groups $\left(W_v(\chi)\right)_{v \in \Vplac_0}$ a \emph{set of local conditions} for $N$ at the places in $\Vplac_0$.

Setting the local conditions $W_v$ at the remaining places as in \cite[Definition 4.3]{Smi22a}, the Selmer group of $N^{\chi}$ is defined by
\[\Sel\left(N^{\chi}, (W_v)_{v \in \Vplac_0}\right) = \ker\left(H^1(G_F, N^{\chi}) \to \prod_{v \text{ of } F} H^1(G_v, N^{\chi})\Big/W_v\left(\res_{G_v\,} \chi\right)\right).\]
We will write this group as $\Sel\,N^{\chi}$ if the local conditions at the places in $\Vplac_0$ are clear.

For $k \ge 1$, we take $\Sel^{\omega^k} N^{\chi}$ to be the portion of $H^1\left(G_F, N^{\chi}\left[\omega^k\right]\right)$ mapping into $\Sel \,N^{\chi}$. We can refer to $\Sel^{\omega} N^{\chi}$ as either an $\omega$-Selmer group or as a fixed point Selmer group. For each $k \ge 1$, we define
\[r_{\omega^k}(N^{\chi}) = \dim \omega^{k-1}\Sel^{\omega^k}N^{\chi}\Big/\text{im} \,H^0(G_F, N^{\chi}[\omega]),\]
with the image being taken under the connecting map corresponding to the exact sequence $0 \to N^{\chi}[\omega] \to N^{\chi}[\omega^2] \to N^{\chi}[\omega] \to 0$.

We recall from \cite[Definition 4.9]{Smi22a} that we have a notion of the dual twistable module $N^{\vee}$ to $N$.
\end{notat}

\begin{defn}
Given $F$, $\Zlz$, and $\Vplac_0$ as above, we define the set of \emph{admissible twists} to be the set of continuous homomorphisms $\chi: G_F \to \FFF$ so that, for every prime $\mfp$ of $F$ outside $\Vplac_0$, the homomorphism $\chi$ is ramified at $\mfp$ only if its image in $\Hom(G_F, \FFF/\ell\FFF)$ is ramified at $\mfp$. We denote this set by $\mathbb{X}_F$. If $\xi$ has order $\ell$, every continuous homomorphism $\chi: G_F \to \FFF$ is an admissible twist.

We represent the ramification of an element $\chi$ of $\mathbb{X}_F$ using the squarefree ideal
\[\mfh_F(\chi) = \prod_{\substack{\mfp \not\in \Vplac_0 \\ \chi \text{ is ramified at } \mfp}} \mfp.\]
There are many possible ways to order these ideals. Writing $N_{F/\QQ}$ for the norm from $F$ to $\QQ$, we settle for the height function
\[h_F(\chi) = N_{F/\QQ}(\mfh_F(\chi)).\]
For $H > 0$, we take $\mathbb{X}_F(H)$ to be the set of admissible twists of height at most $H$.
\end{defn}

\begin{rmk}
We restrict our attention to admissible twists because, outside this case, we do not have a result like \cite[Proposition 4.8]{Smi22a} to determine the $\omega$-Selmer group of a twist from classes, spins, and symbols. 
\end{rmk}

\begin{defn}
\label{defn:tamagawa}
Take $T$ to be a $G_F$-submodule of $N[\omega]$. Given $\chi \in \mathbb{X}_F$, we define the Tamagawa ratio $\mathcal{T}_{N, T}(\chi)$ by
\[\mathcal{T}_{N, T}(\chi) = \prod_{\mfp | \mfh_F(\chi)} \frac{\#H^0(G_{\mfp}, \, N/T[\omega])}{\#H^0(G_{\mfp}, \, N[\omega])}.\]
\end{defn}

Tamagawa ratios give lower bounds for the size of the $\omega$-Selmer group of $N^{\chi}$. Specifically, we have the following.
\begin{prop}
\label{prop:tamagawa}
Suppose $N$ is a twistable module with local conditions defined with respect to $F$, $\Vplac_0$, and $\Zlz$. Then there is a $c > 0$ so that, given any $G_F$-submodule $T$ of $N[\omega]$, and given $\chi \in \mathbb{X}_F$, we have
\[\# \Sel^{\omega} N^{\chi} \ge c \cdot \mathcal{T}_{N, T}(\chi).\]
\end{prop}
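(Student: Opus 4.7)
The plan is to exhibit a subgroup of $\Sel^{\omega} N^{\chi}$ of size matching the Tamagawa ratio by pushing forward a Selmer group defined on $T$ itself. Since $T \subseteq N[\omega]$ is killed by $\omega = \xi - 1$, the Galois action on $T$ is unaltered by twisting, and the inclusion $T \hookrightarrow N[\omega] = N^{\chi}[\omega]$ induces a natural map $\iota \colon H^1(G_F, T) \to H^1(G_F, N^{\chi}[\omega])$ whose kernel is controlled by $H^0(G_F, N[\omega]/T)$, a quantity depending only on $N$.

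First, I would equip $T$ with the pulled-back Selmer structure $\mathcal{W}^T = (W_v^T)_v$, taking $W_v^T$ to be the preimage in $H^1(G_v, T)$ of $W_v(\chi) \subseteq H^1(G_v, N^{\chi})$ under the composition $H^1(G_v, T) \to H^1(G_v, N^{\chi}[\omega]) \to H^1(G_v, N^{\chi})$. By construction, $\iota$ carries $\Sel(T, \mathcal{W}^T)$ into $\Sel^{\omega} N^{\chi}$, so it suffices to produce a lower bound on $\#\Sel(T, \mathcal{W}^T)$ proportional to $\mathcal{T}_{N, T}(\chi)$.

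Second, I would invoke the Greenberg--Wiles version of Poitou--Tate duality, which gives
\[\#\Sel(T, \mathcal{W}^T) = \#\Sel(T^{\vee}, (\mathcal{W}^T)^{\perp}) \cdot \frac{\#H^0(G_F, T)}{\#H^0(G_F, T^{\vee})} \cdot \prod_v \frac{\#W_v^T}{\#H^0(G_v, T)}.\]
Dropping the factor $\#\Sel(T^{\vee}, (\mathcal{W}^T)^{\perp}) \geq 1$ yields a lower bound. The global $H^0$ ratio and the product over $v \in \Vplac_0$ are each bounded below by constants depending only on $N$, $F$, $\Vplac_0$, and the finitely many possible local restrictions $\res_{G_v}\chi$ at $v \in \Vplac_0$. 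At primes $\mfp \notin \Vplac_0$ where $\chi$ is unramified, both $T$ and $N^{\chi}$ are unramified and $W_{\mfp}^T$ equals $H^1_{\textup{ur}}(G_{\mfp}, T)$, making the ratio $1$.

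The key and hardest step is establishing, at each prime $\mfp \mid \mfh_F(\chi)$, the local identity
\[\frac{\#W_{\mfp}^T}{\#H^0(G_{\mfp}, T)} = \frac{\#H^0(G_{\mfp}, (N/T)[\omega])}{\#H^0(G_{\mfp}, N[\omega])},\]
so that the product over ramified primes recovers $\mathcal{T}_{N, T}(\chi)$ exactly. To prove it, I would unwind the definition of $W_{\mfp}^T$ using that $W_{\mfp}(\chi)$ at good primes equals the unramified cohomology $H^1_{\textup{ur}}(G_{\mfp}, N^{\chi})$, whose size is $\#H^0(G_{\mfp}, N^{\chi})$. Since $\chi$ is ramified at $\mfp$, inertia acts on $N^{\chi}$ through $\chi|_{I_{\mfp}} \subseteq \FFF = \langle \xi \rangle$, so the inertia-invariants of $N^{\chi}$ are cut out of $N[\omega]$; combining this with the long exact cohomology sequence attached to $0 \to T \to N[\omega] \to N[\omega]/T \to 0$ at $\mfp$, together with local duality to translate from $N[\omega]/T$ to $(N/T)[\omega]$, yields the claimed ratio. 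Multiplying over $\mfp \mid \mfh_F(\chi)$ and folding in the absolute constants produces the desired inequality $\#\Sel^{\omega} N^{\chi} \geq c \cdot \mathcal{T}_{N, T}(\chi)$.
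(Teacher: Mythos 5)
Your proposal is correct and takes essentially the same route as the paper: both arguments equip $T$ with a Selmer structure whose image lands in $\Sel^{\omega} N^{\chi}$ with kernel controlled by $H^0(G_F, N[\omega]/T)$, apply the Greenberg--Wiles/Euler-characteristic formula to lower-bound that group, and identify the local factors at $\mfp \mid \mfh_F(\chi)$ with the factors of $\mathcal{T}_{N,T}(\chi)$ (the paper uses the strict conditions $\ker\big(H^1(G_{\mfp}, T) \to H^1(G_{\mfp}, N^{\chi})\big)$ rather than your pulled-back $W_{\mfp}^T$, which only makes the bound weaker and is immaterial). The one imprecision is in your justification of the key local identity: the equality $\#H^0(G_{\mfp}, N^{\chi}/T) = \#H^0(G_{\mfp}, (N/T)[\omega])$ comes from the fact that $\chi$ is ramified at $\mfp$, so that the local invariants of the twisted divisible module are cut out by the fixed points of $\xi$ --- not from local duality translating between $N[\omega]/T$ and $(N/T)[\omega]$, which are genuinely different modules.
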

We will prove this result in Section \ref{sec:tamagawa}. 

The next theorem shows that the bound of Proposition \ref{prop:tamagawa} tends to be fairly sharp.
\begin{thm}
\label{thm:main_mmnt_coarse}
Suppose $N$ is a twistable module with local conditions defined with respect to $F$, $\Vplac_0$, and $\Zlz$. Then there is $C > 0$ depending on $F$, $N$, and $\Vplac_0$ so that, given any $H > 30$, and given any positive integer $g$ satisfying
\[(\#N[\omega])^{200g} \le \log \log H,\]
we have
\[\sum_{\substack{\chi \in \mathbb{X}_F(H)}} \left(\frac{\# \Sel^{\omega}(N^{\chi})}{\max_T \mathcal{T}_{N, T}(\chi)}\right)^g \le \exp\left(Cg^2\right)\cdot \# \mathbb{X}_F(H).\]
\end{thm}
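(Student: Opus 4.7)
The plan is to combine the grid decomposition of Section \ref{sec:gridding} with the within-grid moment machinery of Sections \ref{sec:character}--\ref{sec:moments}. First I would partition $\mathbb{X}_F(H)$ into a family of grids $(X_s)_{s \in S}$, using the analytic work of Section \ref{sec:gridding} to ensure each grid is large enough, relative to $g$ and $\log\log H$, that the Chebotarev bound and the bilinear symbol equidistribution result \cite[Theorem 5.2]{Smi22a} both give nontrivial savings. The hypothesis $(\#N[\omega])^{200g} \le \log \log H$ should be calibrated precisely so that this can be arranged while still covering essentially all of the height-bounded family.

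Within each grid, I would expand $\#\Sel^{\omega}(N^{\chi})^g$ using the class--spin--symbol description of $\Sel^{\omega} N^{\chi}$ recalled in Section \ref{ssec:explicit_local}. A $g$-tuple of Selmer classes then becomes a collection of local data at each grid coordinate $s$, and raising to the $g$-th power turns the moment into a product of characters in the $\ovp_s$'s. By the analysis of Section \ref{sec:ignorable}, any term that nontrivially involves a class $\class{\ovp_s}$ or a symbol $\symb{\ovp_s}{\ovp_t}$ at some coordinate is killed by equidistribution; the only surviving terms correspond to configurations where the $g$ classes are compatible with a single $G_F$-submodule $T$ of $N[\omega]$, namely the cofavored submodules classified in Section \ref{sec:tamagawa}.

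For such a surviving configuration attached to $T$, the local conditions at each ramified prime $\mfp | \mfh_F(\chi)$ contribute exactly the factor $\#H^0(G_{\mfp}, N/T[\omega])/\#H^0(G_{\mfp}, N[\omega])$, and multiplying over $\mfp$ gives $\mathcal{T}_{N,T}(\chi)^g$ up to a bounded multiplicative error from the places in $\Vplac_0$. Summing over cofavored $T$ and bounding the number of configurations crudely by $(\#N[\omega])^{O(g^2)} = \exp(O(g^2))$ (from the count of $g$-tuples of submodules together with pairwise compatibility choices) gives the stated bound, with $\sum_T$ absorbed into $\max_T$ at the cost of another $\exp(O(g^2))$.

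The main obstacle will be controlling the errors uniformly in $g$. The equidistribution bounds come with polynomial loss in the grid size, and these losses compound when raised to the $g$-th power, so I will have to verify that the constant $C$ in $\exp(Cg^2)$ is independent of $H$ and of the number of grids. A related subtlety is that the grids produced by Section \ref{sec:gridding} will only approximate $\mathbb{X}_F(H)$ up to a small exceptional set of twists; the bookkeeping that certifies this exceptional set contributes at most $\exp(Cg^2)\cdot\#\mathbb{X}_F(H)$ to the moment, uniformly for all admissible $g$, is where the quantitative heart of the argument must sit.
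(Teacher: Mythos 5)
Your proposal is essentially the paper's argument: the paper realizes the $g$-th moment as the first moment for $N^{\oplus g}$, partitions $\mathbb{X}_F(H)$ into $\ovQQ$-refinements of grids of ideals (Section \ref{sec:gridding}), disposes of the twists supported on bad ideals via Proposition \ref{prop:bye_bad}, and bounds the within-grid average of $\#\Sel^{\omega}(N^{\oplus g})^{\chi}\big/\max_T\mathcal{T}_{N,T}(\chi)^g$ by $e^{Cg^2}$ using Propositions \ref{prop:borderline_rare} and \ref{prop:rough_grid_average}, exactly the route you describe. The one point to tighten is that, since no technical hypotheses are imposed on $N$ here, the classification of cofavored submodules from Section \ref{sec:tamagawa} is unavailable; what is actually needed is the unconditional comparison of Remark \ref{rmk:not_quite_cofavored}, which bounds $\mathcal{T}_{N^{\oplus g},Q}(\chi)$ by $\left(\max_T\mathcal{T}_{N,T}(\chi)\right)^g$ for \emph{every} submodule $Q$ of $N^{\oplus g}[\omega]$, combined with the crude $\exp(O(g^2))$ submodule count you already give.
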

We will prove this theorem in Section \ref{sec:rough}.

\begin{defn}
\label{defn:pfav}
With $N$ fixed as above, we call a twist $\chi \in \mathbb{X}_F$ a \emph{favored twist} of $N$ if, for any $G_F$-submodule $T$ of $N[\omega]$, we have $\mathcal{T}_{N, T}(\chi) \le 1$. We denote the set of favored twists for $N$ by $\XXfav_{F, N}$, and we write those of height at most $H$ by $\XXfav_{F, N}(H)$. We call $N$ \emph{potentially favored} if
\[\lim_{H \to \infty } \frac{\#\XXfav_{F, N}(H)}{\#\mathbb{X}_F(H)} > 0.\]
This condition may be checked using the algebra of $N$; see Definition \ref{defn:pfav2}.

Take $F_1  = F(\mu_{\ell^{k_0}})$, where $\ell^{k_0}$ is the order of $\xi$. We call a $G_F$-submodule $T$ of $N[\omega]$ \emph{cofavored} if, for every $\sigma$ in $G_{F_1}$, we have
\[\dim H^0\big(\langle \sigma \rangle,\, (N/T)[\omega]\big) = \dim H^0\big(\langle \sigma \rangle,\, N[\omega]\big).\]
If the only cofavored submodules of $N[\omega]$ are $0$ and $N[\omega]$, we call $N$ \emph{uncofavored}.
\end{defn}

\begin{rmk}
\label{rmk:finite_favored}
By the Hermite--Minkowski theorem, there are only finitely many isomorphism classes of $G_F$-modules of order $\#N[\omega]$ so that the action of $G_F$ is unramified outside $\Vplac_0$. In particular,  there is a finite collection $T_1, \dots, T_k$ of $G_F$-submodules of $N$ so that, for any other $G_F$-submodule $T$ of $N$, there is $i \le k$ so $(N/T)[\omega]$ is isomorphic to $(N/T_i)[\omega]$. For any admissible twist $\chi$, there is some $i \le k$ so $\chi$ is a favored twist for  $N/T_i$.
\end{rmk}

\begin{defn}
\label{defn:dual_delta}
Given an equivariant endomorphism $\phi$ of $N[\omega]$, we say that $\phi$ \emph{commutes with the connecting maps} if, for all $\sigma \in G_{F_1}$, the connecting map 
\begin{equation}
\label{eq:connecting_map}
\delta_{\sigma}:  H^0\big(\langle \sigma \rangle,\, N[\omega]\big) \to  H^1\big(\langle \sigma \rangle,\, N[\omega]\big)
\end{equation}
corresponding to the exact sequence 
\begin{equation}
\label{eq:conn_map_basic}
0 \to N[\omega] \to N[\omega^2] \xrightarrow{\,\,\omega\,\,} N[\omega] \to 0
\end{equation}
satisfies
\[\phi \circ \delta_{\sigma} = \delta_{\sigma} \circ \phi.\]
Multiplication by any given element in $\FFF_{\ell}^{\times}$ defines an automorphism of $N[\omega]$ that commutes with the connecting map. If $N[\omega]$ has extra automorphisms beyond these, it will have a noticeable impact on the distribution of $\omega$-Selmer groups in its twist family.

Similarly, we can consider the connecting map 
\[\delta_{\sigma}^{\vee} : H^0\big(\langle \sigma \rangle,\, N^{\vee}[\omega]\big) \to  H^1\big(\langle \sigma \rangle,\, N^{\vee}[\omega]\big)\]
corresponding to the exact sequence 
\[0 \to N^{\vee}[\omega] \to N^{\vee}[\omega^2] \xrightarrow{\,\,\omega\,\,} N^{\vee}[\omega] \to 0.\] 
If there is an equivariant isomorphism $\phi: N[\omega] \to N^{\vee}[\omega]$ satisfying
\[\phi \circ \delta_{\sigma} = \delta_{\sigma}^{\vee} \circ \phi.\]
for all $\sigma \in G_{F_1}$, we say that $N[\omega]$ has self-dual structure. Otherwise, we say it lacks self-dual structure.
\end{defn}

\begin{defn}
With $\FFF$ and $\Vplac_0$ fixed as above, a choice of \emph{local twists} is a tuple $(\chi_v)_{v \in \Vplac_0}$ of homomorphisms $\chi_v \in \Hom_{\text{cont}}(G_v, \FFF)$ indexed by $v \in \Vplac_0$ that is globally represented in the sense that some homomorphism $\chi \in \Hom_{\text{cont}}(G_F, \FFF)$ restricts to $\chi_v$ on $G_v$ for each $v$ in $\Vplac_0$. By the Grunwald--Wang theorem \cite{Wang50}, the hypothesis that $(\chi_v)_{v \in \Vplac_0}$ is globally represented is automatically satisfied unless $\ell = 2$ and $\xi$ has order at least $8$.

Given a choice of local twists $(\chi_v)_{v \in \Vplac_0}$, we define $\mathbb{X}_F((\chi_v)_{v \in \Vplac_0})$ to be the set of $\chi$ in $\mathbb{X}_F$ whose restriction to $G_v$ equals $\chi_v$ for each $v$ in $\Vplac_0$. We similarly define $\mathbb{X}_F(H, (\chi_v)_{v \in \Vplac_0})$ and $\XXfav_{F, N}(H, (\chi_v)_{v \in \Vplac_0})$.

Given an element $\phi \in H^1(G_F, N[\omega])$, we call $\phi$ a \emph{universal $\omega$-Selmer element} if, for every $\chi$ in $\mathbb{X}_F((\chi_v)_{v \in \Vplac_0})$, the image of $\phi$ in $H^1(G_F, N^{\chi}[\omega])$ is in the $\omega$-Selmer group of $N^{\chi}$. More algebraically, given $(\chi_v)_v$, the set of universal Selmer elements is the kernel of the map
\[H^1\big(G_F, \,N[\omega]\big) \to \prod_{v \in \Vplac_0} H^1\big(G_v, \,N[\omega]\big)/W_{v, 1}(\chi_v) \times \prod_{\sigma \in G_{F_1}}  H^1\big(\langle \sigma \rangle,\, N[\omega]\big), \]
where $W_{v, 1}(\chi_v)$ is defined as the portion of $H^1(G_v, N[\omega])$ that maps into $W_v(\chi_v)$. 
\end{defn}
\begin{rmk}
Given any twist $\chi$ in $\mathbb{X}_F(H, (\chi_v)_{v \in \Vplac_0})$ ramified at some place outside $\Vplac_0$, we have
\begin{equation}
\label{eq:Wiles}
r_{\omega}(N^{\chi})  - r_{\omega}\left((N^{\vee})^{\chi}\right) = \sum_{v \in \Vplac_0} \left( \dim W_{v, 1}(\chi_v) - \dim H^0(G_v, N[\omega])\right).
\end{equation}
This formula follows Poitou-Tate duality and the global Euler-Poincar\'{e} characteristic formula of Tate. This particular statement is a special case of \cite[Theorem 8.7.9]{Neuk08}, which is itself a generalization of a formula of Wiles \cite{Wiles95}. We note that, in the enumeration of places, each pair of complex embeddings of $F$ corresponds to one place in $\Vplac_0$.
\end{rmk}

Our main results give the distribution of $\ell^{\infty}$-Selmer groups for twists of the twistable module $N$ in families $\XXfav_{F, N}(H, (\chi_v)_{v \in \Vplac_0})$ as $H$ tends to infinity. The distribution of these groups depends on the structure of $N$. We work out what these distributions are in two contrasting cases. The first case will be relevant to the study of class groups, and the latter case will be relevant to the study of Selmer groups of abelian varieties.

\begin{case}
\label{case:nodual}
Take $N$ to be a twistable module with local conditions defined with respect to $F$, $\Vplac_0$, and $\Zlz$. Choose a set of local twists $(\chi_v)_{v \in \Vplac_0}$. We say that $N$ is in the  \emph{non-self-dual case} if
\begin{itemize}
\item $N$ is potentially favored, is uncofavored, and lacks self-dual structure;
\item The only automorphisms of $N[\omega]$ that commute with connecting maps are those in $\FFF_{\ell}^{\times}$; and
\item Neither $N$ nor $N^{\vee}$ has any nonzero universal $\omega$-Selmer element with respect to the local twists $(\chi_v)_{v \in \Vplac_0}$.
\end{itemize}

Given $N$ in this case, take $u$ to be the sum on the right hand side of \eqref{eq:Wiles}. Given $n \ge j \ge \max(u, 0)$, we define $P(j \,|\, n)$ to be the probability that an $(n - u) \times n$ matrix with coefficients selected uniformly at random from $\FFF_{\ell}$ has kernel of dimension $j$.  Explicitly, we have
\[P(j\,|\,n) = \ell^{-j(j- u)} \cdot \prod_{k = 1}^{j-u} \frac{1- \ell^{-k - n + j}}{1 - \ell^{-k}}\cdot \prod_{k=1}^{j} ( 1- \ell^{-k})^{-1} \cdot \prod_{k=1}^n (1 - \ell^{-k}).\]

If we instead have $n \ge j \ge 0$ with $j < u$, we define $P(j \,|\, n)$ to be zero.

We then take
\[P(j \,|\, \infty) = \lim_{n \to \infty} P(j\,|\, n),\]
which, for $j \ge \max(u, 0)$, equals
\[\ell^{-j(j- u)} \cdot \prod_{k = 1}^{j-u} \left(1 - \ell^{-k}\right)^{-1} \cdot \prod_{k=1}^{j} ( 1- \ell^{-k})^{-1} \cdot \prod_{k=1}^\infty (1 - \ell^{-k}).\]
\end{case}

\begin{case}
\label{case:alt}
Take $N$ to be a twistable module with local conditions defined with respect to $F$, $\Vplac_0$, and $\Zlz$. Choose a set of local twists $(\chi_v)_{v \in \Vplac_0}$. We say that $N$ is in the  \emph{alternating case} if
\begin{itemize}
\item $\FFF$ has order $2$, so $\xi = -1$;
\item $N$ is potentially favored and is uncofavored;
\item $N$ has alternating structure, in the sense of \cite[Definition 4.10]{Smi22a};
\item The only automorphism of $N[\omega]$ that commutes with the connecting maps is  the identity; and
\item $N$ has no nonzero universal $\omega$-Selmer element with respect to the local twists $(\chi_v)_{v \in \Vplac_0}$.
\end{itemize}

Given $N$ in this case, and given integers $n \ge j \ge 0$, we define $P(j \,|\, n)$ to be the probability that an alternating $n \times n$ matrix with coefficients selected uniformly at random from $\FFF_{2}$ has kernel of dimension $j$. If $j$ and $n$ have different parities, this equals $0$. Otherwise,
\[P(j\,|\,n) =  2^{\frac{-j(j-1)}{2}}\cdot\prod_{k=1}^{j}\frac{1 - 2^{-n-j +k}}{1 - 2^{-k}} \cdot \prod_{k = 1}^{ \frac{n - j}{2}}\left(1 - 2^{-2k+ 1}\right) .\]

The dependence on the parity of $n$ lends some subtlety to our definition of $P(j\,|\, \infty)$. Define a map $\varepsilon : G_{F} \to \FFF_2$ by
\begin{equation}
\label{eq:Morgan_map}
\varepsilon(\sigma) = \dim H^0\left(\langle \sigma\rangle,\, N[\omega]\right)\,\,\text{ mod } 2.
\end{equation}
If this map is a homomorphism, we say that $N$ is in the \emph{parity-invariant subcase}. Under this assumption, the parity of $r_{\omega}(N^{\chi})$ does not depend on the choice of $\chi$ in $\mathbb{X}_F\left((\chi_v)_{v \in \Vplac_0}\right)$; see Section \ref{ssec:parity}. In this subcase, take $b \in \{0, 1\}$ to match the parity of $r_{\omega}(N^{\chi})$ and define
\[P(j\,|\, \infty) = \lim_{n \to \infty} P(j\,|\, 2n + b).\]
Otherwise, we say $N$ is in the \emph{non-parity-invariant subcase}, and we take
\begin{align*}
P(j\,|\, \infty) \,=\, \lim_{n \to \infty} \tfrac{1}{2} P(j\,|\, 2n + j) \,=\, \tfrac{1}{2} \cdot 2^{\frac{-j(j-1)}{2}} \cdot \prod_{k =1}^j \left(1 - 2^{-k}\right)^{-1} \cdot \prod_{k = 0}^{\infty}\left(1 -2^{-2k- 1}\right).
\end{align*}
\end{case}

We can now state the main result of this project.
\begin{thm}
\label{thm:main}
Choose a twistable module $N$ and local twists $(\chi_v)_v$ satisfying the conditions either of the non-self-dual case or alternating case, and take $P(j\,|\, n)$ to be the transition probabilities corresponding to the case of $N$. There are then real numbers $c, C > 0$ determined by $N$, $\Vplac_0$, and $F$ so that, for any $H > C$, we have the following:

Given a nonincreasing sequence of nonnegative integers $r_{\omega} \ge r_{\omega^2} \ge \dots $, define an empirical probability
\[P_{\le H}(r_{\omega}, r_{\omega}^2, \dots) \,=\,  \frac{\#\left\{\chi \in \XXfav_{F, N}(H, \,(\chi_v)_v)\,:\,\, r_{\omega^k}(N^{\chi}) = r_{\omega^k}\, \text{ for all } \,k \ge 1\right\}}{ \#\XXfav_{F, N}(H,\, (\chi_v)_v)}\]
and a theoretical probability 
\[P_{\textup{th}}(r_{\omega}, r_{\omega}^2, \dots)  = P(r_{\omega}\,|\, \infty) \cdot \prod_{k \ge 1} P(r_{\omega^{k+1}}\,|\, r_{\omega^k}).\]
Then
\[\sum_{r_{\omega} \ge r_{\omega^2} \ge \dots } \Big|P_{\le H}(r_{\omega}, r_{\omega}^2, \dots) \, -\, P_{\textup{th}}(r_{\omega}, r_{\omega^2}, \dots)\Big| \,\le\, \exp\left( -c \cdot (\log \log \log H)^{1/2}\right),\]
where the sum is over all nonincreasing sequence of nonnegative  integers.
\end{thm}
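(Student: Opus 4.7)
The proof has two halves, and my plan is to attack them in sequence. The first half is to establish the limiting distribution of $r_{\omega}(N^{\chi})$ alone over $\chi \in \XXfav_{F, N}(H, (\chi_v)_v)$; the second half is to upgrade this to the full joint distribution of $(r_{\omega^k})_{k \ge 1}$ by conditioning on the $\omega$-Selmer group inside grids and invoking the higher-Selmer transition results of \cite{Smi22a}. Throughout, the main tool is the method of moments, and the explicit rate $\exp(-c \cdot (\log \log \log H)^{1/2})$ will come from balancing the number of moments $g$ we can afford (bounded above by something like $c \sqrt{\log \log H / \log(\#N[\omega])}$, coming from the moment hypothesis of Theorem \ref{thm:main_mmnt_coarse}) against the rate at which the $g$-th moments grow and match those of the target distribution.

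For the first half, I would compute the moments $\sum_\chi \big(\# \Sel^{\omega} N^{\chi}\big)^g$ over $\chi \in \XXfav_{F,N}(H, (\chi_v)_v)$ by expressing each factor $\#\Sel^\omega N^\chi$ via the explicit local/global formula recalled in Section~\ref{ssec:explicit_local} in terms of classes, spins, and symbols. The natural family is carved into grids via Section~\ref{sec:gridding}, and on each grid the $g$-th moment becomes a sum of character-sum-like terms indexed by tuples of submodules of $N^{\oplus g}[\omega]$. Terms involving a genuine symbol $\symb{\ovp_s}{\ovp_t}$ are killed either by Chebotarev or by \cite[Theorem 5.2]{Smi22a}; terms involving only classes are controlled by the restriction to favored twists (via Proposition~\ref{prop:tamagawa} and Theorem~\ref{thm:main_mmnt_coarse}) together with the classification of cofavored submodules of $N^{\oplus g}$ carried out in Section~\ref{sec:tamagawa}. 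Under the \emph{uncofavored} hypothesis on $N$, this classification forces the surviving terms to be exactly the ``diagonal'' ones coming from the endomorphisms of $N[\omega]$ commuting with the connecting maps \eqref{eq:connecting_map}, and the hypothesis that these endomorphisms form $\FFF_\ell^\times$ (non-self-dual case) or are trivial (alternating case) is what pins down which random-matrix model governs the limit: random $(n-u)\times n$ matrices over $\FFF_\ell$ in the non-self-dual case, using $N$ versus $N^\vee$ to distinguish ``row'' and ``column'' directions, and random alternating $n \times n$ matrices over $\FFF_2$ in the alternating case, where the self-dual/alternating structure on $N[\omega]$ forces the symbol matrices to inherit an alternating pairing.

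The subtlest point of the first half is to verify that the matched moments actually characterize the claimed distributions $P(j\,|\,\infty)$; here I would follow the well-known moment computations for ranks of random matrices and alternating matrices, which are determined by all finite $\ell$-power moments, paying attention to the parity-invariant versus non-parity-invariant subcases in the alternating setting (using the map $\varepsilon$ of \eqref{eq:Morgan_map} and the parity discussion in Section \ref{ssec:parity}). The absence of nonzero universal $\omega$-Selmer elements for $N$ (and $N^\vee$) is what ensures that the rank $r_\omega(N^\chi)$ really is given by $\dim\ker$ of the relevant random matrix with no extra ``automatic'' contributions. Converting the agreement of moments up to order $g$ into a total variation bound of the form $\exp(-c\sqrt{\log\log\log H})$ is then a standard Chebyshev-type estimate: if the first $g$ moments agree with error small in $g$, and the tail of the distribution decays exponentially in $j^2$, choosing $g \asymp (\log\log\log H)^{1/2}$ optimizes the two error sources.

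For the second half, once the grid decomposition is in place I would feed each grid into \cite[Proposition 4.8]{Smi22a} and the higher-Selmer machinery of \cite{Smi22a}, which gives the conditional distribution of $r_{\omega^{k+1}}$ given the $\omega^k$-Selmer group on any grid for which the $\omega$-Selmer structure is ``generic'' in the sense made precise there. The genericity hypothesis fails only on a set of twists whose density can be bounded by another moment estimate from Section~\ref{sec:distribution}, contributing only a lower-order error to the total variation. Summing these errors over $k$ (the higher-Selmer transition probabilities $P(j\,|\,n)$ converge to $P(j\,|\,\infty)$ geometrically in $n$, so only finitely many $k$ contribute non-negligibly for any fixed truncation of the tail) and combining with the first-half bound yields the claimed estimate. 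The hardest single step, I expect, is the bookkeeping in the second paragraph above: identifying precisely which cofavored submodules of $N^{\oplus g}$ survive and verifying that their contributions assemble into the correct random-matrix moments, since this is where all the hypotheses of the two cases (uncofavored, automorphism group, (alternating) self-dual structure, absence of universal Selmer elements) must be used simultaneously and compatibly with Remark~\ref{rmk:finite_favored}.
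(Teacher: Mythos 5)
Your overall architecture matches the paper's proof of Theorem \ref{thm:main} closely: restrict to bounded $r_{\omega}$ via Theorem \ref{thm:main_mmnt_coarse}, carve the favored family into grids of good ideals (Section \ref{sec:gridding}, Proposition \ref{prop:hateful_eight}), compute the $\ell$-power moments of $\#\Sel^{\omega} N^{\chi}$ on each grid by classifying the cofavored submodules of $N^{\oplus m}$ and discarding ignorable pairs (Propositions \ref{prop:cofav_form}, \ref{prop:accurate_grid_moment}, \ref{prop:good_moments}), match these against the target distribution (Proposition \ref{prop:moments_ranks}), and then condition on the $\omega$-Selmer group within a grid class and invoke \cite[Theorem 4.16]{Smi22a} for the higher layers. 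Your account of where each hypothesis of the two cases enters is also essentially the paper's.

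The one step that would fail as you describe it is the conversion from moments to the rank distribution. You call this ``a standard Chebyshev-type estimate,'' but no Chebyshev or second-moment argument can work here: the quantities being matched are the exponential moments $\sum_j \ell^{mj} P(j)$, which are of size roughly $\ell^{m^2/2}$ in the alternating case and $\ell^{m^2/4}$ in the non-self-dual case, so an additive error in these moments that is merely ``small in $g$'' is enormous compared to the individual probabilities one wants to recover. The paper instead uses Proposition \ref{prop:unicity_ranks}, a polynomial-interpolation and Cauchy-integral-formula argument in the spirit of \cite[Lemma 18]{Heat94}: one forms the generating function $f(z) = \sum_i a_i z^i$ of the differences of probabilities, observes that $f$ is small at the geometrically spaced points $1, \ell, \dots, \ell^{m-1}$ (and at $\pm\ell^k$ in the alternating case, which is how the parity splitting is exploited), subtracts the interpolating polynomial, divides by $\prod_{i}(1 - z/\ell^i)$, and applies the maximum principle on the circle $|z| = \ell^m$ to bound the coefficients. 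The factor $\ell^{-m(m-1)/2}$ gained from that product is exactly what beats the growth of the moments; without it the matched moments give no pointwise information. Aside from this, note that the final rate $\exp(-c(\log\log\log H)^{1/2})$ comes not from the interpolation itself but from the tail bound of Theorem \ref{thm:main_mmnt_coarse} applied to $r_{\omega} > (\log\log\log H)^{1/4}$, which is consistent with your balancing heuristic; and in the non-self-dual case the final grid-class step must be carried out simultaneously for $N$ and $N^{\vee}$, a point your sketch omits.
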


\section{Examples and non-examples}
\label{sec:examples}
The two principal applications of Theorem \ref{thm:main} are to finding the distribution of $\Cl K[\ell^{\infty}]$ as $K$ varies in a family of degree $\ell$ extensions of a fixed number field $F$ and to finding the distribution of $2^{\infty}$-Selmer groups in the quadratic twist families of most elliptic curves over $F$. Before we prove Theorem \ref{thm:main}, we show how it may be applied to these two examples and some other problems. We will also give some relevant examples where the theorem cannot be applied.

\subsection{Quadratic twists of elliptic curves}
Take $F$ to be a number field, and take $A$ to be an elliptic curve over $F$ with Weierstrass form $y^2 = x^3 + ax + b$. We are interested in the $2^{\infty}$-Selmer groups for the family of elliptic curves
\[\{A^d\colon y^2 = x^3 + d^2ax + d^3b\,:\,\, d \in F^{\times}\}.\]
As in \cite[Example 4.4]{Smi22a}, these are the Selmer groups associated to twists of the twistable module $N = A[2^{\infty}]$ with the Bloch--Kato local conditions. This example has alternating structure.

To apply Theorem \ref{thm:main}, we must check that the other conditions of Case \ref{case:alt} are satisfied. The condition on universal $\omega$-Selmer elements is always satisfied, leaving three conditions; $N$ must be potentially favored, it must be cofavored, and $N[\omega]$ may not have extra automorphisms commuting with the connecting maps.

So take $K = F(A[2])$ to be the minimal extension of $F$ such that $G_{F(A[2])}$ acts trivially on $A[2]$. We may then find $c_1, c_2, c_3$ in $K$ for which
\[x^3 + ax + b = (x - c_1)(x - c_2)(x - c_3).\]
Take $e_1$ to be the point $(c_1, 0)$, and take $e_2$ to be the point $(c_2, 0)$. These generate the $2$-torsion of $E$. The connecting map
\[A[2] \rightarrow H^1(G_{K}, A[2])\]
corresponding to the short exact sequence $0 \to A[2] \to A[4]  \to A[2] \to 0$ may be given explicitly in the form
\begin{align*}
&e_1 \mapsto e_1 \cdot \chi_{c_1 - c_2} + e_2 \cdot \chi_{(c_3 - c_1)\cdot (c_2 - c_1)}\\
& e_2 \mapsto e_1 \cdot \chi_{(c_3 - c_2)\cdot (c_1 - c_2)} + e_2 \cdot \chi_{c_2 - c_1},
\end{align*}
where $\chi_c: G_{K} \rightarrow \FFF_2$ denotes the quadratic character associated to $K(\sqrt{c})/K$; this calculation can be done directly, and is the content of \cite[Proposition X.1.4]{Silv09}.

We now split into cases. 

\begin{ex}
Suppose $K = F$. In this case, $N$ is automatically potentially favored. We find that $N$ is uncofavored if and only if none of the isogenous curves
\[A/\langle e_1 \rangle,\quad A/\langle e_2 \rangle,\quad A/\langle e_1 + e_2 \rangle\]
has full rational two torsion. This is equivalent to the condition that $E$ has no rational cyclic $4$-isogeny. Given the above form of the connecting map, this condition is equivalent to saying that none of 
\begin{equation}
\label{eq:three_negsquares}
(c_3 - c_1) \cdot (c_2 - c_1),\quad (c_3 - c_2) \cdot (c_1- c_ 2),\quad (c_1 - c_3) \cdot (c_2 - c_3)
\end{equation}
are in $(F^{\times})^2$. If $E$ is not uncofavored, there is then  $c \in F \backslash\{0, \pm 1\}$ and $d$ in $F^{\times}$ such that $E$ is isomorphic to a curve with equation
\[y^2 = x(x-d)(x - dc^2).\]

We now check for a nonidentity automorphism commuting with the connecting maps. From Remark \ref{rmk:Wedd}, we can restrict our attention to the isomorphism $T$ sending $e_1$ to $e_2$ to $e_1 + e_2$ back to $e_1$.  We calculate that this commutes with the connecting maps if and only if all three entries in \eqref{eq:three_negsquares} are all in $- (F^{\times})^2$. In the case that $c_1 = 0$ and $c_2 = 1$, this gives that $-c_3$ and $c_3 - 1$ are both squares in $F$. In other words, there are $z_0, z_1$ in $F$ satisfying 
\begin{equation}
\label{eq:darkside_conic}
z_0^2 + z_1^2 = -1
\end{equation}
so $c_3 = -z_0^2$. If this conic has no points over $F$, as is true if $F$ is any totally real field, then the condition of no extra automorphisms is automatically satisfied. Otherwise, any exception will be isomorphic to a curve of the form
\begin{equation}
\label{eq:T_bad_map}
y^2 = x(x-d)(x + dz_0^2)
\end{equation}
where $d$ is in $F^{\times}$ and $z_0 \in F\backslash\{0, \pm 1\}$ satisfies \eqref{eq:darkside_conic} for some $z_1$ in $F$.

Outside these two exceptional families, we find that the technical conditions on Theorem \ref{thm:main} are satisfied. We therefore find that case (3) of \cite[Theorem 1.5]{Smi22a} holds.
\end{ex}

\begin{ex}
Suppose $K/F$ is a $\Z/3\Z$ extension. The only proper $G_F$-submodule of $A[2]$ is $0$, so $N$ is potentially favored and  uncofavored, leaving just the condition on automorphisms. The automorphism $T$ defined above is $G_F$-equivariant, so the condition fails if and only if there are $d, z_0, z_1$ in $K$ so that \eqref{eq:darkside_conic} is satisfied and $E$ is isomorphic over $K$ to the curve with Weierstrass form \eqref{eq:T_bad_map}. This is again impossible if $F$ is totally real, as $K$ is totally real in this case.

However, there are curves that fail the condition. To give a particular example, take $F$ to be a number field containing $\QQ(\mu_3)$, and take $b$ to be any element of $F^{\times}$ that is not a cube. Then, if we consider the elliptic curve
\[A: y^2 = x^3 + b,\]
we find that the extension $K/F$ is of degree $3$, and the map $T$ commutes with the connecting maps. We do not need to check for an isomorphism with a curve of the form \eqref{eq:T_bad_map} to prove this, as this curve has CM by $\Z[\mu_3]$. Taking $\xi$ to be a primitive third root of unity, we see that the automorphism corresponding to $\xi$ either equals $T$ or $T^2$.  To check that this curve is an exception using our condition, we note that $A$ is isomorphic over $K$ to the curve with equation
\[y^2 = x\left( x- (\xi- 1)\sqrt[3]{b}\right) \left(x - (\xi^2 - 1)\sqrt[3]{b}\right),\]
which corresponds to the solution $z_0 = \xi$, $z_1 = \xi^2$ of \eqref{eq:darkside_conic}.

\end{ex}

\begin{ex}
If $K/F$ is an $S_3$ extension, then every condition is clearly satisfied, and we may always apply Theorem \ref{thm:main}.
\end{ex}

From these last two calculations, we see that Theorem \ref{thm:main} yields \cite[Theorem 1.5]{Smi22a} in case (1).

\begin{ex}
If $K/F$ is quadratic, the automorphism $T$ is non-equivariant, so we only need to check the potentially favored and uncofavored conditions. We will replace our notation with the notation of Example \ref{ex:Klag}, so we are considering a pair of isogenous elliptic curves
\[A\colon y^2 = x(x^2 + ax + b)\quad\text{and}\quad A_0 \colon y^2 =x (x^2 - 2ax + (a^2 - 4b))\]
with $a,b$ in $F$ for which $b$ is nonzero and $a^2 - 4b$ is outside $F^2$.

We find that $A$ is potentially favored if $G_F$ acts nontrivially on $A_0[2]$, which happens if $b$ is outside $(F^{\times})^2$. If $A$ is not potentially favored, we may instead consider the curve $A_0$, which fits into the $K = F$ case. Next, we see that $A$ is uncofavored if and only if $F(A[2]) \ne F(A_0[2])$, which is equivalent to the condition that
 $ba^2 - 4b^2$ is not in $(F^{\times})^2$.

If neither $b$ nor $ba^2 -4b^2$ are squares, we find that the main theorem applies to $A$. So Theorem \ref{thm:case2} follows from Theorem \ref{thm:main}.
\end{ex}
\begin{rmk}
Using the methods of this paper, one could find the distribution $2$-Selmer ranks in the family of favored twists for some of the exceptional cases listed above without too much difficulty. For every exceptional curve, the first or second moment of the size of the $2$-Selmer group in the twist family is notably larger than for a non-exceptional curve. To give a concrete example, the average size of the $2$-Selmer group  for the curve with equation
\[dy^2 = x(x - 1)(x-4)\]
over $\QQ$ is larger than the average size of the $2$-Selmer group for the curve with equation
\[dy^2 = x(x-1)(x+1)\]
as $d$ varies through the squarefree integers.

A more difficult question is whether the results on higher Selmer groups can be generalized to these exceptional cases. The author is hopeful that future research will shed some light on these examples.
\end{rmk}

\subsection{Quadratic twists of abelian varieties}
Generalizing from elliptic curves, we may ask about the $2^{\infty}$-Selmer groups in the quadratic twist family of an abelian variety $A/F$ of dimension $g > 1$. To define an alternating structure on $A[2^{\infty}]$ using the Weil pairing, we need to choose a self-dual isogeny $\lambda: A \to A^{\vee}$ of odd degree over $F$, where $A^{\vee}$ denotes the dual abelian variety to $A$. Any such self-dual isogeny arises from a symmetric line bundle $\mathscr{L}$ on $A$ defined over $\overline{F}$, but might not arise from a line bundle defined over $F$. However, the obstruction to the existence of this bundle is given by an element $c_{\lambda}$ of the $2$-Selmer group of $A$ \cite{Poon99, PR11}, and this Selmer element is universal across the quadratic twist family of $A$ \cite{Morg17}. The technical conditions force us to assume that $c_{\lambda} = 0$ and hence that $\lambda$ comes from a symmetric line bundle defined over $F$.

From the existence of this symmetric line bundle, we can conclude that there is a quadratic refinement $q: A[2] \to \pm 1$ of the Weil pairing on $A[2]$ induced by $\lambda$ \cite[p. 314f.]{Mumf66}. Taking $K = F(A[2])$, we must have that $\Gal(K/F)$ is a subgroup of the orthogonal group $O(q)$ of automorphisms of $A[2]$ preserving this quadratic form. 

\begin{ex}
In the case that $\Gal(K/F)$ equals $O(q)$, we find that the technical conditions are all satisfied. We start by noting that, in this case, $H^1(\Gal(K/F), A[2])$ is nonzero if and only if $g = 3$ and $q$ has Arf invariant $0$ \cite[Theorems 4.2 - 5.2]{Poll71}. In the one exceptional case, we find there is $\sigma$ in $G_F$ such that the restriction of the nontrivial cocycle class to $H^1(\langle \sigma \rangle, A[2])$ is nonzero; specifically, in the notation of the final paragraph of \cite[p. 417]{Poll71}, we see that the restriction of the derivation $\delta$ to the subgroup of $S_8$ generated by the element $(12)(34)(56)(78)$ is nontrivial. In all cases, there are no nonzero universal $2$-Selmer elements.

% Here is every Poll theorem needed
% \cite[Theorems 4.2, 4.5, 4.6, 4.7, 4.9, 5.1, 5.2]{Poll71}
Given $v \in A[2]$ satisfying $q(v) = 1$, the map
\[w \mapsto w + (q(v + w) - q(w) - 1)v\]
defines an element in $O(q)$ known as a transvection. Using these transvections and the fact that $g > 1$, we may show that any  submodule of $A[2]$ closed under $O(q)$ contains a nonzero singular element $v$, and can conclude from \cite[Theorem 1.3]{Poll71} that $T = A[2]$. The potentially favored and uncofavored conditions hold. Another application of transvections shows that the only equivariant automorphism of $A[2]$ is the identity.

Since the technical conditions are all satisfied, we find that the $2^{\infty}$-Selmer groups of quadratic twists of abelian varieties in this case have the same distribution as the $2^{\infty}$-Selmer groups of elliptic curves in $\Gal(K/F) = S_3$ case.
\end{ex}

This previous example may be considered the most generic case where our theorem applies for quadratic twist families of abelian varieties. A more concrete example comes from the Jacobians of odd degree hyperelliptic curves.
\begin{ex}
Choose $g > 1$ and choose a degree $2g+1$ polynomial 
\[f(x) = a_{2g+1}x^{2g + 1} + a_{2g}x^{2g} + \dots + a_0\] 
with coefficients in $F$ and no repeated roots. Consider the hyperelliptic curve $C$ with equation $y^2 = f(x)$, and take $A$ to be the Jacobian of $C$ with the standard principal polarization. Take $P_1, \dots, P_{2g+1}$ to be the points on this curve where $y = 0$, take $P_0$ to be the point at $\infty$, and write $P_i - P_0$ as $e_i$. Then $e_1, \dots, e_{2g+1}$ generate $A[2]$ subject to the unique relation $e_1 + \dots + e_{2g+1} = 0$ \cite{Schae95}. From this, we find that $K$ equals the splitting field of the polynomial $f$, and that $\Gal(K/F)$ is necessarily a subgroup of $S_{2g+1}$.

In the maximal case where $\Gal(K/F) = S_{2g+1}$, we may apply \cite[Theorem 5.1]{Poll71} to show there are no universal nonzero $2$-Selmer elements. The other technical conditions are easily checked.

Using the Chabauty--Coleman method, one can go from bounds on the rank of the Jacobian of a curve of genus $> 1$ to bounds on the number of rational points on the curve. One concrete application of these methods and Theorem \ref{thm:main} is the following.
\begin{thm}
Choose $g >1$, and choose a polynomial $f(x) = a_{2g+1}x^{2g + 1} + \dots + a_0$ with rational coefficients. Taking $K$ to be the splitting field for $f$ over $\QQ$, we assume that $\Gal(K/\QQ)$ is isomorphic to $S_{2g+1}$. Given a positive integer $d$, take $C_d$ to be the hyperelliptic curve with equation $dy^2 = f(x)$ over $\QQ$. Take $N$ to be the maximal odd integer dividing the conductor of $C_1$, and choose a positive integer $b$ coprime to $2N$. Then we have the following:
\begin{itemize}
\item Suppose the Jacobian of $C_b$ has even $2$-Selmer rank. Then $100\%$ of the $C_d$ with $d$ equal to $b$ mod $8N$ have $1$ rational point, and we have
\[\lim_{H \to \infty}\, H^{-1}\cdot\sum_{\substack{0 < d < H \\ d \equiv b \textup{ mod } 8N }} \big(\#C_{d}(\QQ) - 1\big) = 0.\]
\item Suppose the Jacobian of $C_b$ has odd $2$-Selmer rank. Then $100\%$ of the $C_d$ with $d$ equal to $b$ mod $8N$ have at most $3$ rational points, and we have
\[\lim_{H \to \infty}\, H^{-1}\cdot \sum_{\substack{0 < d < H \\ d \equiv b \textup{ mod } 8N }} \max\big(0,\, \#C_{d}(\QQ) - 3\big) = 0.\]
\end{itemize}
\end{thm}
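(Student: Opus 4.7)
The plan is to combine Theorem \ref{thm:main} on the distribution of $2^\infty$-Selmer groups in the favored twist family with the Chabauty--Coleman method for bounding rational points on a curve whose Jacobian has rank strictly less than the genus.

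The preliminary task is to verify the hypotheses. The preceding discussion shows that when $\Gal(K/\QQ) \cong S_{2g+1}$, the twistable module $N = J(C_1)[2^\infty]$ satisfies every technical condition of Case \ref{case:alt}. Moreover, the only $G_\QQ$-submodules of $J[2]$ are $0$ and $J[2]$, so every Tamagawa ratio in Definition \ref{defn:tamagawa} is trivially bounded by $1$; hence every admissible twist is favored. The congruence $d \equiv b \pmod{8N}$ fixes a consistent set of local twists $(\chi_v)_{v \in \Vplac_0}$. A direct calculation on $G_\QQ \to S_{2g+1}$ shows that the map $\varepsilon$ of \eqref{eq:Morgan_map} coincides with the sign character (using that $2g+1$ is odd, so the fixed-space dimension is $c(\sigma) - 1$ for a permutation with $c(\sigma)$ cycles), placing us in the parity-invariant subcase; in particular $r_\omega(J(C_d))$ has a well-defined parity on this family that matches that of $r_2(J(C_b))$.

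With these hypotheses verified, Theorem \ref{thm:main} determines the joint distribution of the ranks $r_{\omega^k}(J(C_d))$ for $k \ge 1$ as $d$ varies. Passing to $k \to \infty$ through the alternating transition kernels $P(j \mid n)$, the $2^\infty$-Selmer corank of $J(C_d)$ equals the smallest integer of the correct parity, namely $0$ in the even case and $1$ in the odd case, for $100\%$ of $d$ in the family. Since the Mordell--Weil rank is at most the $2^\infty$-Selmer corank and $g > 1$, we are well within the range where Chabauty--Coleman applies. In the even case, rank $0$ combined with the triviality of $J(C_d)(\QQ)_{\mathrm{tors}}$ for $100\%$ of $d$ (which follows from $J[2]^{G_\QQ} = 0$ for those $d$ together with a standard reduction-mod-$p$ argument for odd torsion) forces $C_d(\QQ) = \{\infty\}$. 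In the odd case, Chabauty--Coleman at a suitable prime of good reduction together with the hyperelliptic involution limits $\#C_d(\QQ)$ to at most $3$.

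The final step promotes these $100\%$ statements to the averaged bounds on $\max(0, \#C_d(\QQ) - c)$. I would invoke a uniform upper bound on $\#C_d(\QQ)$ in terms of the $2$-Selmer rank of the Stoll or Katz--Rabinoff--Zureick-Brown type, and couple it with the moment bound of Theorem \ref{thm:main_mmnt_coarse}: the latter implies that the tails $\#\{d < H : r_2(J(C_d)) > m\}$ decay in $m$ fast enough that the exceptional twists, even under a crude uniform point-count bound, contribute only $o(H)$ to the sum. The main obstacle is precisely this quantitative tail analysis: one must pair the decay rate $\exp(-c(\log\log\log H)^{1/2})$ of exceptional twists supplied by Theorem \ref{thm:main} with a point-count bound that grows sufficiently slowly in the Mordell--Weil rank, so that the product over exceptional $d$ is $o(H)$.
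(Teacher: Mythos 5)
Your overall architecture matches the paper's: verify the hypotheses of the alternating case (your check that $A[2]$ has no proper nonzero $G_\QQ$-submodules, hence that every admissible twist is favored, and your computation that $\varepsilon$ is the sign character so the family is parity-invariant, are both correct and slightly more explicit than the paper, which defers the parity claim to \cite{Yu16}); apply Theorem \ref{thm:main} to get that the $2^\infty$-Selmer corank is $0$ or $1$ for $100\%$ of twists; deduce pointwise bounds on $\#C_d(\QQ)$; and then handle the averaged statements by combining an exponential-in-rank point count with the moment bounds. Your last paragraph is essentially the paper's argument: it proves $\sum_{d<H,\ \rank(A^d)>1} 7^{\rank(A^d)} = o(H)$ from Theorems \ref{thm:main_mmnt_coarse} and \ref{thm:main} and then invokes Silverman's bound \cite[Theorem 1]{Silv93}, which is exactly the ``crude uniform point-count bound'' growing like $c^{\rank}$ that you need.

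The genuine gap is in the pointwise step for the odd-parity case. Plain Chabauty--Coleman at a prime $p$ of good reduction gives a bound of the shape $\#C_d(\QQ) \le \#C_d(\FFF_p) + 2g - 2$, which is on the order of $p$ and nowhere near $3$; the hyperelliptic involution does not repair this. The conclusion ``rank $\le 1$ implies $\#C_d(\QQ) \le 3$'' (and, in the even case, ``Selmer corank $0$ implies $\#C_d(\QQ)=1$'' without a separate torsion analysis) is precisely the content of Stoll's independence theorem for odd-degree twist families, \cite[Theorem 1.1]{Sto06}, which is what the paper cites at this point. You mention Stoll-type uniform bounds only in the averaging step, where they are not actually what is needed (there one wants a bound valid for \emph{all} ranks, i.e.\ Silverman's $c^{\rank}$ bound, not a Chabauty-regime bound); whereas in the pointwise step, where Stoll's theorem is indispensable, you replace it with an argument that does not yield the stated constants. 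Your torsion discussion in the even case is also only a sketch (odd torsion of twists for $100\%$ of $d$ needs an argument), but this too is subsumed by citing Stoll's theorem, whose bound is in terms of the ($2$-Selmer) rank alone.
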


\begin{proof}
Take $A$ to be the Jacobian of $C_1$, and choose a positive integer $d$ equal to $b$ mod $8N$. The Jacobian of $C_d$ is then identified with the quadratic twist $A^d$ of $A$ associated to $\QQ(\sqrt{d})$. From the above discussion, we see that Theorem \ref{thm:main} applies to the $2^{\infty}$-Selmer groups of the quadratic twists of $A$. We can calculate that this example is in the parity-invariant subcase, so the $2$-Selmer parity of $A^d$ equals the $2$-Selmer parity of $A^b$; see \cite{Yu16} for more details. With this set, the bounds on rational points for $100\%$ of $d$ follow from \cite[Theorem 1.1]{Sto06}. 

From  Theorems \ref{thm:main_mmnt_coarse} and \ref{thm:main}, we also can prove an estimate
\[\sum_{\substack{d < H \\ \text{rank}(A^d/\QQ) > 1}} 7^{\text{ rank}(A^d/\QQ)} = o(H)\]
for $H > 0$. The result on the average of $\# C_d(\QQ)$ then follows from \cite[Theorem 1]{Silv93}.

\end{proof}
A stronger result than this can be proved if the $abc$ conjecture is assumed; see \cite[Corollary 1]{Gran07hyper}.
\end{ex}

\subsection{A general construction for twistable modules}
Suppose $|\FFF| = \ell$. A versatile method for constructing twistable modules is to start with a Galois module $N_0$ over $F$ isomorphic to some finite power of $\QQ_{\ell}/\Z_{\ell}$ as an abelian group and consider the tensor product $N = N_0 \otimes \Zlz$. 

\begin{ex}
\label{ex:rough_av_twists}
Choose $\Zlz$ so $\xi$ has order $\ell$. In the case that $N = A[\ell^{\infty}]$ for an abelian variety $A/F$, we can construct an abelian variety $(A \otimes \Z[\xi])^{\chi}$ over $F$ for any $\chi \in \mathbb{X}_F$. This abelian variety has dimension $(\ell -1 )\dim A$, and its $\ell^{\infty}$-torsion is isomorphic to $(N \otimes \Zlz])^{\chi}$ over $F$. In the case that $\ell = 2$, this variety is just the quadratic twist of $A$ by $\chi$. See e.g. \cite{Mazur07} for background on this construction. 

Assuming $\chi$ is nontrivial, we also have an identity,
\begin{equation}
\label{eq:AKAF}
\text{rank}(A/K)  = \text{rank}(A/F) + \text{rank}\left((A \otimes \Z[\xi])^{\chi}/F\right),
\end{equation}
where $K/F$ is the cyclic extension associated to $\chi$. This motivates the study of the Selmer groups of the twists of $A \otimes \Zlz$.

From Remark \ref{rmk:finite_favored}, we know there we can find finitely submodules $T_1, \dots, T_k$ of $N \otimes \Zlz$ so that every $\chi \in \mathbb{X}_F$ favors some $(N  \otimes Zlz)/T_i$. Giving these modules the usual local conditions associated to an abelian variety, we also have
\[\text{rank}\big((A \otimes \Z[\xi])^{\chi}/F\big) \,\le\, (\ell - 1)\cdot r_{\omega}\big(((N  \otimes \Z[\xi])/T_i)^{\chi}\big).\]
for all $i \le k$. From this relation, we see that Theorem \ref{thm:rank_bnd} follows from Theorem \ref{thm:main_mmnt_coarse}.

The non-self-dual case of Theorem \ref{thm:main} cannot apply to this example, as every polarization of $A$ over $F$ will either lead to self-dual structure for $N[\omega]$ or have a kernel that can be manipulated to give a proper nonzero cofavored submodule of $N[\omega]$. So suppose $N$ has alternating structure. In this case, the Cassels--Tate pairings indexed by even numbers for the twists of $N$ are symmetric rather than alternating for $\ell$ odd; see \cite[Remark 4.14]{Smi22a}. We cannot control the diagonal entries of the matrices corresponding to these pairings, which is why Case \ref{case:alt} included the assumption $\#\FFF =2$.

However, if $N$ otherwise obeys the conditions of the alternating case, our methods apply without adjustment to find the distribution of the Cassels--Tate pairings indexed by odd integers. Just from these pairings, we can show that
\[\text{rank}(A/K) - \text{rank}(A/F)  \le \ell - 1\]
for $100\%$ of the cyclic degree $\ell$ extensions $K$ of $F$.

In contrast to the quadratic extensions, we expect that $\text{rank}(A/K) $ should equal $\text{rank}(A/F)$ for $100\%$ of the cyclic degree $\ell$ extensions $K$ of $F$ for $\ell$ odd. See \cite[Conjecture 1.2]{Chant07} for precise conjectures for elliptic curves over $\QQ$. To prove such a zero density result using our methods would require some control on the diagonal terms for the Cassels--Tate pairings indexed by even numbers.
\end{ex}

\begin{ex}
Choose a nonzero element $m$ in $F$. If $F$ contains $\mu_3$, the $3^{\infty}$ torsion of the elliptic curve $A: y^2 = x^3 + m$ is a twistable module that satisfies the conditions of the alternating case besides $\#\FFF = 2$. Using our methods, we could show that the elliptic curves with Weierstrass form $y^2 = x^3 + md^2$ have $3^{\infty}$-Selmer corank at most $1$ for $100\%$ of integers $d$ in $F$, where we are ordering $d$ by the norm of the radical of the ideal $(d)$.

Unfortunately, our method completely fails if $F$ does not contain $\mu_3$, as an order $3$ automorphism of $A$ is only defined over $\QQ(\mu_3)$. We can adjust our methods to handle non-equivariant automorphisms $\xi$, and we can still control $\omega$-Selmer groups in this non-equivariant case. However, twice the $\omega$-Selmer rank is no longer an upper bound for the rank of the eliptic curve. All we can say is that the rank of $y^2 = x^3 + md^2$  is at most the sum of the $\omega$-Selmer ranks of this curve and the isogenous curve $y^2 = x^3 - 27md^2$. Our trick to focus on favored twists no longer works, and it is unclear how to adjust our methods to say anything interesting about this family.
\end{ex}

\begin{ex}
A simple example of the twisting construction is given by the twistable module $N = \QQ_{\ell}[\xi]/\Z_{\ell}[\xi]$ with a trivial $G_F$ action, where $\xi$ has order $\ell$. In \cite[Example 4.5]{Smi22a}, we saw that we could decorate this twistable module with local conditions so, given a cyclic degree $\ell$ extension $K/F$ and a nontrivial character $\chi:\Gal(K/F) \to \mu_{\ell}$, we had an injection
\[ \left(\Cl^* L\big/ (\Cl^* L)^{\Gal(L/F)}\right)[\ell^{\infty}] \hookrightarrow \Sel\, N^{\chi}.\]
A condition for this map to be an isomorphism is given in \cite[Proposition 7.4]{MS21b}. Using Proposition \ref{prop:hateful_eight}, we find that the set of twists not satisfying this condition in the families considered in \cite[Theorem 1.11]{Smi22a} is negligible.

We now check when the technical conditions apply in this example. The potentially favored, uncofavored, and no extra automorphisms conditions are automatic since $N[\omega]$ is $1$-dimensional. There are also no universal $\omega$-Selmer elements. So this example satisfies the conditions for Case \ref{case:nodual} if it lacks self-dual structure.

If $F$ does not contain $\mu_{\ell}$, then $N[\omega]$ and $N[\omega]^{\vee}$ are nonisomorphic, so there is no self-dual structure. So suppose $F$ contains $\mu_{\ell}$. If $\ell$ is odd, we find that $N[\omega^2]$ is isomorphic to $(\Z/\ell\Z)^2$ as a $G_F$-module, so $N[\omega^2]$ is isomorphic to $N[\omega^2]^{\vee}$ and there is self-dual structure. However, if $\ell$ is even, we see that $N[\omega^2]$ is isomorphic to $\Z/4\Z$, so the isomorphism between $N[\omega]$ and $N[\omega]^{\vee}$ only commutes with the connecting map if $F$ contains $\mu_4$.

This accounts for the technical conditions on \cite[Theorem 1.11]{Smi22a}. The translation from the set $X_{F, \ell, r_{1}'}(H)$ to sets of the form $\mathbb{X}_F(H', (\chi_v)_v)$ is straightforward. For any non-archimedean $v \in \Vplac_0$, we find that the preimage of $W_v(\chi_v)$ in $H^1(G_v, N[\omega])$ has cardinality $\ell$. For archimedean places, we find this preimage has order $\ell$ if $v$ is real and $\chi_v$ ramifies at $v$, and otherwise has order $1$. So, given $\chi$ in $\mathbb{X}_F((\chi_v)_v)$, we find that
\[r_{\omega}(N^{\chi}) - r_{\omega}((N^{\vee})^{\chi}) = -r_2 - r_1'\]
by \eqref{eq:Wiles}, where $r_2$ equals the number of conjugate pairs of complex places, and $r_1'$ equals the number of real places $v$ where $\chi_v$ is trivial. With this calculated, \cite[Theorem 1.11]{Smi22a} follows.

We note that the $\ell^{\infty}$-portion of the class groups of the Galois degree $\ell$ extensions of $F$ have a different distribution if $F$ contains $\mu_{2\ell}$. For example, we can use the methods of this paper to show that the $\omega$-portion of the class group has larger moments in this case. In our estimation, it should be challenging but possible to adjust the methods of this paper and \cite{Smi22a} to compute the distribution of class groups in the presence of extra roots of unity.
\end{ex}

\begin{rmk}
Given a number field $L$ and an integer $m \ge 1$, the $\ell^{\infty}$-torsion of the algebraic $K$-group $K_{2m} \mathscr{O}_L$ of the roots of unity of $L$ can be identified with the Selmer group of the Tate twist $\QQ_{\ell}/\Z_{\ell}(m+1)$ over $L$ endowed with a certain reasonable set of local conditions; see \cite[Theorem 7.2]{Weib05}. Because of this, one should be able to find distributional results for these groups that are analogous to our results for class groups, with Theorem \ref{thm:main} applying whenever $\mu_{2\ell}$ does not lie in $F$. For other work on the heuristics of these $K$-groups, see \cite{Jord17}.
\end{rmk}

\section{Tamagawa ratios and cofavored submodules}
\label{sec:tamagawa}

Proposition \ref{prop:tamagawa} claimed that large Tamagawa ratios lead to large $\omega$-Selmer groups. Our first goal is to prove this result.
\begin{proof}[Proof of Proposition \ref{prop:tamagawa}]
Take $T$ and $\chi$ as in the proposition, and take $\Vplac_1$ to be the set of primes dividing $\mfh_F(\chi)$. The group
\begin{equation}
\label{eq:tama_injectable}
\ker\left(H^1(G_F, T) \to \prod_{v \in \Vplac_0} H^1(G_v, N[\omega]) \times \prod_{v \in \Vplac_1} H^1(G_v, N^{\chi}) \times \prod_{v \not \in \Vplac_0 \cup \Vplac_1} H^1(I_v, N[\omega])\right)
\end{equation}
maps into $\Sel^{\omega} N^{\chi}$ with kernel equal to the image $H^0(G_F, N[\omega]/T)$ under the connecting map. For $v$ in  $\Vplac_1$, we have an exact sequence
\begin{align*}
0 \to & H^0(G_v, T) \to H^0(G_v, N^{\chi}) \to H^0(G_v, N^{\chi}/T)\\
\to & H^1(G_v, T) \to H^1(G_v, N^{\chi}),
\end{align*}
so
\begin{align*}
\# \ker\big(H^1(G_v, T) \to H^1(G_v, N^{\chi})\big) &= \,\frac{\# H^0(G_v, N^{\chi}/T)  \cdot \# H^0(G_v, T)}{\# H^0(G_v, N^{\chi})} \\
&=  \,\frac{\# H^0(G_v, N/T[\omega])  \cdot \# H^0(G_v, T)}{\# H^0(G_v, N[\omega])}.
\end{align*}
We then can use the global Euler-Poincar\'{e} characteristic formula of Tate \cite[Section 8.7.9]{Neuk08} to say that \eqref{eq:tama_injectable} has cardinality at least
\[c \cdot \prod_{v \in \Vplac_1} \frac{\# H^0(G_v, N/T[\omega])}{\# H^0(G_v, N[\omega])},\]
where $c$ is some positive number that does not depend on the choice of $\chi$. This gives the proposition.
\end{proof}

This proposition forced us to restrict our attention to favored twists in Theorem \ref{thm:main}. But this proposition is not the end of the importance of Tamagawa ratios in our work. Given a potentially-favored  twistable module $N$, the average size of $\Sel^{\omega} N^{\chi}$ as $\chi$ varies through $\XXfav_{F, N}$ tends towards a certain sum indexed by the submodules $T$ of $N[\omega]$ that have Tamagawa ratio exactly $1$. In particular, twistable modules with more cofavored submodules tend to have $\omega$-Selmer groups of larger average size.

This led to our restriction to uncofavored modules. But this only gives control over the first moment of the $\omega$-Selmer groups in this family. For $k > 1$, if the twistable module $N^{\oplus k}$ has extra cofavored submodules beyond the obvioius ones, we find that the $k$-moments of the sizes of the $\omega$-Selmer groups in this family are larger than would be seen for a family without extra cofavored modules. The main goal for the rest of this section is to show how one can control the collection of cofavored submodules in these powers.

Throughout this section, we fix $F$, $\Zlz$, and a Galois extension $K$ of $F$ containing $\mu_{\ell^{k_0}}$ where $\ell^{k_0}$ is the order of $\xi$. All twistable modules referenced will be defined with respect to $F$ and $\Zlz$, and the $\omega^2$-torsion of all referenced modules will be acted on trivially by $G_K$. We will take $G_0 = \Gal(K/F(\mu_{\ell^{k_0}}))$.
\begin{defn}
\label{defn:pfav2}
Given a twistable module $N$, we call $N$ \emph{potentially favored} if
\begin{itemize}
\item For any $G_F$-submodule $T$ of $N[\omega]$, we have
\begin{equation}
\label{eq:not_super_unfavored}
\sum_{\sigma \in G_0} \dim H^0(\langle \sigma \rangle,\, N[\omega]) \ge \sum_{\sigma \in G_0} \dim H^0(\langle \sigma \rangle,\, (N/T)[\omega]),
\end{equation}
and
\item There is some function
\[w: G_0 \rightarrow \mathbb{R}\]
so that, for any $G_F$-submodule $T$ of $N[\omega]$, we either have
\[\sum_{\sigma \in G_0} w(\sigma) \cdot \dim H^0(\langle \sigma \rangle,\, N[\omega]) \,>\, \sum_{\sigma \in G_0} w(\sigma) \cdot \dim H^0(\langle \sigma \rangle,\, (N/T)[\omega])\]
or
\[\dim H^0(\langle \sigma \rangle,\, N[\omega]) = \dim H^0(\langle \sigma \rangle,\ (N/T)[\omega]) \quad\text{for all }\, \sigma \in G_0.\]
\end{itemize}
We call the set of functions $w$ that obey the second property the \emph{superlatives} for $N$.
\end{defn}

We will prove that this definition of potentially favored is equivalent to Definition \ref{defn:pfav} in Proposition \ref{prop:pfav}.

\begin{defn}
Given an exact sequence
\[0 \rightarrow \Z^{m} \rightarrow \Z^{n} \rightarrow \Z^{n - m } \rightarrow 0,\]
we get an associated sequence  of twistable modules
\[0 \rightarrow N^{\oplus m } \rightarrow N^{\oplus n} \rightarrow N^{\oplus n - m}\rightarrow 0 \]
by tensoring with $N$. We call any such exact sequence a \emph{basic exact sequence}.
\end{defn}

I would like to thank David Yang for his help in finding the following proposition.
\begin{prop}
Suppose $N_1$ and $N_2$ are twistable modules. Then, if $N_1$ and $N_2$ are potentially favored, and if $w: G_0 \rightarrow \mathbb{R}^{+}$ is a superlative for both, then $N_1 \oplus N_2$ is potentially favored and has $w$ as a superlative.
\end{prop}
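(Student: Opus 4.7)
The plan is to start with an arbitrary $G_F$-submodule $T$ of $(N_1 \oplus N_2)[\omega] = N_1[\omega] \oplus N_2[\omega]$ and to reduce the verification of the two bullets of Definition \ref{defn:pfav2} for $T$ to the corresponding statements for the two $G_F$-stable submodules
$$T_1 \,=\, T \cap N_1[\omega] \quad \text{and} \quad T_2 \,=\, \pi_2(T),$$
where $\pi_2$ denotes the projection to $N_2[\omega]$. The key reduction step would be to write down the short exact sequence
$$0 \,\to\, N_1[\omega]/T_1 \,\to\, (N_1 \oplus N_2)[\omega]/T \,\to\, N_2[\omega]/T_2 \,\to\, 0$$
of $G_F$-modules, where the first map is induced by $x \mapsto (x, 0)$ and the second by $\pi_2$. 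Exactness is a short diagram chase: the first map is injective because $(x, 0) \in T$ exactly when $x \in T_1$, the second map is surjective by the definition of $T_2$, and a coset $(x_1, x_2) + T$ lies in its kernel precisely when $x_2 \in T_2$, in which case it can be represented by a pair of the form $(x_1', 0)$.

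Taking $\langle \sigma \rangle$-fixed points and using left exactness of $H^0$, I would next obtain the pointwise bound
$$\dim H^0\big(\langle \sigma \rangle,\, (N_1 \oplus N_2)[\omega]/T\big) \,\le\, \dim H^0\big(\langle \sigma \rangle,\, N_1[\omega]/T_1\big) + \dim H^0\big(\langle \sigma \rangle,\, N_2[\omega]/T_2\big)$$
for each $\sigma \in G_0$. Summing over $\sigma$ and invoking the first bullet of Definition \ref{defn:pfav2} for $(N_1, T_1)$ and $(N_2, T_2)$ delivers the first bullet for $(N_1 \oplus N_2, T)$ at once.

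For the superlative property, I would multiply the pointwise bound by $w(\sigma)$, sum over $G_0$, and apply the second bullet of the definition for $N_1$ and $N_2$ separately. The argument then splits into cases. If either of $T_1, T_2$ fails to be cofavored in the corresponding summand, the associated $w$-weighted inequality is strict, and positivity of $w$ propagates this to a strict inequality for $N_1 \oplus N_2$. Otherwise, both $T_1$ and $T_2$ are cofavored, giving pointwise equalities $\dim H^0(\langle \sigma \rangle, N_i[\omega]/T_i) = \dim H^0(\langle \sigma \rangle, N_i[\omega])$ for every $\sigma$ and $i$; substituting into the pointwise bound collapses it to
$$\dim H^0\big(\langle \sigma \rangle,\, (N_1 \oplus N_2)[\omega]/T\big) \,\le\, \dim H^0\big(\langle \sigma \rangle,\, (N_1 \oplus N_2)[\omega]\big)$$
for each $\sigma$. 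Either this is an equality for every $\sigma$, realizing the cofavored alternative for $T$, or it is strict for some $\sigma$, and positivity of $w$ turns this into a strict weighted inequality.

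The main obstacle, such as it is, lies in handling $G_F$-submodules $T$ that are \emph{not} of the product form $T_1 \oplus T_2$ --- a submodule can be \emph{diagonal}, graphing one piece of $N_1[\omega]$ against a piece of $N_2[\omega]$. The short exact sequence above is the standard device that routes around this: it exhibits any such $T$ as an extension controlled by the two canonical projections, after which the cofavored dichotomy is inherited term by term from the hypotheses on $N_1$ and $N_2$.
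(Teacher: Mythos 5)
Your overall architecture is the paper's: split $T$ via $T_1 = T \cap N_1[\omega] = \iota^{-1}(T)$ and $T_2 = \pi_2(T)$, get subadditivity of the relevant $H^0$-dimensions from a short exact sequence, and then run exactly the case analysis you describe (strict weighted inequality if some $T_i$ is not cofavored; otherwise pointwise equality for both summands, whence either pointwise equality or a strict weighted inequality for the sum, using positivity of $w$). The paper reaches "potentially favored" by applying the superlative argument to $1 + \epsilon w$ and letting $\epsilon \to 0$, whereas you invoke the first bullet of Definition \ref{defn:pfav2} directly; that difference is immaterial.

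However, there is a genuine gap in which modules your exact sequence relates. Definition \ref{defn:pfav2} is stated in terms of $\dim H^0\big(\langle\sigma\rangle, (N/T)[\omega]\big)$, the $\omega$-torsion of the quotient of the full divisible module $N$ by $T$ --- not $N[\omega]/T$. These differ: they sit in the exact sequence $0 \to N[\omega]/T \to (N/T)[\omega] \to T \to 0$, and their $\langle\sigma\rangle$-invariants generally have different dimensions (indeed, $\dim H^0(\langle\sigma\rangle, (N/T)[\omega])$ can exceed $\dim H^0(\langle\sigma\rangle, N[\omega])$, which is the whole reason the potentially favored condition is nontrivial). Your sequence $0 \to N_1[\omega]/T_1 \to (N_1\oplus N_2)[\omega]/T \to N_2[\omega]/T_2 \to 0$ is exact, but it bounds the wrong quantity in the middle, and the hypotheses on $N_1$ and $N_2$ say nothing about the outer terms $N_i[\omega]/T_i$, so the chain of inequalities cannot be completed. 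The repair is exactly what the paper does: quotient the sequence $0 \to N_1 \to N_1 \oplus N_2 \to N_2 \to 0$ of full modules by $T$ to get $0 \to N_1/T_1 \to (N_1\oplus N_2)/T \to N_2/T_2 \to 0$, and only then apply the left-exact functors $(-)[\omega]$ and $H^0(\langle\sigma\rangle, -)$; this yields
\[\dim H^0\big(\langle\sigma\rangle, ((N_1\oplus N_2)/T)[\omega]\big) \le \dim H^0\big(\langle\sigma\rangle, (N_1/T_1)[\omega]\big) + \dim H^0\big(\langle\sigma\rangle, (N_2/T_2)[\omega]\big),\]
with the correct terms, after which your case analysis goes through verbatim. (Note the resulting torsion sequence is only left exact, which suffices; your claimed surjectivity on the right is a symptom of working with the wrong objects.)
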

\begin{proof}
Take $N_1$, $N_2$, and $w$ as in the statement of the proposition, and take $T$ to be a submodule of $N_1 \oplus N_2$. Taking the quotient of the exact sequence
\[0 \rightarrow N_1 \xrightarrow{\iota} N_1 \oplus N_2 \xrightarrow{\pi} N_2 \rightarrow 0\]
by $T$ gives the exact sequence
\[0 \rightarrow N_1/\iota^{-1}(T) \rightarrow (N_1 \oplus N_2)/T \rightarrow N_2/\pi(T) \rightarrow 0.\]
Here, $\iota$ and $\pi$ are the obvious injection and projection.

The long exact sequence associated to this short exact sequence gives
\begin{align*}
&\dim H^0(\langle \sigma \rangle,\, (N_1 \oplus N_2)/T[\omega] )\\
&\qquad \le \dim H^0(\langle \sigma \rangle,\, N_1/\iota^{-1}(T)[\omega]) + \dim H^0(\langle \sigma \rangle,\, N_2/\pi(T)[\omega])
\end{align*}
for all $\sigma$ in $G_0$. Then
\begin{align*}
 &\sum_{\sigma \in G_0} w(\sigma) \cdot \dim H^0(\langle \sigma \rangle,\, (N_1 \oplus N_2)/T[\omega]) \\
&\,\,\,\le  \sum_{\sigma \in G_0} w(\sigma) \cdot \dim H^0(\langle \sigma \rangle,\, N_1/\iota^{-1}(T)[\omega])\, +\,\sum_{\sigma \in G_0} w(\sigma) \cdot \dim H^0(\langle \sigma \rangle,\, N_2/\pi(T)[\omega])\\
 &\,\,\,\le  \sum_{\sigma \in G_0} w(\sigma) \cdot \dim H^0(\langle \sigma \rangle,\, N_1[\omega]) \,+\,\sum_{\sigma \in G_0} w(\sigma) \cdot \dim H^0(\langle \sigma \rangle,\, N_2[\omega])\\
 &\,\,\,=  \sum_{\sigma \in G_0} w(\sigma) \cdot \dim H^0(\langle \sigma \rangle,\, (N_1 \oplus N_2)[\omega]).
\end{align*}
If the first and last entries in this sequence are equal, the positivity of $w$ forces
\[\dim H^0(\langle \sigma \rangle,\, (N_1 \oplus N_2)/T[\omega])  = \dim H^0(\langle \sigma \rangle,\, (N_1 \oplus N_2)[\omega]) \quad\text{for }\, \sigma \in G_0.\]
So $w$ is is a superlative for $N_1 \oplus N_2$. Applying this argument for the superlative $1 + \epsilon w$ with $\epsilon$ approaching zero shows that $N_1 \oplus N_2$ is potentially favored, giving the proposition.
\end{proof}

\begin{rmk}
\label{rmk:not_quite_cofavored}
Given a twistable module $N$, a nonnegative integer $a$, and a $G_F$-submodule  $T$ of $N^{\oplus a}[\omega]$, we see that the logic of the above proposition gives that there is some sequence of $G_F$-submodules $T_1, \dots, T_j$ of $N[\omega]$ and some sequence of nonnegative integers $a_1, \dots, a_j$ with sum $a$ so that
\[\dim H^0(\langle \sigma \rangle, N^{\oplus a}/T[\omega])\, \le \,\sum_{i \le j} a_i \cdot \dim H^0(\langle \sigma \rangle, N/T_i[\omega])\quad\text{for all }\, \sigma \in G_0.\]
This  gives a useful upper bound for the function $\sigma \mapsto \dim H^0(\langle \sigma \rangle,\, N^{\oplus a}/T[\omega])$ in the case that $T$ is not cofavored.
\end{rmk}

Given $\sigma \in G_0$ and twistable modules $N_1$ and $N_2$, we can consider the connecting maps $\delta_{\sigma, N_1}$ and $\delta_{\sigma, N_2}$ as defined in \eqref{eq:connecting_map}. Given a $G_F$-equivariant homomorphism $\phi: N_1[\omega] \to N_2[\omega]$, we say that $\phi$ commutes with the connecting maps if
\begin{equation}
\label{eq:comm_conn}
\phi \circ \delta_{\sigma, N_1} = \delta_{\sigma, N_2} \circ \phi
\end{equation}
for all $\sigma$ in $G_0$.

\begin{prop}
\label{prop:2sum_cofav}
Take $N_1$ and $N_2$ to be potentially favored modules sharing some superlative $w$. Suppose the only cofavored submodules of $N_i$ are $0$ and $N_i[\omega]$ for $i = 1, 2$. Then the cofavored submodules of $N_1 \oplus N_2$ are $0$, $0 \oplus N_2[\omega]$, $(N_1 \oplus N_2)[\omega]$, and those modules of the form
\[\left\{(x, \beta(x))\,:\,\, x \in N_1[\omega]\right\},\]
where $\beta: N_1[\omega] \rightarrow N_2[\omega]$ is an equivariant homomorphism commuting with the connecting maps.
\end{prop}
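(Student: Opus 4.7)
The plan is to first use the superlative $w$ to force the projections and intersections of $T$ to be cofavored submodules of $N_1, N_2$ individually, and then to classify $T$ by a short case analysis followed by a direct computation in the graph case.

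Set $M = N_1 \oplus N_2$ and let $T_1 = \pi_1(T)$, $T_2 = \pi_2(T)$ be the projections of $T$ to each factor, and $T_1' = T \cap (N_1[\omega] \oplus 0)$, $T_2' = T \cap (0 \oplus N_2[\omega])$ be the intersections. Projecting $M/T$ onto $N_2/T_2$ (respectively $N_1/T_1$) and identifying kernels yields the two $G_F$-equivariant exact sequences
\[0 \to N_1/T_1' \to M/T \to N_2/T_2 \to 0\quad\text{and}\quad 0 \to N_2/T_2' \to M/T \to N_1/T_1 \to 0.\]
Taking $\omega$-torsion and then $\langle\sigma\rangle$-invariants produces, for each $\sigma \in G_0$, the inequality
\[\dim H^0\big(\langle\sigma\rangle,\, (M/T)[\omega]\big) \,\le\, \dim H^0\big(\langle\sigma\rangle,\, (N_1/T_1')[\omega]\big) \,+\, \dim H^0\big(\langle\sigma\rangle,\, (N_2/T_2)[\omega]\big),\]
together with the analogous bound for the second sequence.

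Weighting by the common superlative $w(\sigma)$ and summing over $\sigma \in G_0$, the cofavored hypothesis on $T$ forces the LHS total to equal $\sum_\sigma w(\sigma)\dim H^0(\langle\sigma\rangle, M[\omega])$, while the potentially favored hypothesis on each $N_i$ bounds the RHS by the same quantity. Equality throughout, combined with the superlative property applied separately to $N_1$ and $N_2$, forces $T_1'$ and $T_2$ to be cofavored in the respective factor, and the symmetric sequence does the same for $T_1, T_2'$. By hypothesis each of $T_1, T_1', T_2, T_2'$ is therefore $0$ or $N_i[\omega]$. The identities $\dim T = \dim T_1' + \dim T_2 = \dim T_2' + \dim T_1$ then reduce the possibilities to $T \in \{0,\, 0 \oplus N_2[\omega],\, N_1[\omega] \oplus 0,\, (N_1 \oplus N_2)[\omega]\}$ together with one remaining case: $T_1'=T_2'=0$, $T_1 = N_1[\omega]$, $T_2 = N_2[\omega]$ (forcing $\dim N_1[\omega] = \dim N_2[\omega]$), in which both projections restrict to isomorphisms on $T$, so that $T$ is the graph of the equivariant isomorphism $\beta = \pi_2 \circ (\pi_1|_T)^{-1}$. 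Since $N_1[\omega] \oplus 0$ is the graph of $\beta = 0$, the last two items merge into the family of graphs of equivariant $\beta$.

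It remains to see which such graphs are cofavored. For $T = \{(x, \beta(x)) : x \in N_1[\omega]\}$, I parameterize $(M/T)[\omega]$ using the $\omega$-divisibility of $M$: an element is represented by $(m_1, m_2) \in N_1[\omega^2] \oplus N_2[\omega^2]$ satisfying $\omega m_2 = \beta(\omega m_1)$, taken modulo $T$. The map $(m_1, m_2) + T \mapsto \omega m_1$ fits these into a short exact sequence $0 \to M[\omega]/T \to (M/T)[\omega] \to N_1[\omega] \to 0$. Since $\sigma \in G_0$ acts trivially on $M[\omega]$, unwinding the definition of the connecting maps gives $(\sigma - 1)(m_1, m_2) = \big(\delta_{\sigma, N_1}(\omega m_1),\, \delta_{\sigma, N_2}(\omega m_2)\big)$, and this lies in $T$ if and only if $\beta(\delta_{\sigma, N_1}(\omega m_1)) = \delta_{\sigma, N_2}(\beta(\omega m_1))$. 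Since $\omega m_1$ ranges over all of $N_1[\omega]$, and since cofavored for $T$ amounts to $H^0(\langle\sigma\rangle, (M/T)[\omega])$ having the full dimension $\dim M[\omega]$, this equivalence yields the condition $\beta \circ \delta_{\sigma, N_1} = \delta_{\sigma, N_2} \circ \beta$ for every $\sigma \in G_0$, i.e., that $\beta$ commutes with the connecting maps.

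The main obstacle is the last paragraph's direct computation: the quotient $(M/T)[\omega]$ carries a nontrivial $\sigma$-action even though $M[\omega]$ does not, and one has to track how this action factors through the pair of connecting maps to extract the commutation condition on $\beta$. The earlier reduction via the superlative is routine once one recognizes that it should be applied to both natural exact sequences of quotients simultaneously.
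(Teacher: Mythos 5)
Your first two paragraphs take essentially the paper's route: quotient the split exact sequence by $T$, apply the weighted inequality from the preceding proposition, and use the shared superlative to force the relevant projections and intersections to be cofavored, then classify by cases. (The paper gets by with one of your two sequences, using $T \cap (0 \oplus N_2)$ and $\pi_1(T)$; your symmetric version with all four of $T_1, T_1', T_2, T_2'$ is fine. As in the paper, the weighting step silently uses that the superlative is positive, which is the hypothesis of the proposition being invoked.)

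The last paragraph, however, rests on a false premise: an element $\sigma$ of $G_0$ does \emph{not} act trivially on $M[\omega]$ in general --- only $G_K$ is assumed to act trivially on the $\omega^2$-torsion, and $G_0 = \Gal(K/F(\mu_{\ell^{k_0}}))$ typically acts nontrivially on $N_i[\omega]$ (this is the entire reason the quantities $\dim H^0(\langle \sigma\rangle, N[\omega])$ carry any information). Two steps break as a consequence. First, cofavoredness is the condition $\dim H^0(\langle\sigma\rangle, (M/T)[\omega]) = \dim H^0(\langle\sigma\rangle, M[\omega])$, not equality with $\dim M[\omega]$. Second, you cannot let $\omega m_1$ ``range over all of $N_1[\omega]$'': the maps $\delta_{\sigma, N_i}$ are only defined on the $\sigma$-invariants, and commuting with the connecting maps is an identity of maps $H^0(\langle\sigma\rangle,\cdot) \to H^1(\langle\sigma\rangle,\cdot)$, so the comparison must be made modulo the coboundaries $(\sigma-1)N_2[\omega]$ rather than as an identity of elements. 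The repaired computation is exactly the paper's: from $0 \to N_2[\omega] \to (M/T)[\omega] \to N_1[\omega] \to 0$, cofavoredness of the graph is equivalent to the vanishing of the connecting homomorphism $H^0(\langle\sigma\rangle, N_1[\omega]) \to H^1(\langle\sigma\rangle, N_2[\omega])$, which sends a $\sigma$-fixed $x$ to the class of $\beta\big((\sigma-1)\tfrac{1}{\omega}x\big) - (\sigma-1)\tfrac{1}{\omega}\beta(x)$; its vanishing in $H^1$ for all such $x$ is precisely the statement that $\beta$ commutes with the connecting maps. Your conclusion is correct, but the derivation must be restricted to $\sigma$-fixed elements and carried out at the level of cohomology classes.
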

\begin{proof}
Choose a cofavored submodule $T$ of $N_1 \oplus N_2$, and consider the standard exact sequence
\[0 \rightarrow N_2 \xrightarrow{\iota} N_1 \oplus N_2 \xrightarrow{\pi} N_1 \rightarrow 0.\]
From the argument of the previous proposition, we see that $\iota^{-1}(T)$ is cofavored in $N_2$ and $\pi(T)$ is cofavored in $N_1$. If $\iota^{-1}(T)$ is nonzero, it is $N_2[\omega]$, and we find that $T$ is either $0 \oplus N_2[\omega]$ or $(N_1 \oplus N_2)[\omega]$. So we assume it is zero.

If $\pi(T)$ is also zero, $T$ is zero, so we can assume $\pi(T)$ is $N_1[\omega]$. Then $T$ is the graph of an equivariant homomorphism $\beta: N_1[\omega] \rightarrow N_2[\omega]$.

Following the logic of the previous proposition we find that, since $T$ is cofavored, the connecting map
\begin{equation}
\label{eq:N1N2con}
H^0\big(\langle \sigma \rangle, \, N_1[\omega]\big) \royarrow H^1\big(\langle \sigma \rangle, \, N_2[\omega]\big)
\end{equation}
corresponding to the sequence
\[0 \rightarrow N_2[\omega] \rightarrow ((N_1 \oplus N_2)/T)[\omega] \rightarrow N_1[\omega] \rightarrow 0\]
is zero for $\sigma$ in $G_0$. More explicitly, for $x \in H^0(\langle \sigma \rangle, N_1[\omega])$, this means the cocycle on $\langle \sigma \rangle$ defined by
\[\sigma \mapsto \beta\big((\sigma - 1) \tfrac{1}{\omega} x\big) - (\sigma - 1) \tfrac{1}{\omega} \beta(x)\]
is a coboundary. But this is equivalent to $\beta$ commuting with the connecting maps.

Conversely, we find that the graph of any $G_F$-homomorphism $\beta$ that commutes with the connecting maps is cofavored in $N_1 \oplus N_2$, and we have the proposition.
\end{proof}
\begin{rmk}
\label{rmk:Wedd}
Given $N_1$ and $N_2$ satisfying the conditions of this proposition, we find that the nonzero homomorphisms from $N_1[\omega]$ to $N_2[\omega]$ that commute with the connecting maps are all isomorphisms, as the kernel and images of these maps are cofavored. In particular, the set of $G_F$ endomorphisms of $N_1[\omega]$ that commute with connecting maps form a finite division ring and hence a finite field of characteristic $\ell$ by Wedderburn's theorem. The set of $G_F$ homomorphisms from $N_1[\omega]$ to $N_2[\omega]$ that commute with connecting maps is then either $0$ or a one-dimensional vector space over this field.
\end{rmk}

Under some technical assumptions, we can now classify the cofavored submodules of direct sums of potentially favored modules.
\begin{prop}
\label{prop:cofav_form}
Take $N_1, \dots, N_r$ to be potentially favored modules sharing some superlative. Suppose we have the following:
\begin{itemize}
\item For $i \le r$, the only cofavored submodules of $N_i$ are $0$ and $N_i[\omega]$;
\item For $i \le r$, the only $G_F$-automorphisms of $N_i[\omega]$ that commute with the connecting maps are those in $\FFF_{\ell}^{\times}$; and
\item For $i, j \le r$ with $i \ne j$, there is no $G_F$-isomorphism of $N_i[\omega]$ and $N_j[\omega]$ that commutes with the connecting maps.
\end{itemize}
Then, for $a_1, \dots, a_r$ nonnegative integers, the cofavored submodules of
\begin{equation}
\label{eq:N1Nr}
N_1^{\oplus a_1} \oplus \dots \oplus N_r^{\oplus a_r}
\end{equation}
are those of the form
\begin{equation}
\label{eq:A1N1ArNr}
A_1 \otimes N_1[\omega] \oplus \dots \oplus A_r \otimes N_r[\omega],
\end{equation}
where $A_i$ is a vector subspace of $\FFF_{\ell}^{a_i}$ for $i \le r$.
\end{prop}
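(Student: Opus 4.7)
The plan is to induct on $a := a_1 + \dots + a_r$. For the easy direction, I would just observe that the quotient of $M := \bigoplus_i N_i^{\oplus a_i}$ by any submodule of the form \eqref{eq:A1N1ArNr} is itself a direct sum of copies of the $N_i$'s, so $\dim H^0(\langle \sigma \rangle, -)$ is additive across it and the cofavored condition reduces summand by summand to the cofavored condition for $0$ and $N_i[\omega]$ inside $N_i$. The base case $a = 1$ is exactly the first hypothesis.

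For the inductive step, assume $a \ge 2$ and, without loss of generality, $a_1 \ge 1$. I would write $M = N_1 \oplus M'$ with $M' = N_1^{\oplus (a_1 - 1)} \oplus N_2^{\oplus a_2} \oplus \dots \oplus N_r^{\oplus a_r}$, and take $T$ cofavored in $M[\omega]$. Setting $T' = T \cap N_1[\omega]$ and $\bar{T} = \pi(T) \subseteq M'[\omega]$, where $\pi \colon M \to M'$ is the projection, I would rerun the superlative argument from the proof of Proposition \ref{prop:2sum_cofav} --- which applies in this iterated setting because a common superlative for the $N_i$'s is automatically a superlative for $M'$ by the earlier direct-sum proposition --- to obtain that $T'$ is cofavored in $N_1$ and $\bar{T}$ is cofavored in $M'$. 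By the first hypothesis $T' \in \{0,\, N_1[\omega]\}$, and by the induction hypothesis $\bar{T} = \bigoplus_i A_i' \otimes N_i[\omega]$ for some $\FFF_\ell$-subspaces $A_i' \subseteq \FFF_\ell^{a_i'}$, where $a_1' = a_1 - 1$ and $a_i' = a_i$ for $i > 1$.

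If $T' = N_1[\omega]$ then $T$ is the full preimage of $\bar{T}$ in $M[\omega]$, so $T = N_1[\omega] \oplus \bar{T}$, which has the desired form with $A_1 = \FFF_\ell \oplus A_1' \subseteq \FFF_\ell^{a_1}$ and $A_i = A_i'$ for $i > 1$. If instead $T' = 0$, then $\pi$ restricts to an isomorphism $T \cong \bar{T}$, so $T$ is the graph of a $G_F$-equivariant homomorphism $\beta \colon \bar{T} \to N_1[\omega]$; the connecting-map computation inside the proof of Proposition \ref{prop:2sum_cofav}, transported to our setting, forces $\beta$ to commute with the connecting maps. The third hypothesis then kills the restriction of $\beta$ to each $A_i' \otimes N_i[\omega]$ with $i > 1$, while the second hypothesis together with Remark \ref{rmk:Wedd} identifies the restriction to $A_1' \otimes N_1[\omega]$ with a map of the form $a \otimes v \mapsto \phi(a) \cdot v$ for some $\FFF_\ell$-linear $\phi \colon A_1' \to \FFF_\ell$. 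Taking $A_1 = \{(\phi(a),\, a) : a \in A_1'\} \subseteq \FFF_\ell \oplus \FFF_\ell^{a_1 - 1} = \FFF_\ell^{a_1}$ and $A_i = A_i'$ for $i > 1$ then writes $T$ in the form \eqref{eq:A1N1ArNr}.

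The chief obstacle is to verify that the cofavored/graph machinery of Proposition \ref{prop:2sum_cofav} genuinely transfers to the iterated setting here: one has to confirm that a common superlative for the $N_i$'s remains a superlative for each intermediate direct sum encountered, and that the connecting-map characterization of the ``graph'' case still bites when the graph lands inside a sum of several distinct isotypic components. Both are straightforward given the earlier direct-sum proposition and the pairwise $\Hom$ hypotheses between the $N_i[\omega]$.
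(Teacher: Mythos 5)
Your strategy is sound but genuinely different from the paper's. The paper's inductive step for $\sum_i a_i \ge 3$ never analyzes a graph: it observes that if $T$ escapes the induction hypothesis for \emph{every} basic exact sequence (i.e.\ $\iota^{-1}(T)=0$ and $\pi(T)$ is everything in all of them), then $\dim T$ would have to equal $\sum_i (a_i-b_i)\dim N_i[\omega]$ for every admissible $(b_i)$, which is impossible; so the graph case, and with it the second and third hypotheses, are only ever invoked in the base case $\sum_i a_i = 2$ via Proposition \ref{prop:2sum_cofav}. You instead fix one splitting $N_1\oplus M'$ and meet the graph case at every stage, re-using those hypotheses each time. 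This works, but the step you flag as the ``chief obstacle'' is a real gap as written: when $\bar{T}=\pi(T)$ is a \emph{proper} submodule $\bigoplus_i A_i'\otimes N_i[\omega]$ of $M'[\omega]$, cofavoredness of $T$ in $M$ controls the connecting map attached to $0\to N_1\to M/T\to M'/\bar{T}\to 0$, whose $\omega$-torsion sequence has cokernel $(M'/\bar{T})[\omega]$ rather than $\bar{T}$, so it does not directly say that $\beta$ commutes with the connecting maps of $\bar{T}$. The repair is to first pass to the direct summand $M''=N_1\oplus\bigoplus_i A_i'\otimes N_i$ containing $T$: comparing $\dim H^0$ along $0\to (M''/T)[\omega]\to (M/T)[\omega]\to (M/M'')[\omega]\to 0$ and feeding the resulting inequality into the superlative property shows $T$ is still cofavored in $M''$. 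If $M''\ne M$, the induction hypothesis applied to $M''$ finishes immediately; if $M''=M$, then $\bar{T}=M'[\omega]$, the computation of Proposition \ref{prop:2sum_cofav} transports verbatim, and your appeal to Remark \ref{rmk:Wedd} together with the second and third hypotheses correctly pins down $\beta$ componentwise. With that insertion your argument is complete.
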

\begin{proof}
We work by induction on $ \sum_i a_i$. From Proposition \ref{prop:2sum_cofav}, the result holds for this sum at most two. Now take $a \ge 3$, and suppose every cofavored submodule of \eqref{eq:N1Nr} takes the form \eqref{eq:A1N1ArNr} for $\sum_i a_i < a$. Then suppose $\sum_i a_i = a$, and take $T$ to be a cofavored submodule of \eqref{eq:N1Nr}.

Choose any direct sum of basic exact sequences
\begin{equation}
\label{eq:biaibiai}
0 \rightarrow \bigoplus_i  N_i^{\oplus b_i} \xrightarrow{\,\,\iota\,\,} \bigoplus_i N_i^{\oplus a_i} \xrightarrow{\,\,\pi\,\,} \bigoplus_i N_i^{\oplus a_i - b_i} \rightarrow  0.
\end{equation}
with $\sum_i a_i > \sum_i b_i > 0$. We claim that we can assume that $\iota^{-1}(T)$ is zero and that
\begin{equation}
\label{eq:Niai_surj}
\pi(T) = \bigoplus_i N_i^{\oplus a_i - b_i}[\omega].
\end{equation}
First, suppose $\iota^{-1}(T)$ is nonzero. It is cofavored, and hence can be written in the form \eqref{eq:A1N1ArNr} by the induction step. There is then some other direct sum of basic exact sequences
\begin{equation}
\label{eq:ciaiciai}
0 \rightarrow \bigoplus_i  N_i^{\oplus c_i} \xrightarrow{\,\,\iota_c\,\,} \bigoplus_i N_i^{\oplus a_i} \xrightarrow{\,\,\pi_c\,\,} \bigoplus_i N_i^{\oplus a_i - c_i} \rightarrow  0
\end{equation}
with $\sum_i c_i > 0$ so that
\[\bigoplus_i N_i^{\oplus c_i}[\omega] =  \iota^{-1}_c(T).\]
The module $\pi_c(T)$ is cofavored, and hence expressible in the form \eqref{eq:A1N1ArNr}. $T$ equals its preimage in $\bigoplus_i N_i^{\oplus a_i}[\omega]$, and is hence also expressible in this form.

Now suppose \eqref{eq:Niai_surj} fails to hold. The module $\pi(T)$ is cofavored, and hence expressible in the form \eqref{eq:A1N1ArNr}. The same holds for the preimage $\pi^{-1}(\pi(T))[\omega]$. We now choose \eqref{eq:ciaiciai} so that
\[\iota_c \left(\bigoplus_i N_i^{\oplus c_i}\right) =\pi^{-1}(\pi(T))[\omega] \supseteq T.\]
Because \eqref{eq:Niai_surj} does not hold, we have that $\sum_i a_i > \sum_i c_i$, so $\iota_c^{-1}(T)$ is expressible in the form \eqref{eq:A1N1ArNr} by the induction step. But $T = \iota_c(\iota_c^{-1}(T))$, and hence is also expressible in this form.

We have now reduced to the case where, given any direct sum of basic exact sequences \eqref{eq:biaibiai} satisfying $\sum_i a_i  > \sum_i b_i > 0$, we have $\iota^{-1}(T) = 0$ and \eqref{eq:Niai_surj}. But in this case, we must have
\begin{align*}
\dim T = \sum_i &(b_i - a_i) \cdot \dim N_i[\omega]\\
&\text{for all }\,b_1, \dots, b_r \ge 0 \,\,\text{so that}\,\, \sum_i a_i  > \sum_i b_i > 0.
\end{align*}
Since $\sum_i a_i$ is at least $3$ and the $\dim N_i$ are all positive, it is easy to prove that this statement cannot hold. We have thus proved the proposition.
\end{proof}

\section{Local conditions for $\omega$-Selmer groups in grids}
\label{sec:character}

In \cite[Section 4]{Smi22a}, we showed that the $\omega$-Selmer group for a given twist $N^{\chi}$ could be calculated in terms of the classes, spins, and symbols of the primes dividing $\mfh(\chi)$. To prove our distributional results about $\omega$-Selmer groups, we need to make this result more explicit. This starts with a reconsideration of symbols.

\subsection{Involution spins}
Our terminology will come from Section 3 of \cite{Smi22a}, and will in particular involve the alternative symbols defined in Section 3.1.

Most of the identities we need for the symbols $\symb{\ovp}{\ovq}$ appear in \cite[Proposition 3.15]{Smi22a}. These are easily adapted to alternative symbols. In particular, given $\tau \in G_F$ and primes $\ovp, \ovq$ of $\ovQQ$ not over $\Vplac_0$ satisfying $\tau \ovp \cap K \ne \ovq \cap K$, we can determine the quotient
\begin{equation}
\label{eq:tau_rec}
\symb{\tau\ovp}{\ovq}' \big/ \tau\left(\symb{\tau^{-1}\ovq}{\ovp}' \right)
\end{equation}
just from $\tau$, the class of $\ovp$, and the class of $\ovq$.

There is a subtle supplement to this result that turns out to be important.
\begin{prop}
\label{prop:involution_spin}
Choose any unacked starting tuple $(K/F, \Vplac_0, e_0)$, and choose a ramification section for the starting tuple. Take $\ovp$ and $\ovp_0$ to be primes in the same class, and define $E = E(\ovp)$ to be the minimal extension of $F$ so $K/E$ is inert at $E \cap \ovp$ as above. Choose $\tau$ in $G_F$ outside $G_E$ so that
\[G_E \cdot \tau \cdot G_E \,=\, G_E \cdot \tau^{-1}\cdot G_E.\]
Then
\[\symb{\tau \ovp}{\ovp}' \big/ \symb{\tau \ovp_0}{\ovp_0}' \in \left((\mu_{e_0})_{G_{E + \tau E}}\right)^2.\]
\end{prop}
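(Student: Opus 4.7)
The plan is to combine the reciprocity-type identity for alternative symbols recorded in \cite[Proposition 3.15]{Smi22a} with the hypothesis $G_E \tau G_E = G_E \tau^{-1} G_E$. Setting $\ovq = \ovp$ in the formula governing \eqref{eq:tau_rec}, the quotient
\[\symb{\tau\ovp}{\ovp}' \Big/ \tau\!\left(\symb{\tau^{-1}\ovp}{\ovp}'\right)\]
depends only on $\tau$ and the class of $\ovp$; since $\ovp$ and $\ovp_0$ share a class, this factor drops out when we take the ratio and yields
\[\frac{\symb{\tau\ovp}{\ovp}'}{\symb{\tau\ovp_0}{\ovp_0}'} \;=\; \tau\!\left(\frac{\symb{\tau^{-1}\ovp}{\ovp}'}{\symb{\tau^{-1}\ovp_0}{\ovp_0}'}\right).\]
This is the first key reduction: the problem is symmetric under $\tau \leftrightarrow \tau^{-1}$ up to the outer $\tau$-action on $\mu_{e_0}$.

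Next, I invoke the involution hypothesis to pick $g_1, g_2 \in G_E$ with $\tau^{-1} = g_1 \tau g_2$, and use the $G_F$-equivariance of the alternative symbol from Section 3.1 of \cite{Smi22a} to rewrite $\symb{\tau^{-1}\ovp}{\ovp}'$ and $\symb{\tau^{-1}\ovp_0}{\ovp_0}'$ in terms of $\symb{\tau\ovp}{\ovp}'$ and $\symb{\tau\ovp_0}{\ovp_0}'$. The change introduced by conjugating by $g_2$ moves the base prime within a single $G_E$-orbit (hence preserving the class, since $E = E(\ovp) = E(\ovp_0)$), while premultiplying by $g_1$ alters the symbol by an element of $G_E$ acting on $\mu_{e_0}$. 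Taking the ratio over $\ovp$ and $\ovp_0$ cancels the class-dependent parts, and reducing modulo $(\mu_{e_0})_{G_{E+\tau E}}$ collapses both the $G_E$-action and the $\tau G_E\tau^{-1}$-action coming from the outer $\tau$ in the previous display. This should yield
\[\frac{\symb{\tau\ovp}{\ovp}'}{\symb{\tau\ovp_0}{\ovp_0}'} \;\equiv\; \tau\!\left(\frac{\symb{\tau\ovp}{\ovp}'}{\symb{\tau\ovp_0}{\ovp_0}'}\right) \pmod{(\mu_{e_0})_{G_{E+\tau E}}}.\]

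Writing $y$ for the common value, the remaining task is to promote the relation $\tau(y) = y$ in $(\mu_{e_0})_{G_{E+\tau E}}$ to the statement that $y$ is a square. The natural strategy is to exhibit an explicit "half": one isolates an auxiliary symbol $z$ attached to $\tau$, $\ovp$, $\ovp_0$ (for instance, a partial symbol computed along one leg of the coset decomposition of $G_E\tau G_E$) and checks using the involution hypothesis that $y = z \cdot \tau(z)$ in the coinvariants; combined with the identity $\tau(y) = y$, one then rearranges to force $\tau(z) \equiv z$ in $(\mu_{e_0})_{G_{E+\tau E}}$, giving $y \equiv z^2$. The main obstacle I foresee is precisely this last step: verifying that the involution descent lands in the square subgroup rather than merely in the $\tau$-fixed part of the coinvariant group. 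This requires careful bookkeeping of the symbol normalizations (in particular the ramification section data and the rule for $E(\ovp)$) so that the $G_E$-ambiguities in the construction of $z$ can genuinely be absorbed when passing to $G_{E+\tau E}$-coinvariants.
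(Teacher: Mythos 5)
Your proposal has a genuine gap, and it sits exactly where you say you foresee an obstacle. The reciprocity manipulation in your first two steps is plausible and does yield something like $\tau(y) \equiv y$ in $\left(\mu_{e_0}\right)_{G_{E + \tau E}}$ for the ratio $y = \symb{\tau\ovp}{\ovp}'/\symb{\tau\ovp_0}{\ovp_0}'$. But being fixed by $\tau$ in a finite cyclic coinvariant group of even order is strictly weaker than being a square: if $\tau$ acts trivially on the coinvariants (which can certainly happen), every element is $\tau$-fixed while only half are squares. Your proposed remedy --- exhibiting a ``half'' $z$ with $y = z \cdot \tau(z)$ --- is precisely the content of the proposition, not a routine verification, and you do not construct $z$ or indicate where such a factorization of the symbol would come from. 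This is not bookkeeping of normalizations; it is the arithmetic heart of the statement, and symbols of the shape $\symb{\tau\ovp}{\ovp}'$ (involution spins) are exactly the quantities the paper flags as resistant to elementary manipulation.

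The paper's proof takes a different and more structural route. It first reduces to $e_0 = 2$ (for odd $e_0$ the claim is vacuous since squaring is surjective), then uses the hypothesis $G_E \tau G_E = G_E \tau^{-1} G_E$ not to set up a reciprocity symmetry but to replace $\tau$ by $\sigma^k\tau$ (with $\sigma = \FrobF{F}{\ovp}$) so that $\tau^2 \in G_{E + \tau E}$, making $\tau$ an honest involution on $K' = E + \tau E$. This reduces the computation to a quadratic extension $K'/F'$, where the claim follows from the theory of theta groups, citing \cite[Proposition 8.1]{MS21b}. That external input is what actually produces the square; your argument has no substitute for it. To repair your proof you would need to either import that theta-group result or reprove it, at which point the reciprocity preamble becomes unnecessary.
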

\begin{proof}
We first note that it suffices to consider the case where $e_0 = 2$. We also claim that we can reduce to the case where $K/F$ is a quadratic extension. To prove this, take $\sigma = \FrobF{F}{\ovp}$, and find integers $j, k$ so we have
 \[\sigma^k \tau = \tau^{-1} \sigma^j\,\text{ in }\, \Gal(K/F).\]
By replacing $\tau$ with $\sigma^k \tau$, we may as well assume that
\[\tau^2 = \sigma^{j+k}.\]
Then $\sigma^{j + k}$ is in both $G_E$ and $G_{\tau E}$, but $\tau$ is in neither. Taking $K' = E + \tau E$, we see $\tau$ is a nontrivial involution on $K'$, and we take $F'$ to be the subfield fixed by this involution. We can then calculate $\symb{\tau\ovp}{\ovp}'$ with respect to the tuple $(K/F, \Vplac_0, 2)$ by calculating it with respect to the tuple $(K'/F', \Vplac_0', 2)$, where $\Vplac_0'$ is the set of places of $F'$ over $\Vplac_0$.

Having made these reductions, the result follows from the theory of theta groups \cite[Proposition 8.1]{MS21b}.
\end{proof}

\subsection{Explicit local conditions}
\label{ssec:explicit_local}
Take $N$ to be a twistable module defined with respect to $F$ and $\FFF$, choose $(K/F, \Vplac_0)$ unpacking $N$, and choose an associated ramification section $\mfB$. Take $X = \prod_{s \in S} X_s$ to be a grid of ideals in the sense of \cite[Definition 4.7]{Smi22a}. In particular, we assume the primes in $X_s$ share a class $\class{X_s}$. Taking $\Gal(K(\Vplac_0)/F)/\sim$ to be the set of equivalence classes considered in \cite[Definition 3.6]{Smi22a}, we choose $\sigma_s$ in $\Gal(K(\Vplac_0)/F)$ so $\FrobF{F}{\ovp}$ lies in the class of $\sigma_s$ in $\Gal(K(\Vplac_0)/F)/\sim$ for every $\ovp$ in $X_s$. We assume that $\sigma_s$ acts trivially on $\mu_{\ell^{k_0}}$, where $\ell^{k_0}$ is the order of $\xi$.

Fix $\kappa$ in $\FFF(-1)^{\times}$ and choose a unit $\gamma_s \in (\Z/\#\FFF \Z)^{\times}$ for each $s \in S$. Also choose some $\chi_0 \in \Hom_{\text{cont}}(G_F, \FFF)$ that is ramified only at places in $\Vplac_0$. With this auxiliary data fixed, we associate a homomorphism $\chi(x) \in \mathbb{X}_F$ to a given $x \in X$ using the formula
\[\chi(x) = \chi_0 + \sum_{s \in S} \mfB_{\pi_s(x),\, \FFF}(\gamma_s \kappa).\]

Recall that $\msS_{N[\omega]/F}(\Vplac_0)$ denotes the cocycle classes in $H^1(G_F, N[\omega])$ that are unramified away from $\Vplac_0$. Following  \cite[Definition 4.7]{Smi22a}, define
\[\mathscr{M} = \mathscr{M}(N) = \msS_{N[\omega]/F}(\Vplac_0) \oplus \bigoplus_{s \in S} N[\omega]^{\sigma_s}.\]
If we now choose $m = (\phi_0, \,(n_s)_s)$ in $\mathscr{M}$, 
we may consider the sum
\[\Psi_{x, N}(m) = \phi_0 + \sum_{s \in S} \mfB_{\pi_s(x),\, N[\omega]}(\gamma_s n_s \cdot \kappa ).\]
Here, we have identified $N[\omega] \otimes \FFF$ with $N[\omega]$ via the isomorphism sending $n \otimes \xi$ to $n$ for $n \in N[\omega]$. We take $\mathscr{M}_0$ to be the subset of such tuples for which $\Psi_x(m)$ obeys the local conditions at each $v \in \Vplac_0$ for any (or equivalently every) $\chi(x)$. Following the logic of \cite[Proposition 4.8]{Smi22a}, we also see that $\Psi_x(m)$ obeys the local condition at $\pi_s(x)$ if and only if
\[a_{s, x}(m)\,:=\, (\Psi_x(m) - \chi(x) \cup n_s)\big(\FrobF{F}{\pi_s(x)}\big) - \delta_{\sigma_s}(n_s)\]
is $0$ in $N[\omega]_{\sigma_s}$, where $\delta_{\sigma_s}(n_s)$ denotes $(\sigma_s - 1) \omega^{-1} n_s$ evaluated in $N[\omega^2]$. This uses the fact that $\delta_{\sigma_s}(n_s)$ only depends on the image of $\sigma_s$ in $\Gal(K(\Vplac_0)/F)/\sim$, which follows from \cite[Remark 3.7]{Smi22a}.

Given $\sigma, \tau \in G_F$, take $B(\sigma, \tau)$ to be a set of representatives of the collection of double cosets $\langle \sigma G_K \rangle \backslash G_F / \langle \tau G_K\rangle$. We assume this set of representatives contains $1$. Applying \cite[Proposition 3.22]{Smi22a} gives
\begin{align}
\label{eq:ps_cond}
a_{s, x}(m) \,=\, &\phi_0(\sigma_s)  -\delta_{\sigma_s}(n_s) - n_s \cdot \chi_0(\sigma_s) \\
&\qquad+\,\,\sum_{t} \sum_{\tau \in B(\sigma_t, \sigma_s)}  \gamma_t \left( \tau^{-1} n_t - n_s\right) \kappa \left( \symb{\tau\pi_s(x)}{\pi_t(x)}'\right) \nonumber.
\end{align}
% Hi me
% You need the assumption of identity to avoid issues in the cancellable argument

As a general rule, the proportion of $x \in X$ for which $\Psi_x(m)$ is a Selmer element is smaller for choices of $(n_s)_s$ where the submodule of $N[\omega]$ generated by the $(n_s)_s$ is large. The following definition helps make this more precise.
\begin{defn}
\label{defn:ram_sub}
Define the \emph{ramification subspace} $Q = \bfQ((n_s)_s)$ associated to $(n_s)_s$ to be the submodule of $N[\omega]$ spanned by all elements of the form $\tau_1 n_s - \tau_2 n_t$ with $s, t \in S$ and $\tau_1, \tau_2 \in G_F$. 

If $\phi \in H^1(G_F, N^{\chi(x)})$ is parameterized by $m = (\phi_0, (n_s)_s)$, we also write $\bfQ(\phi)$ and $\bfQ(m)$ for $\bfQ((n_s)_s)$.
\end{defn}

With this definition set,  we see that a necessary condition for the $\Psi_x(m)$ to obey the local condition at $\pi_s(x)$ is that
\begin{equation}
\label{eq:reasonable_Q}
\phi_0(\sigma_s) - \delta_{\sigma_s}(n_s) - n_s\cdot \chi_0(\sigma_s) \,\equiv\, 0 \,\,\text{ mod }\, Q + (\sigma_s - 1)N[\omega].
\end{equation}
In particular, if $[X_s] = [X_t]$, we must have
\[\delta_{\sigma_s}(n_s - n_t) \,\equiv 0 \,\,\text{ mod }\, Q + (\sigma_s - 1)N[\omega].\]
In other words, $n_s - n_t$ must lie in the image of the map $ (N/Q)[\omega]^{\sigma_s}\to Q^{\sigma_s}$ coming from the long exact sequence associated to
\[0 \to N[\omega]/Q \to (N/Q)[\omega] \to Q \to 0.\]
Given any submodule $Q$ of $N[\omega]$, we take $\mathscr{M}_1(Q)$ to be the subspace of elements in $\mathscr{M}_0$ satisfying the condition \eqref{eq:reasonable_Q} for every $s \in S$. We then take 
\[\mathscr{M}_1 = \bigcup_Q\,\left \{m \in \mathscr{M}_1(Q)\,:\,\, \bfQ(m) = Q\right\}.\]

\subsection{The main character sum}
We take the notation
\[\mathscr{R} = \bigoplus_{s \in S} (N[\omega]^{\vee})^{\sigma_s}.\]
We then define a pairing
\[\big\langle\,,\,\big\rangle_x\,\colon\,\mathscr{R}\, \times\, \mathscr{M} \to \tfrac{1}{\ell}\Z/\Z\]
by
\[\big\langle\,(r_s)_s, \,m)\big\rangle_x = \sum_s \text{lg}\left(\gamma_s \cdot r_s\big(a_{s, x}(m)\big)\right),\]
where $\text{lg}$ is the homomorphism taking the image of $\overline{\zeta}$ in $\mu_{\ell}$ to $1/\ell$ and where the pairing between $\FFF$ and $\mu_{\ell}$ takes $\xi \otimes \alpha$ to $\alpha$ for $\alpha$ in $\mu_{\ell}$. The $\omega$-Selmer group of $N^{\chi(x)}$ then satisfies
\[ \# \Sel^{\omega} N^{\chi(x)} = \frac{1}{\#\mathscr{R}}\sum_{m \in \mathscr{M}_1} \sum_{r \in \mathscr{R}} \exp\left(2\pi i \big\langle r,\, m\big\rangle_x\right).\]

We are interested in the average size of the $\omega$-Selmer group of $N^{\chi(x)}$ as $x$ varies through $X$, which can be calculated as
\begin{equation}
\label{eq:moment_to_pairs}
\frac{1}{\# X} \sum_{x \in X} \# \Sel^{\omega}N^{\chi(x)} = \frac{1}{\# X \cdot \# \mathscr{R}} \sum_{m \in \mathscr{M}_1} \sum_{r \in \mathscr{R}} \sum_{x \in X} \exp\left(2\pi i \big\langle r,\, m\big\rangle_x\right).
\end{equation}
For fixed $(r, m)$, the inner sum
\begin{equation}
\label{eq:pair_classifier}
\frac{1}{\# X} \sum_{x \in X} \exp\left(2\pi i \big\langle r,\, m\big\rangle_x\right)
\end{equation}
can behave one of three ways:
\begin{enumerate}
\item (Invariant pairs) The pairing  $\langle r,\, m\rangle_x$ does not depend on the choice of $x$, and \eqref{eq:pair_classifier} has magnitude $1$.
\item (Ignorable pairs) The pairing depends on $x$, and \eqref{eq:pair_classifier} can be proved to be negligible using analytic number theoretic techniques.
\item (Uncontrolled pairs) The pairing depends on $x$, and \eqref{eq:pair_classifier} cannot be controlled.
\end{enumerate}

Our approach to estimating \eqref{eq:moment_to_pairs} is to approximate this sum over all invariant pairs. The sum over invariant pairs turns out to be substantially larger than $|X|$ times the total number of uncontrolled pairs, and the ignorable pairs also contribute negligibly by definition, so we find that this simple approximation suffices for our applications.

Our first goal is to be able to distinguish these types of pairs. The following proposition is essential.

\begin{notat}
For $\sigma$ in $G_{F\left(\mu_{|\FFF|}\right)}$, note that the map $\tau \mapsto \tau^{-1}$ defines an involution on the collection of double cosets $\langle \sigma G_K\rangle \backslash G_F / \langle \sigma G_K\rangle$, and hence also an involution on the set of representatives $B(\sigma, \sigma)$. We take $B_{\text{fixed}}(\sigma)$ to be the subset of $B(\sigma, \sigma)$ fixed by this involution, and we take $B_{1/2}(\sigma)$ to be a set of representatives from the collection of orbits in $B(\sigma, \sigma)\backslash B_{\text{fixed}}(\sigma)$ under the group generated by this involution. We will also fix some total order on the set $S$.
\end{notat}

\begin{prop}
\label{prop:change_p}
With all notation as above, choose $s \in S$, choose $x_{0t} \in X_t$ for all $t \ne s$, and take $L$ to be the line $X_s \times \prod_{t \ne s} \{x_{0t}\}$. Fix $\phi_0$, $(n_s)_s$, and $(r_s)_s$ as above. Then there is some $C_0$ in $\tfrac{1}{\ell}\Z/\Z$ so, for all $x \in L$, we have
\begin{align*}
\big\langle\,(r_s)_s, \,(\phi_0,\, &(n_s)_s)\big\rangle_x  \\
=\, C_0 &- \,\sum_{\substack{s, t \in S \\ s < t}}\, \sum_{\tau \in B(\sigma_s, \sigma_t)}  \textup{lg}\left( \gamma_s \gamma_t \left(\tau r_t - r_s \right)\cdot \left(\tau n_t - n_s\right) \cdot \kappa\left(\symb{\tau^{-1} \pi_s(x)}{\pi_t(x)}'\right)\right)  \\
&-\,\sum_{s\in S} \sum_{\tau \in B_{1/2}(\sigma_s)} \textup{lg}\left( \gamma_s^2 \left(\tau r_s - r_s\right)\cdot \left(\tau n_s - n_s\right) \cdot \kappa\left(\symb{\tau^{-1} \pi_s(x)}{\pi_s(x)}'\right)\right)  \\
&-\, \sum_{s\in S} \sum_{\tau \in B_{\textup{fixed}}(\sigma_s)} \textup{lg}\left( A\,\gamma_s^2\,(\tau r_s - r_s)\cdot (\tau n_s - n_s) \cdot \kappa\left(\symb{\tau^{-1}\pi_s(x)}{\pi_s(x)}'\right)\right),
\end{align*}
where $A = 1/2$ if $\ell$ is odd and is otherwise $0$.
\end{prop}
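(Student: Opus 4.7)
The strategy is to substitute \eqref{eq:ps_cond} into
\[\langle r, m\rangle_x = \sum_{s \in S} \textup{lg}\left(\gamma_s\, r_s(a_{s,x}(m))\right)\]
and reorganize the resulting sum indexed by triples $(s, t, \tau)$ with $\tau \in B(\sigma_t, \sigma_s)$ according to the role of the pair $\{s, t\}$. The terms of $a_{s,x}(m)$ not involving a symbol, namely $\phi_0(\sigma_s) - \delta_{\sigma_s}(n_s) - n_s\chi_0(\sigma_s)$, are independent of $x \in L$ and get absorbed into $C_0 \in \tfrac{1}{\ell}\Z/\Z$. What remains is the triple sum
\[\sum_{s, t \in S}\,\sum_{\tau \in B(\sigma_t, \sigma_s)} \textup{lg}\!\left(\gamma_s\gamma_t\, r_s(\tau^{-1}n_t - n_s)\,\kappa\!\left(\symb{\tau\pi_s(x)}{\pi_t(x)}'\right)\right),\]
which must be matched to the three sums of the proposition by relabeling indices and invoking symbol reciprocity.

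For the cross terms $s \ne t$, I would group the contributions from $(s, t)$ and $(t, s)$ for each unordered pair, then reparameterize one of them via the involution $\tau \mapsto \tau^{-1}$, which is a bijection $B(\sigma_t, \sigma_s) \to B(\sigma_s, \sigma_t)$. Symbol reciprocity \eqref{eq:tau_rec} expresses the ratio $\symb{\tau\pi_s(x)}{\pi_t(x)}' \big/ \tau\!\left(\symb{\tau^{-1}\pi_t(x)}{\pi_s(x)}'\right)$ purely in terms of $\tau$, $[\pi_s(x)]$, and $[\pi_t(x)]$, which are all constant on $L$; that excess is absorbed into $C_0$. After this substitution both contributions share the common symbol $\symb{\tau^{-1}\pi_s(x)}{\pi_t(x)}'$ with $\tau \in B(\sigma_s, \sigma_t)$, and Galois-equivariance of the pairing $N[\omega]^\vee \times N[\omega] \to \mu_\ell$ fuses the two linear coefficients $r_s(\tau n_t - n_s)$ and (the appropriately twisted) $r_t(\tau^{-1}n_s - n_t)$ into the symmetric bilinear coefficient $(\tau r_t - r_s)(\tau n_t - n_s)$, producing the sign displayed in the proposition.

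For the self terms $s = t$, partition $B(\sigma_s, \sigma_s) = B_{\textup{fixed}}(\sigma_s) \sqcup B_{1/2}(\sigma_s) \sqcup B_{1/2}(\sigma_s)^{-1}$. On the non-fixed part the pairing of $\tau$ with $\tau^{-1}$ is a direct specialization of the cross-term argument and yields the $B_{1/2}$-sum. The delicate case is $\tau \in B_{\textup{fixed}}(\sigma_s)$, where only a single term survives but the symmetrized form demands ``half'' of it. Here Proposition \ref{prop:involution_spin} enters: for any fixed $x_0 \in L$, the ratio $\symb{\tau\pi_s(x)}{\pi_s(x)}' \big/ \symb{\tau\pi_s(x_0)}{\pi_s(x_0)}'$ lies in $\left((\mu_{e_0})_{G_{E+\tau E}}\right)^2$. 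When $\ell$ is odd, squaring is a bijection on $\mu_\ell$, so a well-defined square root can be extracted inside $\textup{lg}$ (as $\tfrac{1}{2}$ is meaningful in $\tfrac{1}{\ell}\Z/\Z$), producing the factor $A = 1/2$ and permitting symmetrization exactly as in the cross case. When $\ell = 2$, every square in $\mu_2$ is trivial, so $\symb{\tau\pi_s(x)}{\pi_s(x)}'$ is itself $x$-invariant on $L$ and the whole contribution is absorbed into $C_0$, matching $A = 0$. The main obstacle is precisely this $B_{\textup{fixed}}$ case: the cross and $B_{1/2}$ terms reduce by straightforward reciprocity, but the fixed involutions force a genuine appeal to the theta-group content of Proposition \ref{prop:involution_spin} together with a careful parity split on $\ell$.
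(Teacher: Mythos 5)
Your proof is correct and follows essentially the same route as the paper: expand \eqref{eq:ps_cond}, absorb the $x$-independent terms into $C_0$, pair the cross terms and the non-fixed self terms via the class-determined reciprocity \eqref{eq:tau_rec}, and treat $B_{\textup{fixed}}(\sigma_s)$ by halving when $\ell$ is odd and by Proposition \ref{prop:involution_spin} when $\ell = 2$. The only cosmetic difference is that you also invoke Proposition \ref{prop:involution_spin} in the odd-$\ell$ case, where the paper obtains the factor $A = 1/2$ directly from the invertibility of $2$ modulo $\ell$ after symmetrizing; that extra appeal is harmless but unnecessary.
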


\begin{proof}
Given $s, t$ in $S$ with $s < t$, and given $\tau \in B(\sigma_s, \sigma_t)$, we combine the $(t, \tau^{-1})$ term of $r_s(a_{s, x}(\phi_0, (n_s)_s))$ with the $(s, \tau)$ term of $r_t(a_{t, x}(\phi_0, (n_s)_s))$ using the fact that \eqref{eq:tau_rec} is determined by classes to get
\[C -  \textup{lg}\left( \gamma_s \gamma_t \left(\tau r_t - r_s \right)\cdot \left(\tau n_t - n_s\right) \cdot \kappa\left(\symb{\tau^{-1} \pi_s(x)}{\pi_t(x)}'\right)\right)\]
for $C$ some constant not depending on $x$. This similarly works for combining the $(s, (s, \tau))$ term with the $(s, (s, \tau^{-1}))$ term for $\tau$ in $B_{1/2}(\sigma_s)$; by halving, it can also handle $\tau$ from $B_{\text{fixed}}(\sigma_s)$ in the case $\ell > 2$.

This just leaves the case of $\ell = 2$, where we may simply apply Proposition \ref{prop:involution_spin} to handle the terms corresponding to $\tau$ in $B_{\text{fixed}}(\sigma_s)$.
\end{proof}

\section{Ignorable pairs}
\label{sec:ignorable}
We keep all notation as in Section \ref{ssec:explicit_local}. In addition, we partition $S$ into three sets $\Ssm$, $\Smed$, and $\Slg$ indexing small, medium, and large sets of primes. In practice, $\Slg$ will be far larger than $\Ssm \cup \Smed$.

Given $s$ and $t$ in $S$, we have two methods of controlling the distribution of the symbol $\symb{\ovp_s}{\ovp_t}$ as $\ovp_s$ and $\ovp_t$ vary through $X_s$ and $X_t$. First, if neither $X_s$ nor $X_t$ is small, we can control this symbol using the bilinear estimate for symbols given in \cite[Section 5]{Smi22a}. On the other hand, if one of these sets is small but the other is large, we can control this symbol using the Chebotarev density theorem.

Using Proposition \ref{prop:change_p}, we can identify when one of these strategies suffices to control the sum \eqref{eq:pair_classifier}. This leads to the following definition.

\begin{defn}
\label{defn:ignorable}
We say that $\big((r_s)_s, \,(\phi_0, (n_s)_s)\big)$ is \emph{bilinearly ignorable} if there are distinct indices in $\Smed \cup \Slg$ and some $\tau_0 \in G_F$ so that
\begin{equation}
\label{eq:bil_ign}
(\tau_0 r_s - r_t) \cdot (\tau_0 n_s - n_t) \ne 0.
\end{equation}
We say that it is \emph{ignorable by Chebotarev} if it is not bilinearly ignorable and there is some $s \in \Slg$, $t \in \Ssm$, and $\tau_0 \in G_F$ so \eqref{eq:bil_ign} holds and for which we also have
\begin{equation}
\label{eq:nochebspin}
(\tau r_s - r_s) \cdot (\tau n_s - n_s) = 0\quad\text{for all } \, \tau \in G_F.
\end{equation}

We call the pair unignorable if it is neither bilinearly ignorable nor ignorable by Chebotarev.
\end{defn}
We note that the condition \eqref{eq:nochebspin} is needed because the Chebotarev density theorem is likely insufficient to control spins; see \cite{KoMi21}. The flexible coefficients allowed by \cite[Proposition 5.9]{Smi22a} make this condition unnecessary for bilinearly ignorable pairs.

\begin{notat}
\label{notat:cJuti}
Take $c_{\textup{bilin}}$ to be the maximum value taken by the expression \eqref{eq:pair_classifier} among pairs that are bilinearly ignorable. If there is no such pair, take $c_{\textup{bilin}} = 0$. Similarly, take $c_{\textup{Cheb}}$ to be the maximum value of this expression among pairs that are ignorable by Chebotarev. If there is no such pair, take $c_{\textup{Cheb}} = 0$.

\end{notat}
\subsection{Characterizing unignorable pairs}
\label{ssec:pariah}
The basic principle of our characterization of unignorable pairs is that, outside of an irrelevant set of $(n_s)_s$, if $(r_s)_s$ is chosen so $(n_s)_s, (r_s)_s$ defines an unignorable pair, there is an alternating homomorphism $\Gamma: Q \to Q^{\vee}$ so
\[\Gamma(\tau_1 n_s - \tau_2 n_t) \equiv  \tau_1 r_s - \tau_2 r_t\, \text{ mod } Q^{\perp}\]
for all $s, t \in S$ and $\tau_1, \tau_2 \in G_F$, where $Q = \bfQ((q_s)_s)$.

However, we still need methods for controlling the irrelevant set of $(n_s)_s$. We start with a definition that will be important to handle the condition \eqref{eq:nochebspin}.
\begin{defn}
\label{defn:pariah}
 Given $(n_s)_s$ as above, we call $s_0 \in \Slg$ a \emph{pariah index} for $(n_s)_s$ if there are not disjoint subsets $S_1$, $S_2$ of $\Slg - \{s_0\}$ and some choice of 
\[(a_{1s})_{s \in S_1} \in \bigoplus_{s \in S_1} \FFF_{\ell} \quad\text{and}\quad (a_{2s})_{s \in S_2} \in \bigoplus_{s \in S_2} \FFF_{\ell} \]
satisfying
\[\sum_{s \in S_1} a_{1s} = \sum_{s \in S_2} a_{2s} = 1\quad\text{and}\quad \sum_{s \in S_1} a_{1s} n_s =  \sum_{s \in S_2} a_{2s} n_s =  n_{s_0}.\]
We denote the set of pariah indices by $\Spar((n_s)_s)$.

We call $s_0$ an \emph{m/l pariah index} if $s_0$ lies in $\Smed \cup \Slg$ and the condition given above is satisfied with $\Slg$ replaced by $\Smed \cup \Slg$. We denote the set of m/l pariahs by $S_{\textup{m, par}}$.
\end{defn}

Our first observation about pariahs indices is that there cannot be that many of them.
\begin{prop}
\label{prop:parbound}
Taking $g$ to be the corank of $N$, and given $(n_s)_s$ as above, we have
\[\# \Spar((n_s)_s) \le 2g + 2\]
\end{prop}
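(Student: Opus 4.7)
The plan is to lift the problem to linear algebra in the auxiliary space $W = N[\omega] \oplus \FFF_{\ell}$ by associating to each index $s$ the vector $v_s = (n_s, 1) \in W$. A disjoint pair of affine representations $n_{s_0} = \sum_{s \in S_1} a_{1s} n_s = \sum_{s \in S_2} a_{2s} n_s$ with $\sum_s a_{1s} = \sum_s a_{2s} = 1$ translates into a pair of disjoint linear representations of $v_{s_0}$ from $\{v_s : s \in \Slg \setminus \{s_0\}\}$, and conversely, by reading off the second coordinate. Since $\omega^{(\ell - 1)\ell^{k_0 - 1}}$ equals $\ell$ up to a unit in $\Zlz$, we have $N[\omega] \subseteq N[\ell]$, and hence $\dim_{\FFF_{\ell}} W \le g + 1$.

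The key step will be a greedy two-basis extraction. I would order $\Slg$ arbitrarily and sweep through it, maintaining two disjoint subsets $B_1, B_2 \subseteq \Slg$: at the step indexed by $s$, append $s$ to $B_1$ if $v_s$ is not in the $\FFF_{\ell}$-span of the current $B_1$; otherwise append $s$ to $B_2$ if $v_s$ is not in the span of the current $B_2$; otherwise discard $s$. Because each $B_i$ grows only when a new $\FFF_{\ell}$-dimension is added, at the end we have $|B_1|, |B_2| \le g + 1$, and by construction the two remain disjoint.

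The final step is the following check: any $s_0 \in \Slg \setminus (B_1 \cup B_2)$ was discarded, so $v_{s_0}$ lay in the span of $B_1$ and in the span of $B_2$ at the moment of discard, hence at the end as well. Writing $v_{s_0} = \sum_{s \in B_1} a_{1s} v_s = \sum_{s \in B_2} a_{2s} v_s$ and reading off the two coordinates separately gives exactly the affine identities with $\sum_s a_{is} = 1$ required by Definition \ref{defn:pariah}; taking $S_i$ to be the support of $a_{i \cdot}$ certifies that $s_0 \notin \Spar$. Therefore $\Spar \subseteq B_1 \cup B_2$, giving $\#\Spar \le 2(g + 1) = 2g + 2$. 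There is no genuine obstacle; the argument is essentially the matroid observation that in a rank-$r$ matroid at most $2r$ elements can fail to be doubly spanned. The only mild bookkeeping is the affine-to-linear lift via the constant coordinate and the verification that $\dim_{\FFF_{\ell}} N[\omega] \le g$ from the ramification of $\omega$ in $\Zlz$.
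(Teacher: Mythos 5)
Your proof is correct and is essentially the paper's argument: both extract two disjoint affine spanning sets of size at most $g+1$ each (the paper does this by choosing a basis of differences $n_{s_i}-n_{s_0}$, removing those indices, and repeating, while you use the homogenization $v_s=(n_s,1)$ and a greedy sweep) and observe that every index outside their union admits the two disjoint affine representations required to fail the pariah condition. The affine-to-linear lift and the bound $\dim_{\FFF_\ell}N[\omega]\le g$ are both sound, so no changes are needed.
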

\begin{proof}
This statement is vacuous if $\Slg$ has fewer than $g + 2$ elements. So we assume $\#\Slg \ge g + 2$.

Choose $s_0 \in \Slg$, and choose a sequence $s_1, \dots, s_r \in \Slg$ so that 
\[n_{s_1} -n_{s_0}, \,\,\dots,\,\, n_{s_r} - n_{s_0}\]
gives a basis for the $\FFF_{\ell}$-vector space spanned by all elements of the form $n_s - n_t$ with $s, t \in \Slg$.

Take $\Slg' = \Slg - \{s_0, \dots, s_r\}$. Choose $t_0$ in $\Slg'$, and choose a sequence $t_1, \dots, t_{r'} \in \Slg'$ so that 
\[n_{t_1} - n_{t_0},\,\, \dots,\,\, n_{t_{r'}} - n_{t_0}\]
gives a basis for the space spanned by all elements of the form $n_s - n_t$ with $s, t \in \Slg'$.

We then have $r  \le g$ and $r' \le g$, and we also have
\[\Spar((n_s)_s) \subseteq \{s_0, \dots, s_r\} \cup \{t_0, \dots, t_{r'}\},\]
giving the proposition.
\end{proof}
The same proposition holds for $S_{\textup{m, par}}$ using the same logic.

\begin{defn}
\label{defn:unlawful}
Given $(n_s)_s$ as above and  $S' \subseteq S$, we take 
\[\bfQ(S') = \bfQ(S', \,(n_s)_s)\]
to be the vector subspace of $\bfQ((n_s)_s)$ generated by the elements
\[\tau_1 n_s - \tau_2 n_t \quad\text{for } \, s, t\in S',\,\, \tau_1, \tau_2 \in G_F.\]
We also take
\[S_{\textup{npar}}((n_s)_s) = \Slg - S_{\textup{par}}((n_s)_s)\,\text{ and }\, S_{\textup{m,npar}}((n_s)_s) = \Smed \cup\Slg - S_{\textup{m, par}}((n_s)_s).\]
We call $(n_s)_s$ \emph{borderline} if $\bfQ(S_{\textup{npar}}, (n_s)_s)$ is a proper submodule of $\bfQ(S, (n_s)_s)$.
\end{defn}

\begin{prop}
\label{prop:four_qrs}
In the above situation, suppose the pair $(n_s)_s, (r_s)_s$ is not bilinearly ignorable. Then the equation
\[\Gamma(\tau_1 n_s - \tau_2 n_t) \equiv \tau_1 r_s - \tau_2r_t \quad\text{for all }\, s, t \in S', \,\,\tau_1, \tau_2 \in G_F\]
defines $G_F$-equivariant homomorphisms
\begin{alignat*}{5}
&\Gamma_{\textup{m,npar}}&&: \bfQ(S') \rightarrow N[\omega]^{\vee}/ \bfQ(\Slg \cup \Smed)^{\perp}\quad&&\text{with } S' = S_{\textup{m,npar}}\\
&\Gamma_{\textup{m, l}}&&: \bfQ(S') \rightarrow N[\omega]^{\vee}/ \bfQ(S_{\textup{m,npar}})^{\perp}\quad&&\text{with } S' = \Slg \cup \Smed,
\end{alignat*}
and we have an identity $\Gamma_{\textup{m, npar}}(m) \cdot m = 0$ for all $m$ in $\bfQ(S_{\textup{m, npar}})$.

If the pair is unignorable, the same equation defines $G_F$-equivariant homomorphisms
\begin{alignat*}{5}
&\Gamma_{\textup{npar}}&&: \bfQ(S') \rightarrow N[\omega]^{\vee}/ \bfQ(S)^{\perp}\quad&&\text{with } S' = S_{\textup{npar}}\\
&\Gamma&&: \bfQ(S') \rightarrow N[\omega]^{\vee}/ \bfQ(S_{\textup{npar}})^{\perp}\quad&&\text{with } S' = S,
\end{alignat*}
and we have an identity $\Gamma_{\textup{npar}}(m) \cdot m = 0$ for all $m$ in $\bfQ(S_{\textup{npar}})$.
\end{prop}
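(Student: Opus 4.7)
The plan is to define each of the four $\Gamma$-maps on the natural generators $\tau_1 n_s - \tau_2 n_t$ of $\bfQ(S')$ by the specified formula, then check well-definedness modulo the relevant perp spaces and verify the alternating identities. The starting observation is that Definition \ref{defn:ignorable} unpacks into the clean bilinear identity
\[(\tau_0 r_s - r_t) \cdot (\tau_0 n_s - n_t) \,=\, 0\]
holding for all $\tau_0 \in G_F$ and all distinct pairs $(s,t)$ drawn from $\Smed \cup \Slg$ under the not-bilinearly-ignorable hypothesis, and additionally for pairs with $s \in \Slg$, $t \in \Ssm$ satisfying \eqref{eq:nochebspin} under the full unignorable hypothesis.

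For well-definedness of $\Gamma_{\textup{m,l}}$, the task is: given a null relation $\sum_i c_i(\tau_{1i} n_{s_i} - \tau_{2i} n_{t_i}) = 0$ in $N[\omega]$ with $s_i, t_i \in \Smed \cup \Slg$, show that the image $\sum_i c_i(\tau_{1i} r_{s_i} - \tau_{2i} r_{t_i})$ lies in $\bfQ(S_{\textup{m,npar}})^\perp$. Pairing with a generator $\tau_3 n_u - \tau_4 n_v$ for $u, v \in S_{\textup{m,npar}}$ and using $G_F$-invariance of the Weil-type pairing, each mixed term $(\tau r_a) \cdot n_b$ can be rewritten using the bilinear identity in the equivalent form
\[(\tau r_a) \cdot n_b + r_b \cdot (\tau n_a) \,=\, r_a \cdot n_a + r_b \cdot n_b,\]
valid for distinct $a, b$ in the relevant index set. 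The mixed pairings collapse to diagonal pairings $r_a \cdot n_a$, whose total contribution vanishes because the coefficient attached to each diagonal term equals the sum of coefficients of $\tau n_a$ appearing in the null relation. Restriction to $\Gamma_{\textup{m,npar}}$ is then immediate: on the smaller domain, the same reduction applies when pairing against the larger module $\bfQ(\Smed \cup \Slg)$, giving the finer target $N[\omega]^\vee/\bfQ(\Smed \cup \Slg)^\perp$.

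The alternating identity $\Gamma_{\textup{m,npar}}(m) \cdot m = 0$ is where the pariah structure becomes essential. Writing $m = \sum_i c_i(\tau_{1i} n_{s_i} - \tau_{2i} n_{t_i})$ with $s_i, t_i \in S_{\textup{m,npar}}$ and expanding $\Gamma(m) \cdot m$ bilinearly, the distinct-index contributions vanish by the bilinear identity; the problematic terms are self-pairings $(\tau_1 r_s - \tau_2 r_s) \cdot (\tau_1 n_s - \tau_2 n_s)$ for indices $s$ repeating in the expansion. Here Definition \ref{defn:pariah} saves us: for each $s \in S_{\textup{m,npar}}$, there exist disjoint $S_1, S_2 \subseteq (\Smed \cup \Slg) \setminus \{s\}$ and weights summing to $1$ with $n_s = \sum_{i \in S_1} a_{1i} n_i = \sum_{i \in S_2} a_{2i} n_i$. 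Substituting these two disjoint decompositions alternately into the two occurrences of $n_s$ inside each self-pairing rewrites it as a weighted sum of cross-pairings with all four participating indices distinct, to which the bilinear identity then applies. The parallel statements for $\Gamma_{\textup{npar}}$ and $\Gamma$ in the unignorable case follow identically, additionally invoking the $\Slg$-vs-$\Ssm$ bilinear identity granted by unignorability to handle the pairings between large and small indices.

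The main obstacle I expect is formalizing this substitution step cleanly: we must verify that the output of replacing a self-pairing by the weighted sum of cross-pairings is independent of the particular non-pariah decomposition chosen, and that the resulting expression really lies within the realm controlled by the bilinear identity. Independence of the decomposition should follow from the already-established well-definedness of $\Gamma_{\textup{m,l}}$, since any two decompositions of $n_s$ differ by a relation in $\bfQ(\Smed \cup \Slg)$. A secondary care point is tracking the asymmetry in the targets between $\Gamma_{\textup{m,l}}$ and $\Gamma_{\textup{m,npar}}$: the fact that the smaller domain $\bfQ(S_{\textup{m,npar}})$ can afford the larger perp denominator $\bfQ(\Smed \cup \Slg)^\perp$ reflects that, on non-pariah inputs, the target is constrained by pairings against all m/l generators, not merely the non-pariah ones.
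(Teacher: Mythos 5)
Your proposal follows essentially the same route as the paper: reduce everything to the bilinear identity $(\tau r_s - r_t)\cdot(\tau n_s - n_t)=0$, use the two disjoint pariah decompositions to establish the self-pairing identity $(\tau r_{s_0} - r_{s_0})\cdot(\tau n_{s_0} - n_{s_0})=0$ for non-pariah $s_0$ (this is the paper's equation \eqref{eq:nonign_B}, derived by exactly your three-fold substitution trick), and then read off well-definedness, equivariance, and the alternating property by pairing zero-sum combinations against generators of the appropriate $\bfQ(S')$. The paper packages the bookkeeping more systematically, via the identity $\left(\sum_s a_s\tau_s r_s\right)\cdot\left(\sum_s a_s\tau_s n_s\right) = -\sum_{\{s,t\}}a_sa_t(\tau_s r_s-\tau_t r_t)\cdot(\tau_s n_s-\tau_t n_t)$ for coefficients summing to zero, and then the symmetry relation \eqref{eq:nonign_C} between $I(S_{\textup{m,npar}})$- and $I(\Smed\cup\Slg)$-combinations, from which all four maps and both alternating identities fall out at once.

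One point in your write-up needs repair. In the well-definedness step you restrict the rewriting identity $(\tau r_a)\cdot n_b + r_b\cdot(\tau n_a) = r_a\cdot n_a + r_b\cdot n_b$ to \emph{distinct} $a,b$, but when you pair a null relation involving an index $u$ against a test generator $\tau_3 n_u - \tau_4 n_v$ built from that same index, you hit terms of the form $(\tau r_u)\cdot(\tau' n_u)$, and the identity you need there is precisely the self-pairing identity \eqref{eq:nonign_B} for the non-pariah index $u$. So the pariah structure is already required for well-definedness, not only for the alternating property; you do prove the needed identity, but only in the following paragraph, so the logical order should be reversed (establish the self-pairing identity for non-pariah indices first, then run the collapse-to-diagonal computation). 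Relatedly, in the alternating-identity paragraph the claim that ``distinct-index contributions vanish by the bilinear identity'' is not true term by term for cross terms $(\tau_1 r_s - \tau_2 r_t)\cdot(\tau_3 n_u - \tau_4 n_v)$ with $\{s,t\}\ne\{u,v\}$; one must first reorganize the full double sum into difference-pair products using that the coefficients sum to zero, which is the polarization identity above. With those two adjustments your argument matches the paper's.
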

\begin{rmk}
If $(n_s)_s$ is not borderline, these four maps are all equal.
\end{rmk}

\begin{proof}
Assume to start that $\big((n_s)_s, (r_s)_s)$ is not bilinearly ignorable.

Given
\[a: \Smed \cup \Slg \rightarrow \FFF_{\ell}\quad\text{satisfying}\quad \sum_{s \in \Smed \cup \Slg} a_s = 0\]
and any function $\tau: S \rightarrow G_F$, we have
\begin{equation}
\label{eq:nonign_A}
\left( \sum_{s \in \Smed \cup \Slg}a_s \tau_s r_s \right) \cdot \left( \sum_{s \in \Smed \cup \Slg}a_s \tau_s n_s \right) = 0,
\end{equation}
as this product can be rewritten in the form
\[-\sum_{\{s, t\} \subseteq \Smed \cup \Slg} a_s a_t \left(\tau_s r_s - \tau_t r_t\right) \cdot (\tau_s n_s - \tau_t n_t).\]

If $s_0 \in \Slg \cup \Smed$ is not an m/l pariah index, we can choose $S_1$, $S_2$, $a_1$, and $a_2$ as in Definition \ref{defn:pariah} so that we have
\[n_{s_0} = \sum_{s \in S_1} a_{1s}n_s =\sum_{s \in S_2} a_{2s}n_s.\]
Choosing $\tau \in G_F$, we can show
\begin{align*}
&\left(r_{s_0} - \sum_{s \in S_1} a_{1s}\tau r_s\right) \cdot \left(n_{s_0} - \sum_{s \in S_1} a_{1s}\tau n_s\right) = 0,\\
& \left( \sum_{s \in S_1} a_{1s}\tau r_s -  \sum_{s \in S_2} a_{2s} r_s \right) \cdot \left( \sum_{s \in S_2} a_{2s} n_s  - \sum_{s \in S_1} a_{1s}\tau n_s \right) = 0,\\
& \left(-\tau r_{s_0} + \sum_{s \in S_2} a_{2s}r_s \right) \cdot \left(-\tau n_{s_0} + \sum_{s \in S_2} a_{2s} n_s \right)  = 0
\end{align*}
by applying \eqref{eq:nonign_A} three times. But these sum to give
\begin{equation}
\label{eq:nonign_B}
(r_{s_0} - \tau r_{s_0}) \cdot (n_{s_0} - \tau n_{s_0}) = 0\quad\text{ for }\, s_0 \in S_{\textup{m,npar}},\,\, \tau \in G_F.
\end{equation}

Take $\text{val}: \FFF_{\ell}[G_F] \rightarrow \FFF_{\ell}$ to be the homomorphism sending $[\sigma]$ to $1$ for all $\sigma$ in $G_F$. For $S'$ a subset of $S$, define
\[I(S') = \left\{ (\alpha_s)_s \in \bigoplus_{s \in S'} \FFF_{\ell}[G_F]\,:\,\, \sum_s \text{val}(\alpha_s) = 0\right\}.\]

Repeating the argument used to prove \eqref{eq:nonign_A} and applying \eqref{eq:nonign_B} as necessary, we can show that, for all $(\alpha_s)_s \in I(S_{\textup{m,npar}})$, all $s_1 \in \Smed \cup \Slg$, all $s_0 \in S_{\text{m,npar}}$, and any $\tau \in G_F$, we have
\begin{equation}
\label{eq:Gmnpar_alt}
\left(\tau r_{s_1} -  r_{s_0}  + \sum_{s \in S_{\textup{m,npar}}} \alpha_s r_s\right)\cdot \left(\tau n_{s_1} - n_{s_0} + \sum_{s \in S_{\textup{m,npar}}} \alpha_s n_s\right) = 0.
\end{equation}
By taking the difference with the case given by $s_1 = s_0$, $\tau = 1$, we get
\[(\tau r_{s_1} - r_{s_0}) \cdot \left(\sum_{s \in S_{\textup{m,npar}}} \alpha_s n_s\right) = -  \left(\sum_{s \in S_{\textup{m,npar}}} \alpha_s r_s\right) \cdot (\tau n_{s_1} - n_{s_0}).\]
Except in the case where $S_{\textup{m,npar}}$ is empty, we can sum some number of identities of this form together to show
\begin{equation}
\label{eq:nonign_C}
 \left(\sum_{s \in \Smed \cup \Slg} \alpha'_s r_s\right) \cdot \left(\sum_{s \in S_{\textup{m,npar}}} \alpha_s n_s\right) = -  \left(\sum_{s \in S_{\textup{m,npar}}} \alpha_s r_s\right) \cdot  \left(\sum_{s \in \Smed \cup \Slg} \alpha'_s n_s\right)
\end{equation}
for all $(\alpha_s)_s \in I(S_{\textup{m,npar}})$ and $(\alpha'_s)_s \in I(\Smed \cup \Slg)$. If $S_{\textup{m,npar}}$ is empty, \eqref{eq:nonign_C} is vacuously true. The existence and equivariance of $\Gamma_{\text{m,npar}}$ and $\Gamma_{\text{m,l}}$ follow from this equation, and the alternating property for $\Gamma_{\text{m,npar}}$ follows from \eqref{eq:Gmnpar_alt}.

If $\big((n_s)_s, (r_s)_s\big)$ is also not ignorable by Chebotarev, we can conclude from \eqref{eq:nonign_B} that
\[(\tau r_{s_1} - r_{s_0}) \cdot (\tau n_{s_1} - n_{s_0}) = 0 \quad\text{for all }\, s_1 \in S, \,s_0 \in S_{\text{npar}}, \,\tau \in G_F.\]
Following the same argument as before, we find
\begin{equation}
\label{eq:nonign_C2}
 \left(\sum_{s \in S} \alpha'_s r_s\right) \cdot \left(\sum_{s \in S_{\textup{npar}}} \alpha_s n_s\right) = -  \left(\sum_{s \in S_{\textup{npar}}} \alpha_s r_s\right) \cdot  \left(\sum_{s \in S} \alpha'_s n_s\right)
\end{equation}
for all $(\alpha_s)_s \in I(S_{\textup{npar}})$ and $(\alpha'_s)_s \in I(S)$. The existence and equivariance of $\Gamma_{\text{npar}}$ and $\Gamma$  follow from this equation, as does the alternating property of the former map.
\end{proof}

\subsection{Bounds for borderline $(n_s)_s$}
For our first result, we fix $(n_s)_s$ and give upper bounds for the number of unignorable pairs that contain it.
\begin{prop}
\label{prop:borderline_fewpair}
With all notation as above, there is a $C > 0$ determined just from $(K/F, \Vplac_0, \FFF)$ so we have the following:

Choose $(n_s)_s$ as above, and take $\bfQ(S') = \bfQ(S', (n_s)_s)$ as in Definition \ref{defn:unlawful} for any subset $S'$ of $S$. Then the number of $(r_s)_s$ in $\mathscr{R}$ for which $\big((n_s)_s,\, (r_s)_s\big)$ is not bilinearly ignorable is bounded by
\[e^{Cg^2}\cdot \prod_{s \in \Smed \cup \Slg} \# (N[\omega]/\bfQ(\Smed \cup \Slg))^{\sigma_s} \cdot \prod_{s \in \Ssm} \# N[\omega]^{\sigma_s},\]
and the number of such $(r_s)_s$ for which this pair is unignorable is bounded by
\[e^{Cg^2}\cdot \prod_{s \in  \Slg} \# (N[\omega]/\bfQ(S))^{\sigma_s}  \cdot \prod_{s \in \Smed} \#(N[\omega]/\bfQ(\Smed \cup \Slg))^{\sigma_s} \cdot \prod_{s \in \Ssm} \# (N[\omega]/\bfQ(S_{\textup{npar}}))^{\sigma_s},\]
where $g$ is the corank of $N$.
\end{prop}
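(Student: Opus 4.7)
The strategy is to use Proposition \ref{prop:four_qrs} as the backbone: if a pair is not bilinearly ignorable, the homomorphisms $\Gamma_{\textup{m,l}}$ and $\Gamma_{\textup{m,npar}}$ are defined and, together with a choice of base element $r_{s_0}$ for some $s_0 \in \Smed \cup \Slg$, almost completely determine each $r_s$ for $s \in \Smed \cup \Slg$. I would count by first enumerating the possible data $(\Gamma_{\textup{m,l}}, \Gamma_{\textup{m,npar}}, r_{s_0})$ and then counting, for each such choice, the number of admissible $(r_s)_s$. A perfect pairing $N[\omega] \times N[\omega]^{\vee} \to \mu_{\ell}$ gives the identity $\#(\bfQ^{\perp})^{\sigma_s} = \#(N[\omega]/\bfQ)^{\sigma_s}$ for any $\FFF$-submodule $\bfQ$ and any $\sigma_s$, which I will use freely.

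For the first bound, the number of $G_F$-equivariant homomorphisms landing in $N[\omega]^{\vee}$ modulo any subspace is bounded by $e^{C_0 g^2}$ (since $\bfQ(\Slg \cup \Smed)$ has $\FFF_{\ell}$-dimension $O(g)$ and $N[\omega]^{\vee}$ has dimension $g$), and the base element $r_{s_0}$ offers at most $\#N[\omega]^{\sigma_{s_0}}$ choices, all absorbed into the constant $e^{Cg^2}$. For $s \in \Ssm$, no further constraint applies, contributing $\#(N[\omega]^{\vee})^{\sigma_s} = \#N[\omega]^{\sigma_s}$. For $s \in S_{\textup{m,npar}}$, the map $\Gamma_{\textup{m,npar}}$ pins down $r_s - r_{s_0}$ modulo $\bfQ(\Slg \cup \Smed)^{\perp}$, so $r_s$ lies in a coset of $(\bfQ(\Slg \cup \Smed)^{\perp})^{\sigma_s}$, of size at most $\#(N[\omega]/\bfQ(\Slg \cup \Smed))^{\sigma_s}$. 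For $s \in S_{\textup{m,par}}$, $\Gamma_{\textup{m,l}}$ only determines $r_s$ modulo the larger $\bfQ(S_{\textup{m,npar}})^{\perp}$, so bounding $r_s$ by the trivial $\#N[\omega]^{\sigma_s}$ introduces overcounting at each such index, but the analogue of Proposition \ref{prop:parbound} for m/l pariahs gives $\#S_{\textup{m,par}} \le 2g + 2$, and the total overcount $(\#N[\omega])^{2g+2} \le e^{Cg^2}$ is absorbed.

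For the second (unignorable) bound, I would replace $(\Gamma_{\textup{m,l}}, \Gamma_{\textup{m,npar}})$ with $(\Gamma, \Gamma_{\textup{npar}})$ and proceed analogously. For $s \in \Slg \cap S_{\textup{npar}}$, the map $\Gamma_{\textup{npar}}$ pins down $r_s - r_{s_0}$ modulo $\bfQ(S)^{\perp}$, contributing $\#(N[\omega]/\bfQ(S))^{\sigma_s}$. For $s \in \Smed$, the map $\Gamma_{\textup{m,npar}}$ (obtained by composing $\Gamma_{\textup{npar}}$ with the quotient, or invoking Proposition \ref{prop:four_qrs} again) gives the mod $\bfQ(\Slg \cup \Smed)^{\perp}$ constraint for $s \in S_{\textup{m,npar}}$, contributing $\#(N[\omega]/\bfQ(\Slg \cup \Smed))^{\sigma_s}$. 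For $s \in \Ssm$, we use the full $\Gamma$ to pin $r_s - r_{s_0}$ modulo $\bfQ(S_{\textup{npar}})^{\perp}$, giving $\#(N[\omega]/\bfQ(S_{\textup{npar}}))^{\sigma_s}$. The overcounting at the pariah indices (at most $2g+2$ from each of $S_{\textup{par}}$ and $S_{\textup{m,par}}$) is again absorbed into $e^{Cg^2}$.

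The main obstacle is the careful bookkeeping of the two distinct pariah classes and the verification that on each bounded-size exceptional set the trivial bound differs from the target bound by a factor at most $(\#N[\omega])^{O(g)} \le e^{Cg^2}$. A secondary subtlety is that the freedom to choose $r_{s_0}$ and the freedom in the homomorphisms $\Gamma$ are not cleanly independent: one must pick $r_{s_0}$ after $\Gamma$ is fixed, and verify that the composite count is indeed dominated by the product form stated, but since both individual counts are at most $e^{Cg^2}$ this causes no real difficulty.
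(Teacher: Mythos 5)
Your proposal is correct and follows essentially the same route as the paper: both proofs invoke the duality identity $\#Q^{\sigma} = \#(Q^{\vee})^{\sigma}$, use the maps from Proposition \ref{prop:four_qrs} to pin down $r_s - r_{s_0}$ modulo the appropriate perpendicular subspace on each of $\Ssm$, $\Smed$, $\Slg$ (respectively via $\Gamma$, $\Gamma_{\textup{m,npar}}$, $\Gamma_{\textup{npar}}$), and absorb the choices of the $\Gamma$'s, of $r_{s_0}$, and of $r_s$ at the boundedly many pariah indices (Proposition \ref{prop:parbound}) into the $e^{Cg^2}$ factor.
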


\begin{proof}
We first note that we have an identity
\[\# Q^{\sigma_0} = \# (Q^{\vee})^{\sigma_0}\]
for any $\sigma_0$ in $G_{F\left(\mu_{\ell^{k_0}}\right)}$ and any $G_F$ submodule $Q$ of $N[\omega]$.

To bound the number of $(r_s)_s$ for which the pair is not bilinearly ignorable, we use the map $\Gamma_{\textup{m,npar}}$ defined in Proposition \ref{prop:four_qrs}. Having fixed this map and the value of $r_{s_0}$ for some $s_0 \in \Smed \cup \Slg$, there are at most $\left(\bfQ(\Smed \cup \Slg)^{\perp}\right)^{\sigma_s}$ consistent choices of $r_s$ for each $s$ in $S_{\textup{m, npar}}$. The choice of $\Gamma_{\textup{m,npar}}$, the value of $r_s$ at the m/l pariahs, and the value of $r_{s_0}$ can then be absorbed by the $e^{Cg^2}$ term. Accouting for the possible values of $r_s$ for $s \in \Ssm$ gives the first estimate.

For the estimate of unignorable pairs, we use the maps $\Gamma$, $\Gamma_{\textup{m,npar}}$, and $\Gamma_{\textup{npar}}$ of Proposition \ref{prop:four_qrs} to constrain the $r_s$ for $s$ in $\Ssm$, $\Smed$, and $\Slg$ respectively. With this setup, the proof follows as before.
\end{proof}

The following bound will be convenient in our bound for the number of borderline $(n_s)_s$.
\begin{defn}
We define the set of \emph{jury indices} $S_{\textup{jury}}$ to be the set of $s \in \Slg$ for which $\sigma_s$ is trivial in $\Gal(K(\Vplac_0)/F)$.
\end{defn}
\begin{prop}
\label{prop:borderline_few}
With all notation as above, there is a $C > 0$ determined just from $(K/F, \Vplac_0, \FFF)$ so we have the following:

Choose $G_F$-submodules $Q_1 \subseteq Q_0$ of $N[\omega]$. Take
\[k = \left( \#S_{\textup{jury}} - \# \Ssm \cup \Smed \right) \cdot \dim Q_0/Q_1.\]

Then the size of the set
\[\big\{ (\phi_0,\, (n_s)_s )\in \mathscr{M}_1(Q_0) \,:\,\, \bfQ( S_{\textup{npar}}((n_s)_s), (n_s)_s) \subseteq Q_1\, \text{ and }\, \bfQ(  (n_s)_s) = Q_0\big\}\]
has upper bound
\begin{equation}
\label{eq:borderline_few}
\ell^{-k} \cdot e^{Cg^2} \cdot (1 + \# \Slg)^{Cg}  \cdot \prod_{s \in S}  {\min}_{\,Q\,} \left(\frac{\#(N/Q)[\omega]^{\sigma_s}}{ \#(N[\omega]/Q_0)^{\sigma_s}}\right),
\end{equation}
where each minimum is taken over $G_F$-modules $Q$ satisfying $Q_1 \subseteq Q \subseteq Q_0$.
\end{prop}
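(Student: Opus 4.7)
The plan is to enumerate $(\phi_0, (n_s)_s)$ in layers: first the auxiliary data and the pariah indices, then the main count over non-pariah indices, and finally fit the resulting estimate into the stated form involving the minimum over $Q$.

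First, $\phi_0 \in \msS_{N[\omega]/F}(\Vplac_0)$ ranges over a finite set whose size depends only on $(K/F, \Vplac_0, \FFF)$, absorbed into $e^{Cg^2}$. Next, we enumerate the pariah set $\Spar((n_s)_s) \subseteq \Slg$: Proposition \ref{prop:parbound} gives $\#\Spar \le 2g+2$, so there are at most $(1+\#\Slg)^{2g+2}$ choices, absorbed into $(1+\#\Slg)^{Cg}$. For each pariah set we freely choose $n_s \in N[\omega]^{\sigma_s}$ for $s \in \Spar$, contributing at most $(\#N[\omega])^{2g+2} \le e^{Cg^2}$.

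The structural constraint $\bfQ(S_{\textup{npar}}) \subseteq Q_1$ together with $n_s \in N[\omega]^{\sigma_s}$ implies that for every non-pariah $s$, the reduction $n_s \bmod Q_1$ is a common $G_F$-invariant element $\overline{n} \in (N[\omega]/Q_1)^{G_F}$; enumerating $\overline{n}$ contributes a further factor absorbed into $e^{Cg^2}$. Given $\overline{n}$, each $n_s$ for $s \in S_{\textup{npar}}$ has at most $\#Q_1^{\sigma_s}$ lifts inside $N[\omega]^{\sigma_s}$, while $n_s$ for $s \in \Ssm \cup \Smed$ ranges freely in $N[\omega]^{\sigma_s}$. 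The requirement $\bfQ((n_s)_s) = Q_0$ forces the pariah and small/medium contributions to span $Q_0/Q_1$; enumerating a set of $\le \dim Q_0/Q_1$ indices supplying these generators contributes another $(1+\#\Slg)^{Cg}$.

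It then remains to rearrange the product
\[\prod_{s \in S_{\textup{npar}}} \#Q_1^{\sigma_s}\, \cdot\, \prod_{s \in \Ssm \cup \Smed} \#N[\omega]^{\sigma_s}\]
into $\prod_s \min_Q \#(N/Q)[\omega]^{\sigma_s} / \#(N[\omega]/Q_0)^{\sigma_s}$ with the explicit $\ell^{-k}$ savings. For each $s$, the choice of $Q \in [Q_1, Q_0]$ in the minimum is tuned per index type: $Q$ close to $Q_0$ for small and medium indices, matching $\#N[\omega]^{\sigma_s}$ up to the denominator; $Q$ close to $Q_1$ for jury non-pariah indices, where the $Q_1$-coset constraint improves the naive bound by a factor of $\#Q_0/Q_1 = \ell^{\dim Q_0/Q_1}$. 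Multiplying across the at least $\#\Sjury - \#(\Ssm \cup \Smed) - O(g)$ jury non-pariah indices produces the savings $\ell^{-k}$, with the remaining $O(g)$ discrepancy swallowed into $e^{Cg^2}$ and $(1+\#\Slg)^{Cg}$. The main obstacle lies in this final step: the combinatorial bookkeeping across the three index categories and the careful comparison of $\#N[\omega]^{\sigma_s}$ and $\#Q_1^{\sigma_s}$ with $\#(N/Q)[\omega]^{\sigma_s}$ using the description $(N/Q)[\omega] = \omega^{-1}(Q)/Q$, especially in the degenerate case $Q_1 = Q_0$ where $k = 0$ and the bound must reduce correctly.
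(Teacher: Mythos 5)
Your enumeration of the auxiliary data, the pariah indices, and the common reduction $\overline{n} \in (N[\omega]/Q_1)^{G_F}$ all match the paper's argument, and your count at the jury indices does produce the $\ell^{-k}$ savings. But there is a genuine gap in the per-index lift counts: you never invoke the defining condition of $\mathscr{M}_1(Q_0)$, namely \eqref{eq:reasonable_Q}. Membership in $\mathscr{M}_1(Q_0)$ forces the difference of $n_s$ from a fixed base value to lie not merely in $Q_1^{\sigma_s}$ but in the smaller group
\[Q_1^{\sigma_s} \cap \text{im}\big((N/Q_0)[\omega]^{\sigma_s} \to Q_0^{\sigma_s}\big) \,=\, \text{im}\big(\big(\tfrac{1}{\omega}Q_1/Q_0\big)^{\sigma_s} \to Q_1^{\sigma_s}\big),\]
whose cardinality equals $\#\big(\tfrac{1}{\omega}Q_1/Q_0\big)^{\sigma_s}\big/\#(N[\omega]/Q_0)^{\sigma_s}$; this is where the denominators $\#(N[\omega]/Q_0)^{\sigma_s}$ in \eqref{eq:borderline_few} come from, and the comparison with $\min_Q$ then follows from the chain $\#(\tfrac{1}{\omega}Q_1/Q_0)^{\sigma_s} \le \#(\tfrac{1}{\omega}Q/Q_0)^{\sigma_s} = \#(\tfrac{1}{\omega}Q/Q_0)_{\sigma_s} \le \#(N/Q)[\omega]_{\sigma_s}$. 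Your bounds of $\#Q_1^{\sigma_s}$ (large non-pariah indices) and $\#N[\omega]^{\sigma_s}$ (small and medium indices) are not dominated by the corresponding target factors. For instance, take $Q_1 = Q_0$ and $s \notin \Sjury$ with the connecting map $Q_0^{\sigma_s} \to H^1(\langle\sigma_s\rangle, N[\omega]/Q_0)$ injective and $Q_0^{\sigma_s} \ne 0$: the target factor at $Q = Q_0$ is $\#\text{im}\big((N/Q_0)[\omega]^{\sigma_s} \to Q_0^{\sigma_s}\big) = 1$, while your count contributes $\#Q_0^{\sigma_s} > 1$. The excess is per index, so over $\Slg$ (and likewise over $\Ssm \cup \Smed$, whose size is not bounded in terms of $g$) it is exponential in $\#S$ and cannot be absorbed into $e^{Cg^2}(1+\#\Slg)^{Cg}$.

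The final "rearrangement" step you flag as the main obstacle therefore cannot be carried out from the quantities you have: no choice of $Q \in [Q_1, Q_0]$ makes $\#Q_1^{\sigma_s}$ or $\#N[\omega]^{\sigma_s}$ comparable to $\#(N/Q)[\omega]^{\sigma_s}/\#(N[\omega]/Q_0)^{\sigma_s}$ in general. The missing idea is to count lifts only within the $\mathscr{M}_1(Q_0)$-constrained coset and to express that count cohomologically via $\tfrac{1}{\omega}Q_1/Q_0$, with the exact evaluation $\#(\tfrac{1}{\omega}Q_1/Q_0) = \ell^{-\dim Q_0/Q_1}\cdot \#(N/Q)[\omega]$ at jury indices and the loss of at most $\ell^{\dim Q_0/Q_1}$ at small and medium indices.
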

\begin{proof}
From Proposition \ref{prop:parbound}, we see that we can bound the choices of pariah indices, together with the value of $n_s$ at the pariah indices, by $e^{Cg^2} (1 + \# \Slg)^{Cg}$. From \eqref{eq:reasonable_Q}, we can then bound the number of tuples $(n_s)_{s \in \Slg}$ completeable to some $(\phi_0, (n_s)_s)$ in the set  by
\[e^{Cg^2} (1 + \# \Slg)^{Cg} \cdot \prod_{s \in \Slg} \# \left(Q_1^{\sigma_s} \cap \text{im}\big((N/Q_0)[\omega]^{\sigma_s} \to Q_0^{\sigma_s}\big)\right).\]

Meanwhile, given $(n_s)_{s \in \Slg}$, the number of completions to $(\phi_0, (n_s)_s)$ in $\mathscr{M}_1(Q_0)$ is bounded by 
\[e^{Cg}\cdot \prod_{s \in \Ssm \cup \Smed}\#\text{im}\big((N/Q_0)[\omega]^{\sigma_s} \to Q_0^{\sigma_s}\big).\]
Takinng $\tfrac{1}{\omega}Q_1$ to be the set of $x$ in $N[\omega^2]$ for which $\omega x$ lies in $Q_1$, we have an identity
\[Q_1^{\sigma_s} \cap \text{im}\big((N/Q_0)[\omega]^{\sigma_s} \to Q_0^{\sigma_s}\big) \,=\, \text{im}\big(\big(\tfrac{1}{\omega}Q_1/ Q_0\big)^{\sigma_s} \to Q_{1}^{\sigma_s}\big),\]
where the image on the right comes from the second map in the exact sequence
\[0 \to N[\omega]/Q_0 \to \tfrac{1}{\omega} Q_1/Q_0 \to Q_1 \to 0.\]
So we have
\[\# \left(Q_1^{\sigma_s} \cap \text{im}\big((N/Q_0)[\omega]^{\sigma_s} \to Q_0^{\sigma_s}\big)\right) \,=\, \frac{\#\left(\tfrac{1}{\omega} Q_1 / Q_0 \right)^{\sigma_s}}{\#\left(N[\omega] / Q_0 \right)^{\sigma_s}}.\]
Given $Q$ satisfying $Q_1 \subseteq Q \subseteq Q_0$, we have the chain
\[\# \left(\tfrac{1}{\omega} Q_1 / Q_0 \right)^{\sigma_s} \le \# \left(\tfrac{1}{\omega} Q/ Q_0 \right)^{\sigma_s} \,=\, \#\left(\tfrac{1}{\omega} Q/ Q_0 \right)_{\sigma_s} \le \#\left(N/Q\right)[\omega]_{\sigma_s},\]
with the first and last inequalities following from the existence of a natural inclusion and projection, respectively. For $s \in S_{\text{jury}}$, we can make use of the exact result
\[\#  \left(\tfrac{1}{\omega} Q_1 / Q_0 \right) \,=\, \ell^{-\dim Q_0/Q_1}\cdot \# \left(N/Q\right)[\omega].\]
Finally, for $s \in \Ssm \cup \Smed$, we have
\[ \# \left(N/Q\right)[\omega]^{\sigma_s}\, \ge\,  \ell^{-\dim Q_0/Q_1} \cdot \# \left(N/ Q_0 \right)^{\sigma_s} ,\]
as follows from an application of the long exact sequence. Combining these bounds gives the proposition.
\end{proof}
We can now handle the borderline $(n_s)_s$ by combining Propositions \ref{prop:borderline_fewpair} and \ref{prop:borderline_few}.
\begin{prop}
\label{prop:borderline_rare}
With all notation as above, there is a $C > 0$ determined just from $(K/F, \Vplac_0, \FFF)$ so we have the following:

Choose distinct $G_F$-submodules $Q_0$ and $Q_1$ of $N[\omega]$ satisfying $Q_1 \subset Q_0$. Take $\mathscr{M}'$ to be the set of $(\phi_0, (n_s)_s)$ in $\mathscr{M}$ satisfying
\[\bfQ((n_s)_s) = Q_0\quad\text{and}\quad \bfQ\big(S_{\textup{npar}}((n_s)_s), \,(n_s)_s\big) = Q_1.\]\
Take
\[b =  \# S_{\textup{jury}} - 2\cdot \# \Ssm \cup \Smed.\]
We assume $b > 0$. Then, for any $G_F$-module $Q$ satisfying $Q_1 \subseteq Q \subseteq Q_0$, we have
\begin{align*}
&\frac{1}{\# X}\sum_{x \in X} \# \big\{m \in \mathscr{M}'\,:\,\, \Psi_x(m) \in \Sel^{\omega}N^{\chi(x)}\big\} \\
&\qquad\,\le\, \ell^{-b} \cdot e^{Cg^2} (1 + \# \Slg)^{Cg} \cdot  \mathcal{T}_{N, Q}(\chi(x_0)) \cdot \,+\, (c_{\textup{bilin}} + c_{\textup{Cheb}}) \cdot e^{C g|S|},
\end{align*} 
where $x_0$ is any point in $X$.
\end{prop}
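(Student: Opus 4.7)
The proof will follow the character sum strategy set up in Sections \ref{sec:character} and \ref{sec:ignorable}, combined with the counting bounds of Propositions \ref{prop:borderline_fewpair} and \ref{prop:borderline_few} for the two ``dimensions'' of the pair $(m, r)$.

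First I would expand the left-hand side via the characterization of the Selmer condition as the vanishing of all local obstructions $a_{s, x}(m) \in N[\omega]_{\sigma_s}$. Character-theoretically,
\[
\mathbf{1}\big[\Psi_x(m) \in \Sel^{\omega}N^{\chi(x)}\big] \,=\, \frac{1}{\#\mathscr{R}}\sum_{r \in \mathscr{R}} \exp\!\big(2\pi i \,\langle r, m\rangle_x\big),
\]
using that $\mathscr{R} = \bigoplus_s (N[\omega]^{\vee})^{\sigma_s}$ is the Pontryagin dual of $\bigoplus_s N[\omega]_{\sigma_s}$. After summing over $x \in X$ and interchanging order, the left-hand side becomes
\[
\frac{1}{\#X \cdot \#\mathscr{R}} \sum_{m \in \mathscr{M}'}\sum_{r \in \mathscr{R}} \sum_{x \in X} \exp\!\big(2\pi i \,\langle r, m\rangle_x\big).
\]

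Next I would partition the pairs $(m, r)$ into three classes according to Definition \ref{defn:ignorable}: bilinearly ignorable, ignorable by Chebotarev, and unignorable. For the first two classes, Notation \ref{notat:cJuti} gives bounds $c_{\textup{bilin}}$ and $c_{\textup{Cheb}}$ respectively on $|\frac{1}{\#X}\sum_x \exp(2\pi i \langle r, m\rangle_x)|$, so their total contribution is at most $(c_{\textup{bilin}} + c_{\textup{Cheb}}) \cdot \#\mathscr{M}/\#\mathscr{R} \cdot \#\mathscr{R} = (c_{\textup{bilin}} + c_{\textup{Cheb}})\#\mathscr{M}$. Since $\#\mathscr{M} \le e^{Cg|S|}$ (bounding $\#\msS_{N[\omega]/F}(\Vplac_0)$ and each $\#N[\omega]^{\sigma_s}$ by $e^{Cg}$), this supplies the second term of the desired bound.

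The main work is bounding the number of unignorable pairs. Given any $m \in \mathscr{M}'$, Proposition \ref{prop:borderline_fewpair} bounds the number of compatible $r$; using $\bfQ(S, (n_s)_s) = Q_0$, $\bfQ(S_{\textup{npar}}, (n_s)_s) = Q_1$, and $Q_1 \subseteq \bfQ(\Smed \cup \Slg, (n_s)_s) \subseteq Q_0$, this gives
\[
\#\{r \text{ unignorable}\} \,\le\, e^{Cg^2} \prod_{s \in \Slg}\#(N[\omega]/Q_0)^{\sigma_s} \prod_{s \in \Smed \cup \Ssm}\#(N[\omega]/Q_1)^{\sigma_s}.
\]
Multiplying by the bound on $\#\mathscr{M}'$ from Proposition \ref{prop:borderline_few} (noting $\mathscr{M}' \subseteq \mathscr{M}_1(Q_0)$ since only elements of $\mathscr{M}_1(Q_0)$ can contribute to the Selmer count), the $\#(N[\omega]/Q_0)^{\sigma_s}$ factors for $s \in \Slg$ cancel directly, and for $s \in \Smed \cup \Ssm$ we estimate
\[
\frac{\#(N[\omega]/Q_1)^{\sigma_s}}{\#(N[\omega]/Q_0)^{\sigma_s}} \,\le\, \#(Q_0/Q_1) \,=\, \ell^{\dim Q_0/Q_1}.
\]
Dividing by $\#\mathscr{R} = \prod_s \#N[\omega]^{\sigma_s}$ converts the remaining $\prod_s \#(N/Q_*)[\omega]^{\sigma_s}$ into Tamagawa ratios. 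The resulting $\ell$-power exponent is
\[
-\big(\#S_{\textup{jury}} - \#(\Smed\cup\Ssm)\big)\dim Q_0/Q_1 + \#(\Smed\cup\Ssm)\dim Q_0/Q_1 \,=\, -b \dim Q_0/Q_1 \,\le\, -b
\]
since $\dim Q_0/Q_1 \ge 1$ and $b > 0$. Finally, using $\prod_s \min_Q \le \min_Q \prod_s$ lets us replace the pointwise minima by any single $Q$ with $Q_1 \subseteq Q \subseteq Q_0$, yielding the advertised $\ell^{-b} e^{Cg^2}(1+\#\Slg)^{Cg}\cdot \mathcal{T}_{N, Q}(\chi(x_0))$ bound (up to an absorbable constant from $\Vplac_0$-places).

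The main obstacle I anticipate is bookkeeping the various submodules $\bfQ(S)$, $\bfQ(\Smed \cup \Slg)$, $\bfQ(S_{\textup{npar}})$ simultaneously and verifying that the cancellation between the $s \in \Slg$ factors in the two propositions is clean; the rest is combining estimates already supplied.
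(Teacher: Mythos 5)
Your proposal is correct and follows essentially the same route as the paper: expand the indicator of the Selmer condition as a character sum over $\mathscr{R}$, absorb the ignorable pairs into the $(c_{\textup{bilin}} + c_{\textup{Cheb}})\cdot e^{Cg|S|}$ term, restrict to $m \in \mathscr{M}_1(Q_0)$, and multiply the count of unignorable $r$ per $m$ from Proposition \ref{prop:borderline_fewpair} against the bound on $\#\mathscr{M}'$ from Proposition \ref{prop:borderline_few}, with the exponent bookkeeping $-k + \#(\Ssm\cup\Smed)\dim Q_0/Q_1 = -b\dim Q_0/Q_1 \le -b$ matching the paper's. The only cosmetic difference is that the paper takes a maximum over $m$ of the inner $r$-sum rather than counting unignorable pairs globally, which amounts to the same estimate.
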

\begin{proof}
We can rewrite the average we are bounding by
\[\frac{1}{\# X \cdot \# \mathscr{R}} \sum_{m \in \mathscr{M}'} \sum_{r \in \mathscr{R}} \sum_{x \in X} \exp\left(2\pi i \big\langle r,\, m\big\rangle_x\right).\]
We may restrict this sum to $m$ that are also in $\mathscr{M}_1(Q_0)$. Using Proposition \ref{prop:borderline_few}, we can then bound this average by
\[\frac{A}{\# X \cdot \# \mathscr{R}}  \max_{m \in \mathscr{M}'} \left(\sum_{r \in \mathscr{R}}  \sum_{x \in X}  \exp\left(2\pi i \big\langle r,\, m\big\rangle_x\right)\right),\]
where $A$ equals \eqref{eq:borderline_few}. The inner sum can be bounded using Proposition \ref{prop:borderline_fewpair}, giving
\begin{align*}
&\max_{m \in \mathscr{M}'} \left(\frac{1}{\#X} \sum_{r \in \mathscr{R}}  \sum_{x \in X}  \exp\left(2\pi i \big\langle r,\, m\big\rangle_x\right)\right) \\
&\qquad  \le\,  (c_{\textup{bilin}} + c_{\textup{Cheb}}) \cdot \# \mathscr{R} \, +\, e^{Cg^2}\cdot  \ell^{\dim Q_0/Q_1 \cdot \# \Ssm \cup \Smed } \cdot \prod_{s \in S} \#(N[\omega]/Q_0)^{\sigma_s}
\end{align*}
for some $C > 0$ only depending on $(K/F, \Vplac_0, \FFF)$. The result follows.
\end{proof}

\subsection{Rough bounds for average Selmer group sizes}
Having dealt with borderline $(n_s)_s$, we need upper bounds on the average number of non-borderline $(n_s)_s$ that give Selmer elements. Our main result is the following.
\begin{prop}
\label{prop:rough_grid_average}
With all notation as above, there is a $C > 0$ determined just from $(K/F, \Vplac_0, \FFF)$ so we have the following:

Choose a $G_F$-submodule $Q$ of $N[\omega]$, and take $\mathscr{M'}$ to be the set of non-borderline $(\phi_0, (n_s)_s) \in \mathscr{M}$ satisfying $Q = \bfQ((n_s)_s)$. Then
\begin{align*}
&\frac{1}{\# X}\sum_{x \in X} \# \big\{m \in \mathscr{M}'\,:\,\, \Psi_x(m) \in \Sel^{\omega}N^{\chi(x)}\big\}\\
&\qquad\quad\le \, e^{Cg^2} \cdot\mathcal{T}_{N, Q}(\chi(x_0))  + (c_{\textup{bilin}} + c_{\textup{Cheb}}) \cdot e^{Cg|S|},
\end{align*}
where $x_0$ is any point in $X$.
\end{prop}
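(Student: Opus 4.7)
The plan is to follow the blueprint of Proposition \ref{prop:borderline_rare}, adapting it to exploit the non-borderline hypothesis. I would first expand the average as the triple character sum
\[\frac{1}{\#X \cdot \#\mathscr{R}} \sum_{m \in \mathscr{M}'} \sum_{r \in \mathscr{R}} \sum_{x \in X} \exp\!\bigl( 2\pi i\, \langle r, m\rangle_x\bigr)\]
as in \eqref{eq:moment_to_pairs}, and partition the $(m, r)$ pairs according to whether they are bilinearly ignorable, ignorable by Chebotarev, or unignorable. The ignorable pairs contribute at most $\#\mathscr{M}' \cdot (c_{\textup{bilin}} + c_{\textup{Cheb}})$; since $\#\mathscr{M} \cdot \#\mathscr{R} \le e^{Cg|S|}$, this is absorbed into the stated error term. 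For unignorable pairs the $x$-sum is bounded trivially by $\#X$, leaving
\[\frac{\#\mathscr{M}'}{\#\mathscr{R}}\cdot  \max_{m \in \mathscr{M}'} \#\{r \in \mathscr{R} \,:\, (m, r) \text{ unignorable}\}\]
to estimate.

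The non-borderline hypothesis forces $\bfQ(S_{\textup{npar}}, (n_s)_s) = \bfQ(\Smed \cup \Slg, (n_s)_s) = \bfQ((n_s)_s) = Q$, so the three distinct products appearing in the unignorable bound of Proposition \ref{prop:borderline_fewpair} collapse to a single product, yielding the estimate $\max_m \#\{r : (m,r)\text{ unignorable}\} \le e^{Cg^2}\prod_{s \in S} \#(N[\omega]/Q)^{\sigma_s}$. To bound $\#\mathscr{M}'$ I would apply Proposition \ref{prop:borderline_few} with $Q_0 = Q_1 = Q$, where the $\ell^{-k}$ factor becomes trivial, and then exploit the non-borderline condition to subsume the pariah-enumeration factor $(1 + \#\Slg)^{Cg}$ into the exponential $e^{Cg^2}$: since $\bfQ(S_{\textup{npar}}) = Q$, the non-pariah indices already span $Q$, so one can first fix a spanning subset of at most $\dim Q \le g$ non-pariah indices (with at most $e^{Cg^2}$ choices) and then count the remaining $n_s$ through the coset constraint \eqref{eq:reasonable_Q}. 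The resulting bound is
\[\#\mathscr{M}' \,\le\, e^{Cg^2}\prod_{s \in S} \frac{\#(N/Q)[\omega]^{\sigma_s}}{\#(N[\omega]/Q)^{\sigma_s}}.\]

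Combining these estimates and using $\#\mathscr{R} = \prod_s \#(N[\omega]^{\vee})^{\sigma_s} = \prod_s \#N[\omega]^{\sigma_s}$, the unignorable contribution becomes
\[\frac{e^{Cg^2}\prod_s \#(N/Q)[\omega]^{\sigma_s}}{\prod_s \#N[\omega]^{\sigma_s}} \,=\, e^{Cg^2} \cdot \mathcal{T}_{N, Q}(\chi(x_0)),\]
using that $\mfh_F(\chi(x_0)) = \prod_s \pi_s(x_0)$ and that $\Frob_{\pi_s(x_0)}$ lies in the class of $\sigma_s$, so $H^0(G_{\pi_s(x_0)}, M) = M^{\sigma_s}$ for the relevant unramified modules $M$. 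The hard part is the clean $\#\mathscr{M}'$ bound: the direct application of Proposition \ref{prop:borderline_few} only delivers an estimate with an extra $(1+\#\Slg)^{Cg}$ factor coming from Proposition \ref{prop:parbound}, and the non-borderline hypothesis is precisely what allows this combinatorial enumeration of pariah indices to be subsumed into a uniform exponential constant $e^{Cg^2}$ rather than contributing a polynomial-in-$|S|$ term to the main bound.
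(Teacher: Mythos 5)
Your overall architecture is the same as the paper's: expand the average as the character sum \eqref{eq:moment_to_pairs}, absorb the ignorable pairs into the $(c_{\textup{bilin}} + c_{\textup{Cheb}})\cdot e^{Cg|S|}$ error term, observe that the non-borderline hypothesis forces $\bfQ(S_{\textup{npar}}) = \bfQ(\Smed\cup\Slg) = \bfQ(S) = Q$ so that the unignorable count of Proposition \ref{prop:borderline_fewpair} collapses to $e^{Cg^2}\prod_{s\in S}\#(N[\omega]/Q)^{\sigma_s}$, and combine with a bound on $\#\mathscr{M}'$ of the shape $e^{Cg^2}\prod_{s}\#(N/Q)[\omega]^{\sigma_s}/\#(N[\omega]/Q)^{\sigma_s}$. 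The final identification with $\mathcal{T}_{N,Q}(\chi(x_0))$ is also correct.

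The one step that fails as written is your justification of the bound on $\#\mathscr{M}'$. Fixing ``a spanning subset of at most $\dim Q \le g$ non-pariah indices'' does not cost $e^{Cg^2}$: the number of subsets of $S$ of size at most $g$ is of order $(1+\#S)^{g}$, which is exactly the polynomial-in-$|S|$ factor you were trying to eliminate, and it cannot be subsumed into a constant depending only on $g$. Fortunately, no pariah bookkeeping is needed here at all. Since $\bfQ((n_s)_s) = Q$, every $\tau_1 n_s - \tau_2 n_t$ lies in $Q$; in particular the images of all the $n_s$ in $N[\omega]/Q$ coincide and are $G_F$-invariant, so after fixing $\phi_0$ and this common residue (at most $e^{Cg}$ choices in total, with $C$ depending only on $(K/F,\Vplac_0,\FFF)$) the condition \eqref{eq:reasonable_Q} confines each $n_s$ to a coset of $\mathrm{im}\big((N/Q)[\omega]^{\sigma_s}\to Q^{\sigma_s}\big)$, whose cardinality is precisely $\#(N/Q)[\omega]^{\sigma_s}\big/\#(N[\omega]/Q)^{\sigma_s}$. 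Multiplying over $s\in S$ gives the desired bound on $\#\mathscr{M}'$ with no loss in $|S|$; this is the count the paper uses, and with it in place the rest of your argument goes through.
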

\begin{proof}
We may restrict the sum to $m$ lying in $\mathscr{M}_1(Q)$. The number of such $m$ can be bounded by an expression of the form
\[e^{Cg} \cdot \prod_{s \in S} \frac{\# (N/Q)[\omega]^{\sigma_s}}{\# (N[\omega]/Q)^{\sigma_s}}.\]
The result then follows from Proposition \ref{prop:borderline_fewpair}.
\end{proof}

For some grids, we will have access to bilinear results but not to the Chebotarev density theorem. For these, we will need the following rough bound on the average size of Selmer groups in the grid.
\begin{prop}
\label{prop:rough_grid_average_nocheb}
With all notation as above, there is a $C > 0$ determined just from $(K/F, \Vplac_0, \FFF)$ so we have
\begin{align*}
&\frac{1}{\# X}\sum_{x \in X} \# \big\{m \in \mathscr{M}\,:\,\, \Psi_x(m) \in \Sel^{\omega}N^{\chi(x)}\big\} \\
&\qquad\quad \le e^{Cg^2} \cdot \ell^{g\cdot \# \Ssm} \cdot  \max_{Q \subseteq N[\omega]} \mathcal{T}_{N, Q}(\chi(x_0)) + c_{\textup{bilin}} \cdot e^{Cg|S|},
\end{align*}
where $x_0$ is any point in $X$.
\end{prop}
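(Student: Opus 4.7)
The argument mirrors the proof of Proposition \ref{prop:rough_grid_average}, except that without the Chebotarev density theorem I can no longer discount pairs ignorable by Chebotarev alongside the bilinearly ignorable ones. My plan is to expand the left hand side by Fourier inversion, split the resulting sum over pairs $(m, r) \in \mathscr{M}_1 \times \mathscr{R}$ according to whether the pair is bilinearly ignorable, and control each piece separately. Since $\Psi_x(m)$ can lie in $\Sel^{\omega} N^{\chi(x)}$ only if $m \in \mathscr{M}_1$, the left hand side equals
\[
\frac{1}{\# X \cdot \# \mathscr{R}}\sum_{m \in \mathscr{M}_1}\sum_{r \in \mathscr{R}} \sum_{x\in X}\exp\!\left(2\pi i \langle r,\, m\rangle_x\right).
\]

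The bilinearly ignorable pairs are handled trivially: by Notation \ref{notat:cJuti}, the inner $x$-average has magnitude at most $c_{\textup{bilin}}$, and $\# \mathscr{M}_1 \cdot \# \mathscr{R} \le e^{Cg|S|}$ for some $C$ depending only on $(K/F, \Vplac_0, \FFF)$, contributing the $c_{\textup{bilin}} \cdot e^{Cg|S|}$ term to the bound.

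For the remaining pairs I apply the trivial bound of magnitude $1$ to each $x$-average and organize $m \in \mathscr{M}_1$ by the submodule $Q_{ml} := \bfQ(\Smed \cup \Slg,\, (n_s)_s)$. Following the counting in the proofs of Propositions \ref{prop:borderline_few} and \ref{prop:rough_grid_average}, the number of $m$ with a given value of $Q_{ml}$ is bounded by $e^{Cg^2}\prod_s \#(N/Q_{ml})[\omega]^{\sigma_s}/\#(N[\omega]/Q_{ml})^{\sigma_s}$, where the crude bound $n_s \in N[\omega]^{\sigma_s}$ is used at the small primes. The first estimate of Proposition \ref{prop:borderline_fewpair} bounds the number of non-bilinearly-ignorable $r$ per such $m$ by $e^{Cg^2}\prod_{s \in \Smed \cup \Slg}\#(N[\omega]/Q_{ml})^{\sigma_s} \cdot \prod_{s \in \Ssm}\#N[\omega]^{\sigma_s}$.

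Multiplying these two bounds and dividing by $\# \mathscr{R} = \prod_s \# N[\omega]^{\sigma_s}$, for each $s \in \Smed \cup \Slg$ the factor $\#(N[\omega]/Q_{ml})^{\sigma_s}$ cancels, leaving the Tamagawa factor $\#(N/Q_{ml})[\omega]^{\sigma_s}/\# N[\omega]^{\sigma_s}$. For each $s \in \Ssm$, the remaining ratio is
\[
\frac{\#(N/Q_{ml})[\omega]^{\sigma_s}}{\#(N[\omega]/Q_{ml})^{\sigma_s}} \,\le\, \# Q_{ml}^{\sigma_s} \,\le\, \ell^g
\]
by the exact sequence $0 \to N[\omega]/Q_{ml} \to (N/Q_{ml})[\omega] \to Q_{ml}$ coming from the snake lemma applied to multiplication by $\omega$ on $0 \to Q_{ml} \to N \to N/Q_{ml} \to 0$. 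These excess factors at small primes multiply to at most $\ell^{g\cdot \# \Ssm}$; summing over the at most $\ell^{g^2}$ choices of $Q_{ml}$ absorbs into $e^{Cg^2}$; and finally the bound $\mathcal{T}_{N, Q_{ml}}(\chi(x_0)) \le \max_Q \mathcal{T}_{N, Q}(\chi(x_0))$ yields the first term of the claim. The main subtlety, as in Proposition \ref{prop:rough_grid_average}, lies in keeping the bookkeeping of cohomological sizes straight so that the $\ell^{g\cdot \# \Ssm}$ loss at small primes is isolated cleanly; borderline tuples $(n_s)_s$ where $Q_{ml}$ is strictly smaller than the full ramification subspace are already absorbed into this bound by the $Q_{ml}$-stratification.
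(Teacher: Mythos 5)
Your overall strategy (Fourier expansion, split off the bilinearly ignorable pairs, trivial bound plus a count for the rest) is the right one, but the count of $m$ in a given stratum is wrong, and this is exactly the point where the proposition is delicate. You stratify $\mathscr{M}_1$ by $Q_{ml} = \bfQ(\Smed\cup\Slg, (n_s)_s)$ and claim that the number of $m$ in a stratum is governed by the congruence condition \eqref{eq:reasonable_Q} taken modulo $Q_{ml}$, giving a factor $\#(N/Q_{ml})[\omega]^{\sigma_s}/\#(N[\omega]/Q_{ml})^{\sigma_s}$ at each $s \in \Smed\cup\Slg$. But membership in $\mathscr{M}_1$ only imposes \eqref{eq:reasonable_Q} modulo the \emph{full} ramification subspace $Q = \bfQ(S,(n_s)_s)$, which strictly contains $Q_{ml}$ whenever the coordinates $n_s$ at small indices generate new directions; note that $\mathscr{M}_1(Q_{ml}) \subseteq \mathscr{M}_1(Q)$, so an element of $\mathscr{M}_1$ with $\bfQ(m) = Q \supsetneq Q_{ml}$ need not satisfy the mod-$Q_{ml}$ condition at all. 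The true count at each $s \in \Smed\cup\Slg$ is the size of the kernel of $Q_{ml}^{\sigma_s} \to (N[\omega]/Q)_{\sigma_s}$, which exceeds your claimed count by a factor of up to $\#(N[\omega]/Q_{ml})^{\sigma_s}\big/\#(N[\omega]/Q)^{\sigma_s}$. Since the $r$-count from the first part of Proposition \ref{prop:borderline_fewpair} is genuinely governed by $Q_{ml}$ (bilinear ignorability sees only $\Smed\cup\Slg$), nothing compensates: the excess compounds over all of $\Smed\cup\Slg$, i.e.\ is exponential in $|S|$ rather than in $g^2 + g\cdot\#\Ssm$, and the telescoping to a Tamagawa ratio breaks. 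Your closing remark that such tuples are ``already absorbed by the $Q_{ml}$-stratification'' is precisely the assertion that fails.

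The paper repairs this with a structural observation you did not use: for fixed $x$, the set $\{m \in \mathscr{M} : \Psi_x(m) \in \Sel^{\omega}N^{\chi(x)}\}$ is a \emph{subgroup} of $\mathscr{M}$, since $\Psi_x$ is linear. Letting $\mathscr{M}'$ be the subgroup of $m$ whose coordinates vanish at every $s \in \Ssm \cup \{s_0\}$ for one fixed $s_0 \in \Smed\cup\Slg$, one has $\#A \le [\mathscr{M}:\mathscr{M}']\cdot\#(A\cap\mathscr{M}')$ for any subgroup $A$, and $[\mathscr{M}:\mathscr{M}'] \le \ell^{g(1 + \#\Ssm)}$ — this is where the factor $\ell^{g\cdot\#\Ssm}$ in the statement actually comes from. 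Inside $\mathscr{M}'$ one has $\bfQ(S,(n_s)_s) = \bfQ(\Smed\cup\Slg,(n_s)_s)$ identically, so the stratification is by a single submodule $Q$, the congruence condition and the $r$-count both refer to the same $Q$, and your bookkeeping then goes through verbatim. Without some device of this kind that forces the two submodules to coincide, the argument does not close.
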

\begin{proof}
If $\Smed \cup \Slg$ is empty, the result is immediate. So choose $s_0$ in $\Smed \cup \Slg$, and take $\msM'$ to be the subgroup of $\msM$ of elements that are $0$ at each $s \in \Ssm \cup \{s_0\}$. Combining the first part of Proposition \ref{prop:borderline_fewpair} with the calculation for the size of $\msM' \cap \msM_1(Q)$ for a given $Q \subseteq N[\omega]$  gives
\begin{align*}
&\frac{1}{\# X}\sum_{x \in X} \# \big\{m \in \mathscr{M}' \cap \msM_1(Q)\,:\,\, \Psi_x(m) \in \Sel^{\omega}N^{\chi(x)}\big\} \\
&\qquad\quad \le e^{C_0g^2} \cdot \mathcal{T}_{N, Q}(\chi(x_0)) + c_{\textup{bilin}} \cdot e^{C_0g|S|},
\end{align*}
for some $C_0$ depending on $(K/F, \Vplac_0, \FFF)$. These sum to give the proposition.
\end{proof}

\section{Moments of sizes of $\omega$-Selmer groups in grids}
\label{sec:moments}

Propositions \ref{prop:borderline_rare} and \ref{prop:rough_grid_average} give us rough upper bounds on the average size of $\omega$-Selmer groups in a grid of twists. In this section, we give conditions under which we can estimate these averages. Throughout, we will take $G_1 = \Gal(K(\Vplac_0)/F(\mu_{\ell^{k_0}}))$.
\subsection{Invariant pairs}
Our first goal will be to show that certain collections of invariant pairs do not contribute meaningfully to the sum \eqref{eq:moment_to_pairs}. The reason these pairs will not contribute involves the appearance of the connecting map $\delta_{\sigma_s}$ in the expression \eqref{eq:ps_cond}. Isolating the contribution of this map is relatively subtle, and our process of isolating it starts with the following variant of the group $\mathscr{M}_1(Q)$ constructed in Definition \ref{defn:ram_sub}.

\begin{notat}
For $\sigma \in G_1$, take $Q(\sigma)$ to be the subset of $n$ in $Q^{\sigma}$ satisfying
\[\delta_{\sigma} n \equiv 0 \text{ mod } Q + (\sigma -1)N[\omega]\]
Take 
\[\mathscr{M}_{10}(Q) = \{(0, (n_s)_s) \in \mathscr{M} \,:\,\, n_s \in Q(\sigma_s)\text{ for all } s \in S\}.\]
\end{notat}

With this set, choose some $G_F$-equivariant homomorphism 
\[\Gamma: Q \to Q^{\vee}.\]
We assume that this map is alternating, so $\Gamma(m) \cdot  m = 0$ for every $m$ in $Q$. We also assume that, for all $\sigma \in G_1$ and $n \in Q(\sigma)$, there is an $r \in (N[\omega]^{\vee})^\sigma$ projecting to $\Gamma(n)$ under the dual to the inclusion map $Q \hookrightarrow N[\omega]$.

From this information, we construct a quadratic form $f_{\Gamma}: \mathscr{M}_{10}(Q) \to \tfrac{1}{\ell}\Z/Z$ using the expression
\[f_{\Gamma}(m) = \langle r, m \rangle_x,\]
 where $r$ is chosen so $\pi_s(r)$ projects to $\Gamma(\pi_s(m))$ for all $s \in S$, and where $x$ is chosen from $x \in X$. We can check that the expression does not depend on the choice of $r$ or $x$.

We recall the notion of commuting with connecting maps given in \eqref{eq:comm_conn}. We need a variant of this definition to handle certain kinds of invariant pairs. This definition also uses the dual connecting map appearing in Definition \ref{defn:dual_delta}.
\begin{defn}
\label{defn:cancellable}
Choose $\Gamma: Q \to Q^{\vee}$ obeying the assumptions above. Suppose there is a choice of $\sigma \in G_1$, elements $n_0, n_1$ in $Q(\sigma)$, and elements $r_0, r_1$ in $(N[\omega]^{\vee})^{\sigma}$ such that $r_0$ projects to $\Gamma(n_0)$, such that $r_1$ projects to $\Gamma(n_1)$, and such that
\begin{equation}
\label{eq:cancellable_condition}
\delta_{\sigma}(n_0) \cdot r_1 \ne - n_1 \cdot \delta_{\sigma}^{\vee}(r_0),
\end{equation}
where $\delta_{\sigma}^{\vee}$ is the map appearing in Definition \ref{defn:dual_delta}, where the left product is defined using the identification
\[H^1(\langle \sigma \rangle, N[\omega]) \cong (N[\omega])_{\sigma}\]
and the evaluation pairing $N[\omega]_{\sigma} \times (N[\omega]^{\vee})^{\sigma} \to \mu_{\ell}$, and where the right product is defined using the dual construction. Then we call the map $\Gamma$ \emph{cancellable}.
\end{defn}

In the case that $\Gamma$ is cancellable, we will find that the sum 
\[\sum_{(n_s)_s \in \mathscr{M}_{10}(Q)} \exp(2\pi i f_{\Gamma}((n_s)_s))\]
tends to have small magnitude. We can prove this by showing that the associated bilinear form
\[B_{\Gamma}: \mathscr{M}_{10}(Q) \times \mathscr{M}_{10}(Q) \to \tfrac{1}{\ell}\Z/\Z\]
given by
\[B_{\Gamma}\big(m_1, \,m_2\big)\, =\, f_{\Gamma}(m_1 + m_2) - f_{\Gamma}(m_1) - f_{\Gamma}(m_2)\]
has large rank.

\begin{prop}
\label{prop:cancellable}
Take $\Gamma$ as above, and suppose that $\Gamma$ is cancellable. Choose $\sigma \in G_1$ so we can find $n_0$, $n_1$, $r_0$, and $r_1$ satisfying the condition of Definition \ref{defn:cancellable}, and take $S_0$ to be the subset of $S$ of $s$ so that $\sigma_s$ maps to $\sigma$. Then $B_{\Gamma}$ has rank at least $|S_0| - 1$.
\end{prop}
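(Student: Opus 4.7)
The plan is to show that, under cancellability, $B_{\Gamma}$ is block-diagonal across $s \in S$ with each block at $s \in S_0$ of rank at least one, which will give the stronger bound $\textup{rank}\,B_\Gamma \ge |S_0|$.

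First, since $\phi_0 = 0$ throughout $\mathscr{M}_{10}(Q)$, the formula \eqref{eq:ps_cond} shows that $a_{s,x}(m)$ is $\FFFF_\ell$-linear in $m$. I would fix an $\FFFF_\ell$-linear componentwise lift $\widetilde{\Gamma} \colon \mathscr{M}_{10}(Q) \to \mathscr{R}$ whose $s$-component sends $n_s$ to a lift in $(N[\omega]^{\vee})^{\sigma_s}$ of $\Gamma(n_s)$ (existing by the hypothesis on $\Gamma$). Writing $r_s$ and $r'_s$ for the $s$-components of $\widetilde{\Gamma}(m)$ and $\widetilde{\Gamma}(m')$, the definition of $f_\Gamma$ together with bilinearity of the pairing gives
\[B_{\Gamma}(m, m') \,=\, \langle \widetilde{\Gamma}(m), m' \rangle_x + \langle \widetilde{\Gamma}(m'), m \rangle_x.\]

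Next I would expand via \eqref{eq:ps_cond} and organize the terms. The $\chi_0$-contribution is $-\sum_s \chi_0(\sigma_s) \textup{lg}\big(\gamma_s[\Gamma(n_s)(n'_s) + \Gamma(n'_s)(n_s)]\big)$, which vanishes by polarizing the alternating identity $\Gamma(n)(n) = 0$. For the symbol terms, I would regroup $(s, t, \tau)$ with $(t, s, \tau^{-1})$ exactly as in Proposition \ref{prop:change_p}; by $G_F$-equivariance of $\Gamma$, the combined coefficient equals $\Gamma(\tau n_t - n_s)(\tau n'_t - n'_s) + \Gamma(\tau n'_t - n'_s)(\tau n_t - n_s)$, which again vanishes by polarized alternation, and the diagonal $B_{\textup{fixed}}(\sigma_s)$ pieces at $\ell = 2$ are absorbed via Proposition \ref{prop:involution_spin}. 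What remains is
\[B_{\Gamma}(m, m') \,=\, -\sum_{s \in S} \textup{lg}\big(\gamma_s \big[r_s \cdot \delta_{\sigma_s}(n'_s) + r'_s \cdot \delta_{\sigma_s}(n_s)\big]\big),\]
which is block-diagonal in $s$. For $s \in S_0$, all $\sigma_s$ induce the same connecting map $\delta_\sigma$ and the same subspace $Q(\sigma)$, so the $s$-block is $(n, n') \mapsto -\gamma_s \textup{lg}\,\beta_\sigma(n, n')$ where $\beta_\sigma(n, n') := r(n) \cdot \delta_\sigma(n') + r(n') \cdot \delta_\sigma(n) \in \mu_\ell$; this pairing is well-defined on $Q(\sigma) \times Q(\sigma)$ because for $n' \in Q(\sigma)$ the class $\delta_\sigma(n')$ is represented by an element of $Q$, so the value depends only on $r(n)|_Q = \Gamma(n)$.

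To finish I would verify $\beta_\sigma \ne 0$. Standard compatibility of cup product with the connecting maps of the mutually dual sequences $0 \to N[\omega] \to N[\omega^2] \to N[\omega] \to 0$ and its dual yields the identity $r \cdot \delta_\sigma(n) = n \cdot \delta_\sigma^{\vee}(r)$ for $n \in N[\omega]^\sigma$ and $r \in (N[\omega]^{\vee})^\sigma$; the sign in Definition \ref{defn:cancellable} is chosen precisely so that cancellability reads $\beta_\sigma(n_0, n_1) = \delta_\sigma(n_0) \cdot r_1 + n_1 \cdot \delta_\sigma^{\vee}(r_0) \ne 0$. Enumerating $S_0 = \{s_1, \dots, s_k\}$, define test vectors $u_i, v_j \in \mathscr{M}_{10}(Q)$ by placing $n_0$ (respectively $n_1$) at position $s_i$ (respectively $s_j$) and zero at every other coordinate. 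Block-diagonality forces $B_\Gamma(u_i, v_j) = 0$ for $i \ne j$, while the diagonal entries are $-\gamma_{s_i} \textup{lg}\,\beta_\sigma(n_0, n_1) \ne 0$; the Gram matrix is therefore diagonal and invertible, so $\textup{rank}\, B_\Gamma \ge k = |S_0|$, which is stronger than the claimed bound.

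The main obstacle is the symbol cancellation in the second paragraph: although it parallels the check that $f_\Gamma$ is well-defined, the bilinear form version requires tracking a symmetric partner for each of the three term types in Proposition \ref{prop:change_p}, and at $\ell = 2$ the $B_{\textup{fixed}}(\sigma_s)$ pieces hinge on Proposition \ref{prop:involution_spin}. A smaller obstacle is matching the sign in the cup-product duality $r \cdot \delta_\sigma(n) = n \cdot \delta_\sigma^{\vee}(r)$ to the convention encoded in the definition of cancellability.
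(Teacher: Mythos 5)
Your overall strategy --- restrict $B_\Gamma$ to the span of test vectors supported at single coordinates $s \in S_0$ with values built from the $n_0, n_1, r_0, r_1$ of Definition \ref{defn:cancellable}, and show the resulting Gram matrix has large rank --- is the same as the paper's. But there is a genuine gap at the central claim that $B_\Gamma$ is block-diagonal, i.e.\ that $B_\Gamma(u_i, v_j) = 0$ for $i \ne j$. When you regroup the $(s,t,\tau)$ term with the $(t,s,\tau^{-1})$ term, the two symbols $\symb{\tau\pi_s(x)}{\pi_t(x)}'$ and $\symb{\tau^{-1}\pi_t(x)}{\pi_s(x)}'$ are not equal: their quotient \eqref{eq:tau_rec} is merely \emph{determined by the classes} of the primes, so the regrouping leaves behind a constant residue (the ``$C$'' in the proof of Proposition \ref{prop:change_p}) that depends on $\tau$, the classes, and the coefficients $\Gamma(n_0)\cdot\tau^{-1}n_1$, and is not killed by polarized alternation. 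On top of that, converting the cross-symbols to single-prime terms produces genuine spin contributions $\sum_{\tau}(\cdots)\,\kappa\bigl(\symb{\tau\ovp}{\ovp}'\bigr)$ together with an extra term $A\,\Gamma(n_1)\cdot n_0$ coming from Hilbert reciprocity and the identity $(a,-a)=1$ when $\ell = 2$. The paper's computation keeps all of this and shows only that the off-diagonal entries $B_\Gamma(E_0(s), E_1(t))$, $s \ne t$, equal a common constant $C$ --- independent of $s$ and $t$ by Proposition \ref{prop:involution_spin}, but not zero in general --- while the diagonal entries differ from $C$ by the nonzero quantity supplied by cancellability. The Gram matrix is thus $C\cdot J$ plus an invertible diagonal matrix, which is exactly why the conclusion is rank $\ge |S_0| - 1$ rather than the $|S_0|$ you claim; if your block-diagonality held, the proposition would have been stated with the stronger bound.

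Your argument is salvageable with minor repairs, since a matrix with constant off-diagonal entries and diagonal entries all distinct from that constant still has rank at least $|S_0|-1$, which is all the proposition asks. Two smaller points to fix along the way: the ``standard'' cup-product compatibility $r\cdot\delta_\sigma(n) = n\cdot\delta_\sigma^\vee(r)$ is itself off by a correction term --- the paper computes $\iota_1(\delta_\sigma^\vee\iota_1^{-1}(\phi)) = A\cdot\phi + \phi\circ\delta_\sigma$ with $A \ne 0$ possible when $\#\FFFF = 2$ and $\sigma$ moves $\sqrt{-1}$ --- and this $A$-term must be tracked to match the sign convention in \eqref{eq:cancellable_condition}; and the $\chi_0$- and $\delta_{\sigma_s}$-contributions carry a factor $\gamma_s^{-1}$ from the normalization $n_k/\gamma_s$ of the test vectors, which is harmless (a unit) but should appear in your diagonal entries.
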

\begin{proof}
Throughout this proof, we will identify $\mu_{\ell}$ and $\tfrac{1}{\ell}\Z/\Z$ using $\text{lg}$, and we will often write the group operation for $\mu_{\ell^2}$ additively.

Given $k \ge 1$, take $\iota_k$ to be the standard isomorphism of $N^{\vee}[\omega^k]$ and $N[\omega^k]^{\vee}$. We have a commutative diagram
\[
\begin{tikzcd}
0 \arrow{r} & N^{\vee}[\omega] \arrow{r} \arrow{d}{-\iota_1} & N^{\vee}[\omega^2] \arrow{r}{\omega} \arrow{d}{\iota_2} & N^{\vee}[\omega] \arrow{r} \arrow{d}{\iota_1} & 0 \\
0 \arrow{r} &N[\omega]^{\vee} \arrow{r}{\omega^{\vee}}& N[\omega^2]^{\vee} \arrow{r} & N[\omega]^{\vee} \arrow{r} & 0
\end{tikzcd}\]
We may explicitly calculate the connecting map $(N[\omega]^{\vee})^{\sigma} \to (N[\omega]^{\vee})_{\sigma}$ corresponding to the lower sequence here. This connecting map takes $\phi$ in $(N[\omega]^{\vee})^{\sigma}$ to the class of the map
\[m \mapsto (\sigma - 1)\left(\phi'(\omega^{-1} m)\right) - \phi(\delta_{\sigma} m),\]
where $\phi'$ is an element of $(N[\omega^2])^{\vee}$ whose restriction to $(N[\omega])^{\vee}$ equals $\phi$.

Taking
\[A = \begin{cases} \tfrac{1}{2}\left(1 - \tfrac{\sigma(\sqrt{-1})}{\sqrt{-1}}\right) &\text{ if } \#\FFF = 2 \\ 0 & \text{otherwise,}\end{cases}\]
we see that the image of $\phi$ under this connecting map may be given as $A \cdot \phi - \phi \circ \delta_{\sigma}$. So
\[\iota_1 \left(\delta_{\sigma}^{\vee} \iota_1^{-1}(\phi)\right) = A \cdot \phi + \phi \circ \delta_{\sigma}\quad\text{in} \left(N[\omega]^{\sigma}\right)^{\vee} \cong \left(N[\omega]^{\vee}\right)_{\sigma}.\]
Given distinct $s, t \in S_0$, we also have
\begin{equation}
\label{eq:hi_negative_one}
\kappa(\symb{\pi_s(x)}{\pi_t(x)}') \cdot \phi -  \kappa(\symb{\pi_t(x)}{\pi_s(x)}') \cdot \phi = A \cdot \phi,
\end{equation}
as can be checked using Hilbert reciprocity and the general identity $(a, -a) = 1$ of Hilbert symbols. With this calculated, we return to proving the lower bound for the rank of $B_{\Gamma}$.

Choose $n_0, n_1, r_0, r_1$ so the condition of Definition \ref{defn:cancellable} is satisfied.  For $k = 0, 1$ and $s \in S_0$, take $E_k(s)$ to be the element of $\mathscr{M}_1(Q)$ with $s^{th}$-coordinate $n_k/\gamma_s$ and all other coordinates $0$, and take $\Gamma E_k(s)$ to be the element of $\mathscr{R}$ with $s^{th}$-coordinate $r_k/\gamma_s$ and all other coordinates $0$.

 Fix some $x$ in $X$. For any $s, t \in S$, we have
\[B_{\Gamma}(E_0(s), \, E_1(t)) = \left\langle \Gamma E_0(s), E_1(t)\right\rangle_x  + \left\langle \Gamma E_0(t), E_1(s)\right\rangle_x .\]
Suppose $s$ and $t$ are distinct, and take $\ovp = \pi_s(x)$ and $\ovq = \pi_t(x)$. We start by noting that
\[\langle \Gamma E_0(s), E_1(t)\rangle_x = \sum_{\tau \in B(\sigma, \sigma)} \Gamma(n_0) \cdot \tau^{-1}n_1\cdot \kappa\left(\symb{\tau\ovp}{\ovq}'\right).\]

Given $\tau_0, \tau_1 \in G_F$ representing the same double coset not containing the identity, we have that
\[\symb{\tau_0 \ovp}{\ovq}' - \tau_1\left(\symb{\tau_1^{-1} \ovq}{\ovp}'\right) = \symb{\tau_0 \ovp}{\ovp}' - \tau_1\left(\symb{\tau_1^{-1} \ovp}{\ovp}'\right).\]
Applying this and \eqref{eq:hi_negative_one}, we can calculate that $B_{\Gamma}(E_0(s), \, E_1(t))$ equals
\[A\Gamma(n_1) \cdot n_0 + \sum_{\tau \in B(\sigma, \sigma)}\left(\Gamma(n_0) \cdot \tau^{-1} n_1 + \Gamma(n_0) \cdot \tau^{-1}n_1 \right)  \cdot  \kappa\left(\symb{\tau \ovp}{\ovp}'\right).\]
This sum does not depend on the choice of $s$ or $t$.

We also can calculate
\begin{align*}
\langle \Gamma E_0(s), E_1(s) \rangle_x &= C_{s, x} \Gamma(n_0) \cdot n_1 -  \gamma_s^{-1} r_0 \cdot \delta_{\sigma}(n_1) \\
& + \sum_{\tau \in B(\sigma, \sigma)} \Gamma(n_0) \cdot \tau^{-1}(n_1) \kappa\left(\symb{\tau\ovp}{\ovp}'\right),
\end{align*}
where $C_{s, x}$ depends on $s$ and $x$ but not $k$ and $j$.
From the fact that $\Gamma$ is alternating, we then have
\begin{align*}
&B_{\Gamma}(E_0(s), \, E_1(s)) \, -\, B_{\Gamma}(E_0(s), \, E_1(t))\\
&\qquad \qquad=  -\gamma_s^{-1} \cdot \big( r_0 \cdot \delta_{\sigma}(n_1) +r_1 \cdot \delta_{\sigma}(n_1)\big) \,+\, A \Gamma(n_1) \cdot n_0 .
\end{align*}
We see that this is nonzero. In particular, there is $C$ in $\tfrac{1}{\ell}\Z/\Z$ so
\[B_{\Gamma}\big( E_0(s), \, \,E_1(t)\big) = \begin{cases} C &\text{ if } s \ne t \\ \ne C &\text{ if } s = t \end{cases}\]
for all $s, t \in S_0$. This is enough to give the proposition.
\end{proof}

In particular, suppose we have taken $\Gamma$ and $S_0$ as in this proposition. Given a vector subspace $V$ of $\mathscr{M}_{10}(Q)$ of codimension $r \le\tfrac{1}{2}|S_0|- \tfrac{1}{2}$, and given a homomorphism $\psi: V \to \mu_{\ell}$, we have
\begin{equation}
\label{eq:direct_Gauss}
\left|\sum_{v \in V} \psi(v) \exp(2\pi i f_{\Gamma}(v))\right| \le \ell^{-b/2} \cdot |V|
\end{equation}
where $b = |S_0| - 1 - 2r$. To see this, take $V_0$ to be a $b$-dimensional subspace of $V$ where $B_{\Gamma}$ is nondegenerate. A standard argument for Gauss sums \cite[Section 3.4]{Iwan04} gives
\begin{align*}
\left|\sum_{v \in V_0} \psi(v) \exp(2\pi i f_{\Gamma}(v)) \right|^2& = \sum_{v, w \in V_0} \psi(v - w) \exp(2\pi i (f_{\Gamma}(v)- f_{\Gamma}(w)))\\
& = \sum_{v, w\in V_0} \psi(v-w) \exp(2\pi i (f_{\Gamma}(v - w) + B_{\Gamma}(v - w, w)) ) = |V_0|,
\end{align*}
which is enough to prove \eqref{eq:direct_Gauss}.

As a converse to Proposition \ref{prop:cancellable}, we could prove an upper bound on the rank of $B_{\Gamma}$  depending on $N$ and $(K/F, \Vplac_0, \FFF)$ for non-cancellable $\Gamma$. Instead of this general converse, our main results require a stronger result under more stringent conditions. 
\begin{prop}
\label{prop:theta}
Suppose that $N$ has alternating structure $\nu: N \to N^{\vee}$ and  that $\#\FFF = 2$. Then, given $m = (\phi_0, (n_s)_s)$ in $\mathscr{M}_0$ and $x$ in $X$, we have
\[\left\langle m, (\nu(n_s))_{s \in S}\right\rangle_x = 0.\]
\end{prop}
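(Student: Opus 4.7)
The plan is to expand the left-hand side using the explicit formula \eqref{eq:ps_cond} and exhibit the vanishing through a combination of the alternating property of $\nu$, the reciprocity identity \eqref{eq:hi_negative_one}, and a global product formula. Since $\#\FFF = 2$, every $\gamma_s$ equals $1$, and the alternating property of $\nu$ says that the $G_F$-equivariant bilinear form $\langle x, y\rangle := \nu(x)(y) \in \mu_2$ satisfies $\langle n, n\rangle = 1$ for all $n \in N[\omega]$; in characteristic $2$ this also forces the form to be symmetric.

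First, I expand $\nu(n_s)(a_{s,x}(m))$ into four kinds of contributions: a term $\langle n_s,\, \phi_0(\sigma_s)\rangle$ coming from $\phi_0$, a connecting-map term $\langle n_s,\, \delta_{\sigma_s}(n_s)\rangle$, a scalar term $\chi_0(\sigma_s) \cdot \langle n_s,\, n_s\rangle$, and a symbol term $\sum_t \sum_{\tau \in B(\sigma_t, \sigma_s)} \langle n_s,\, \tau^{-1}n_t - n_s\rangle \cdot \kappa\big(\symb{\tau\pi_s(x)}{\pi_t(x)}'\big)$. The scalar term is immediately trivial by $\langle n_s, n_s\rangle = 1$, as are the self-contributions $\langle n_s, n_s\rangle$ appearing inside the symbol sum.

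Second, I reorganize the surviving symbol sum into pairs. For $s \ne t$ and $\tau \in B(\sigma_t, \sigma_s)$, the contribution $\langle n_s, \tau^{-1} n_t\rangle \cdot \kappa(\symb{\tau\pi_s(x)}{\pi_t(x)}')$ pairs with the $(t, s, \tau^{-1})$ contribution, which using $G_F$-equivariance and the symmetry of $\langle\,,\,\rangle$ is $\langle n_s,\, \tau^{-1} n_t\rangle \cdot \kappa(\symb{\tau^{-1}\pi_t(x)}{\pi_s(x)}')$. By Hilbert reciprocity in the form \eqref{eq:hi_negative_one}, the product of the two symbols is $\kappa$ of a fixed $\ell$-th power, so this paired contribution depends only on $x$-independent data; moreover its $s$-summand and $t$-summand absorb into the ``diagonal'' expressions that will be handled by the theta-group step. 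For the double-coset representatives $\tau \in B(\sigma_s, \sigma_s)$ with $\tau^{-1} \ne \tau$, a symmetric argument together with the involution-spin Proposition \ref{prop:involution_spin} cancels the paired terms modulo the same class. The remaining $\tau$ fixed by the involution are precisely what Proposition \ref{prop:involution_spin} is built to handle: its conclusion that $\symb{\tau\pi_s(x)}{\pi_s(x)}'/\symb{\tau\pi_{s,0}}{\pi_{s,0}}'$ is a square shows these contributions are independent of $x$ and collapse onto the diagonal.

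Third, and most delicately, I dispose of the surviving $x$-independent contributions -- those involving $\phi_0$, the connecting map $\delta_{\sigma_s}$, the residual diagonal symbol data, and the unchecked local behavior at places in $\Vplac_0$. The key observation is that $(\nu(n_s))_s$ is precisely the ``dual parameter data'' for $\nu \circ \Psi_x(m) \in H^1(G_F,\,N^{\vee,\chi(x)}[\omega])$, and the rearranged expression is exactly the formal sum of local Cassels--Tate-type invariants of the pair $\big(\Psi_x(m),\,\nu\Psi_x(m)\big)$ at the primes outside $\Vplac_0$. The condition $m \in \mathscr{M}_0$ guarantees vanishing of the corresponding local invariants at the places in $\Vplac_0$, since the local conditions there are isotropic for the alternating structure. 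Global Poitou--Tate reciprocity then asserts that the total sum over all places is zero, leaving the sum over $S$ as the negative of the vanishing $\Vplac_0$-contribution, hence zero. The main obstacle is this final identification: one must show that the combinatorial expression produced by \eqref{eq:ps_cond} coincides, as an element of $\tfrac12\Z/\Z$, with the sum of invariants of a genuine global self-pairing -- this requires tracking the ramification contributions of $\Psi_x(m)$ at each $\pi_s(x)$ through the ramification section $\mfB$ and verifying the match at the isotropy data prescribed by the alternating local conditions of $N$.
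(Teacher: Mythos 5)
Your proposal has a genuine gap, and it sits exactly where you flag ``the main obstacle'': the identification of the combinatorial expansion with the local invariants of a global self-pairing is not an obstacle to be noted in passing --- it is the entire content of the proof, and your outline does not supply the tool needed to carry it out. Concretely, after your first two steps you are still left with the terms $\nu(n_s)\big(\delta_{\sigma_s}(n_s)\big)$, the term involving $\phi_0(\sigma_s)$, and the diagonal spin contributions. The alternating property $\nu(n)(n)=0$ does not kill $\nu(n_s)\big(\delta_{\sigma_s}(n_s)\big)$, since $\delta_{\sigma_s}(n_s)$ is not a multiple of $n_s$, and no amount of Hilbert reciprocity or Proposition \ref{prop:involution_spin} will make these terms cancel against each other on their own. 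The missing ingredient is the quadratic refinement $q_H: H^1(H, N[2]) \to H^2(H,\mu_4)$ of the cup-product-with-$\nu$ pairing, defined via the theta group over $N[2]$ constructed from $N[4]$ (this is \cite[Section 5.3]{MS21}). The proof needs two local vanishing statements for $q$: that $q_{G_{\pi_s(x)}}$ kills the unramified $\mu_4$-valued class $\Psi_x(m) - \delta_{\chi,\pi_s(x)}(n_s)$, and that it kills the connecting-map image $\delta_{\chi,\pi_s(x)}(n_s)$ itself (\cite[Example 5.20]{MS21}). These are precisely what let one recover the bilinear invariant $\inv_{\pi_s(x)\cap F}\big((\Psi_x(m) - \delta_{\chi,\pi_s(x)}(n_s))\cup \nu(\Psi_x(m))\big)$ as the polarization of $q$, and they are what dispose of the diagonal terms you leave unhandled.

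The actual argument is also structurally simpler than your plan: it never expands $\langle m, (\nu(n_s))_s\rangle_x$ via \eqref{eq:ps_cond} at all. One applies Poitou--Tate reciprocity directly to $\sum_v \inv_v\big(q_{G_v}(\Psi_x(m))\big) = 0$, notes that the local conditions at $v \in \Vplac_0$ force those invariants to vanish, and then uses the two local identities above at each $\pi_s(x)$ to convert the surviving sum over $S$ into $\sum_s \inv_{\pi_s(x)\cap F}\big((\Psi_x(m) - \delta_{\chi,\pi_s(x)}(n_s))\cup \nu(\Psi_x(m))\big) = 0$, which is the claimed identity. If you want to salvage your route, you would have to prove the local comparison between \eqref{eq:ps_cond} and these invariants by hand, which amounts to re-deriving the theta-group computation term by term; as written, your proof does not establish the proposition.
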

\begin{proof}
Given a closed subgroup $H$ of $G_F$, take
\[q_H: H^1(H, N[2]) \to H^2\left(H, \mu_{4}\right)\]
to be the map defined in \cite[Section 5.3]{MS21} as the connecting map associated to the theta group above $N[2]$ constructed from $N[4]$. We note that
\[\sum_{v \text{ of } F} \inv_v(q_{G_v}(\Psi_x(m))) = 0\]
by Poitou--Tate duality. Since $\Psi_x(m)$ obeys the local conditions for $v \in \Vplac_0$, we find that  $\inv_v(q_{G_v}(\Psi_x(m))) = 0$ at these places, so
\[\sum_{s \in S} \inv_{\pi_s(x) \cap F} \left(q_{G_{\pi_s(x)}}(\Psi_x(m))\right) = 0.\]
Take $\chi = \chi(x)$, and take $\delta_{\chi, \pi_s(x)}: H^0(G_{F, \pi_s(x)}, N[2]) \to H^1(G_{F, \pi_s(x)}, N[2])$ to be the connecting map associated to $N^{\chi}$. For each $s$, we find that 
\[q_{G_{\pi_s(x)}}(\Psi_x(m) - \delta_{\chi, \pi_s(x)}( n_s)) =0\]
because the cocycle class is unramified and valued in $\mu_{4}$. We also have
 \[q_{G_{\pi_s(x)}}(\delta_{\chi, \pi_s(x)}( n_s)) = 0,\]
as follows from  \cite[Example 5.20]{MS21}. Combining these identities and \cite[(5.3)]{MS21} then gives
\[\sum_{s \in S} \inv_{\pi_s(x) \cap F}\left(\left(\Psi_x(m) - \delta_{\chi, \pi_s(x)}( n_s)\right) \cup \nu(\Psi_x(m))\right) = 0.\] This reduces to the claimed identity.
\end{proof}

A similar but simpler result that applies for more general $N$ and $\FFF$ is the following.

\begin{prop}
\label{prop:poitou_0}
Given $m$ in $\mathscr{M}_0$ and $r_0$ in $H^0(G_F, N[\omega]^{\vee})$, we have
\[\left\langle m,\, (r_0)_{s \in S}\right\rangle_x = 0\]
for all $x$ in $X$.
\end{prop}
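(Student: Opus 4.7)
The plan is to deduce the identity from global Poitou--Tate duality applied to a suitable cup product landing in $H^2(G_F, \mu_\ell)$. First, I will lift $r_0$ to a global class $\delta(r_0) \in H^1(G_F, N^\vee[\omega])$ via the connecting homomorphism for the short exact sequence
\[0 \,\to\, N^\vee[\omega] \,\to\, N^\vee[\omega^2] \,\xrightarrow{\,\omega\,}\, N^\vee[\omega] \,\to\, 0.\]
A direct check shows $\delta(r_0)$ is unramified at every place outside $\Vplac_0$, since at such places inertia acts trivially on $N^\vee[\omega^2]$. Moreover, the standard compatibility of connecting maps with the chosen local conditions (which are set up so that, for each $v \in \Vplac_0$, the local conditions on the twist of $N$ and on the twist of $N^\vee$ are mutual Tate annihilators) will ensure that $\delta(r_0)|_{G_v}$ lies in the Tate-orthogonal complement of $W_{v,1}(\chi_v)$.

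Next, I will form the cup product $\Psi_x(m) \cup \delta(r_0) \in H^2(G_F, \mu_\ell)$ and invoke the vanishing of the sum of local invariants of a global class, writing
\[\sum_{v \text{ of } F} \inv_v\bigl(\Psi_x(m) \cup \delta(r_0)\bigr) \,=\, 0.\]
At each place $v$ outside $\Vplac_0 \cup \{\pi_s(x) \cap F : s \in S\}$, both factors are unramified, so the cup product lies in $H^2_{\mathrm{ur}}(G_v, \mu_\ell) = 0$ and has zero invariant. At each $v \in \Vplac_0$, the local pairing vanishes because $\Psi_x(m)|_{G_v}$ satisfies the local condition $W_{v,1}(\chi_v)$ (as $m \in \mathscr{M}_0$) while $\delta(r_0)|_{G_v}$ lies in its Tate annihilator by the previous step. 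Consequently only the contributions at the primes $v_s := \pi_s(x) \cap F$ for $s \in S$ survive.

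Finally, I will identify each surviving local term at $v_s$ with $\textup{lg}(\gamma_s \cdot r_0(a_{s,x}(m)))$ by unwinding the definitions. Using the explicit form of $\Psi_x(m)$ from Section \ref{ssec:explicit_local} together with the formula
\[a_{s,x}(m) \,=\, \bigl(\Psi_x(m) - \chi(x) \cup n_s\bigr)\bigl(\FrobF{F}{\pi_s(x)}\bigr) - \delta_{\sigma_s}(n_s),\]
the local cup with $\delta(r_0)|_{G_{v_s}}$ reduces, by $G_F$-equivariance of $r_0$ and the compatibility of the local Tate pairing with the connecting map used to build $\delta(r_0)$, to $r_0$ evaluated on this very expression. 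Summing over $s$ and comparing to the Poitou--Tate identity yields the claim. The main technical obstacle will be verifying the Tate-orthogonality at $v \in \Vplac_0$ and tracking the normalizations (the factors $\gamma_s$ and the role of $\kappa$) in the final local identification; the argument is in effect a linear, simplified version of the proof of Proposition \ref{prop:theta}, with $\delta(r_0)$ playing the role that the theta-group class $q_H(\Psi_x(m))$ played there.
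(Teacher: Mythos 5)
Your overall strategy is the right one and matches the paper's: lift $r_0$ to a degree-one class via a connecting map, cup with $\Psi_x(m)$, and apply the reciprocity law $\sum_v \inv_v = 0$, with the places in $\Vplac_0$ killed by orthogonality of local conditions and the places $\pi_s(x)\cap F$ producing the terms of $\langle m, (r_0)_s\rangle_x$. However, there is a genuine problem with the specific lift you chose. You take $\delta(r_0)$ to be the image of $r_0$ under the connecting map of the \emph{untwisted} sequence $0 \to N^{\vee}[\omega] \to N^{\vee}[\omega^2] \to N^{\vee}[\omega] \to 0$, whereas the paper takes $\psi$ to be the image under the connecting map attached to the \emph{twisted} module $(N^{\vee})^{\chi}[\omega^2]$. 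That choice is not cosmetic. The paper's $\psi$ is automatically an $\omega$-Selmer element of $(N^{\vee})^{\chi}$ (the connecting-map image of $H^0$ dies in $H^1(G_v, (N^{\vee})^{\chi})$ at every $v$, hence lands in every dual local condition), which is exactly what makes the $\Vplac_0$-terms and the cross terms $\delta_{\chi,\pi_s(x)}(n_s)\cup\psi$ vanish. Your $\delta(r_0)$ differs from $\psi$ by (roughly) $\chi\cup r_0$, and since the $W_v(\chi_v)$ for $v\in\Vplac_0$ are arbitrary $\ell$-divisible subgroups, there is no reason for $\delta(r_0)|_{G_v}$ to annihilate $W_{v,1}(\chi_v)$; the "standard compatibility" you invoke for this step does not exist for the untwisted lift.

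The same issue resurfaces at the primes $v_s = \pi_s(x)\cap F$. Your $\delta(r_0)$ is unramified there, so the local invariant of $\Psi_x(m)\cup\delta(r_0)$ pairs the tame part of $\Psi_x(m)$ (essentially $n_s$) against the Frobenius value of $\delta(r_0)$, producing a term of the shape $n_s\cdot\delta_{\sigma_s}^{\vee}(r_0)$ rather than $r_0(\delta_{\sigma_s}(n_s))$, which is what appears in $a_{s,x}(m)$. These two expressions are \emph{not} equal in general — their discrepancy is precisely what Definition \ref{defn:cancellable} and the computation in the proof of Proposition \ref{prop:cancellable} are about (there is a correction term when $\ell=2$, computed there via Hilbert reciprocity). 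So with the untwisted lift the surviving local terms do not assemble into $\langle m,(r_0)_s\rangle_x$ without substantial extra work. The repair is simply to use the twisted connecting map: then $\psi$ is a dual Selmer element, the $\Vplac_0$-terms vanish, $\inv_{v_s}(\delta_{\chi,\pi_s(x)}(n_s)\cup\psi)=0$ by orthogonality of the Selmer and dual-Selmer conditions at $v_s$, and $\Psi_x(m)-\delta_{\chi,\pi_s(x)}(n_s)$ is unramified at $v_s$ with Frobenius value exactly $a_{s,x}(m)$, so the invariant is $\textup{lg}(\gamma_s\cdot r_0(a_{s,x}(m)))$ as required.
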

\begin{proof}
Take $\chi = \chi(x)$, take $\iota_1: N^{\vee}[\omega] \to N[\omega]^{\vee}$ to be the standard isomorphism, and take $\psi \in H^1(G_F, N^{\vee}[\omega])$ to be the image of $\iota_1(r_0)$ under the connecting map associated to $N^{\chi}[\omega^2]^{\vee}$. We see that $\beta_{\chi}^{-1} \circ \psi$ lies in the $\omega$-Selmer group for $(N^{\vee})^{\chi}$. Poitou--Tate duality gives
\[\sum_{v \text{ of } F} \inv_v(\Psi_x(m) \cup \psi) = 0.\]
Since $\Psi_x(m)$ obeys the local conditions for $v \in \Vplac_0$, this reduces to a sum over $S$. The Selmer group and dual Selmer group have orthogonal local conditions, so we also have
\[\inv_{\pi_s(x) \cap F} (\delta_{\chi, \pi_s(x)}( n_s) \cup \psi) = 0\]
for each $s \in S$. We thus have
\[\sum_{s \in S} \inv_{\pi_s(x) \cap F}((\Psi_x(m) - \delta_{\chi, \pi_s(x)}(n_s)) \cup \psi) = 0,\]
which reduces to $\left\langle m, (r_0)_{s \in S}\right\rangle_x = 0$.

\end{proof}

\subsection{Moment calculations}
We can now prove our main estimate for the moments of $\omega$-Selmer groups in grids. We will need some assumptions.
\begin{ass}
\label{ass:grid_moments}
With all notation as above, we choose a twistable module $N_0$ defined with respect to $(K/F, \Vplac_0, \FFF)$ in either the alternating case or non-self-dual case with respect to all sets of local twists. We will assume that $S$ is nonempty and that
\begin{equation}
\label{eq:cancel_cancel}
\min_{\sigma \in G_1} \#\left\{s \in S_{\text{lg}}\,:\,\, \sigma_s = \sigma \,\text{ in }\, G_1\right\} \ge \frac{| S|}{2\cdot |G_1|} \quad\text{and}\quad \#\Ssm \cup\Smed \le \frac{|S|}{8 \cdot| G_1|}.
\end{equation}

Choose integers $a \ge b \ge 0$, and take
\[N = N_0^{\oplus a}.\]
Choose a subspace $V$ of $\FFF_{\ell}^a$ of dimension $b$. We will consider the submodule
\[Q = V\otimes N_0[\omega]\]
of $N[\omega]$. We define the vector space $\mathscr{M}_1(Q)$ as above for $N$, and we take $\mathscr{N}^-$ to be the subset of $\mathscr{M}_1(Q)$ of elements whose ramification subspace equals $Q$. 

Choose a subset $\mathscr{N}$ of $\mathscr{N}^{-}$. In the case that there is some cancellable alternating homomorphism $\Gamma\colon N[\omega] \to N[\omega]^{\vee}$, we will assume that there is a subspace $W$ of $\mathscr{M}_1(Q)$ of codimension at most $|S| /8| G|$ and some $m$ in $\mathscr{M}_1(Q)$ so
\begin{equation}
\label{eq:cancel_assumption}
\mathscr{N} = \mathscr{N}^- \cap (W + m).
\end{equation}
\end{ass}

\begin{prop}
\label{prop:accurate_grid_moment}
Take all notation and assumptions as in Assumption \ref{ass:grid_moments}, and take $g$ to be the corank of $N$. Choose $x_0 \in X$. If $N_0$ is in the alternating case, take
\[A =  \frac{\# \mathscr{N}}{\# \mathscr{N}^-} \cdot \ell^{\frac{1}{2} b(b+1) } \cdot \left(\#H^0(G_F, N[\omega])\right)^a.\]
If $N_0$ is in the non-self-dual case, choose some $x_0$ in $X$ and take
\[A = \frac{\# \mathscr{N}}{\# \mathscr{N}^-}  \cdot \left(\#H^0(G_F, N[\omega])\right)^{a} \cdot  \left(\prod_{v \in \Vplac_0} \frac{\# W_{v, 1}(N_0, \chi(x_0))}{\#H^0(G_v, N_0[\omega])}\right)^b.\]
Then there is $C > 0$ depending only on $(K/F, \Vplac_0, \FFF)$ so
\begin{align*}
&\left| A \,-\,\frac{1}{\# X} \sum_{x \in X} \left \{m \in \mathscr{N}\,:\,\, \Psi_x(m) \in \Sel^{\omega} N^{\chi(x)}\right\} \right| \\
&\qquad\qquad\le e^{ -|S|/8|G_1| + Cg^2}  + e^{Cg |S|} \cdot (c_{\textup{bilin}} + c_{\textup{Cheb}}).
\end{align*}

\end{prop}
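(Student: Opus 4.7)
The plan is to start from the character-sum identity used throughout Section~\ref{sec:character}, namely
\begin{align*}
\frac{1}{\#X}\sum_{x \in X} \#\{m \in \mathscr{N}\,:\,\, \Psi_x(m) \in \Sel^{\omega} N^{\chi(x)}\} \,=\, \frac{1}{\#X \cdot \#\mathscr{R}}\sum_{m \in \mathscr{N}}\sum_{r \in \mathscr{R}}\sum_{x \in X} \exp(2\pi i \langle r, m\rangle_x),
\end{align*}
and then partition the pairs $(r, m)$ into bilinearly ignorable, Chebotarev-ignorable, and unignorable classes following Definition~\ref{defn:ignorable}. Since $\#\mathscr{N}\cdot\#\mathscr{R} \leq e^{Cg|S|}$ for some constant $C$ depending on $(K/F, \Vplac_0, \FFF)$, the two ignorable classes contribute at most $(c_{\textup{bilin}} + c_{\textup{Cheb}})\,e^{Cg|S|}$ to the sum, absorbing one of the two error terms.

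For the remaining unignorable pairs I would invoke Proposition~\ref{prop:four_qrs} to produce the $G_F$-equivariant alternating homomorphism $\Gamma\colon Q \to Q^{\vee}$ attached to the pair, and then split according to whether $\Gamma$ is cancellable in the sense of Definition~\ref{defn:cancellable}. If $\Gamma$ is cancellable, Proposition~\ref{prop:cancellable} together with hypothesis \eqref{eq:cancel_cancel} supplies a subspace of $\mathscr{M}_{10}(Q)$ on which the bilinear form $B_{\Gamma}$ has rank at least $|S|/(2|G_1|) - 1$, while hypothesis \eqref{eq:cancel_assumption} ensures that $\mathscr{N}$ itself meets this subspace in a coset of small codimension. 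A standard Gauss-sum bound of the form \eqref{eq:direct_Gauss} then yields cancellation of magnitude $\ell^{-|S|/(4|G_1|) + O(|S|/(8|G_1|))}$, which we absorb into the $e^{-|S|/(8|G_1|) + Cg^2}$ error term.

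It remains to sum the contributions of the non-cancellable $\Gamma$, for which $\langle r, m\rangle_x$ is independent of $x$ and each pair contributes $+1$. In the alternating case, Proposition~\ref{prop:theta} exhibits $\Gamma = \beta \otimes \nu$ as non-cancellable for every symmetric $\beta\colon V\times V \to \FFF_2$, and the hypothesis that the only equivariant endomorphism of $N_0[\omega]$ commuting with connecting maps is the identity ensures that these exhaust the non-cancellable $\Gamma$; counting such $\beta$ yields the factor $\ell^{b(b+1)/2}$ in $A$, which is then multiplied by the residual $H^0(G_F, N[\omega]^{\vee})$-worth of freedom in $r$, identified with $H^0(G_F, N[\omega])$ via $\nu$. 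In the non-self-dual case, the absence of self-dual structure together with the endomorphism hypothesis forces every non-cancellable $\Gamma$ to vanish, and Proposition~\ref{prop:poitou_0} combined with the Poitou--Tate duality and Euler--Poincar\'e calculation underlying \eqref{eq:Wiles} produces the product $\prod_v(\#W_{v,1}/\#H^0)^b$ factor in $A$. The ratio $\#\mathscr{N}/\#\mathscr{N}^{-}$ appears throughout as the fraction of invariant pairs retained when restricting from $\mathscr{N}^{-}$ to $\mathscr{N}$.

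The principal obstacle is this last step: identifying precisely which $\Gamma$ are non-cancellable, and in the non-self-dual case turning the residual freedom in the $r_s$ into the explicit product over $v\in\Vplac_0$. This requires careful bookkeeping of which $G_F$-equivariant homomorphisms commute with every connecting map $\delta_{\sigma}$ for $\sigma\in G_1$, together with a delicate use of the local orthogonality between the Selmer conditions for $N^{\chi}$ and $(N^{\vee})^{\chi}$ to convert global invariant pairs into the product over $\Vplac_0$.
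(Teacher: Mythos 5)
Your outline follows the paper's proof essentially step for step: the character-sum identity, disposal of ignorable pairs via $c_{\textup{bilin}}$ and $c_{\textup{Cheb}}$, sorting the unignorable pairs by the map $\Gamma$ of Proposition \ref{prop:four_qrs}, Gauss-sum cancellation for cancellable $\Gamma$ using \eqref{eq:cancel_assumption} and \eqref{eq:cancel_cancel}, and a count of the non-cancellable $\Gamma$ to produce the main term. Two points need repair, though.

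First, you skip the borderline $m$. Proposition \ref{prop:four_qrs} only yields a single equivariant \emph{alternating} map $\Gamma$ defined on all of $Q = \bfQ(m)$ when $m$ is not borderline; for borderline $m$ the map $\Gamma_{\textup{npar}}$ is alternating only on the proper submodule $\bfQ(S_{\textup{npar}})$, and the full map $\Gamma$ is only defined modulo the larger subspace $\bfQ(S_{\textup{npar}})^{\perp}$, so your classification of non-cancellable $\Gamma$ does not apply to these $m$. The paper removes them first using the separate counting estimate of Proposition \ref{prop:borderline_rare} (this is where the assumption on $\#S_{\textup{jury}}$ versus $\#\Ssm\cup\Smed$ built into \eqref{eq:cancel_cancel} gets used), and you need that step. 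Second, your bookkeeping for the main term is slightly off. Propositions \ref{prop:theta} and \ref{prop:poitou_0} are not what shows $\beta\otimes\nu$ is non-cancellable, nor what produces the product over $\Vplac_0$: their role is to show that for an unignorable pair attached to a non-cancellable $\Gamma$ the constant value of $\langle r,m\rangle_x$ is exactly $0$, so that each such pair contributes $\exp(0)=1$ rather than some other root of unity. The classification of the non-cancellable $\Gamma$ as symmetric $b\times b$ matrices of scalars times $\nu$ (alternating case) or as $0$ (non-self-dual case) comes from Remark \ref{rmk:Wedd} together with the case hypotheses on automorphisms commuting with connecting maps, and the factor $\prod_{v\in\Vplac_0}\left(\#W_{v,1}/\#H^0(G_v,N_0[\omega])\right)^b$ arises from the Greenberg--Wiles computation of $\#\mathscr{M}_1(Q)\approx\#\mathscr{N}^-$ (using that $N^{\vee}$ has no universal $\omega$-Selmer element), not from converting invariant pairs into local terms. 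With those corrections the argument closes as in the paper.
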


\begin{proof}
We may assume that $S$ is nonempty without issue. Our aim is to calculate the sum
\[ \frac{1}{\# X \cdot \# \mathscr{R}} \sum_{m \in \mathscr{N}} \sum_{r \in \mathscr{R}} \sum_{x \in X} \exp\left(2\pi i \big\langle r,\, m\big\rangle_x\right).\]
The subsum over ignorable $(r, m)$ has magnitude bounded by $ e^{Cg |S|} \cdot (c_{\textup{bilin}} + c_{\textup{Cheb}})$. The subsum over borderline $m$ also fits into the error term by Proposition \ref{prop:borderline_rare}. So we may consider the sum over non-borderline $m$ for which $(r, m)$ is non-ignorable.

Using Proposition \ref{prop:four_qrs}, we may sort these pairs into subcollections based on their associated equivariant alternating map $\Gamma: Q \to N[\omega]^{\vee}/Q^{\perp}$. Suppose we look at the collection of pairs $(r, m)$ with $m$ non-ignorable associated with a given cancellable $\Gamma$; call this set of pairs $\mathscr{Y}$.  Taking $W$ to be the vector space appearing in \eqref{eq:cancel_assumption}, we may use \eqref{eq:direct_Gauss} to show
\[\frac{1}{\# \mathscr{R}} \sum_{(r, m) \in \mathscr{Y}} \exp\left(2\pi i \big\langle r,\, m\big\rangle_x\right) \le e^{Cg^2} \cdot \ell^{-|S|/ 8 |G_1|}+ \left(1 - \frac{\#\mathscr{N}}{\#W}\right)\]
for every $x \in X$ and for some $C$ determined from the starting tuple. Noting that the final difference of the right hand term is no larger than $e^{Cg^2} \cdot \ell^{(|S|/8|G_1|) -|S_{\text{jury}}|}$, we find that the sum over cancellable pairs fits into the error term. 

This just leaves the sum over non-cancellable $\Gamma$. Choose a basis $e_1, \dots, e_b$ of  $V$ and endow $V$ with an inner product so this basis is orthonormal. We may think of $\Gamma$ as an alternating map
\[\Gamma \colon V \otimes N_0[\omega]  \to V \otimes N_0[\omega]^{\vee}.\]
Equivalently, we may think of $\Gamma$ as a $b \times b$ matrix whose $ij^{th}$ entry is an equivariant homomorphism $\beta_{ij}: N_0[\omega] \to N_0[\omega]^{\vee}$ that commutes with the connecting maps, subject to the condition $\beta_{ij} = - \beta_{ji}^{\vee}$ and that $\beta_{ii}$ is alternating for all $i, j \le b$. In the alternating case, we find that all these coefficients must be scalar multiples of $\nu$ by Remark \ref{rmk:Wedd} and the assumptions of the alternating case. The scalars $\nu$ is multiplied by form a symmetric matrix, so there are $\ell^{\frac{1}{2}b(b+1)}$ choices of non-cancellable $\Gamma$. In the non-self-dual case, we find that this map must be $0$.

Given such a $\Gamma$, and given non-borderline $m$ with $\bfQ(m) = Q$ , we see there are
\[\# H^0(G_F, N[\omega]^{\vee})^{b} \cdot \prod_{s \in S} (\# N[\omega]^{\sigma_s})^{a - b}\]
choices of $r$ for which $(r, m)$ is a unignorable and associated to the map $\Gamma$.  Propositions \ref{prop:theta} and \ref{prop:poitou_0} show that we have $\langle r,\,m\rangle_x = 0$ for all $x \in X$ and any such pair $(r, m)$. So
\[  \ell^c \cdot \# H^0(G_F, N[\omega]^{\vee})^{b} \cdot \prod_{s \in S} (\# N[\omega]^{\sigma_s})^{-b}\,-\, \frac{1}{|\mathscr{N}| |\mathscr{R}| | X|} \sum_{m \in \mathscr{N}} \sum_{r \in \mathscr{R}} \sum_{x \in X} \exp\left(2\pi i \big\langle r,\, m\big\rangle_x\right)\]
has magnitude within the error of the proposition, where $c = \frac{1}{2}b(b+1)$ in the alternating case and $c = 0$ otherwise.

To prove the proposition, we just need to estimate the size of $\mathscr{N}^-$. We can calculate the size of $\mathscr{M}_1(Q)$ directly using \cite[Section 8.7.9]{Neuk08}, the assumption \eqref{eq:cancel_cancel}, and the assumption that $N^{\vee}$ has no universal $\omega$-Selmer element.  This gives
\[\#\mathscr{M}_1(Q) = \frac{(\# H^0(G_F, N_0[\omega]))^a}{(\# H_0(G_F, N_0[\omega]^{\vee}))^b} \cdot \left(\prod_{v \in \Vplac_0} \frac{\# W_{v, 1}(N_0, \chi(x))}{\#H^0(G_v, N_0[\omega])}\right)^b \cdot \prod_{s \in S} \left(\#N[\omega]^{\sigma_s}\right)^b.\]
In the alternating case, this simplifies to
\[\# \mathscr{M}_1(Q) = \left(\# H^0(G_F, N_0[\omega])\right)^{a -b} \cdot \prod_{s \in S} \left(\#N[\omega]^{\sigma_s}\right)^b.\]
 The proposition follows once we note that $\mathscr{M}_1(Q) \backslash \mathscr{N}^-$ is negligible in $\mathscr{M}_1(Q)$.
\end{proof}

\section{Gridding}
\label{sec:gridding}
In the previous few sections, we have studied how one can control $\omega$-Selmer groups in a grid of twists of a given Galois module. To prove our main results, we will show that $\XXfav_{F, N}(H, (\chi_v)_{v \in \Vplac_0})$ can be partitioned into such grids with negligible remainder.

As a starting point, we give the following estimate for the size of $\mathbb{X}_{F}(H)$.
\begin{prop}
\label{prop:XFHsize}
Given $(K/F, \Vplac_0, \FFF)$ as above, there is a positive real number $C$ so that we have an asymptotic equivalence
\[\# \mathbb{X}_{F}(H)\sim C H (\log H)^{\kappa - 1}\]
as $H$ tends to infinity, where we have set $\kappa = \# \FFF(-1)^{\times}/ [F\left(\mu_{|\FFF|}\right) : F]$.
\end{prop}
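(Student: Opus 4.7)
The plan is to apply a Selberg--Delange type Tauberian theorem to the Dirichlet series
\[D(s) = \sum_{\chi \in \mathbb{X}_F} h_F(\chi)^{-s} = \sum_{\mathfrak{m}} a(\mathfrak{m}) N\mathfrak{m}^{-s},\]
where $\mathfrak{m}$ ranges over squarefree integral ideals of $\mathcal{O}_F$ supported outside $\Vplac_0$ and $a(\mathfrak{m}) = \#\{\chi \in \mathbb{X}_F : \mfh_F(\chi) = \mathfrak{m}\}$. The aim is to show that $D(s)$ admits a meromorphic continuation just past $\mathrm{Re}(s) = 1$ with a unique singularity of logarithmic type $(s-1)^{-\kappa}$ at $s = 1$. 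First I would handle the local analysis via class field theory: for a prime $\mathfrak{p} \notin \Vplac_0$, an admissible local character $\chi_{\mathfrak{p}} \colon \mathcal{O}_{F,\mathfrak{p}}^{\times} \to \FFF$ is either trivial or surjects onto $\FFF$, and the latter is possible only when $N\mathfrak{p} \equiv 1 \pmod{\ell^{k_0}}$, in which case there are exactly $\phi(\ell^{k_0}) = \#\FFF(-1)^{\times}$ distinct surjections. By Chebotarev, the primes satisfying this congruence are precisely those splitting completely in $F(\mu_{\ell^{k_0}})/F$, a set of density $1/[F(\mu_{\ell^{k_0}}) : F]$.

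Next I would compute $a(\mathfrak{m})$ by M\"obius inversion from
\[R(\mathfrak{m}) = \#\{\chi \in \mathbb{X}_F : \mfh_F(\chi) \mid \mathfrak{m}\},\]
which by class field theory equals the number of admissible $\FFF$-valued characters of a suitable ray class group $\mathrm{Cl}_F^{\mathfrak{c}\mathfrak{m}}$, where $\mathfrak{c}$ is a fixed conductor supported on $\Vplac_0$ large enough to accommodate any behavior at those places. Combining the structure of this ray class group with the admissibility condition at each $\mathfrak{q} \mid \mathfrak{m}$ gives, after inclusion-exclusion, the identity
\[a(\mathfrak{m}) = C_0 \cdot \phi(\ell^{k_0})^{\omega(\mathfrak{m})}\]
for $\mathfrak{m}$ squarefree supported on primes $\mathfrak{q} \notin \Vplac_0$ with $N\mathfrak{q} \equiv 1 \pmod{\ell^{k_0}}$, and zero otherwise, up to correction terms whose contribution to $D(s)$ is holomorphic for $\mathrm{Re}(s) > 1/2$. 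Here $C_0$ is a positive constant depending only on $F$, $\Vplac_0$, and $\FFF$.

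Consequently the main part of $D(s)$ agrees, up to a factor holomorphic and non-vanishing for $\mathrm{Re}(s) > 1/2$, with $C_0 \prod_{\mathfrak{q}} \bigl(1 + \phi(\ell^{k_0}) N\mathfrak{q}^{-s}\bigr)$, where the product runs over primes $\mathfrak{q}$ of $F$ outside $\Vplac_0$ that split completely in $F(\mu_{\ell^{k_0}})/F$. This in turn differs from $\zeta_{F,T}(s)^{\phi(\ell^{k_0})}$ by another such factor, where $\zeta_{F,T}(s) = \prod_{\mathfrak{q}} (1 - N\mathfrak{q}^{-s})^{-1}$ denotes the partial Dedekind zeta function over the same set of split primes. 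Factoring $\zeta_{F(\mu_{\ell^{k_0}})}$ as a product of Artin $L$-functions over $\Gal(F(\mu_{\ell^{k_0}})/F)$ and applying Chebotarev shows that $\zeta_{F,T}(s) \sim c \cdot (s-1)^{-1/[F(\mu_{\ell^{k_0}}) : F]}$ near $s = 1$. Hence $D(s) = g(s)(s-1)^{-\kappa}$ with $g$ holomorphic and non-vanishing at $s = 1$ and $\kappa = \phi(\ell^{k_0})/[F(\mu_{\ell^{k_0}}) : F]$, and the Selberg--Delange theorem yields $\#\mathbb{X}_F(H) \sim C H (\log H)^{\kappa - 1}$ with $C = g(1)/\Gamma(\kappa)$.

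The main obstacle is the ray class group computation of $a(\mathfrak{m})$: one must verify carefully that the global contributions from units and the class group, coupled with the local admissibility at each ramifying prime in the support of $\mathfrak{m}$, arrange themselves so that the leading dependence of $a(\mathfrak{m})$ on $\mathfrak{m}$ is exactly $\phi(\ell^{k_0})^{\omega(\mathfrak{m})}$ with multiplicative constant independent of $\mathfrak{m}$, and that the correction terms contribute only a factor holomorphic in $\mathrm{Re}(s) > 1/2$ to the Dirichlet series.
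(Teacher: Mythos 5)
Your overall strategy---build the Dirichlet series $D(s)=\sum_{\chi}h_F(\chi)^{-s}$, exhibit a singularity of type $(s-1)^{-\kappa}$ at $s=1$, and apply a Selberg--Delange/Tauberian theorem---is the standard and correct route to this count, and it is essentially what the reference the paper invokes (the proof in the text is a one-line citation of \cite[Proposition 4.16]{Alb22}, which packages exactly this kind of analysis). Your local computation is also right: an admissible $\chi$ ramified at a tame prime $\mfp\notin\Vplac_0$ must restrict to a surjection of the tame inertia quotient onto $\FFF$, which forces $N\mfp\equiv 1 \bmod \ell^{k_0}$ and leaves $\phi(\ell^{k_0})=\#\FFF(-1)^{\times}$ choices, accounting for the exponent $\kappa=\#\FFF(-1)^{\times}/[F(\mu_{\ell^{k_0}}):F]$.

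The gap is in the step you yourself flag as the main obstacle, and the resolution you propose for it is the wrong mechanism. It is not true in general that $a(\mathfrak{m})=C_0\cdot\phi(\ell^{k_0})^{\omega(\mathfrak{m})}$ with $C_0$ independent of $\mathfrak{m}$ up to corrections whose contribution to $D(s)$ is holomorphic in $\mathrm{Re}(s)>1/2$. The dual of the ray class sequence shows that $R(\mathfrak{m})$ equals $\prod_{\mfq\mid\mathfrak{m}}\#\Hom((\mathcal{O}_F/\mfq)^{\times},\FFF)$ times a factor measuring the image of the $\Vplac_0$-units and the class group in $\bigoplus_{\mfq\mid\mathfrak{m}}(\mathcal{O}_F/\mfq)^{\times}\otimes\Z/\ell^{k_0}\Z$. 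Whenever $\mathcal{O}_{F}^{\times}/(\mathcal{O}_{F}^{\times})^{\ell^{k_0}}$ or $\Cl F/\ell^{k_0}$ is nontrivial, this factor is a bounded but genuinely non-constant function of $\mathfrak{m}$, determined by the Frobenius classes of the primes dividing $\mathfrak{m}$ in a fixed governing extension such as $F\bigl(\mu_{\ell^{k_0}},\,\mathcal{O}_{F,\Vplac_0}^{\times\,1/\ell^{k_0}}\bigr)$; it does not decay with $N\mathfrak{m}$, so the discrepancy is another Dirichlet series with a branch singularity at $s=1$, not a function holomorphic past $\mathrm{Re}(s)=1/2$. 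The correct repair is to expand this bounded factor in characters $\psi$ of the Galois group of the governing field, obtaining finitely many twisted Euler products $\prod_{\mfq}(1+\phi(\ell^{k_0})\psi(\mfq)N\mfq^{-s})$ over the completely split primes, and then to check via orthogonality (restricted to the subgroup fixing $F(\mu_{\ell^{k_0}})$) that every non-principal $\psi$ yields a singularity at $s=1$ of real order strictly less than $\kappa$, while the characters trivial on that subgroup merely fold into the leading constant. With that substitution your argument closes; without it, the asserted form of $a(\mathfrak{m})$ is false for general $F$ even though the final asymptotic survives.
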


\begin{proof}
This follows from an application of \cite[Proposition 4.16]{Alb22}.
\end{proof}

We will use the asymptotic expression $CH (\log H)^{\kappa - 1}$ as a comparison point to prove that certain collections of twists are negligible.

Our grids of twists are built out of grids of ideals in $F$. We set these up now.

\subsection{Grids of ideals}
\begin{notat}
\label{notat:pregrid}
Fix $(K/F, \Vplac_0, \FFF)$ as above. We take $\mathscr{P}$ to be the set of primes of $F$ outside $\Vplac_0$ that split completely in $F(\zeta)/F$. Taking $G_1 = \Gal\left(K(\Vplac_0)/F\left(\mu_{|\FFF|}\right)\right),$ we define $G_1/\sim$ to be the set of equivalence classes of $G_1$ under conjugation by $G_F$. Given a prime $\mfp$ in $\mathscr{P}$, we see that $\{\Frob\, \ovp\,:\,\, \ovp | \mfp\}$ maps to one of the classes of $G_1/\sim$, and we write this class as $\Frob\,\mfp$.

Take $H$ to be a positive real number satisfying $\log^{(3)} H > 1$. We define
\[\alpha(H) =\exp\left( \exp^{(3)}\left(\tfrac{1}{4} \log^{(3)} H\right)^{-1}\right)\quad\text{and}\quad a_0(H) = \exp^{(3)} \left(\tfrac{1}{3} \log^{(3)} H\right).\]
For $i$ a nonnegative integer, we then define
\[\mathscr{P}_i(H) = \left\{ \mfp \in \mathscr{P} \,:\,\, a_0(H) \cdot \alpha(H)^i \le N_{F/\QQ}(\mfp) <  a_0(H) \cdot \alpha(H)^{i+1} \right\}.\]
If $C$ is a class of $\GOsim$, we will also define
\[\mathscr{P}_i(H, C) = \left\{ \mfp \in \mathscr{P}_i(H) \,:\,\,\Frob \, \mfp = C \right\}.\]
We also take the notation
\[i_{\textup{med}}(H) = \left\lceil \exp^{(2)} \left(\tfrac{1}{2} \log^{(3)} H \right)\big/\log \alpha(H) \right\rceil.\]
\end{notat}

\begin{defn}
\label{defn:ideal_grid}
With $(K/F, \Vplac_0, \FFF)$ and $H$ as above, take $S$ to be a finite set partitioned as $\Ssm \cup \Smed \cup \Slg$. For $s \in \Ssm$, take $\mfp_s$ be a prime in $\mathscr{P}$ satisfying $N_{F/\QQ}(\mfp_s) \le a_0(H)$. For $s \in \Smed \cup \Slg$, take $i_s$ to be a nonnegative integer. We make the following assumptions:
\begin{itemize}
\item For $s_1 \ne s_2$ in $\Ssm$, $\mfp_{s_1}$ and $\mfp_{s_2}$ are distinct.
\item For $s_1 \ne s_2$ in $\Smed \cup \Slg$, $i_{s_1}$ and $i_{s_2}$ are distinct.
\item For $s \in \Smed \cup \Slg$, $s$ is in $\Smed$ if and only if $i_s < i_{\text{med}}(H)$.
\end{itemize}
Taking the notation
\[Y_s = \begin{cases} \{\mfp_s\} &\text{ if } s \in \Ssm \\ \mathscr{P}_{i_s}(H)&\text{ if } s \in \Smed \cup \Slg,\end{cases}\]
we define a set of ideals $Y$ of $F$ by
\begin{equation}
\label{eq:XfromXs}
Y = \left\{ \prod_{s \in S} \mfp_s \,:\,\, \mfp_s \in Y_s \text{ for } s \in S\right\}.
\end{equation}
If $Y$ contains no ideal of norm greater than $H$, we will say that $Y$ is an \emph{unfiltered grid of ideals of height} $H$.

Suppose that the tuple
\[\left(H, S,\, (\mfp_s)_{s \in \Ssm},\, (i_s)_{s \in \Smed \cup \Slg}\right)\]
defines an unfiltered grid of ideals of height $H$. For $s \in S$, take $C_s$ to be an class of $\GOsim$. For $s \in \Ssm$, we assume that $\mfp_s$ has Frobenius class $C_s$. Then, defining
 \[Y_s = \begin{cases} \{\mfp_s \} &\text{ if } s \in \Ssm \\ \mathscr{P}_{i_s}(H, \,C_s)&\text{ if } s \in \Smed \cup \Slg,\end{cases}\]
the set $Y$ defined by \eqref{eq:XfromXs} will be called a \emph{grid} (or \emph{filtered grid}) \emph{of ideals} of height $H$.
\end{defn}

\begin{notat}
\label{notat:hateful_eight}
Fix $(K/F, \Vplac_0, \FFF)$ and $H >20$. Take all notation as above.  Write $\mathscr{H}$ for the set of squarefree products of primes in $\mathscr{P}$. Given $\mfh \in \mathscr{H}$ that satisfies $N_{F/\QQ}(\mfh) \le H$, we will write $\omega(\mfh)$ for the number of distinct prime divisors of $\mfh$, and $\omega_{\textup{sm}}(\mfh)$ for the number of prime divisors of norm at most $a_0(H)$. We will take $\mathbb{X}_F(\mfh)$ to be the set of $\chi$ in $\mathbb{X}_F$ with $\mfh_F(\chi) = \mfh$.

In addition, we will call $\mfh$ \emph{good} if the following assumptions hold:
\begin{enumerate}
\item \emph{(Not too many primes)} We have
\[\omega(\mfh) \le (\log \log H)^2.\]
\item \emph{(Inside a grid)} There is  a filtered grid of ideals of height $H$ corresponding to the tuple
\[ \left(S,\, (\mfp_s)_{s \in \Ssm},\, (i_s)_{s \in \Smed \cup \Slg},\, (C_s)_s \right)\]
 that contains $\mfh$.
\item \emph{(Not too many small primes)} We have
\[|\Ssm|\, \le \,(\log^{(2)} H)^{\frac{1}{3} +\frac{1}{100} }\quad\text{and}\quad|\Smed| \le (\log^{(2)} H)^{\frac{1}{2} +\frac{1}{100} }.\]
\item \emph{(Enough primes)} We also have
\[|S| = \omega(\mfh) \, \ge\, \frac{\log^{(2)} H}{\log^{(3)} H}.\]
\item \emph{($(C_s)_s$ is balanced)} For any $C$ in $\GOsim$, we have
\[\left| \#\left\{ s \in \Slg\,:\,\, C_s = C\right\} \,\,- \,\,  \frac{\# C}{\# G_1} \cdot \#S  \right| \le  \#S^{3/4}.\]
% This encompasses the former case of too few Jury primes; no substantial gain by not doing it here
\item \emph{(No Siegel zeros)}
Given $x \ge 2$, there is at most one positive squarefree integer $d_{x, \text{Sie}} \le x$ so the Dedekind zeta function associated to $\QQ(\sqrt{d_{x, \text{Sie}}})$ has a real zero particularly close to $s = 1$; we give a more precise specification for this potentially-defined integer in Proposition \ref{prop:dxsie}. Taking
\[x = \exp^{(3)}\left( \tfrac{2}{5} \log^{(3)} H\right),\]
we assume that either $\ell \ne 2$, that $d_{x, \textup{Sie}}$ does not exist, or that $N_{F/\QQ}(a_{\Vplac_0} \cdot \mfh_{\text{sm}})$ is indivisible by $d_{x, \textup{Sie}}$, where $\mfh_{\text{sm}}$ is the product of the primes of $\mfh$ indexed by $\Ssm$ and $a_{\Vplac_0}$ is the product of all rational primes divisible by some prime in $\Vplac_0$.
\item\emph{(Prepared for higher Selmer work)} The norm of each ideal in the grid is at least $\sqrt{H}$. Furthermore, among the primes indexed by 
\[\{s \in \Slg \,:\,\,  C_s = \{1\}\},\]
there are at least $(\log \log H)^{2/3 - 1/100}$ primes of norm at most
\[\exp^{(3)}\left(\tfrac{2}{3}\log^{(3)} H\right)\]
and at least $(\log \log H)^{1 - 1/100}$ primes of norm at least
\[\exp^{(3)}\left(\tfrac{3}{4}\log^{(3)}H\right).\]

\item \emph{($(C_s)_s$ is not overbalanced)} Given a nonzero function $f: \GOsim  \rightarrow \Z$
whose magnitude does not exceed $\exp^{(2)}\left(\tfrac{1}{2}\log^{(4)} H\right)$,
we have
\[\left|\sum_{s \in S} f(C_s) \right| \ge \# S^{1/4}.\]
\end{enumerate}

Any ideal not satisfying one of these properties will be called \emph{bad}. Given an integer $j$ satisfying $1 \le j \le 8$,  we will write $\mathscr{H}_{\textup{bad}, j}(H)$ for the set of $\mfh$ as above that have the first $j - 1$ properties enumerated above but which do not have the $j^{th}$ property.
\end{notat}
It can be difficult to control the Selmer groups of twists $N^{\chi}$ for which $\mfh(\chi)$ is a bad ideal. Fortunately, such twists  can be safely ignored when doing statistical work, as can be shown by comparing the next result to Proposition \ref{prop:XFHsize}.
\begin{prop}
\label{prop:hateful_eight}
With all notation as above, take $\kappa = \# \FFF(-1)^{\times}/ [F(\zeta) : F]$. Also fix some sufficiently small $\epsilon > 0$.
Then there is a real number $C > 0$  determined just from $(K/F, \Vplac_0, \FFF)$ and $\epsilon$ so that, for all $H > C$ and $j$ in $\{1, \dots, 8\}$, we have
\begin{equation}
\label{eq:bad_moments}
\sum_{\mfh \in \mathscr{H}_{\textup{bad}, j}(H)} (\# \FFF(-1)^{\times})^{\omega(\mfh)} \cdot \exp\big( a_j(\mfh, H)\big)\,\,  \le\,\, H(\log H)^{\kappa - 1} \cdot E_j(H),
\end{equation}
where we have taken
% Code so this doesn't look stupid
\begingroup
\allowdisplaybreaks
\begin{alignat*}{4}
&a_1(\mfh, H) =  \omega(\mfh) \cdot (1 - \epsilon) \log^{(3)} H   && E_1(H) = \exp\left( -\left(\log^{(2)} H\right)^2 \right) \\
&a_2(\mfh, H) = \omega(\mfh) \cdot \exp^{(2)}\left(\tfrac{1}{4} \log^{(3)} H\right)^{1 - \epsilon} \quad\,\,&& E_2(H) = \exp^{(3)}\left(\tfrac{1}{4} \log^{(3)} H\right)^{-1 + \epsilon}\\
&a_3(\mfh, H) = \omega_{\textup{sm}}(\mfh) \cdot \left(\tfrac{1}{100} - \epsilon\right) \log^{(3)} H && E_3(H) = \exp^{(2)}\left(\tfrac{1}{3} \log^{(3)} H\right) ^{-1}\\
&a_4(\mfh, H) = \omega_{\textup{sm}}(\mfh) \cdot \left(\log^{(2)} H\right)^{\frac{2}{3} - \frac{1}{100} - \epsilon} && E_4(H) = (\log H)^{-\kappa + \epsilon}\\
&a_5(\mfh, H) = \omega_{\textup{sm}}(\mfh) \cdot  \left(\log^{(2)} H\right)^{\frac{1}{6} - \frac{1}{100} - \epsilon} &&  E_5(H) = \exp^{(2)}\left(\left(\tfrac{1}{2} - \epsilon\right) \cdot \log^{(3)} H\right)^{-1}\\
&a_6(\mfh, H) = \omega_{\textup{sm}}(\mfh) \cdot \left(\log^{(2)} H\right)^{\frac{1}{15} - \frac{1}{100} - \epsilon} && E_6(H) = \exp^{(2)}\left(\tfrac{2}{5} \log^{(3)} H\right)^{-1 + \epsilon}\\
&a_7(\mfh, H) = 0 && E_7(H) = \exp^{(2)}\left(\left(\tfrac{2}{3} - \epsilon\right)\cdot  \log^{(3)} H\right)^{-1}\\
&a_8(\mfh, H) = 0 && E_8(H) = \left(\log^{(2)} H\right)^{-\frac{1}{4} + \epsilon}.
\end{alignat*}
\endgroup
\end{prop}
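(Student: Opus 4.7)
The backbone estimate underlying all eight bounds is
\[\sum_{\substack{\mfh \in \mathscr{H} \\ N_{F/\QQ}(\mfh) \le H}} \lambda^{\omega(\mfh)} \,\ll_{\lambda}\, H(\log H)^{\lambda \kappa - 1}\]
for any $\lambda \ge 1$, proven via Landau's theorem applied to the Dirichlet series $\sum \lambda^{\omega(\mfh)} N\mfh^{-s}$, whose Euler factors match $\zeta_F(s)^{\lambda/[F(\mu_{|\FFF|}):F]}$ up to a factor holomorphic at $s = 1$. Taking $\lambda = \#\FFF(-1)^{\times}$ reproduces the count $H(\log H)^{\kappa - 1}$ of Proposition \ref{prop:XFHsize}, so each instance of \eqref{eq:bad_moments} reduces to exhibiting the savings factor $E_j(H)$.

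Cases $(1)$, $(3)$, $(4)$ are Chebyshev-style bounds in $\omega$: to control $\mfh$ with $\omega(\mfh) > t$ I would insert a factor $\nu^{\omega(\mfh) - t}$ with $\nu > 1$ and apply the backbone estimate to $\nu \cdot \#\FFF(-1)^{\times} \cdot \exp(a_j(\mfh,H)/\omega(\mfh))$. For case $(1)$ with $t = (\log^{(2)} H)^2$, the extra weight $\exp((1-\epsilon)\log^{(3)} H \cdot \omega(\mfh))$ is swallowed by taking $\nu$ a fixed constant, and the resulting $\nu^{-t}$ dominates the polylogarithmic loss. Cases $(3)$ and $(4)$ are analogous, with the Chebyshev argument restricted to $\omega_{\textup{sm}}(\mfh)$ using the truncated Euler product over $\mfp$ with $N\mfp \le a_0(H)$. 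Case $(2)$ is a collision estimate: the only failure mode is that two primes of $\mfh$ fall in the same $\mathscr{P}_i(H)$, and by the prime ideal theorem $\#\mathscr{P}_i(H) \asymp a_0(H)\alpha(H)^i(\alpha(H)-1)/\log(a_0(H)\alpha(H)^i)$, so the triply-exponential decay of $\alpha(H) - 1$ makes $\sum_i (\alpha(H)-1)^2 /\log(a_0(H)\alpha(H)^i)^2$ small enough to provide $E_2(H)$ after summing the generating function over the remaining factors of $\mfh$.

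The delicate cases are $(5)$, $(7)$, and $(8)$. I would apply an effective Chebotarev density theorem, with the standard Deuring--Heilbronn treatment of a possible Landau--Siegel exceptional character, to show the count of primes in $\mathscr{P}_i(H,C)$ is $(\#C/\#G_1) \cdot \#\mathscr{P}_i(H)$ with power-saving error. A Chernoff-type high-moment bound in $s$ then shows that the linear statistic $\sum_s (\mathbf{1}[C_s = C] - \#C/\#G_1)$ in $(5)$ or $\sum_s f(C_s)$ in $(8)$ exceeds the threshold $\#S^{3/4}$ or $\#S^{1/4}$ only with probability beating $E_j(H)$, using $\#S \gtrsim \log^{(2)} H / \log^{(3)} H$; case $(7)$ combines a prime-ideal-theorem lower bound on $\#\mathscr{P}_i(H,\{1\})$ with the same concentration. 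Case $(6)$ is handled by the Landau--Page uniqueness of an exceptional discriminant: at most one $d_{x,\textup{Sie}}$ exists in $[1,x]$, it can enter $\mfh_{\text{sm}}$ in at most one way, and Siegel's theorem bounds its contribution by $x^{-\epsilon}$, which matches $E_6(H)$ after unwinding the definitions.

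The main obstacle is the effective Chebotarev input for $(5)$, $(7)$, $(8)$ over the very short intervals $\mathscr{P}_i(H)$: their length $a_0(H)\alpha(H)^i(\alpha(H)-1)$ is minuscule compared with their starting point, so the unconditional error in Chebotarev must be controlled carefully using the zero-free region for $\zeta_F$ and its twists, together with the exclusion of any Siegel exceptional character via case $(6)$. Once $(6)$ is established the high-moment large-deviation argument is standard, and the ordering of the eight cases in the proposition is essentially the order in which I would establish them in a proof.
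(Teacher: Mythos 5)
Your overall architecture for cases (1)--(7) --- exponential-moment (Rankin/Chebyshev) bounds on the number of prime factors, plus effective Chebotarev with Siegel-zero control --- is close to the paper's, which routes the prime-counting cases through a Hardy--Ramanujan style bound (Proposition \ref{prop:Hardy}) rather than a Landau/Selberg--Delange backbone; that substitution is acceptable provided you keep the estimate uniform in the weight, which in case (1) grows like a power of $\log^{(2)}H$. But case (8) is not a concentration statement, and your proposed tool cannot prove it. Membership in $\mathscr{H}_{\textup{bad},8}(H)$ means that for \emph{some} nonzero $f$ the statistic $\sum_{s}f(C_s)$ has magnitude \emph{less} than $\#S^{1/4}$; when $f$ averages to zero against the uniform distribution on $\GOsim$, this is an anti-concentration event --- the sum must avoid a window of width $2\#S^{1/4}\ll \#S^{1/2}$ around its mean --- and no Chernoff or high-moment inequality bounds the probability of landing \emph{near} the mean. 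The paper supplies exactly this missing ingredient as Proposition \ref{prop:G1_model}, built on a multivariate Berry--Esseen theorem, which shows each such window is hit with probability $O(\#S^{-1/4})$; a union bound over the $\exp\bigl(O((\log^{(3)}H)^{1/2})\bigr)$ admissible $f$ then yields $E_8(H)$. Case (5), which you correctly treat by Hoeffding, is the upper-tail counterpart and does not subsume case (8).

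Two smaller points. In case (2) you call the collision of two primes in one $\mathscr{P}_i(H)$ ``the only failure mode,'' but an ideal of norm at most $H$ can also fail to lie in an unfiltered grid \emph{of height $H$} because the grid generated by its prime factors contains ideals of norm exceeding $H$; such $\mfh$ satisfy $H \ge N_{F/\QQ}(\mfh) \ge H\cdot\alpha(H)^{-\omega(\mfh)}$ and must be counted separately. In case (6), invoking Siegel's theorem both sacrifices the effectivity the paper is careful to preserve and does not deliver the saving you claim: the relevant input is Stark's effective bound $d_{x,\textup{Sie}} \ge c\log x = c\exp^{(2)}\bigl(\tfrac{2}{5}\log^{(3)}H\bigr)$, and $E_6(H)$ arises because the offending $\mfh$ are forced to be divisible by the fixed set of primes of $d_{x,\textup{Sie}}$, an event of density roughly $1/d_{x,\textup{Sie}}$ --- not $x^{-\epsilon}$, which no exceptional-zero bound provides.
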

We will prove this result in Sections \ref{ssec:first_five} and \ref{ssec:Frob_prob}.

\subsection{The first seven cases of Proposition \ref{prop:hateful_eight}}
\label{ssec:first_five}
\subsubsection{Some preliminaries}
The unconditional effective Chebotarev density theorem we will use comes from \cite{LaOd77}. For convenience, we record the form of this result.

\begin{thm}[\cite{LaOd77}]
\label{thm:LaOdCheb}
There are absolute effective constants $C, c$ so we have the following:

Take $L/F$ to be a Galois extension of number fields, take $n_L$ to be the degree of $L$ over $\QQ$, and take $\Delta_L$ to be the absolute value of the discriminant of $L$. Take $\phi$ to be a real valued class function on $G = \textup{Gal}(L/F)$ of magnitude at most one. 
\begin{itemize}
\item The Dedekind zeta function for $L$ has at most one real zero in the interval 
\[[1 - \alpha,\, 1] \quad\text{ with } \alpha = \begin{cases} 1/2 &\text{ if } L = \QQ \\ 1/(4 \log \Delta_L) &\text{ otherwise.}\end{cases}\]
If this zero exists, it is simple. Take $\beta_0$ to be this zero if it exists, and take $\beta_0 = 1/2$ otherwise.
\item Given $H> 0$ satisfying
\[ \log H \ge 10 n_L (\log \Delta_L)^2,\]
we have
\[\left| \sum_{\substack{\mathfrak{p} \text{ prime in } F \\ N_{F/\QQ}(\mathfrak{p}) \le H}} \phi(\text{\emph{Frob }} \mathfrak{p})\,-\, \frac{1}{|G|} \left(\sum_{\sigma \in G} \phi(\sigma)\right) \text{\emph{Li }}(H)\right| \le \text{\emph{Li }}(H^{\beta_0}) + Cn_L He^{-c \sqrt{\frac{\log H}{n_L}}}.\]
\end{itemize}
\end{thm}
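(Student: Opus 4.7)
The plan is to follow the strategy of Lagarias--Odlyzko, combining a Landau-style zero-free region for $\zeta_L$ with a truncated explicit formula for the Artin $L$-functions attached to the characters of $G$.

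For the first assertion, I would work with $\zeta_L(s)$, which factors as a product of Artin $L$-functions over irreducible characters of $\Gal(L/\QQ)$. Expanding $-\zeta_L'/\zeta_L$ as a Dirichlet series with nonnegative coefficients, and applying the standard inequality $3 + 4\cos\theta + \cos 2\theta \ge 0$ to the product $\zeta_L(s)^3 \zeta_L(s+it)^4 \zeta_L(s+2it)$, one obtains a bound of the form $\operatorname{Re}(1/(s - \rho)) \ll \log \Delta_L$ that is uniform in the zeros $\rho$ of $\zeta_L$. This forces any non-real zero close to $s = 1$ to lie outside $[1 - c/\log \Delta_L,\, 1]$; only a real simple zero can evade the argument. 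A Deuring--Heilbronn style repulsion principle, applied to pairs of real zeros near $1$, shows that at most one such zero can actually exist in this interval.

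For the counting estimate, decompose the class function as $\phi = \sum_\chi c_\chi \chi$ into irreducible characters of $G$ with $c_\chi = \langle \phi, \chi\rangle_G$, so that
\[\sum_{N_{F/\QQ}(\mfp) \le H} \phi(\Frob\, \mfp) \,=\, \sum_\chi c_\chi \sum_{N_{F/\QQ}(\mfp) \le H} \chi(\Frob\, \mfp).\]
For each irreducible $\chi$, Brauer's induction theorem writes the Artin $L$-function $L(s, \chi, L/F)$ as a product and quotient of Hecke $L$-functions over intermediate cyclic subextensions, inheriting meromorphic continuation and a functional equation from the Hecke case. A smoothed Perron formula then expresses each $\chi$-weighted prime counting function as a contour integral of $-L'(s,\chi)/L(s,\chi)$ against $H^s/s$, and I would shift this contour past the zero-free region. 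The pole at $s = 1$ yields $\delta_{\chi,\mathbf{1}}\operatorname{Li}(H)$; reassembling the character decomposition recovers the main term $|G|^{-1}(\sum_\sigma \phi(\sigma)) \operatorname{Li}(H)$. A possible Siegel zero of $\zeta_L$ contributes the $\operatorname{Li}(H^{\beta_0})$ term, while the remaining zeros are controlled by a Riemann--von Mangoldt style bound $N_\chi(T) \ll n_L (\log \Delta_L + \log(|T|+n_L))$ for the number of zeros up to height $T$. Optimally balancing the truncation height $T$ against the depth of the contour shift—using the hypothesis $\log H \ge 10 n_L (\log \Delta_L)^2$ to guarantee the zero-free region is wide enough for the balance to beat the zero density—produces the claimed error $C n_L H \exp(-c\sqrt{\log H / n_L})$.

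The main obstacle is handling Artin $L$-functions for non-trivial irreducible characters $\chi$, since these are not individually known to be entire. Brauer induction is the standard workaround, but it multiplies the effective conductor and makes tracking explicit dependence on $n_L$ and $\log \Delta_L$ tedious. The most delicate point is obtaining this uniform dependence at every step—in the zero-counting for all relevant Hecke $L$-functions appearing after Brauer induction, in the truncation error of the Perron formula, and in the estimates along the shifted contour—without degrading the exponent in the final error term. Once these uniformities are assembled, the rest of the argument is mechanical optimization of the cutoff parameter.
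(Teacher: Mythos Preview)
The paper does not prove this theorem; it is simply quoted from Lagarias--Odlyzko \cite{LaOd77} and used as a black box, with the remark that stronger forms are now available. There is therefore no ``paper's own proof'' to compare against.

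Your sketch is a reasonable outline of the standard argument behind such effective Chebotarev theorems, and nothing you wrote is wrong. One small comment: the original Lagarias--Odlyzko argument works more directly with the Dedekind zeta function $\zeta_L$ and the explicit formula for the prime-counting function of $L$ itself, rather than decomposing into Artin $L$-functions via Brauer induction. The point is that for a conjugacy class $C \subset G$, the count of primes of $F$ with Frobenius in $C$ can be related to the count of degree-one primes of $L$ lying over them, and this count is governed by $\zeta_L$ alone. This sidesteps the bookkeeping you flag as the main obstacle --- tracking conductors through Brauer induction --- at the cost of a slightly less direct main term extraction. Both routes lead to the same shape of error term.
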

Stronger forms of this theorem are now known (see \cite{ThZa18}), but this statement suffices for our purposes. To use this theorem, we need some control on the Siegel zero $\beta_0$. Our source on this is \cite{Stark74}.

\begin{prop}
\label{prop:dxsie}
 There exists an absolute effective constant $c > 0$ so we have the following:

For any $x \ge 2$, there is at most one rational squarefree integer $d_{x, \textup{Sie}}$ of magnitude at most $x$ such that the zeta function associated to $\QQ(\sqrt{d_{x, \textup{Sie}}})$ has a real zero in the interval
\[\left(1 - \frac{c}{\log x},\,\,\, 1\right).\]
Furthermore, the integer $d_{x, \textup{Sie}}$ satisfies
\[d_{x, \textup{Sie}}\, \ge\, c \cdot \log x.\]
Finally, suppose $K$ is a number field of degree $n_K$ and discriminant of magnitude at most $x$. We assume $K$ does not contain $\QQ(\sqrt{ d_{x, \textup{Sie}}})$ if $d_{x, \textup{Sie}}$ exists and that $K/\QQ$ is normal. Then the Dedekind zeta function for $K$ contains no zero in the interval
\[\left(1 - \frac{c}{ \log x},\, 1\right).\]
\end{prop}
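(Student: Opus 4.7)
The plan is to derive all three assertions from classical results on zeros of Dedekind zeta functions, most of which are established in \cite{Stark74} itself; the role of the proposition is to package them into a form convenient for later use. We will fix an absolute constant $c > 0$ small enough that all three assertions hold simultaneously and shrink it as needed.

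For the uniqueness assertion, suppose for contradiction that $d_1 \ne d_2$ are distinct squarefree integers of magnitude at most $x$ such that $L(s, \chi_{d_i})$ each have a real zero in the interval $(1 - c/\log x,\,1)$. Let $d_3$ denote the squarefree kernel of $d_1 d_2$, and consider the biquadratic field $L = \QQ(\sqrt{d_1}, \sqrt{d_2})$, whose Dedekind zeta function factors as
\[
\zeta_L(s) \,=\, \zeta(s)\, L(s, \chi_{d_1})\, L(s, \chi_{d_2})\, L(s, \chi_{d_3}).
\]
The ratio $\zeta_L(s)/\zeta(s)$ has nonnegative Dirichlet coefficients, and $|{\textup{disc}}(L)| \ll x^{O(1)}$. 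Applying the standard Landau--Heilbronn argument (a positivity plus mean value argument at a point $s > 1$ slightly above the two putative zeros) forces $\max(1 - \beta_1,\, 1 - \beta_2) \gg 1/\log |{\textup{disc}}(L)| \gg 1/\log x$, which contradicts the assumption for $c$ small.

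For the lower bound $d_{x, \textup{Sie}} \ge c \log x$, the plan is to invoke Stark's effective lower bound from \cite{Stark74}, which asserts that a real zero $\beta$ of $L(s, \chi_d)$ satisfies $1 - \beta \gg d^{-1/2}(\log d)^{-2}$. Combining this with $1 - \beta < c/\log x$ and $d \le x$ yields $d \gg (\log x)^{2}(\log d)^{-4}$, which for $c$ small implies $d \ge c \log x$. Actually any weaker effective lower bound on $L(1, \chi_d)$ suffices here, since we only need a logarithmic (not polynomial) bound on $d_{x,\textup{Sie}}$.

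For the third assertion, the plan is to apply Stark's theorem on zero-free regions for Dedekind zeta functions of normal number fields (Theorem~3 of \cite{Stark74}): there exists an absolute $c_0 > 0$ such that for any $K/\QQ$ normal with $|{\textup{disc}}(K)| \le x$, the zeta function $\zeta_K(s)$ has no zero in $(1 - c_0/\log x,\,1)$, with one possible exception of a simple real zero that, when it exists, arises as a zero of $L(s, \chi)$ for the quadratic character $\chi$ cutting out some quadratic subfield $F \subseteq K$. By the first two parts of the proposition, applied with $c \le c_0$, the only quadratic field that can contribute such a zero is $\QQ(\sqrt{d_{x, \textup{Sie}}})$. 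Since $K$ is assumed not to contain this field, the exceptional case is ruled out, and the zero-free region is unconditional for $K$.

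The main obstacle is simply matching constants: one must verify that the constant from Stark's zero-free region, the constant in the Landau--Heilbronn uniqueness argument, and the constant coming from the effective bound on $L(1, \chi_d)$ can all be taken equal to a single $c$. Since each of these can be chosen independently as small as needed, we take $c$ to be the minimum of the three; everything else is bookkeeping.
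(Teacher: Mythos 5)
Your proposal is correct and follows essentially the same route as the paper: form the biquadratic field $L = \QQ(\sqrt{d_1},\sqrt{d_2})$, factor $\zeta_L$ into $\zeta$ times three quadratic $L$-functions, and use the fact that $\zeta_L$ admits at most one real zero within $O(1/\log\Delta_L)$ of $s=1$ (the paper cites this directly from the Lagarias--Odlyzko statement of Theorem \ref{thm:LaOdCheb} rather than re-running the Landau--Heilbronn positivity argument, and for that argument one should use that $\zeta_L$ itself, not $\zeta_L/\zeta$, has nonnegative coefficients), then quote Stark's effective lower bound for the exceptional discriminant and his result that an exceptional zero of a normal $\zeta_K$ must come from a quadratic subfield. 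The only differences are cosmetic: which statement of \cite{Stark74} is cited, and that the paper notes Stark actually gives the stronger bound $d_{x,\textup{Sie}} \ge c(\log x)^2$.
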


\begin{proof}
For the first part, suppose $d_1, d_2 < x$ are distinct squarefree rational integers other than $1$, and take $L = \QQ(\sqrt{d_1}, \sqrt{d_2})$.  $L/\QQ$ is then a degree $4$ extension of discriminant dividing $2^8 (d_1d_2)^2$. Its Dedekind zeta function is a product of the Riemann zeta function with three Artin $L$-functions corresponding to the quadratic characters for $\QQ(\sqrt{d_1})$, $\QQ(\sqrt{d_2})$ and $\QQ(\sqrt{d_1d_2})$. These Artin $L$-functions are entire. Per the first part of the above theorem, the Dedekind zeta function for $L$ has at most one zero in the range
\[(1 - (16 \log (4x))^{-1}, \,1).\]
In particular, only one of the three mentioned $L$-functions  can have a zero in this range. The uniqueness of $d_{x, \textup{Sie}}$ follows by adjusting $c$. Its effective lower bound follows from \cite[Theorem 1]{Stark74}, though this result was known earlier; this theorem actually gives the stronger, still-effective statement
\[d_{x, \textup{Sie}} \ge c (\log x)^2.\]
Finally, the result for $K/\QQ$ is a direct consequence of \cite[Lemma 8]{Stark74}.
\end{proof}

Our next preliminary result is an upper bound for the number of ideals of $F$ with a certain number of prime divisors of which a certain number are small. The proof of this proposition largely follows an argument of Hardy and Ramanujan \cite{HaRa17}.

\begin{prop}
\label{prop:Hardy}
Given $(K/F, \Vplac_0, \FFF)$ as above, there is $C > 0$ so we have the following:

Choose integers $r > k \ge 0$ and real numbers $x, y > 2$ satisfying
\[y^{k +1} \le x,\]
and take $\pi_{r, k}(x, y)$ to be the number of ideals $\mfh$ in $\mathscr{H}$ of norm at most $x$ that satisfy $\omega(\mfh) = r$ and for which $\mfh$ is divisible by exactly $k$ prime ideals of norm at most $y$. Take $d = [F(\zeta):F]$. Then
\[\pi_{r, k}(x, y) \,\le\, \frac{Cx}{\log x}\cdot\frac{(d^{-1} \cdot \log \log y + C)^k}{k!} \cdot \frac{(d^{-1} \cdot (\log \log x - \log \log y) + C)^{r-k - 1}}{(r- k -1)!}.\]
\end{prop}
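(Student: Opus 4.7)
The plan is to prove the bound by induction on $r$, using the standard Hardy--Ramanujan peeling argument adapted to the thin set $\mathscr{P}$ of primes splitting completely in $F(\zeta)/F$. The two analytic inputs I will need are the prime number theorem for $\mathscr{P}$, which follows from the Chebotarev density theorem and gives $\#\{\mfp\in\mathscr{P}:N_{F/\QQ}(\mfp)\le z\}\ll z/(d\log z)$, and its Mertens-type consequence (by partial summation)
\[\sum_{\substack{\mfp\in\mathscr{P}\\ u<N_{F/\QQ}(\mfp)\le z}}\frac{1}{N_{F/\QQ}(\mfp)}\,\le\,d^{-1}\bigl(\log\log z-\log\log u\bigr)+C_0\]
for $z>u\ge 2$, where $C_0$ depends only on $(K/F,\Vplac_0,\FFF)$. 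The base case $r=1$ (forcing $k=0$) is exactly the Chebotarev bound on primes of $\mathscr{P}$ of norm in $(y,x]$, which matches the claimed estimate.

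For the inductive step, I double-count pairs $(\mfh,\mfp)$ with $\mfp\mid\mfh$ and $\mfh$ counted by $\pi_{r,k}(x,y)$. Since each such $\mfh$ has exactly $r$ prime divisors, of which $k$ are small and $r-k$ are large, writing $\mfh'=\mfh/\mfp$ gives
\[r\cdot\pi_{r,k}(x,y)\,\le\,\sum_{\substack{\mfp\in\mathscr{P}\\ N(\mfp)\le y}}\pi_{r-1,k-1}\bigl(x/N(\mfp),\,y\bigr)\,+\sum_{\substack{\mfp\in\mathscr{P}\\ y<N(\mfp)\le x}}\pi_{r-1,k}\bigl(x/N(\mfp),\,y\bigr),\]
where we have dropped the (harmless) constraint that $\mfh'$ be coprime to $\mfp$. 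Inserting the inductive bound in each summand reduces the problem to estimating weighted sums of $N(\mfp)^{-1}\log(x/N(\mfp))^{-1}$ against powers of $\log\log(x/N(\mfp))$ and of $\log\log(x/N(\mfp))-\log\log y$.

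To handle the $\log(x/N(\mfp))$ factor cleanly, I will split each sum at $N(\mfp)=\sqrt{x}$. The hypothesis $y^{k+1}\le x$ already forces $y\le\sqrt{x}$ whenever $k\ge 1$, so every small prime satisfies $N(\mfp)\le\sqrt{x}$; for the large-prime sum, I split explicitly. In the range $N(\mfp)\le\sqrt{x}$ we have $\log(x/N(\mfp))\ge\tfrac12\log x$ and $\log\log(x/N(\mfp))\le\log\log x$, so substituting the inductive hypothesis and applying the Mertens estimate over the appropriate sub-range ($(0,y]$ or $(y,\sqrt{x}]$) gives the desired shape, up to absorbing constants. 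In the range $\sqrt{x}<N(\mfp)\le x$, the Mertens estimate contributes only $O(1)$ mass, and $\pi_{r-1,k}(x/N(\mfp),y)\le\pi_{r-1,k}(\sqrt{x},y)$ is already controlled by the inductive hypothesis applied at $\sqrt{x}$; this contribution is absorbed into the error. Dividing through by $r$ and using $1/(r!)=1/r\cdot 1/((r-1)!)$ closes the induction, with the factor of $d^{-1}$ coming from the thin density of $\mathscr{P}$ in each Mertens estimate.

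The main obstacle is the bookkeeping of constants through the induction, particularly making sure the absolute constant $C$ in the statement can be chosen uniformly in $r$ and $k$. The large-prime tail $N(\mfp)>\sqrt{x}$ is the delicate spot, since naively it could cost a factor polynomial in $r$ per inductive step; one controls this by noting that this tail shrinks both $\log(x/N(\mfp))$ and the Mertens contribution simultaneously, so the net loss can be bounded by a constant factor independent of $r$, which is then absorbed into $C$ by inflating $C$ once at the base case.
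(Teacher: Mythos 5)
Your analytic inputs (Chebotarev for the thin set $\mathscr{P}$ and its Mertens-type consequence with density $d^{-1}$) are exactly the paper's, but your recursion is not the one that closes. The paper never double-counts over all $r$ prime divisors and divides by $r$; it uses the Hardy--Ramanujan device of counting only prime divisors of norm at most $\sqrt{x}$. Since an ideal of norm at most $x$ has at most one prime factor of norm exceeding $\sqrt{x}$, every $\mfh$ counted by $\pi_{r,0}(x,y)$ has at least $r-1$ prime divisors with norm in $[y,\sqrt{x}]$, giving
\[(r-1)\,\pi_{r,0}(x,y)\;\le\;\sum_{\substack{\mfp\in\mathscr{P}\\ y\le N_{F/\QQ}(\mfp)\le\sqrt{x}}}\pi_{r-1,0}\bigl(x/N_{F/\QQ}(\mfp),\,y\bigr),\]
with no large-prime tail at all; the case $k>0$ is then reduced to $k=0$ by peeling a small prime (automatically of norm $\le y\le x^{1/(k+1)}\le\sqrt{x}$) and dividing by $k$. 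The weighted Mertens bound $\sum_{y_2\le N_{F/\QQ}(\mfp)\le y_1}\frac{\log x}{N_{F/\QQ}(\mfp)\log(x/N_{F/\QQ}(\mfp))}\le d^{-1}(\log\log y_1-\log\log y_2)+C$, valid only for $y_1\le\sqrt{x}$, then closes each induction exactly because the division by $r-1$ (resp.\ $k$) is matched by the factor gained in the factorial.

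The genuine gap in your plan is the tail $\sqrt{x}<N_{F/\QQ}(\mfp)\le x$, which cannot be ``bounded by a constant factor independent of $r$.'' Your fallback $\pi_{r-1,k}(x/N_{F/\QQ}(\mfp),y)\le\pi_{r-1,k}(\sqrt{x},y)$, summed over the roughly $x/\log x$ primes in $(\sqrt{x},x]$, yields something of size $x^{3/2}/(\log x)^2$, vastly larger than the target $\asymp x/\log x$. If you instead insert the inductive bound pointwise, you must estimate $\sum_{\sqrt{x}<N_{F/\QQ}(\mfp)\le x}\tfrac{1}{N_{F/\QQ}(\mfp)\log(x/N_{F/\QQ}(\mfp))}$; writing $N_{F/\QQ}(\mfp)=x^u$ this behaves like $\tfrac{1}{\log x}\int_{1/2}^{1}\tfrac{du}{u(1-u)}$, whose divergence at $u=1$ makes the sum of size $\tfrac{\log\log x}{\log x}$ rather than $\tfrac{O(1)}{\log x}$. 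The tail contribution is therefore at least of order $\tfrac{x\log\log x}{\log x}\cdot\tfrac{(\cdots)^{r-k-2}}{(r-k-2)!}$, and comparing with the target (which carries one more factor of $d^{-1}(\log\log x-\log\log y)+C$ and one more $\tfrac{1}{r-k-1}$) gives a ratio of order $\tfrac{(r-k-1)\log\log x}{d^{-1}(\log\log x-\log\log y)+C}$, which is unbounded when, say, $y$ is near $\sqrt{x}$. So the induction does not close. (A smaller issue: for $N_{F/\QQ}(\mfp)>\sqrt{x}$ the hypothesis $y^{k+1}\le x/N_{F/\QQ}(\mfp)$ needed to invoke the inductive bound can fail outright.) Replacing your double count by the $\sqrt{x}$-truncated count above removes the tail entirely and repairs the argument.
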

\begin{proof}
Applying partial summation to Theorem \ref{thm:LaOdCheb}, we find that there is a real constant $c_{\text{Mert}}$ and positive numbers $C, c$ so that we have
\begin{equation}
\label{eq:Mert}
\Bigg|c_{\text{Mert}} + d^{-1} \log \log H - \sum_{\substack{\mfp \in \mathscr{P} \\ N_{F/\QQ}(\mfp) < H}} \frac{1}{N_{F/\QQ}(\mfp)} \Bigg| \le C \exp(-c \sqrt{\log H})
\end{equation}
for $H > 3$ and
\[\sum_{\substack{\mfp \in \mathscr{P} \\ N_{F/\QQ}(\mfp) < H}} \frac{\log N_{F/\QQ}(\mfp)}{N_{F/\QQ}(\mfp)} \le C \cdot \log H\]
for $H > 1$. These results generalize two of Mertens' theorems.
We can now follow the argument of \cite[Lemma A]{HaRa17} to show that there is some $C> 0$ so that 
\begin{equation}
\label{eq:HaRaMert}
\sum_{\substack{\mfp \in \mathscr{P}\\ y_2 \le N_{F/\QQ}(\mfp) \le y_1}} \frac{\log x}{N_{F/\QQ}(\mfp) \cdot \log (x/N_{F/\QQ}(\mfp))}\le d^{-1} \cdot (\log \log y_1 - \log \log y_2) + C
\end{equation}
for real numbers $x, y_1, y_2$ satisfying $\sqrt{x} \ge y_1 \ge y_2 \ge 2$. The argument from \cite{HaRa17} also gives the inequality
\[\pi_{r, 0}(x, y) \le \frac{1}{r-1} \sum_{\substack{\mfp \in \mathscr{P} \\ y \le N_{F/\QQ}(\mfp) \le \sqrt{x}}} \pi_{r-1, 0}\left(\frac{x}{N_{F/\QQ}(\mfp)},\, y\right),\]
and we can then use this inequality and \eqref{eq:HaRaMert} to inducitvely prove
\[\pi_{r, 0}(x, y) \,\le\, \frac{Cx}{\log x}\cdot \frac{(d^{-1} \cdot (\log \log x - \log \log y) + C)^{r - 1}}{(r-1)!},\]
for some $C > 0 $ determined from $(K/F, \Vplac_0, \FFF)$, with the Chebotarev density theorem sufficing for the case $r = 1$.

We also have the relation
\[\pi_{r, k}(x, y) \le \frac{1}{k} \sum_{\substack{\mfp \in \mathscr{P} \\ N_{F/\QQ}(\mfp) \le y}} \pi_{r-1, k-1}\left(\frac{x}{N_{F/\QQ}(\mfp)},\, y\right),\]
which holds for $k > 0$. Using the above estimate on $\pi_{r, 0}$ as a base case, the full proposition can be proved inductively from this relation and \eqref{eq:HaRaMert}.
\end{proof}

\subsubsection{The case $j =1$ of Proposition \ref{prop:hateful_eight}}
Take $r > (\log^{(2)} H)^2$. The contribution of the portion of $\mathscr{H}_{\text{bad},1}$ with $r$ prime divisors to \eqref{eq:bad_moments} can be bounded by Proposition \ref{prop:Hardy}. For sufficiently large $H$, this contribution is at most
\[\frac{H}{\log H} \cdot \frac{(d_{\FFF}^{-1} \cdot \log \log H + C)^{r-1}}{(r-1)!} \cdot \exp\left((1 - \epsilon/2)\cdot r \cdot \log^{(3)}H\right).\]
Using Stirling's approximation, we can find some $C_0 > 0$ so that this is at most
\begin{align*}
&\frac{H}{\log H} \exp\left( r \cdot C_0 + ( 2- \epsilon/2) r \log^{(3)} H - r \log r\right) \\
\le \,&\frac{H}{\log H}\exp\left( r \cdot C_0 - (\epsilon/2) r \log r\right) \le \frac{H}{\log H} e^{-r}
\end{align*}
for any and $H > C_0$. The sum of these terms over $r > (\log^{(2)} H)^2$ is then within the bound of the proposition.

\subsubsection{The case $j =2$ of Proposition \ref{prop:hateful_eight}}
For $j =2$, we note that there are two reasons an ideal $\mfh$ in $\mathscr{H}$ of norm at most $H$ can fail to appear in a grid of ideals of height $H$:
\begin{enumerate}
\item It can have more than one prime factor from some interval $\mathscr{P}_i(H)$, or
\item It can share a grid $Y$ as in \eqref{eq:XfromXs} with an ideal of norm greater than $H$.
\end{enumerate}
Our starting point is to note that there are constants $C_0, C_1, c > 0$ depending just on $F$ so that, for $H > C_1$, the number of integral ideals of $F$ of norm at most $H$ is within $C_1 H^{1-c}$ of $C_0 H$; see \cite{Murt07} for a streamlined approach to this old result. From \eqref{eq:Mert}, we can find a $C > 0$ depending just on $F(\zeta)$ so that
\[\sum_{\mfp \in \mathscr{P}_i(H)} \frac{1}{N_{F/\QQ}(\mfp)} \le C\cdot  \min\left(\log \alpha,\, (i+1)^{-1}\right),\]
where $\mathscr{P}_i(H)$ and $\alpha = \alpha(H)$ are defined as in Notation \ref{notat:pregrid}. As such, the number of ideals of norm at most $H$ with at least two prime divisors from $\mathscr{P}_i(H)$ is at most $C H \min\left(\log \alpha,\, (i+1)^{-1} \right)^2$ for some $C$ depending just on $F(\zeta)$. Summing over all $i$, we see the number of ideals in $\mathscr{H}_{\textup{bad}, 2}(H)$ that are bad for the first reason listed above is bounded by
\[\frac{CH}{ \exp^{(3)} \left(\frac{1}{4} \log^{(3)} H\right)}\]
for some $C$ depending just on $F(\zeta)$. 

If the ideal $\mfh$ is in $\mathscr{H}_{\textup{bad}, 2}$ for the second reason, we have
\[H \ge N_{F/\QQ}(\mfh) \ge H \cdot \alpha^{- \omega(\mfh)}.\]
Using the bound $\omega(\mfh) \le (\log^{(2)} H)^2$ that holds for these ideals, we can bound the number of ideals of this type by
\[\frac{CH}{ \left(\exp^{(3)} \left(\frac{1}{4} \log^{(3)} H\right)\right)^{1- \epsilon}},\]
where $C$ depends on $F$ and the positive constant $\epsilon > 0$. These bounds suffice to give the $j = 2$ case.

\subsubsection{The case $j =3$ of Proposition \ref{prop:hateful_eight}}
Given an integer $k \ge 0$ and real numbers $x, y \ge 2$ satisfying $y^{k+1} \le x$, we may apply the Taylor expansion of $e^x$ and Proposition \ref{prop:Hardy} to show that sum $\sum_{\mfh} (\# \FFF(-1)^{\times})^{\omega(\mfh)}$ taken over ideals of height at most $x$ with at least $k$ factors less than $y$ has upper bound
\[Cx \cdot (\log x)^{\kappa - 1} \cdot \frac{(d^{-1} \cdot \log \log y + C)^k}{k!},\]
where $C > 0$  is determined just from $F(\zeta)$. The part follows from Stirling's approximation.

\subsubsection{The case $j =4$ of Proposition \ref{prop:hateful_eight}}
We note that $a_4(\mfh, H)$ is bounded by $\left(\log^{(2)} H\right)^{1 - \epsilon}$ for $\mfh$ in $\mathscr{H}_{\text{bad}, 4}(H)$. With this observed, the result follows from Proposition \ref{prop:Hardy} and Stirling's approximation.

\subsubsection{The case $j =5$ of Proposition \ref{prop:hateful_eight}}
Given an unfiltered grid of ideals that are not bad for one of the previously mentioned reasons, the Chebotarev density theorem and Hoeffding's inequality \cite[Theorem 1]{Okam59} imply that a randomly selected ideal from the grid has probability at most
\[\exp(-c |\Slg|^{1/2}) +   |S| \cdot \exp^{(3)}\left(\left(\frac{1}{3} - \epsilon\right) \log^{(3)} H\right)^{-1} \]
of lying in $\mathscr{H}_{\text{bad}, 5}(H)$ for $H \ge C$, where $c, C >0$ are determined from the starting tuple and $\epsilon > 0$. The case then follows from the lower bound on $|S|$ and the upper bounds on $|\Ssm|$ and $|\Smed|$.

\subsubsection{The case $j =6$ of Proposition \ref{prop:hateful_eight}}
Suppose $d_{x,\text{Sie}}$ exists for $x = \exp^{(3)}\left(\frac{2}{5}\log^{(3)} H\right)$. From Proposition \ref{prop:dxsie}, we find there is some absolute $c > 0$ so that $d_{x, \text{Sie}}$ is at least $c \exp^{(2)}\left(\frac{2}{5}\log^{(3)} H\right)$. Take $d_0$ to be the product of the prime factors of $d_{x, \text{Sie}}$ not divisible by a prime in $\Vplac_0$. If this number has more that $(\log^{(2)}H)^{1/3 + 1/100}$ prime factors, then no ideals will lie in $\mathscr{H}_{\text{bad}, 6}(H)$. So we may assume it has at most this many prime factors, and we can bound the number of squarefree ideals in $F$ whose norm divides some power of $d_0$ by 
\[\exp\left(C \cdot (\log^{(2)}H)^{1/3 + 1/100}\right)\]
 for some $C > 0$ depending on $F$. The result then follows from an application of Proposition \ref{prop:Hardy}.

\subsubsection{The case $j =7$ of Proposition \ref{prop:hateful_eight}}
The restriction to ideals of norm at least $\sqrt{H}$ follows immediately from Proposition \ref{prop:XFHsize}.

Using Proposition \ref{prop:Hardy}, we find that we can restrict our attention to ideals with at least $(\log^{(2)} H)^{2/3 - \epsilon/2}$ prime factors below $\exp^{(3)}\left(\frac{2}{3} \log^{(3)} H\right)$ and $(\log^{(2)} H)^{1 - \epsilon/2}$ prime factors above $\exp^{(3)}\left(\frac{3}{4}\log^{(3)} H\right)$. The result then follows from the Chebotarev density theorem and Hoeffding's inequality.

\subsection{The joint distribution of Frobenius elements}
\label{ssec:Frob_prob}
The final case of Proposition \ref{prop:hateful_eight} relates to the distribution of Frobenius classes for the primes dividing $\mfh$. It is not too difficult to prove this case directly using the De Moivre--Laplace  local limit theorem \cite{Sinai92} together with the Chebotarev density theorem. However, in order to estimate the proportion of twists in $\mathbb{X}_F(H, (\chi_v)_{ v \in \Vplac_0})$ that are favored for a given $N$, we will need a more robsut model for this distribution of Frobenius classes. Our next goal is to produce this model.

\begin{defn}
\label{defn:frob_model}
Choose a nonempty finite set $G$. We take $X_1, X_2, \dots$ to be a sequence of independent random variables on $G$ with the uniform distribution. Given $n \ge 1$, we take $g_n$ to be a random variable valued in $\Z^G$ defined by
\[g_n(\sigma) = \# \{i \le n \,:\,\, X_i = \sigma \}\quad\text{ for all } \sigma \in G.\]
We take $V\in \R^G$ to be a random function with a multivariate normal distribution whose mean is $0$ and whose covariance matrix $\Sigma: G \times G \to \mathbb{R}$ is given by
\[\Sigma_{\sigma, \tau} = \begin{cases} |G|^{-1} - |G|^{-2}&\text{ if } \sigma = \tau\\ -|G|^{-2} &\text{ otherwise.}\end{cases}\]
Finally, we fix some element $\sigma_0$ in $G$.
\end{defn}

\begin{prop}
\label{prop:G1_model}
Take $G$ and $\sigma_0$ as in Definition \ref{defn:frob_model}, and define the random variables $X_1, X_2, \dots $ and random functions $V, g_1, g_2, \dots$ as in that definition. Choose a real number $\delta \in (0, 1/2)$ and a positive integer $R$. Then there is some $C > 0$ depending on $G$, $R$, and $\delta$ so we have the following:

Choose $k \ge 0$, real numbers $b_1, \dots, b_k$ in $[-1, 1]$, and nonzero integral functions $f_1, \dots f_k: G \to \mathbb{\Z}$.  Also choose an integer $n > C$ and a function $a: G\backslash \{\sigma_0\} \to \Z/R\Z$. Take $P$ to be the probability
\[\textup{Pr}\left(\sum_{i \le n} f_j(X_i) \ge b_jn^{\delta} \,\text{ for all } j \le k \,\,\text{ and }\,\,g_n(\sigma) \equiv a(\sigma) \,\textup{ mod } R\,\text{ for all } \sigma \in G\backslash \{\sigma_0\}\right).\]
Take $B$ to be the maximum magnitude attained by any function $f_j$. We then we have the following:
\begin{itemize}
\item If, for some $j \le k$, $\sum_{\sigma \in G} f_j(\sigma)$ is negative, then $P \le C\exp\left(-C^{-1}B^{-2}n\right)$.
\item Otherwise, take $Z$ to be the set of $j \le k$ for which $\sum_{\sigma \in G} f_j(\sigma)$ is zero, and take
\[P_0 = \textup{Pr}\left(\sum_{\sigma \in G} V_{\sigma} \cdot f_j(\sigma) \ge 0 \,\text{ for all } j \in  Z\right).\]
Then
\[\left|P - R^{1 -|G|}P_0\right| \le CkBn^{-1/2 + \delta}.\]
\end{itemize}
\end{prop}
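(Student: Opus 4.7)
The plan is to split into the two cases and handle them via a combination of concentration and a joint local-plus-global central limit theorem. For the first bullet, suppose some $f_j$ has $\sum_{\sigma \in G} f_j(\sigma) < 0$; since this sum is a nonzero integer, $\sum_\sigma f_j(\sigma) \le -1$, so $\mathbb{E}[f_j(X_i)] \le -|G|^{-1}$ and $\mathbb{E}\bigl[\sum_{i \le n} f_j(X_i)\bigr] \le -n/|G|$. The constraint $\sum_{i \le n} f_j(X_i) \ge b_j n^\delta \ge -n^\delta$ therefore demands a deviation of order $n$ from the mean, and Hoeffding's inequality for variables in $[-B, B]$ gives the desired bound $C\exp(-C^{-1}B^{-2}n)$.

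For the second bullet, the symmetric Hoeffding estimate shows that each $j \notin Z$ (those with $\sum_\sigma f_j(\sigma) \ge 1$) has its constraint satisfied except with probability $\le C\exp(-cn/B^2)$; a union bound over these at most $k$ indices contributes only $Ck\exp(-cn/B^2)$ error, which is dominated by $CkBn^{-1/2+\delta}$ once $n$ is large. We reduce to showing
\[
\Bigl| \textup{Pr}\bigl(L_j(g_n) \ge b_j n^\delta \text{ for all } j \in Z \text{ and } g_n \equiv a \bmod R\bigr) - R^{1-|G|}P_0 \Bigr| \le CkBn^{-1/2+\delta},
\]
where $L_j(g) := \sum_\sigma f_j(\sigma) g(\sigma)$. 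Setting $Y^{(n)} := (g_n - n/|G|)/\sqrt{n}$, the event becomes $L_j(Y^{(n)}) \ge b_j n^{\delta-1/2}$ for all $j \in Z$.

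The main step is a joint local-plus-global CLT for $(g_n \bmod R,\, Y^{(n)})$, whose limit law is the product of the uniform law on $(\mathbb{Z}/R\mathbb{Z})^{G \setminus \{\sigma_0\}}$ with the Gaussian $V$. I would decouple the modular part from the continuous part via Fourier analysis on $(\mathbb{Z}/R\mathbb{Z})^{G \setminus \{\sigma_0\}}$: the trivial character produces the main term $R^{1-|G|}\,\textup{Pr}(L_j(Y^{(n)}) \ge b_j n^{\delta-1/2} \text{ for all } j \in Z)$, while each nontrivial character yields an iid product $\phi(t)^n$ with $|\phi(t)| \le 1-c_R$, hence exponentially small in $n$ even when coupled with the half-space indicators. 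For the main term, a multivariate Berry--Esseen theorem replaces $Y^{(n)}$ by $V$ with total variation error $O(n^{-1/2})$ on half-spaces; finally, shifting each boundary from $\{L_j(V) \ge b_j n^{\delta-1/2}\}$ back to $\{L_j(V) \ge 0\}$ costs $O(Bn^{\delta-1/2})$ per constraint by uniform boundedness of the density of each $L_j(V)$, and summing over $j \in Z$ yields the claimed rate.

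The main obstacle will be making the joint Berry--Esseen step quantitative for multiple simultaneous half-space constraints. I would handle this via a standard smoothing argument: replace each indicator $\mathbf{1}\{L_j(Y^{(n)}) \ge b_j n^{\delta-1/2}\}$ by a mollification at scale $n^{-1/2}$, apply the CLT to the resulting smooth functional through integration by parts against the Gaussian density of $V$, and then account for the thin boundary layer of width $n^{-1/2}$ using uniform density bounds. The factor $B$ in the final rate tracks the scaling of the linear forms $L_j$, while the factor $k$ reflects the per-constraint smoothing and the boundary-shift costs being summed over the at most $k$ active constraints.
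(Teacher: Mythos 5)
Your first bullet and the reduction to $j \in Z$ via Hoeffding are fine and match the paper. The problem is the decoupling of the mod-$R$ condition from the half-space conditions. You expand in characters of $(\Z/R\Z)^{G\setminus\{\sigma_0\}}$ and assert that each nontrivial character contributes an i.i.d.\ product $\phi(t)^n$, ``hence exponentially small in $n$ even when coupled with the half-space indicators.'' That factorization is exactly what the indicator destroys: the quantity you must bound is $\mathbb{E}\bigl[e(t\cdot g_n)\,\mathbf{1}_A\bigr]$ with $A$ the intersection of half-spaces, and this is not a product over $i$. Worse, the conclusion itself is false, not just the justification: take $G = \{\sigma_0,\sigma_1\}$, $R=2$, $A = \{g_n(\sigma_1) \ge n/2\}$, and $t$ the nontrivial character; then $\mathbb{E}[(-1)^{g_n(\sigma_1)}\mathbf{1}_A] = 2^{-n}\sum_{j \ge n/2} (-1)^j\binom{n}{j} = \pm 2^{-n}\binom{n-1}{\lceil n/2\rceil - 1}$, which is of order $n^{-1/2}$, not exponentially small. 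So the nontrivial-character terms are genuinely of the same polynomial size as your target error, and bounding them requires a real argument that your sketch does not supply.

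The paper closes this gap by a conditioning trick rather than by Fourier analysis: set $m = \lfloor n^{\delta}\rfloor$ and note that, outside an event of probability $O(kBn^{-1/2+\delta})$ (controlled by the Bentkus convex-set Berry--Esseen bound, which shows $\sum_{i\le n}f_j(X_i)$ rarely lands within $Bn^{\delta}$ of its threshold), the half-space events are unchanged if one replaces $\sum_{i\le n}f_j(X_i)$ by $\sum_{i = m}^{n}f_j(X_i)$, hence become measurable in $X_m,\dots,X_n$ alone. Conditionally on those variables, the event $g_n \equiv a \bmod R$ is the event that $g_{m}$ lies in a prescribed coset, and the sequence $g_0 \bmod R, g_1 \bmod R,\dots$ is an irreducible aperiodic Markov chain on $(\Z/R\Z)^{G\setminus\{\sigma_0\}}$ whose distribution at time $m$ is within $Ce^{-cm}$ of uniform. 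This yields the factor $R^{1-|G|}$ with an error $e^{-cn^{\delta}}$ that is absorbed into $CkBn^{-1/2+\delta}$. Your Fourier framework could be repaired by running essentially this same conditioning inside each nontrivial-character term, but as written the central step of your argument does not hold. (Two smaller points: the Berry--Esseen comparison cannot be in total variation, since $g_n$ is discrete and $V$ is continuous --- you need the bound over convex sets or half-spaces; and the covariance of $g_n$ is singular, so one must first project away the $\sigma_0$ coordinate before invoking a nondegenerate multivariate CLT, as the paper does.)
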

\begin{proof}
The first part follows immediately from Hoeffding's inequality \cite[Theorem 1]{Okam59}. Hoeffding's inequality also allows us to ignore functions with a positive sum when estimating $P$, so we reduce to the case where $Z = \{1, \dots, k\}$.

The random function $g_n$ has a uniform multinomial distribution. As $n$ tends to $\infty$, the normalized random function $n^{-1/2}\left(g_n - n |G|^{-1}\right)$
has distribution converging  to the random function $V$ \cite[Section 2.7]{Serf80}. 

We note that the covariance matrix of each normalized random function equals that of $V$. This covariance matrix has rank $|G| -1$; projecting these normalized random functions to $\R^{G \backslash \{\sigma_0\}}$ gives a random function with an invertible covariance matrix. We may then apply a multivariate generalization of the Berry--Essen theorem \cite[Theorem 1.1]{Bentk05} to say that there is some $C_0 > 0$ depending just on $G$ so that, for any convex subset $K$ of $\R^{G \backslash \{\sigma_0\}}$, we have
\[\left|\textup{Pr}\left( \left(n^{-1/2} \cdot g_n(\sigma) - |G|^{-1}\right)_{\sigma \in G \backslash \{\sigma_0\}}  \in K\right)- \textup{Pr}\left((V_{\sigma})_{\sigma \in G \backslash \{\sigma_0\}} \in K\right)\right| \le C_0n^{-1/2}.\]
This implies that the probability that $\sum_{i \le n} f_j(X_i)$ has magnitude bounded by $Bn^{\delta}$ for some $j \le k$ can be bounded by $C_1kBn^{-1/2 + \delta}$, where $C_1 > 0$ depends only on $H$ and $\delta$. In particular, if we take $m = \lfloor n^{\delta} \rfloor$, we see that the condition
\[\sum_{i= 1 }^n f_j(X_i) \ge b_jn^{\delta} \quad\text{ if and only if }\quad  \sum_{i= m }^n f_j(X_i) \ge b_jn^{\delta}\]
holds for all $j \le k$ with probability at least $1 - C_1kBn^{-1/2 + \delta}$.

Take $\varphi: \Z^G \to (\Z/R\Z)^{G\backslash \{\sigma_0\}}$ to be the standard projection. The sequence 
\[\varphi \circ g_0, \,\varphi \circ g_1,\, \varphi \circ g_2,\, \dots\]
 is an irreducible, aperiodic Markov chain on the state space $(\Z/R\Z)^{G\backslash \{\sigma_0\}}$. In particular, given any $a: G \backslash \{\sigma_0\} \to \Z/R\Z$, the convergence theorem for aperiodic irreducible Markov chains \cite[Theorem 4.9]{LePe17} allows us to conclude that
\[ \left|\textup{Pr}\Big(g_m \equiv a(\sigma) \,\text{ for all } \sigma \in G \backslash \{\sigma_0\}\Big) - R^{1-|G|}\right| \le Ce^{-c m}\]
for some $C, c > 0$ depending just on $G$ and $R$. The proposition follows.
\end{proof}

We now finish the proof of Proposition \ref{prop:hateful_eight}. 
\begin{proof}[The final case of Proposition \ref{prop:hateful_eight}]
Choose an unfiltered grid of ideals $Y$  of height $H$ not meeting $\mathscr{H}_{\textup{bad}, j}(H)$ with $j = 1, 3, 4$. Given a class function $f$ as in Notation \ref{notat:hateful_eight} , we may apply the Chebotarev density theorem and Proposition \ref{prop:G1_model} to bound the number of $(\mfp_s)_s$ in $Y$ satisfying  $\left|\sum_{s \in S} f(\Frob\,\mfp_s) \right|\le |S|^{1/4}$ by $C|S|^{-1/4 + \epsilon}$ for some $C$ determined from $(K/F, \Vplac_0, \FFF)$ and $\epsilon > 0$. The result follows from a count of class functions.
\end{proof}

A similar but more involved argument gives the following.

\begin{prop}
\label{prop:pfav}
Given a twistable module $N$ defined with respect to $(K/F, \Vplac_0, \FFF)$, and given $\epsilon > 0$, there are real numbers $P_0, C \ge 0$ so, for $H > C$ and any set of local twists $(\chi_v)_v$, we have
\[\left| \frac{\#\XXfav_{F, N}(H, (\chi_v)_v)}{\#\mathbb{X}_F(H)}  - P_0 \right| \le (\log^{(2)} H)^{-\frac{1}{6} + \frac{1}{100} + \epsilon}.\]
If $N$ is potentially favored in the sense of Definition \ref{defn:pfav2}, $P_0$ is positive. Otherwise, $P_0$ is zero.
\end{prop}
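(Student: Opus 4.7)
The plan is to reinterpret being favored as a condition on a Frobenius sum and then appeal to Proposition \ref{prop:G1_model}. For a $G_F$-submodule $T\subseteq N[\omega]$, set
\[f_T(\sigma) \,=\, \dim_{\FFF_{\ell}} H^0\!\big(\langle\sigma\rangle,\,(N/T)[\omega]\big) \,-\, \dim_{\FFF_{\ell}} H^0\!\big(\langle\sigma\rangle,\,N[\omega]\big),\]
a class function on $G_1 = \Gal(K(\Vplac_0)/F(\mu_{\ell^{k_0}}))$. By Definition \ref{defn:tamagawa}, $\mathcal{T}_{N,T}(\chi)=\ell^{\sum_{\mfp\mid\mfh_F(\chi)} f_T(\Frob\,\mfp)}$, so $\chi$ is favored exactly when $\sum_{\mfp\mid\mfh_F(\chi)} f_T(\Frob\,\mfp)\le 0$ for every $T$. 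By Remark \ref{rmk:finite_favored} there are only finitely many distinct $f_T$, each bounded in magnitude by $\dim N[\omega]$.

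First I will use Proposition \ref{prop:hateful_eight} and Proposition \ref{prop:XFHsize} to discard $\chi$ with bad conductor, contributing only to the error. Each remaining $\chi$ has $\mfh_F(\chi)$ sitting inside a filtered grid, so I group the good twists by the grid shape $(S,\,(\mfp_s)_{s\in\Ssm},\,(i_s)_{s\in\Smed\cup\Slg})$, the Frobenius pattern $(C_s)_{s\in\Smed\cup\Slg}$, and the prescribed restrictions at $\Vplac_0$. The effective Chebotarev density theorem used in the proof of Proposition \ref{prop:hateful_eight} shows that, once grids are weighted by the number of constituent ideals, the distribution of $(C_s)_s$ matches that produced by drawing $X_s$ i.i.d.\ uniform on $G_1$ and setting $C_s=[X_s]$. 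Any congruence imposed by $(\chi_v)_v$ descends to a condition of the form $g_n\equiv a\bmod R$ in the notation of Proposition \ref{prop:G1_model}.

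Next I invoke Proposition \ref{prop:G1_model} with $G=G_1$, test functions $-f_T$, and thresholds $b_T=0$. If some $T$ satisfies $\sum_{\sigma\in G_1}f_T(\sigma)>0$, the first bullet yields exponentially small probability of favoredness, forcing $P_0=0$; this is precisely the failure of condition (1) in Definition \ref{defn:pfav2}. Otherwise every $f_T$ has $\sum_\sigma f_T(\sigma)\le 0$ and the second bullet supplies, up to error of order $|S|^{-1/2+\delta}$,
\[P_0 \,=\, R^{1-|G_1|}\cdot\Pr\!\left(\textstyle\sum_{\sigma\in G_1} V_\sigma f_T(\sigma)\le 0 \text{ for every } T \text{ with } \sum_{\sigma} f_T(\sigma)=0\right).\]
Property (4) of a good conductor gives $|S|\gtrsim \log^{(2)} H/\log^{(3)} H$, so this convergence rate combined with the tail bounds of Proposition \ref{prop:hateful_eight} fits comfortably under $(\log^{(2)} H)^{-1/6+1/100+\epsilon}$.

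The remaining step is to identify $P_0>0$ with $N$ being potentially favored. Because $V$ is supported on the hyperplane $H_0=\{\sum_\sigma V_\sigma=0\}$, we have $P_0>0$ iff the convex cone cut out by the inequalities $\sum V f_T\le 0$ has nonempty interior in $H_0$, iff there exists $w\colon G_1\to\R$ with $\sum_\sigma w(\sigma)f_T(\sigma)<0$ for every $T$ with $f_T$ nonconstant. Starting from a superlative, one adds a large positive constant to $w$ (harmless modulo $H_0$) to absorb the boundary cases where $\sum_\sigma f_T(\sigma)<0$ but $f_T\not\equiv 0$, producing a strictly interior point; conversely, an interior point directly supplies a superlative. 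I expect this convex-geometric translation, particularly the careful handling of the boundary regime $\sum_\sigma f_T(\sigma)=0$, to be the main technical point of the argument.
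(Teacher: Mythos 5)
Your proposal is correct and follows essentially the same route as the paper: discard bad conductors via Proposition \ref{prop:hateful_eight}, model the Frobenius classes of the primes of $\mfh_F(\chi)$ as i.i.d.\ uniform on $G_1$ via Chebotarev, apply Proposition \ref{prop:G1_model} with the class functions $f_T$ and $R=|\FFF|$, and translate positivity of the limiting Gaussian cone probability into the potentially-favored condition by the superlative/interior-point equivalence (the paper phrases this via hyperplane separation and the openness of the set of superlatives). The only quibble is that you have the two directions of that equivalence swapped — it is the $H_0$-interior point that needs a large constant added to become a superlative, while a superlative yields an interior point directly after centering — but the underlying argument is sound.
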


\begin{proof}
Take $Y$ to be an unfiltered grid of ideals of height $H$. We suppose that $Y$ does not meet $\mathscr{H}_{\textup{bad}, j}$ for $j \le 4$. From Proposition \ref{prop:hateful_eight}, we see that it suffices to estimate the proportion of twists in $\bigcup_{\mfh \in Y} \mathbb{X}_F(\mfh)$ that lie in $\XXfav_{F, N}$. One method to produce a twist in this union uniformly at random is to independently choose 
\begin{itemize}
\item An element $\chi_0$ uniformly at random from $\msS_{\FFF/F}(\Vplac_0)$, 
\item An element $\kappa_s$ uniformly at random from $\FFF(-1)^{\times}$ for each $s \in S$, and 
\item A pair of primes $(\mfp_s, \mathfrak{P}_s)$ for each $s \in S$, with $\mfp_s$ chosen uniformly at random from the primes in $Y_s$  and $\mathfrak{P}_s$ chosen uniformly at random among primes of  $K(\Vplac_0)$  dividing $\mfp_s$.
\end{itemize}
Choosing an arbitrary prime $\ovp_s$ of $\ovQQ$ dividing $\mathfrak{P}_s$ for each $s \in S$, we can then define our random twist by
\[\chi = \chi_0 + \sum_{s\in S}\sum_{s \in S} \mfB_{\ovp_s,\, \FFF}(\kappa_s).\]
For each $s \in \Smed \cup \Slg$, we may think of $\FrobF{F}{\ovp_s}$ as a random variable valued in $G_1$. By the Chebotarev density theorem, these random variables approximately take the uniform distribution. By throwing out a negligible portion of each $Y_s$, we may assume that these random variables are exactly equidistributed.

We can now apply Proposition \ref{prop:G1_model} to estimate the probability that a twist chosen this way is favored. Take $T_0 =N[\omega], T_1, \dots, T_k$ to be a maximal set of $G_F$-submodules of $N[\omega]$ so that, for $0 \le i < j \le k$, there is some $\sigma$ so 
\[\dim (N/T_i)[\omega]^{\sigma} \ne \dim (N/T_j)[\omega]^{\sigma}.\]
We define $f_i: \Gal(K(\Vplac_0)/F) \to \Z$ by 
\[f_i(\sigma) =  \dim N[\omega]^{\sigma} -\dim (N/T_i)[\omega]^{\sigma} .\]
First, if $N$ is not potentially favored because \eqref{eq:not_super_unfavored} does not hold for some $T_i$, we can apply the first part of Proposition \ref{prop:G1_model} to $f_i$ to bound the probability that $\chi$ is favored by $C\exp(-c|\Smed \cup \Slg|)$ for some $C, c > 0$ depending just on $N$ and $(K/F, \Vplac_0, \FFF)$. This is comfortably within the bounds of the proposition.

Otherwise, if $N$ is not potentially favored, the hyperplane separation theorem implies that some convex sum of the $f_i$ is $0$. In this case, for $\chi$ to be favored, we must have $\sum_{s \in S} f_j(\FrobF{F}{\ovp_s}) = 0$ for at least one $j \le k$. The probability of this event is at most $(\log \log H)^{-1/2 + \epsilon}$ for $H$ sufficiently large, as can be seen from Proposition \ref{prop:G1_model}

This leaves the case that $N$ is potentially favored. In this case, there is a nonempty open set of superlatives among the set of functions $w: G_1 \to \R$ satisfying $\sum_{\sigma \in G_1} w(\sigma) = 0$. Taking $V$ to be the random function of Definition \ref{defn:frob_model} for $G = G_1$, we find that
\[\sum_{\sigma \in G_1} V_{\sigma} f_j(\sigma) > 0 \,\,\text{ for all } \, j \le k\]
happens with positive probability. After taking $R = |\FFF|$ to control the local conditions, we find that the result follows from Proposition \ref{prop:G1_model}.
\end{proof}

\section{Rough bounds for moments of sizes of $\omega$-Selmer groups}
\label{sec:rough}
With Proposition \ref{prop:hateful_eight} proved, we have all the tools we need to prove the coarse moment bound Theorem \ref{thm:main_mmnt_coarse}. To start, we will transition from grids of ideals over $F$ to grids of twists.

\begin{defn}
\label{defn:grid_refinement}
Suppose $Y = \prod_{s \in S} Y_s$ is a filtered grid of ideals in $F$. For $s \in \Smed \cup \Slg$, we will fix a set of primes $\overline{Y}_s$ of $\ovQQ$ so that every prime in $\overline{Y}_s$ divides some prime in $Y_s$, and so that every prime of $K(\Vplac_0)$ dividing some prime in $Y_s$  is divisible by exactly one prime in $\overline{Y}_s$. For $s \in \Ssm$, we take $\overline{Y}_s = \{\ovp_s\}$, where $\ovp_s$ is any prime of $\ovQQ$ dividing the unique prime in $Y_s$.

Choosing some $\sigma_s \in \Gal\left(K(\Vplac_0)/F\left(\mu_{|\FFF|}\right)\right)$ for every $s \in S$, we may consider the grid of ideals in $\ovQQ$ of the form $X = X((\sigma_s)_s) = \prod_{s} X_s$, where we have taken 
\[X_s = \left\{\ovp \in \overline{Y}_s\,:\,\, \FrobF{F}{\ovp}\equiv \sigma_s \,\text{ in } \Gal(K(\Vplac_0)/F)/\sim\right\},\]
where $\Gal(K(\Vplac_0)/F)/\sim$ is the set of classes associated to $(K/F, \Vplac_0, |\FFF|)$ as in \cite[Definition 3.6]{Smi22a}.

We call $X$ a $\ovQQ$-refinement of $Y$. Given $\chi_0$ and $(\gamma_s)_s$ as in Section \ref{ssec:explicit_local}, we may follow the construction of that section to define a twist $\chi(x) = \chi(\chi_0, (\gamma_s)_s, x)$ for every $x \in X$.

\end{defn}
With this definition, we have associated a collection of elements in $\mathbb{X}_F$ to $Y$ decorated with the extra data $((\sigma_s)_s, (\gamma_s)_s, \chi_0)$. We note that a single homomorphism $\chi$ in $\mathbb{X}_F(\mfh)$ may appear multiple times in the collection of twists associated to $((\sigma_s)_s, (\gamma_s)_s, \chi_0)$, and that the collection of twists associated to some other tuple may still contain $\chi$. The following proposition shows that this redundancy is fine, as every $\chi$ in $\bigcup_{\mfh \in X} \mathbb{X}_F(\mfh)$ is overcounted the same amount.

\begin{prop}
Given a filtered grid of ideals $Y = \prod_{s \in S} Y_s$ as above, there is a positive integer $A$ so that, for every $\chi\in \bigcup_{\mfh \in Y} \mathbb{X}_F(\mfh)$, we have
\[ \sum_{(\sigma_s)_s} \sum_{(\gamma_s)_s}  \sum_{\chi_0} \# \left\{x \in X((\sigma_s)_s) \,:\,\, \chi = \chi(\chi_0, (\gamma_s)_s, x)\right\}\,=\,A.\]
\end{prop}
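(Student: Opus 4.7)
The plan is to decouple the sum into purely local contributions at each $s \in S$ and check that each local count is a grid-intrinsic constant, independent of the particular $\chi$ being counted.

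First I would observe that in any tuple $(\chi_0, (\sigma_s)_s, (\gamma_s)_s, x)$ producing $\chi$, the prime $\pi_s(x)$ must divide $\mfh_F(\chi)$ in the $s$-slot, so $\pi_s(x)$ lies over $\mfp_s$, where $\mfh_F(\chi) = \prod_s \mfp_s$. In particular the grid element $\mfh \in Y$ is forced by $x$, and $\pi_s(x)$ can only range over the finite set of primes of $\overline{Y}_s$ above $\mfp_s$.

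Next I would analyze the local cancellation. Fixing a choice $\ovp_s \in \overline{Y}_s$ over $\mfp_s$ for each $s$, the condition that $\chi_0 := \chi - \sum_s \mfB_{\ovp_s, \FFF}(\gamma_s\kappa)$ lies in $\msS_{\FFF/F}(\Vplac_0)$ forces $\mfB_{\ovp_s,\FFF}(\gamma_s\kappa)\big|_{I_{\mfp_s}} = \chi\big|_{I_{\mfp_s}}$ for every $s$. Because $\chi$ is admissible and genuinely ramified at $\mfp_s$, its image modulo $\ell$ is nonzero on $I_{\mfp_s}$, and since $\FFF$ is cyclic of order $\ell^{k_0}$ with the only subgroup not contained in $\ell\FFF$ being $\FFF$ itself, the character $\chi|_{I_{\mfp_s}}$ is surjective. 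Under the identification of tame $\ell$-power characters of $I_{\mfp_s}$ with $\FFF(-1)$ induced by $\ovp_s$, this character therefore corresponds to a generator $\alpha_s(\ovp_s) \in \FFF(-1)^\times$. Since $\kappa \in \FFF(-1)^\times$ is itself a generator, there is exactly one $\gamma_s \in (\Z/\#\FFF\Z)^\times$ solving $\gamma_s \kappa = \alpha_s(\ovp_s)$. Once these $(\ovp_s,\gamma_s)$ are chosen, $\chi_0$ is uniquely determined and automatically lies in $\msS_{\FFF/F}(\Vplac_0)$.

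Finally I would handle the outer sums over $(\sigma_s)_s$ and count the primes $\ovp_s$. For each $s$, the condition $\ovp_s \in X_s((\sigma_s))$ is $\sigma_s \sim \FrobF{F}{\ovp_s}$; since $\Frob\,\mfp_s$ is well-defined as a class in $\Gal(K(\Vplac_0)/F)/{\sim}$ and, by the filtered-grid structure, equals $C_s$ for $s \in \Smed \cup \Slg$ and is the fixed Frobenius of the unique prime in $Y_s$ for $s \in \Ssm$, the number $M_s$ of $\sigma_s \in G_1$ in this class depends only on $Y_s$. Likewise, the number $N_s = \#\{\ovp \in \overline{Y}_s : \ovp \mid \mfp_s\}$ equals $[K(\Vplac_0):F]$ divided by the residue degree of $\mfp_s$ in $K(\Vplac_0)/F$, and this residue degree is determined by the order of $\Frob\,\mfp_s$, hence by $C_s$ alone. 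Putting everything together gives
\[
A \,=\, \prod_{s \in S} N_s \cdot M_s,
\]
a quantity independent of $\chi$. Positivity of $A$ follows because $\mfp_s \in \mathscr{P}$ splits completely in $F(\mu_{\ell^{k_0}})/F$, so $\Frob\,\mfp_s \in G_1$ and $M_s \ge 1$. The step requiring the most care is the local admissibility argument establishing surjectivity of $\chi|_{I_{\mfp_s}}$; without it, the unique $\gamma_s$ might fail to be a unit and the count could vary with $\chi$.
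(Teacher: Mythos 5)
Your proof is correct and follows essentially the same route as the paper's (very terse) argument: everything hinges on the fact that the number of primes of $\overline{Y}_s$ over $\mfp_s$ and the count of admissible $\sigma_s$ depend only on the Frobenius class $C_s$, which is constant across the filtered grid. You have simply made explicit the reduction the paper leaves implicit — the unique recovery of $\gamma_s$ from admissibility of $\chi$ at $\mfp_s$ and the forced choice of $\chi_0$ — which is a faithful expansion rather than a different method.
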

\begin{proof}
This reduces to the claim that, for any $s \in \Smed \cup \Slg$, the number of elements in $\overline{Y}_s$ dividing $\mfp_s$ does not depend on the choice of $\mfp_s$ in $Y_s$. This in turn follows from the fact that the conjugacy class of $\Frob\, \mfp_s$ does not depend the choice of this element.
\end{proof}

The need for the following proposition was a major influence on our definition of good grids of ideals.

\begin{prop}
\label{prop:real_grid_small_c}
Given $(K/F, \Vplac_0, \FFF)$, there are real constants $c, C > 0$ so we have the following:

Choose $H > C$, and choose a filtered grid of ideals $Y = \prod_{s \in S} Y_s$ in $F$ of height $H$, with $\Ssm$, $\Smed$, and $\Slg$ as in Definition \ref{defn:ideal_grid}. Choose any nonempty $\ovQQ$-refinement $X = \prod_{s \in S} X_s$ of $X$, and choose $\chi_0, (\gamma_s)_s$ as in Definition \ref{defn:grid_refinement}. Take
\[\delta = \exp^{(3)}\left(\tfrac{1}{3}\log^{(3)} H\right)^{-c}.\]
Choose any twistable module $N$ defined with respect to $(K/F, \Vplac_0, \FFF)$, and define $c_{\textup{Cheb}}$ and $c_{\textup{bilin}}$ as in Notation \ref{notat:cJuti} for the collection of twists corresponding to $(\chi_0, (\gamma_s)_s, X)$.

We then have $c_{\textup{bilin}} \le \delta$. Furthermore,  if $Y$ contains no ideals lying in $\mathscr{H}_{\textup{bad}, j}(H)$   for $j \le 6$, we have
$c_{\textup{Cheb}} \le \delta$.
\end{prop}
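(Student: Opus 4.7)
The plan is to control both quantities via symbol sums. I will use the bilinear equidistribution result \cite[Theorem 5.2]{Smi22a}, in the flexible-coefficient form of \cite[Proposition 5.9]{Smi22a}, to bound $c_{\textup{bilin}}$, and the effective Chebotarev density theorem (Theorem \ref{thm:LaOdCheb}), together with the no-Siegel-zero condition encoded in item (6) of Notation \ref{notat:hateful_eight}, to bound $c_{\textup{Cheb}}$. The common starting point is Proposition \ref{prop:change_p}: given a pair $(r, m)$ of the prescribed type, this expresses $\langle r, m\rangle_x$ as a constant plus a sum of ``cross'' terms $\symb{\tau^{-1}\pi_s(x)}{\pi_t(x)}'$ with $s \ne t$ and ``spin'' terms $\symb{\tau^{-1}\pi_s(x)}{\pi_s(x)}'$, so that $\exp(2\pi i \langle r, m\rangle_x)$ is a product of unit-modulus phases, each depending on $x$ only through one or two coordinates.

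For $c_{\textup{bilin}}$, bilinear ignorability furnishes distinct $s, t \in \Smed \cup \Slg$ and $\tau_0$ with $(\tau_0 r_t - r_s)(\tau_0 n_t - n_s) \ne 0$. Fixing all coordinates $x_u$ for $u \ne s, t$ and absorbing into weights $\alpha(x_s)\beta(x_t)$ of amplitude one all factors that do not couple $x_s$ and $x_t$, the average \eqref{eq:pair_classifier} reduces to a bilinear sum on $X_s \times X_t$ with a nontrivial coefficient on $\symb{\tau_0^{-1} \pi_s(x)}{\pi_t(x)}'$. The bilinear estimate \cite[Proposition 5.9]{Smi22a} then gives a saving of $\min(|X_s|, |X_t|)^{-c_0}$ for some $c_0 > 0$ depending only on $(K/F, \Vplac_0, \FFF)$. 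Since $s, t \in \Smed \cup \Slg$ and $Y$ avoids $\mathscr{H}_{\textup{bad}, 2}(H)$, Theorem \ref{thm:LaOdCheb} gives $|X_s|, |X_t| \ge a_0(H)^{1-o(1)} = \exp^{(3)}(\tfrac{1}{3}\log^{(3)} H)^{1 - o(1)}$, comfortably dominating $\delta$ for $c$ sufficiently small.

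For $c_{\textup{Cheb}}$, the hypothesis provides $s \in \Slg$, $t \in \Ssm$, and $\tau_0$ with a nonzero cross product, while \eqref{eq:nochebspin} kills the involution-spin factor at the index $s$. Since $|X_t| = 1$, the prime $\ovp_t$ is fixed, and the average reduces to a sum over $\ovp_s \in X_s$ of a nontrivial character $\psi$ of $\FrobF{F}{\ovp_s}$ in a finite Galois extension $L/F$ determined by $K(\Vplac_0)$, $\tau_0$, and $\ovp_t$. Because $\ovp_t$ lies over a prime of norm at most $a_0(H)$, the absolute discriminant of $L$ is polynomial in $a_0(H)$, so Theorem \ref{thm:LaOdCheb} yields a power saving in $|X_s|$ provided the Dedekind zeta function of $L$ has no real zero too close to $s = 1$. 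The hard part is precisely this Siegel-zero issue: item (6) of the definition of a good ideal excludes divisibility of $a_{\Vplac_0} \cdot \mfh_{\textup{sm}}$ by the Siegel modulus $d_{x, \textup{Sie}}$ at $x = \exp^{(3)}(\tfrac{2}{5}\log^{(3)}H)$, and combining this exclusion with Proposition \ref{prop:dxsie} rules out such a zero in $L$ throughout the relevant range of discriminants. Choosing $c$ small enough to absorb the finite combinatorial factors arising from the sums over pairs $\{s,t\}$ and double-coset representatives $\tau$ then completes the bound.
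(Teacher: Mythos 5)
Your overall strategy matches the paper's: Proposition \ref{prop:change_p} to reduce \eqref{eq:pair_classifier} to symbol sums, the bilinear equidistribution theorem of \cite{Smi22a} for the bilinearly ignorable pairs, and the effective Chebotarev theorem plus the Siegel-zero exclusion of item (6) for the Chebotarev-ignorable pairs. Two corrections are needed. First, the bound $c_{\textup{bilin}} \le \delta$ is asserted unconditionally, so you cannot invoke the avoidance of $\mathscr{H}_{\textup{bad},2}(H)$; fortunately the required lower bound on $\#X_s/H_s$ for $s \in \Smed\cup\Slg$ (the bilinear input is the density of $X_s$ relative to its height $H_s$, not its raw cardinality) follows directly from the definition of $\mathscr{P}_{i_s}(H, C_s)$ and the Chebotarev density theorem, with no goodness hypothesis.

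Second, and more substantively, your field $L$ is too small. After fixing all coordinates but $x_s$, the non-constant part of $\langle r, m\rangle_x$ is $\sum_{u\in\Ssm}\sum_{\tau} c_{u,\tau}\,\textup{lg}\,\kappa\big(\symb{\tau^{-1}\pi_s(x)}{\ovp_u}'\big)$ --- a sum over \emph{all} small indices, not just the single $t$ witnessing ignorability (the coefficients for $u \in \Smed\cup\Slg$ vanish because the pair is not bilinearly ignorable, and the spin terms vanish by \eqref{eq:nochebspin}, but nothing kills the remaining small indices). The extension you must equidistribute Frobenius in is therefore the minimal abelian extension of $\widetilde{K}$ of exponent dividing $\#\FFF$ ramified over $\Vplac_0$ and over \emph{every} prime in $\bigcup_{u\in\Ssm} Y_u$. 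Its degree is exponential in $\#\Ssm$ and its discriminant is only bounded by $\exp^{(3)}\left(\left(\tfrac13+\tfrac1{50}\right)\log^{(3)}H\right)$ --- far from polynomial in $a_0(H)$ --- and even this requires the bound $\#\Ssm \le (\log^{(2)}H)^{1/3+1/100}$ coming from the $j=3$ goodness condition, a hypothesis your argument never uses. The proof still closes because $\tfrac13+\tfrac1{50} < \tfrac25 < \tfrac12$, so the Lagarias--Odlyzko condition $\log H' \ge 10\,n_L(\log\Delta_L)^2$ holds at the norm scale of the large-index primes and Proposition \ref{prop:dxsie} applies with $x = \exp^{(3)}\left(\tfrac25\log^{(3)}H\right)$; but this exponent bookkeeping is exactly the content that your ``polynomial in $a_0(H)$'' claim skips over, and as written your discriminant estimate is false for the field actually required.
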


\begin{proof}
Suppose $\big((r_s)_s, \,(\phi_0, (n_s)_s)\big)$ is bilinearly ignorable. Choose distinct indices $s, t$ $\Smed \cup \Slg$ and $\tau_0 \in G_F$ so $(\tau_0 r_s - r_t) \cdot (\tau_0n_s - n_t)$ is nonzero. Choose $\ovp_{0u} \in X_u$ for every $u \in S \backslash \{s, t\}$,  and take 
\[Z = X_{s} \times X_{t} \times \prod_{u \ne s,t} \{\ovp_{0u}\}.\]
From Proposition \ref{prop:change_p}, we find there are functions $a_t: X_t \to \tfrac{1}{\ell}\Z/\Z$ and $a_s: X_s \to \frac{1}{\ell}\Z/\Z$ so
\begin{align*}
\big\langle\,(r_s)_s, &\,(\phi_0,\, (n_s)_s)\big\rangle_x + a_t(\pi_t(x)) + a_s(\pi_s(x)) \\
&= \sum_{\tau\in B(\sigma_t, \sigma_s)}  \textup{lg}\big( \gamma_s \gamma_t \,(\tau r_s - r_t)\cdot (\tau n_s - n_t) \cdot \kappa\left(\symb{\pi_t(x)}{\pi_s(x)}'(\tau^{-1})\right)\big).
\end{align*}
for all $x \in Z$.

Take $H_s = \max_{\ovp \in X_s} N_{F/\QQ}(\ovp \cap F)$, and define $H_t$ similarly. Using the Chebotarev density theorem, we find that
\[ \min(\#X_s/H_s,\, \#X_t/H_t) \ge \exp^{(3)}\left(\tfrac{1}{4} \log^{(3)} H\right)^{-2}\]
so long as $H$ is sufficiently large. The upper bound for $c_{\textup{bilin}}$ then follows from \cite[Theorem 5.2]{Smi22a}.

Supposing that $Y$ does not meet  $\mathscr{H}_{\textup{bad}, j}(H)$ for $j \le 6$, we now wish to bound $c_{\textup{Cheb}}$. Take $\widetilde{K}$ to be the minimal normal extension of $\QQ$ that contains $K$, and take $L$ to be the minimal abelian extension of $\widetilde{K}$ of exponent dividing $\#\FFF$ that is ramified only at archimedean places and over rational primes divisible by some prime in $\Vplac_0 \cup \bigcup_{s \in \Ssm} Y_s$.

Using the bound on $\# \Ssm$ provided by the definition of $\mathscr{H}_{\textup{bad}, 3}(H)$, we find that there is $C_0 > 0$ so the degree $n_L$ and discriminant $\Delta_L$ of $L/\QQ$ satisfy
\[n_L \le \exp^{(2)}\left(\left(\tfrac{1}{3} + \tfrac{1}{50}\right) \log^{(3)} H\right)\quad\text{and}\quad \Delta_L \le \exp^{(3)}\left(\left(\tfrac{1}{3} + \tfrac{1}{50}\right) \log^{(3)} H\right)\]
so long as $H > C_0$. Since  $\mathscr{H}_{\textup{bad}, 6}(H)$ does not meet $Y$, we find that the Dedekind zeta function for $L$ has no real zero in the interval
\[\left(1 - c\cdot \exp^{(2)} \left(\tfrac{2}{5}\log^{(3)} H\right)^{-1}, \,1\right)\]
by Proposition \ref{prop:dxsie}, where $c > 0$ is the absolute constant of that proposition. Choose $s \in \Slg$, take $C_s \in G_1/\sim$ to be the class corresponding to $Y_s$, and take $D$ to be the subset of $\Gal(L/F)$ projecting to the conjugacy class of $C_s$. Then Theorem \ref{thm:LaOdCheb} implies that,  for any class function $\phi: \Gal(L/F) \to \R$ of magnitude at most $1$, we have
\[\left| \sum_{\mfp \in Y_s} \phi(\textup{Frob} \,\mfp) \,-\, \frac{1}{\# D} \left(\sum_{\sigma \in D} \phi(\sigma)\right) |X_s| \right| \le |X_s| \cdot \exp^{(3)}\left(\left(\tfrac{1}{2} - \epsilon\right) \log^{(3)} H\right)^{-1}\]
for $s \in \Slg$ and $\epsilon > 0$, so long as $H$ is sufficiently large given $(K/F, \Vplac_0, \FFF)$ and $\epsilon$. In other words, if we choose an element $\mfp$ of $X_s$ uniformly at random, if we choose a prime $\mathfrak{P}$ of $L$ over $\mfp$ uniformly at random, and finally choose some prime $\ovp$ over $\mathfrak{P}$, the value of $\FrobF{F}{\ovp}$ will be distributed approximately uniformly at random through $D$. By \cite[Proposition 3.23]{Smi22a}, we find the tuple of symbols 
\[\left(\symb{\ovp}{\ovp_u}\,:\,\, u \in \Ssm, X_u = \{\ovp_u\}\right)\]
 will be approximately uniformly distributed at random through the set of possible tuples of symbols
\[\prod_{s \in \Ssm} \symb{\class{X_s}}{\class{X_u}}\]
if $\ovp$ is chosen uniformly at random from $X_s$. The bound on $c_{\textup{Cheb}}$ then follows from Proposition \ref{prop:change_p}.
\end{proof}

We can now combine Proposition \ref{prop:hateful_eight} and our work in Section \ref{sec:ignorable} in a meaningful way.

\begin{prop}
\label{prop:bye_bad}
Given $(K/F, \Vplac_0, \FFF)$ as above, given $g_0  > 0$, and given any $\epsilon > 0$, there is a real number $C > 0$ so we have the following:

Choose $H > C$, and choose a twistable module $N$ defined with respect to $(K/F, \Vplac_0, \FFF)$. Taking $g$ to be the corank of $N$, we assume that
\[g \le \frac{1}{200 \log \ell} \log^{(3)} H.\]
Then
\[\sum_{j = 1}^7 \sum_{\mfh \in \mathscr{H}_{\textup{bad}, j}(H)} \sum_{\chi \in \mathbb{X}_F(\mfh)} \frac{\# \Sel^{\omega} N^{\chi}}{\max_T \mathcal{T}_{N, T}(\chi)} \le H (\log H)^{\kappa - 1} \cdot \exp^{(2)}\left(\tfrac{1}{3} \log^{(3)} H\right) ^{-1},\]
where $\kappa$ is taken to be $\# \FFF(-1)^{\times}/ [F(\zeta) : F]$ as above. Furthermore, if we instead suppose
\[g \le C^{-1} \sqrt{\log^{(3)} H},\]
we may conclude
\[\sum_{\mfh \in \mathscr{H}_{\textup{bad}, 8}(H)} \sum_{\chi \in \mathbb{X}_F(\mfh)} \frac{\# \Sel^{\omega} N^{\chi}}{\max_T \mathcal{T}_{N, T}(\chi)} \le H (\log H)^{\kappa - 1} \cdot \left(\log^{(2)} H\right)^{-1/4 + \epsilon}.\]
\end{prop}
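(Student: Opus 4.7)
The plan is to reduce each of the eight sums to an inner bound of the form $\mathcal{A}(\mfh) := \#\mathbb{X}_F(\mfh)^{-1} \sum_{\chi \in \mathbb{X}_F(\mfh)} \#\Sel^\omega N^\chi/\max_T\mathcal{T}_{N,T}(\chi) \;\le\; \exp\bigl(a_j(\mfh,H)\bigr) \cdot (\text{small slack}),$ so that after noting $\#\mathbb{X}_F(\mfh) \ll (\#\FFF(-1)^\times)^{\omega(\mfh)}$, the desired estimate follows by summing over $\mfh$ and applying Proposition~\ref{prop:hateful_eight}. The crux is therefore to match the Selmer-ratio bound for each bad class against the exponential weight $a_j$ that Proposition~\ref{prop:hateful_eight} provides.

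For $j=1,2$ no grid structure is available, and I would use a crude pointwise bound on the Selmer group. Standard bounds on unramified cohomology coupled with Proposition~\ref{prop:tamagawa} yield $\#\Sel^\omega N^\chi/\max_T \mathcal{T}_{N,T}(\chi) \le (\#N[\omega])^{\omega(\mfh)+O(1)} = \exp(O(1) + g\omega(\mfh)\log\ell)$. Since the corank hypothesis forces $g\log\ell \le \tfrac{1}{200}\log^{(3)} H$, this is dwarfed by $\exp(a_1) = \exp(\omega(\mfh)(1-\epsilon)\log^{(3)} H)$ and by $\exp(a_2)$, and the $j=1,2$ cases follow.

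For $j\in\{3,\dots,7\}$, the ideal $\mfh$ sits inside an unfiltered grid $Y$ of height $H$ obeying properties 1 and 2. I would pass to a $\ovQQ$-refinement $X$ and associated auxiliary data as in Definition~\ref{defn:grid_refinement}, so that twists in $\bigcup_{\mfh'\in Y} \mathbb{X}_F(\mfh')$ are enumerated uniformly. Proposition~\ref{prop:rough_grid_average_nocheb} then gives
\[
\frac{1}{\#X}\sum_{x\in X}\frac{\#\Sel^\omega N^{\chi(x)}}{\max_Q \mathcal{T}_{N,Q}(\chi(x))}\;\le\; e^{Cg^2}\cdot \ell^{g\,\#\Ssm} \;+\; c_{\textup{bilin}}\cdot e^{Cg|S|}.
\]
Because the grid satisfies property 2, Proposition~\ref{prop:real_grid_small_c} makes $c_{\textup{bilin}}$ triply-exponentially small, which beats $e^{Cg|S|}$ using the bound $|S|\le (\log\log H)^2$ from property 1. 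The main term $e^{Cg^2}\ell^{g\,\#\Ssm}$ is controlled by the corank hypothesis: $g\log\ell\le\tfrac{1}{200}\log^{(3)} H$ makes $\ell^{g\omega_{\textup{sm}}(\mfh)} \le \exp(a_3(\mfh,H))$ with room to spare, and the remaining weights $a_4,\dots,a_7$ (which grow like powers of $\log^{(2)} H$) comfortably absorb this factor times $e^{Cg^2}$. Summing $\mathcal{A}(\mfh) \cdot (\#\FFF(-1)^\times)^{\omega(\mfh)}$ over $\mfh \in \mathscr{H}_{\textup{bad},j}(H)$ and applying Proposition~\ref{prop:hateful_eight} gives a bound of $H(\log H)^{\kappa-1}E_j(H)$, and the maximum $E_j$ among $j\le 7$ is $E_3 = \exp^{(2)}(\tfrac{1}{3}\log^{(3)} H)^{-1}$, matching the claim.

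The $j=8$ case is analogous but more delicate: the grid now satisfies properties 1--7, so Proposition~\ref{prop:real_grid_small_c} gives triply-small bounds on \emph{both} $c_{\textup{bilin}}$ and $c_{\textup{Cheb}}$, and I would apply the sharper Proposition~\ref{prop:rough_grid_average} (summed over all $Q\subseteq N[\omega]$) to eliminate the $\ell^{g\#\Ssm}$ factor, obtaining $\mathcal{A}(\mfh) \le e^{Cg^2}$. Combined with $E_8(H) = (\log^{(2)} H)^{-1/4+\epsilon_0}$ from Proposition~\ref{prop:hateful_eight} and the stronger corank bound $g\le C^{-1}\sqrt{\log^{(3)} H}$, we get $e^{Cg^2}\le (\log^{(2)} H)^{1/C}$, which can be absorbed into a slightly larger $\epsilon$. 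The main obstacle is calibration: for each $j$ one must verify that the total slack in the exponential weight (the difference between $a_j$ and the logarithmic factor extractable from the Selmer-ratio bound) strictly dominates $g\log\ell\cdot\omega_{\textup{sm}}(\mfh)$ and $Cg^2$, and that the constant $C$ in the conclusion can be chosen uniformly in $\epsilon$. Choosing $C$ large in terms of $\epsilon$ (and in particular, requiring the proportion $1/C$ above to be smaller than $\epsilon$) resolves this.
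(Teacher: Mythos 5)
Your overall strategy --- bound the average of the Selmer ratio on each bad class and feed the result into Proposition \ref{prop:hateful_eight} --- is the same as the paper's, and your treatments of $j=1,2$ (pointwise dimension bound on the Selmer group) and of $j=3,\dots,6$ (Proposition \ref{prop:rough_grid_average_nocheb} on a $\ovQQ$-refinement, with $c_{\textup{bilin}}$ controlled by Proposition \ref{prop:real_grid_small_c}) coincide with the paper's. For $j=7$ you diverge: the paper switches to the sharper Propositions \ref{prop:borderline_rare} and \ref{prop:rough_grid_average} at that point, while you keep using Proposition \ref{prop:rough_grid_average_nocheb}. Your stated justification there is wrong --- $a_7(\mfh,H)=0$, so there is no weight ``growing like a power of $\log^{(2)}H$'' to absorb $\ell^{g\cdot\#\Ssm}$ --- but the conclusion survives for a different reason: ideals in $\mathscr{H}_{\textup{bad},7}(H)$ satisfy property (3), so $\#\Ssm \le (\log^{(2)}H)^{1/3+1/100}$ uniformly, and $E_7(H)=\exp\bigl(-(\log^{(2)}H)^{2/3-\epsilon}\bigr)$ is small enough to swallow $\ell^{g\cdot\#\Ssm}\le\exp\bigl(O(\log^{(3)}H\cdot(\log^{(2)}H)^{1/3+1/100})\bigr)$ together with $e^{Cg^2}$.

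The genuine gap is in the $j=8$ case. Proposition \ref{prop:rough_grid_average} counts only \emph{non-borderline} parameterizations $m=(\phi_0,(n_s)_s)$ with $\bfQ(m)=Q$; summing it over all $Q\subseteq N[\omega]$ still omits every borderline $m$, so it does not bound $\#\Sel^{\omega}N^{\chi(x)}$. You need Proposition \ref{prop:borderline_rare} to control the borderline contribution, and invoking it requires verifying its hypothesis $b=\#S_{\textup{jury}}-2\cdot\#\Ssm\cup\Smed>0$, which is exactly where the definitions of bad ideals of types $3$, $4$, $5$ enter (they bound $\#\Ssm$ and $\#\Smed$ from above and, via the balance condition, bound $\#S_{\textup{jury}}$ from below). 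You cannot sidestep this by falling back on Proposition \ref{prop:rough_grid_average_nocheb} for $j=8$: its $\ell^{g\cdot\#\Ssm}$ factor can be as large as $\exp\bigl(c\,(\log^{(2)}H)^{1/3+1/100}\bigr)$, which is superpolynomial in $\log^{(2)}H$ and hence overwhelms $E_8(H)=(\log^{(2)}H)^{-1/4+\epsilon}$, which is only polynomially small. Adding the borderline estimate (and the check that $b>0$) closes the gap and recovers the paper's argument.
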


\begin{proof}
There is a real number $C_0 > 0$ determined from $(K/F, \Vplac_0, \FFF)$ and $N$ so that the dimension of $ \Sel^{\omega} N^{\chi}$ is bounded by $(C_0 + \omega(\mfh_F(\chi)))g$ for every $\chi$ in $\mathbb{X}_F(H)$. This and Proposition \ref{prop:hateful_eight} allow us to bound the impact of twists corresponding to bad ideals in the cases $j = 1, 2$.

To handle the cases $j = 3, 4, 5, 6$, we apply Proposition \ref{prop:rough_grid_average_nocheb} to an $\ovQQ$-refinement $X$ of a bad grid of ideals of these types, using Proposition \ref{prop:real_grid_small_c} to bound the constant $c_{\textup{bilin}}$ in these estimates. We then find that the average size of 
\begin{equation}
\label{eq:Sel_ratio}
\frac{\# \Sel^{\omega} N^{\chi(x)}}{\max_T \mathcal{T}_{N, T}(\chi)}
\end{equation}
over $X$ is bounded by an expression of the form $e^{Cg^2} \cdot \ell^{g \cdot\# \Ssm}$. The case then follows from Propositions \ref{prop:hateful_eight} and \ref{prop:XFHsize}.

To handle the cases $j = 7, 8$ , we apply Propositions \ref{prop:borderline_rare} and \ref{prop:rough_grid_average} to an $\ovQQ$-refinement $X$ of a bad grid of ideals of these types. We may use Proposition \ref{prop:real_grid_small_c} to bound the constants $c_{\textup{bilin}}$ and $c_{\textup{Cheb}}$ in these estimates, and we may use the definition of bad ideals of type $j = 3, 4, 5$ to bound the number $b$ appearing in Proposition \ref{prop:borderline_rare}. We then can bound the average of  \eqref{eq:Sel_ratio} over such a grid by an expression of the form $e^{Cg^2}$, and the cases follow from Propositions \ref{prop:hateful_eight} and \ref{prop:XFHsize}.
\end{proof}

Extending this proof a little further gives Theorem \ref{thm:main_mmnt_coarse}.

\begin{proof}[Proof of Theorem \ref{thm:main_mmnt_coarse}]
We work with the twistable module $N^{\oplus g}$. As in  the proof of Proposition \ref{prop:bye_bad}, apply Propositions \ref{prop:borderline_rare} and  \ref{prop:rough_grid_average} to an $\ovQQ$-refinement $X$ of a grid of either good ideals or ideals lying in $\mathscr{H}_{\textup{bad}, 8}(H)$. The average size of \eqref{eq:Sel_ratio} on such a refinement is then bounded by an expression of the form $e^{Cg^2}$. The result then follows from Propositions \ref{prop:bye_bad} and \ref{prop:XFHsize}.
\end{proof}
\section{Finding the distribution of Selmer ranks}
\label{sec:distribution}

At this point, there are only a few more steps in proving our main result on the distribution of Selmer ranks, Theorem \ref{thm:main}. After considering the parity of Selmer ranks in the alternating case, we will calculate some moments of the sizes of $\omega$-Selmer groups in the twist family $\mathbb{X}_F(H, (\chi_v)_v)$ using the methods developed so far in this paper, and will then derive the distributional results for $\omega$-Selmer ranks from a complex-analytic argument. To finish the proof of Theorem \ref{thm:main}, we will find the distribution of higher Selmer ranks in these twist families by applying \cite[Theorem 4.16]{Smi22a}, the main theorem of that paper.

\subsection{Parity of Selmer ranks in the alternating case}
\label{ssec:parity}

Choose a twistable module $N$ in the alternating case. As part of this case, we have assumed that $\FFF$ has order $2$. Taking $\varepsilon$ to be the map \eqref{eq:Morgan_map}, we may use  \cite[Theorem 6.6]{Morg17} to show that there is $k_0 \in \FFF_2$ so that
\begin{equation}
\label{eq:disparity}
\dim_{\FFF_{\ell}} \Sel^{\omega}(N^{\chi}) \equiv k_0 + \sum_{\mfp | \mfh_F(\chi)} \varepsilon(\Frob\, \mfp)\,\,\text{ mod } 2
\end{equation}
for all $\chi$ in $\mathbb{X}_F((\chi_v)_{v \in \Vplac_0})$. From Hilbert reciprocity, we see that this expression is constant across this set of twists if $\varepsilon$ is a homomorphism.

Conversely, we have the following:
\begin{prop}
\label{prop:parity_noninvariant}
Choose an unpacked starting tuple $(K/F, \Vplac_0, \FFF)$ with $\FFF = \pm 1$, choose a set of local twists $(\chi_v)_{v \in \Vplac_0}$, and choose a twistable module $N$ with local conditions defined with respect to $(K/F, \Vplac_0, \FFF)$ that is in the non-parity-invariant alternating case with respect to $(\chi_v)_{v \in \Vplac_0}$. Then, for any $\epsilon > 0$, there is $C > 0$ so that, for $H > C$, we have
\[\left|\frac{\#\{\chi \in \XXfav_{F, N}(H, (\chi_v)_{v \in \Vplac_0}) \,:\,\, \dim \Sel^{2} N^{\chi} \,\text{ is even}\}}{\# \XXfav_{F, N}(H, (\chi_v)_{v \in \Vplac_0}) } - \frac{1}{2}\right| \le (\log^{(2)} H)^{-\frac{1}{6} +\frac{1}{100} + \epsilon}.\]
\end{prop}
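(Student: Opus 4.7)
The plan is to reduce, via the parity formula \eqref{eq:disparity}, to showing that
\[S_\varepsilon(\chi) \,:=\, \sum_{\mfp \mid \mfh_F(\chi)} \varepsilon(\Frob\,\mfp) \,\,\textup{mod}\,\, 2\]
is approximately equidistributed in $\FFF_2$ as $\chi$ varies in $\XXfav_{F,N}(H,\,(\chi_v)_v)$. Since the constant $k_0$ in \eqref{eq:disparity} is independent of the variable primes, this is equivalent to the desired claim. As a first step, apply Proposition \ref{prop:hateful_eight} to discard twists whose ramification ideal is bad; combined with the lower bound on $\#\XXfav_{F,N}(H,\,(\chi_v)_v)$ coming from Proposition \ref{prop:pfav} and Proposition \ref{prop:XFHsize}, the contribution of the discarded twists falls comfortably within the error $(\log^{(2)} H)^{-1/6 + 1/100 + \epsilon}$.

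Next, I would partition the remaining twists by their (filtered) grid of ideals, following the random model introduced in the proof of Proposition \ref{prop:pfav}: a uniformly random admissible twist inside a good unfiltered grid of height $H$ corresponds to an independent random choice of the cocycle datum $(\chi_0,(\kappa_s)_s)$ together with IID uniform primes $\mfp_s \in Y_s$. Writing $g_n(\sigma) = \#\{s \in \Smed \cup \Slg : \Frob\,\mfp_s = \sigma\}$ for $\sigma \in G_1/\!\sim$, the Chebotarev density theorem and the good-grid conditions allow us to treat the $\Frob\,\mfp_s$ as IID uniform on $G = G_1/\!\sim$ up to the errors already tolerated. Now apply Proposition \ref{prop:G1_model} with $R = 2$, with the inequality constraints $\sum_s f_j(\Frob\,\mfp_s) \ge 0$ for the functions
\[f_j(\sigma) \,=\, \dim N[\omega]^\sigma - \dim(N/T_j)[\omega]^\sigma\]
($T_j$ ranging over the cofavored-type submodules of $N[\omega]$) encoding the favoredness condition. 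The proposition yields that, conditional on favoredness, the residue tuple $\bigl(g_n(\sigma) \bmod 2\bigr)_{\sigma \in G}$ is approximately equidistributed on the affine hyperplane $\{(r_\sigma) : \sum_\sigma r_\sigma \equiv n \pmod 2\}$ in $(\Z/2\Z)^G$, with error of size $O(n^{-1/2+\delta})$. Via the translation $S_\varepsilon(\chi) \equiv \sum_\sigma \varepsilon(\sigma)\, g_n(\sigma) \pmod 2$, the parity becomes an $\FFF_2$-linear functional of this residue tuple.

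The main step is to show that, in the non-parity-invariant subcase, this linear functional is non-constant on the affine hyperplane for a positive density of unfiltered grids. If, to the contrary, the functional were constant on the hyperplane for every grid, then $\varepsilon\big|_G$ would have to be either identically $0$ or identically $1$. The first option puts us in the parity-invariant subcase, contradicting our hypothesis. The second option forces $\varepsilon(e) = 1$, so that a lift of $\varepsilon$ to $G_F$ fails to send the identity to $0$ and hence cannot be a homomorphism; however, in this situation the sum $S_\varepsilon(\chi)$ equals $\omega(\mfh_F(\chi)) \bmod 2$, and a Selberg--Delange-type argument for $\sum (-1)^{\omega(\mfh)}$ over good ideals of norm $\le H$, combined with the favoredness control from Proposition \ref{prop:pfav}, gives the same equidistribution statement with an error that is even polynomially small in $\log H$. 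So in all subcases we obtain equidistribution.

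Once non-constancy of the $\FFF_2$-linear functional is established (for the bulk of grids), exactly half of the residue tuples on the hyperplane produce parity $0$ and half produce parity $1$, so the approximate uniform distribution on residue tuples furnished by Proposition \ref{prop:G1_model} transfers to approximate equidistribution of $S_\varepsilon(\chi) \bmod 2$. Summing over good grids and combining with the bounds from Proposition \ref{prop:hateful_eight} and Proposition \ref{prop:pfav} yields the stated error. The main obstacle is the argument of the previous paragraph: cleanly relating the non-homomorphism condition on $G_F$ to the behavior of the functional on the affine hyperplane is delicate, and the ``$\varepsilon\big|_G$ constant $1$'' subcase does need a genuinely distinct (Selberg--Delange-style) input rather than a direct appeal to Proposition \ref{prop:G1_model}.
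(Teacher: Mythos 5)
Your overall route is the paper's route: the paper's proof is a one-line appeal to the sampling argument of Proposition \ref{prop:pfav} together with Proposition \ref{prop:G1_model} at $R=2$, and your reduction via \eqref{eq:disparity}, the removal of bad ideals via Proposition \ref{prop:hateful_eight}, and the grid-by-grid random model are all the right skeleton. But the crucial step is handled incorrectly. The set in the proposition is $\XXfav_{F,N}(H,(\chi_v)_{v\in\Vplac_0})$ with the local twists at $\Vplac_0$ \emph{fixed}, and in the sampling model this is precisely the role of the congruence datum $a$ in Proposition \ref{prop:G1_model}: fixing $(\chi_v)_v$ constrains the residue tuple $(g_n(\sigma)\bmod 2)_\sigma$ to a proper coset of $(\Z/2\Z)^{G}$, not merely to the hyperplane $\{\sum_\sigma r_\sigma \equiv n\}$. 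Your argument lets the residue tuple equidistribute over the whole hyperplane and then asks only that $\varepsilon$ be non-constant there. That criterion is too weak, and your proof would establish a false statement: if $\varepsilon$ is a \emph{surjective homomorphism}, it is certainly non-constant on $G_1$, yet the remark preceding \eqref{eq:disparity} notes that Hilbert reciprocity then makes the parity of $\dim\Sel^2 N^\chi$ constant across $\mathbb{X}_F((\chi_v)_v)$. The mechanism separating the two subcases is exactly the interaction between the parity functional $\sum_\sigma\varepsilon(\sigma)r_\sigma$ and the coset cut out by the local conditions; what must be shown is that the failure of $\varepsilon$ to be a homomorphism (not its mere non-constancy) forces this functional to be non-constant on that coset as one varies over the admissible congruence data $a$, the auxiliary $\chi_0$, and the grids. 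Your write-up also wavers internally on this point: you invoke the datum $a$ (which pins the residue tuple down to a single point) and simultaneously claim the tuple equidistributes on the hyperplane.

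Separately, the digression into the subcase ``$\varepsilon|_{G}\equiv 1$'' and the Selberg--Delange input is vacuous: in the alternating case $N[\omega]$ carries a nondegenerate alternating form over $\FFF_2$, so $\dim N[\omega]$ is even and $\varepsilon(1)=0$; hence a constant $\varepsilon$ is identically zero, which is a homomorphism and places $N$ in the parity-invariant subcase, contrary to hypothesis. So that branch never occurs, and no auxiliary equidistribution of $\omega(\mfh)\bmod 2$ is needed.
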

\begin{proof}
Following the method of Proposition \ref{prop:pfav}, we see that the result follows from an application of Proposition \ref{prop:G1_model} with $R = 2$. 
\end{proof}

\subsection{Moments of sizes of $\omega$-Selmer groups}
With Proposition \ref{prop:bye_bad} proved, we can estimate the moments of the sizes of $\omega$-Selmer groups in the family $\mathbb{X}_F(H, (\chi_v)_{v \in \Vplac_0})$ by estimating these moments for twists coming from grids of good ideals. We need some notation.
\begin{notat}
\label{notat:grell}
Given integers $n \ge j \ge 0$ and a prime $\ell$, take $\textup{gr}_{\ell}(j, n)$ to equal the number of $j$-dimensional subspaces of $\FFF_{\ell}^n$. We can calculate
\begin{equation}
\label{eq:grell_explicit}
\textup{gr}_{\ell}(j, n) = \frac{(\ell^n - 1) (\ell^{n-1} - 1) \dots (\ell^{n- j + 1} - 1) }{(\ell^j - 1) (\ell^{j-1} - 1) \dots (\ell - 1)}.
\end{equation}
\end{notat}

\begin{prop}
\label{prop:good_moments}
Take $N$ to be a twistable module with local conditions defined over $(K/F, \Vplac_0, \FFF)$. Choosing a set of local twists $(\chi_v)_{v \in \Vplac_0}$, we assume that $N$ is in either the alternating case or non-self-dual case with respect to $(\chi_v)_{v \in \Vplac_0}$. Also choose $\epsilon > 0$. Then there is $C > 0$ so that, for $H > C$, we have the following:

Take $Y$ to be a filtered grid of good ideals of height $H$, and take $X$ to be the set of twists $\chi$ in $\mathbb{X}_F((\chi_v)_v)$ for which $\mfh_F(\chi)$ lies in $Y$. We assume that $X$ is nonempty and that any/every twist in $X$ is favored for $N$. Take $m$ to be a nonnegative integer that satisfies
\[m \le (\log^{(2)} H)^{1/8 - \epsilon}.\]
In the non-self-dual case, take $u$ to equal the right hand side of \eqref{eq:Wiles}.

Then, defining
\[b(j) = \begin{cases} \tfrac{j(j+1)}{2} &\text{ if } N \text{ is in the alternating case} \\ j \cdot u &\text{ otherwise,}\end{cases}\]
we find that
\[\frac{1}{|X|}\sum_{\chi \in X} \left(\# \Sel^{\omega} N^{\chi}\right)^m \,-\,   (\# H^0(G_F, N[\omega]))^m \cdot \sum_{j = 0}^m \textup{gr}_{\ell}(j, m) \cdot \ell^{b(j)}\]
has magnitude bounded by $\exp(-(\log \log H)^{1/4 - \epsilon})$.
\end{prop}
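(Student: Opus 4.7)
The plan is to use the identity $(\# \Sel^{\omega} N^{\chi})^m = \# \Sel^{\omega}(N^{\oplus m})^{\chi}$ to reduce the $m$th moment for $N$ to a first moment for the twistable module $N^{\oplus m}$. After choosing auxiliary data $((\sigma_s)_s, (\gamma_s)_s, \chi_0)$ so that a $\ovQQ$-refinement of $Y$ parameterizes the twists in $X$ with uniform multiplicity, I would apply the character sum of Section \ref{sec:character} and partition the contributing tuples $m_{\bullet} \in \mathscr{M}_1$ by their ramification subspace $Q = \bfQ(m_{\bullet}) \subseteq N^{\oplus m}[\omega]$.

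The first step is to discard all $Q$ that do not contribute to the main term. Proposition \ref{prop:borderline_rare} disposes of borderline tuples: assumptions (3), (5), (7) of good grids give $|S_{\textup{jury}}| - 2|\Ssm \cup \Smed| \gg (\log^{(2)} H)/\log^{(3)} H$, making the factor $\ell^{-b}$ there well inside the target error. For non-borderline tuples with $Q$ not cofavored in $N^{\oplus m}[\omega]$, I would apply Proposition \ref{prop:rough_grid_average}; the favored hypothesis, combined with Remark \ref{rmk:not_quite_cofavored} and the Frobenius-balance assumption (8) of good grids, forces $\mathcal{T}_{N^{\oplus m}, Q}(\chi)$ to decay much faster than the target error. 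The negligibility of $c_{\textup{bilin}}$ and $c_{\textup{Cheb}}$ follows from Proposition \ref{prop:real_grid_small_c}.

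By Proposition \ref{prop:cofav_form} and the technical hypotheses of the alternating or non-self-dual case (uncofavored $N$, no extra automorphisms commuting with connecting maps), the cofavored submodules of $N^{\oplus m}[\omega]$ are exactly those of the form $Q_V = V \otimes N[\omega]$ for subspaces $V \subseteq \FFF_{\ell}^m$; there are $\textup{gr}_{\ell}(j, m)$ of these with $\dim V = j$. For each such $V$ I would apply Proposition \ref{prop:accurate_grid_moment} with $a = m$, $b = j$, and $\mathscr{N} = \mathscr{N}^-$. In the non-self-dual case, no nonzero equivariant alternating $\Gamma: Q_V \to Q_V^{\vee}$ commuting with connecting maps exists, and the local-conditions product in Proposition \ref{prop:accurate_grid_moment} equals $\ell^{ju}$ by the defining relation \eqref{eq:Wiles}. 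In the alternating case, Remark \ref{rmk:Wedd} identifies these $\Gamma$ with symmetric $j \times j$ matrices over $\FFF_2$, contributing the factor $\ell^{j(j+1)/2}$. Summing the main terms over $0 \le j \le m$ yields the claimed identity.

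The principal obstacle is verifying that every equivariant alternating $\Gamma: Q_V \to Q_V^{\vee}$ commuting with the connecting maps is non-cancellable in the sense of Definition \ref{defn:cancellable} --- otherwise Proposition \ref{prop:accurate_grid_moment} would force a restriction of $\mathscr{N}$ and the combinatorics producing $\ell^{j(j+1)/2}$ would break. By Remark \ref{rmk:Wedd} this reduces to checking the single $\Gamma$ built from the alternating structure $\nu$, which is precisely what Proposition \ref{prop:theta} rules out via the global Poitou--Tate computation through the theta-group above $N[2]$. The error bookkeeping then amounts to tracking the $e^{Cg^2}$ terms from Proposition \ref{prop:accurate_grid_moment} with $g$ equal to $m$ times the corank of $N$; the hypothesis $m \le (\log^{(2)} H)^{1/8 - \epsilon}$ is tuned exactly so that these are dominated by $\exp(-(\log^{(2)} H)^{1/4 - \epsilon})$.
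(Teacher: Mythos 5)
Your proposal is correct and follows essentially the same route as the paper: reduce the $m$th moment to the first moment of $N^{\oplus m}$, discard borderline and non-cofavored ramification subspaces via Propositions \ref{prop:borderline_rare} and \ref{prop:rough_grid_average} together with Remark \ref{rmk:not_quite_cofavored} and the bounds of Proposition \ref{prop:real_grid_small_c}, classify the cofavored submodules by Proposition \ref{prop:cofav_form}, and evaluate their contribution with Proposition \ref{prop:accurate_grid_moment}. The only slight imprecision is in your ``principal obstacle'' paragraph: cancellable $\Gamma$ need not be ruled out (they are absorbed into the error term inside Proposition \ref{prop:accurate_grid_moment}, with $\mathscr{N}=\mathscr{N}^-$ trivially satisfying \eqref{eq:cancel_assumption}), while Proposition \ref{prop:theta} serves to show the non-cancellable $\Gamma$ built from $\nu$ give vanishing pairing and hence the full main term --- but this bookkeeping is internal to the proposition you are invoking, so nothing is missing.
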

\begin{proof}
Take $(W_v)_v$ to be the local conditions for $N$, and endow  $N^{\oplus m}$ with the local conditions $(W_v^{\oplus m})_v$. We have
\[  \left(\# \Sel^{\omega} N^{\chi}\right)^m = \# \Sel^{\omega} (N^{\oplus m})^{\chi} = \sum_T \#\{\phi \in \Sel^{\omega}(N^{\oplus m})^{\chi}\,:\,\, \bfQ(\phi) =T\},\]
where the sum is over all $G_F$-submodules $T$ of $N$. From Proposition \ref{prop:cofav_form}, the cofavored submodules of $N^{\oplus m}$ are those of the form
\[A \otimes_{\FFF_{\ell}} N[\omega],\]
where $A$ is a subspace of $\FFF_{\ell}^m$. If $T$ is not of this form, Remark \ref{rmk:not_quite_cofavored} and part 8 of the definition of a good ideal gives
\[\mathcal{T}_{N^{\oplus m} , T}(\chi) \le   \exp(-(\log \log H)^{1/4 - \epsilon})\] 
for $\chi$ in $X$ if $H$ is sufficiently large. From this, Propositions \ref{prop:borderline_rare} and  \ref{prop:rough_grid_average}, and the bounds on $c_{\text{bilin}}$ and $c_{\text{Cheb}}$ of Proposition \ref{prop:real_grid_small_c}, we conclude that
\[\frac{1}{|X|}\sum_{\chi \in X}\sum_{T \textup{ uncofav.}} \#\{\phi \in \Sel^{\omega}(N^{\oplus m})^{\chi}\,:\,\, \bfQ(\phi) =T\} \le \exp(-(\log \log H)^{1/4 - \epsilon})\]
for $H$ sufficiently large given $N$, $(K/F, \Vplac_0, \FFF)$, and $\epsilon$.

This just leaves the sum over cofavored submodules. Here, an application of Proposition \ref{prop:accurate_grid_moment} finishes the proof.
\end{proof}

\subsection{From moments to ranks}
The moments calculated in Proposition \ref{prop:good_moments} are consistent with the probabilities defined in Cases \ref{case:nodual} and \ref{case:alt}.
\begin{prop}
\label{prop:moments_ranks}
Take $N$ and $b$ as in Proposition \ref{prop:good_moments}, and define $P(j\,|\,\infty)$ as in Case \ref{case:nodual} in the non-self-dual case and as in Case \ref{case:alt} in the alternating case.  Given any integer $m \ge 0$, we have
\begin{equation}
\label{eq:moments_ranks}
\sum_{j = 0}^{\infty}  \ell^{mj} \cdot P(j\,|\,\infty) = \sum_{j = 0}^m \textup{gr}_{\ell}(j, m) \cdot \ell^{b(j)}.
\end{equation}
\end{prop}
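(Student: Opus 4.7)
The plan is to recognize both sides of \eqref{eq:moments_ranks} as the $m$-th moment $\E_n[|\ker M|^m]$ of the size of the kernel of an appropriate random matrix $M$, then compute this expectation combinatorially and pass to the limit $n \to \infty$. For the left side, the tautology $\sum_{j=0}^n \ell^{mj} P(j|n) = \E_n[\ell^{m\dim \ker M}] = \E_n[|\ker M|^m]$ exhibits $\sum_j \ell^{mj} P(j|\infty)$ as a limit of such moments, where $M$ is a uniformly random $(n-u)\times n$ matrix over $\FFF_\ell$ in the non-self-dual case and a uniformly random $n \times n$ alternating matrix over $\FFF_2$ in the alternating case.

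For the right side, I would expand $|\ker M|^m = \#\Hom(\FFF_\ell^m, \ker M)$ and swap sums to get
\[\E_n[|\ker M|^m] = \sum_{\phi \colon \FFF_\ell^m \to \FFF_\ell^n} \Pr\!\big[\phi(\FFF_\ell^m) \subseteq \ker M\big].\]
Grouping the maps $\phi$ by the dimension $j$ of their image and using the standard count
\[\#\{\phi \colon \dim\,\mathrm{im}\,\phi = j\} = \textup{gr}_\ell(j, m) \cdot \prod_{i=0}^{j-1}(\ell^n - \ell^i)\]
(choose the $(m-j)$-dimensional kernel, then an injection into $\FFF_\ell^n$), the task reduces to computing $p_j(n) := \Pr[V \subseteq \ker M]$ for an arbitrary fixed $j$-dimensional $V \subseteq \FFF_\ell^n$. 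In the non-self-dual case, $V \subseteq \ker M$ imposes $j(n-u)$ independent linear conditions on the entries of $M$, so $p_j(n) = \ell^{-j(n-u)}$. In the alternating case, picking a basis adapted to $V$ and counting constraints shows that $V \subseteq \ker M$ cuts out $j(2n-j-1)/2$ independent conditions among the $n(n-1)/2$ free entries of an alternating $M$, so $p_j(n) = 2^{-j(2n-j-1)/2}$.

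Substituting and using the identity $\prod_{i=0}^{j-1}(\ell^n - \ell^i) = \ell^{jn}\prod_{i=0}^{j-1}(1 - \ell^{i-n})$, the $\ell^{jn}$ factor cancels against the $n$-dependent piece of $p_j(n)$, leaving
\[\E_n[|\ker M|^m] = \sum_{j=0}^{\min(m,n)} \textup{gr}_\ell(j, m) \cdot \ell^{b(j)} \cdot \prod_{i=0}^{j-1}(1 - \ell^{i-n}),\]
with $b(j) = ju$ in the non-self-dual case and $b(j) = j(j+1)/2$ in the alternating case, exactly matching the definitions in the statement. Taking $n \to \infty$ makes each $\prod_{i=0}^{j-1}(1 - \ell^{i-n}) \to 1$ and reproduces the right-hand side of \eqref{eq:moments_ranks}. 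This handles the non-self-dual case and the parity-invariant alternating subcase at once. The only subtlety, which I expect to be the main (but mild) obstacle, is the non-parity-invariant alternating subcase: there $P(j|\infty)$ is by definition a half-sum that averages the even- and odd-$n$ limits of $P(j|n)$, and one must check that $\sum_j \ell^{mj} P(j|\infty)$ equals $\tfrac{1}{2}\big(\lim_n \E_{2n}[|\ker M|^m] + \lim_n \E_{2n+1}[|\ker M|^m]\big)$; since the moment formula above is independent of the parity of $n$, both limits agree with the common value $\sum_j \textup{gr}_2(j,m) \cdot 2^{b(j)}$ and the average is unchanged.
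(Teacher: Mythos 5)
Your proposal is correct and follows essentially the same route as the paper's proof: both compute $\sum_j \ell^{mj}P(j\,|\,n)$ as the expected number of $m$-tuples of kernel vectors (your reformulation via $\#\Hom(\FFF_\ell^m,\ker M)$ is the same count), group by the dimension of the span, use the probability $\ell^{-nj+b(j)}$ (up to the $n$-dependent normalization) that a fixed $j$-dimensional subspace lies in the kernel, and pass to the limit $n\to\infty$. Your explicit treatment of the non-parity-invariant alternating subcase is a welcome elaboration of a point the paper handles only implicitly.
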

\begin{proof}
We focus on the alternating case, where we have assumed that $\ell = 2$. Given a positive integer $n$, take $T$ to be a random alternating $n \times n$ matrix over $\FFF_2$. The definition of a mean gives
\[\sum_{j = 0}^{n}  2^{mj} \cdot P(j\,|\,n) = \mathbb{E}\left( \#\left\{ (v_1, \dots, v_m )\in (\FFF_2^n)^m \,:\,\, v_i \text{ is in the kernel of } T \text{ for all } i \le m \right\}\right).\]
Given $(v_1, \dots, v_m)$ spanning a $d$-dimensional subset of $\FFF_{2}^n$, the probability that $Tv_i = 0$ for all $i \le m$ is given by
\[2^{-n + 1} \cdot 2^{-n + 2} \cdot \dots 2^{-n + d} = 2^{-nd + \frac{d(d+1)}{2}}.\]
Meanwhile, the number of $(v_1, \dots, v_m)$ spanning a $d$-dimensional space equals the number of choices of an $m-d$-dimensional space of linear dependencies among these vectors times the number of choices of $d$ linearly independent vectors in $\FFF_2^n$. So  there are
\[\textup{gr}(d, m) \cdot (\ell^{n} - 1) \dots (\ell^n - \ell^{d - 1})\]
such tuples. Combining, we find
\[\lim_{n \to \infty} \sum_{j = 0}^{n}  2^{mj} \cdot P(j\,|\,n) = \sum_{j = 0}^m \textup{gr}_{\ell}(d, m) \cdot 2^{b(j)}.\]
The explicit form of $P(j\,|\,n)$ given in Case \ref{case:alt} allows us to conclude
\[\lim_{n \to \infty} \sum_{j = 0}^{n}  2^{mj} \cdot P(j\,|\,n)  = \sum_{j = 0}^{\infty}  2^{mj} \cdot P(j\,|\,\infty) ,\]
and the proposition follows in the alternating case.  A similar calculation gives the proposition in the non-self-dual case.
\end{proof}

To move from moments to rank distributions, we will follow the trend of the literature and use a brief argument in the theory of holomorphic functions. The specific argument we  will use is similar to that of \cite[Lemma 18]{Heat94}.

\begin{prop}
\label{prop:unicity_ranks}
There is $C > 0$ so we have the following:

Take $a_0, a_1, \dots$ to be a sequence of real numbers, and take
\[f(z) = \sum_{i \ge 0} a_iz^i.\]
Choose an integer $m \ge 1$ and a rational prime $\ell$. We take
\[B = \sum_{i \ge 0} |a_i| \ell^{mi}.\]
We assume that $B$ is finite. Choose $\epsilon > 0$.
\begin{itemize}
\item Suppose $|f(z)| \le \epsilon$ for $z$ in $\{1, \ell, \dots, \ell^{m- 1}\}$. Then
\[|a_i| \le  \ell^{(-i + C)m}\left( B \ell^{-\frac{1}{2}m^2} + \epsilon\right) \quad\text{for } i \ge 0.\]
\item Suppose $|f(z)| \le \epsilon$ for $z$ in $\{\pm 1, \pm \ell, \dots, \pm \ell^{m- 1}\}$. Then
\[|a_i| \le \ell^{(-i + C)m}\left( B \ell^{-m^2} + \epsilon\right) \quad\text{for } i \ge 0.\]
\end{itemize}
\end{prop}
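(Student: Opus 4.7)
The plan is to run a standard Lagrange interpolation and Cauchy estimate argument, following the spirit of \cite{Heat94}. First I would set $P(z)=\prod_{j=0}^{m-1}(z-\ell^{j})$ in case (1) and $P(z)=\prod_{j=0}^{m-1}(z^{2}-\ell^{2j})$ in case (2), so that the zeros of $P$ are exactly the interpolation nodes; $P$ then has degree $m$ in the first case and $2m$ in the second. Since $B<\infty$, the series $f$ converges on $|z|\le\ell^{m}$ with $|f|\le B$ there. Let $r(z)$ be the unique polynomial of degree $<\deg P$ interpolating $f$ at the zeros of $P$, and set $h(z)=(f(z)-r(z))/P(z)$; this is holomorphic on $|z|\le\ell^{m}$ since the zeros of $P$ cancel. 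The identity $a_{i}=r_{i}+\sum_{j=0}^{\deg P}p_{j}h_{i-j}$ (with $r_{i}=0$ for $i\ge\deg P$ and $p_{j}=[z^{j}]P$) is the target: I want to bound each piece separately.

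For $r$, I would use the Lagrange representation $r=\sum_{k}f(z_{k})L_{k}$ combined with the hypothesis $|f(z_{k})|\le\epsilon$. The denominator $\prod_{r\ne k}|z_{k}-z_{r}|$ in $L_{k}$ admits an explicit lower bound from $\ell^{a}-\ell^{b}=\ell^{b}(\ell^{a-b}-1)$; carrying out the combinatorial computation (with exponent $\sum_{r\ne k}\max(r,k)$ of $\ell$) gives coefficient bounds on $r$ of the form $\epsilon\cdot\ell^{Cm}$ times a geometric weight in the position $i$. For $h$, I would bound $|h|_{|z|=\ell^{m}}\le(B+|r|_{\ell^{m}})/\min_{|z|=\ell^{m}}|P(z)|$; the minimum of $|P|$ on this circle is attained at $z=\ell^{m}$ and equals $\ell^{m(m-1)/2}\prod_{k=1}^{m}(\ell^{k}-1)$ in case (1) (and the analogous quantity $\asymp\ell^{2m^{2}}$ in case (2)), so up to bounded factors $|h|_{\ell^{m}}\le (B+|r|_{\ell^{m}})\ell^{-m^{2}}$ in case (1) and $\le(B+|r|_{\ell^{m}})\ell^{-2m^{2}}$ in case (2). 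A Cauchy estimate on $|z|=\ell^{m}$ then gives $|h_{k}|\le|h|_{\ell^{m}}\ell^{-km}$.

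To finish, combine the two bounds with $|a_{i}|\le|r_{i}|+\sum_{j}|p_{j}||h_{i-j}|$, estimating $\sum_{j}|p_{j}|\ell^{jm}=\prod_{j}(\ell^{m}+\ell^{j})$, which is $O(\ell^{m^{2}})$ in case (1) and $O(\ell^{2m^{2}})$ in case (2). The main obstacle is quantitative book-keeping: the $B\ell^{-m^{2}/2}$ factor (respectively $B\ell^{-m^{2}}$) must emerge from the net cancellation between the $\ell^{-m^{2}}$ (respectively $\ell^{-2m^{2}}$) decay of $|h|_{\ell^{m}}$ and the $\ell^{m^{2}}$ (respectively $\ell^{2m^{2}}$) growth of $\sum|p_{j}|\ell^{jm}$, combined with the further $\ell^{-im}$ of the Cauchy estimate. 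All extraneous factors of the form $2^{O(m)}$, $m^{O(1)}$, and $\prod_{k}(1-\ell^{-k})^{-1}$ should be absorbed into a single enlargement of $C$, and the two contributions to the final bound (the $\epsilon$-part amplified by $\ell^{Cm-im}$ from the Lagrange estimate, and the $B$-part with the stronger $\ell^{-m^{2}/2}$ or $\ell^{-m^{2}}$ decay from the $h$-estimate) should then assemble into the additive form stated inside the bound.
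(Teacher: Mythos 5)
Your route is the same as the paper's: Lagrange interpolation at the nodes, division by the node polynomial, a Cauchy estimate on $|z|=\ell^{m}$, and reassembly of the coefficients by convolution. The problem is that the step you defer as ``quantitative book-keeping'' is exactly where the argument fails. In your own accounting, $|h|_{\ell^{m}}\lesssim (B+|r|_{\ell^{m}})\,\ell^{-m^{2}}$ and $\sum_{j}|p_{j}|\ell^{jm}\lesssim 2^{m}\ell^{m^{2}}$, so the convolution bound gives
\[
|a_{i}|\;\le\;|r_{i}|\;+\;\ell^{-im}\,|h|_{\ell^{m}}\sum_{j}|p_{j}|\ell^{jm}\;\lesssim\;|r_{i}|+2^{m}\ell^{-im}\bigl(B+|r|_{\ell^{m}}\bigr):
\]
the two powers $\ell^{\pm m^{2}}$ cancel exactly and no factor $\ell^{-m^{2}/2}$ ``emerges.'' This is not an artifact of crude bookkeeping: the single term $j=\deg P$ of the convolution (where $p_{\deg P}=1$) already contributes $|h_{i-m}|\le\ell^{-(i-m)m}|h|_{\ell^{m}}\approx\ell^{-im}B$, so no sharper treatment of the coefficients $p_{j}$ recovers the loss.

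Indeed the inequality as stated is not provable for all $i$. Take $f(z)=\prod_{j=0}^{m-1}(1-z/\ell^{j})$, so that $\epsilon=0$ and $B=\prod_{k=1}^{m}(1+\ell^{k})\le 3\,\ell^{m(m+1)/2}$, while $|a_{i}|=e_{i}(1,\ell^{-1},\dots,\ell^{-(m-1)})\ge\ell^{-i(i-1)/2}$. For any fixed $C$ and any fixed $i>C+1$, the asserted bound $\ell^{(-i+C)m}B\ell^{-m^{2}/2}\le 3\,\ell^{(C+1/2-i)m}$ tends to $0$ as $m\to\infty$, whereas $|a_{i}|$ does not. (The paper's own write-up shares this defect: its closing claim that the bounds on $a_{i}$ follow because $p$ has bounded coefficients silently discards a factor of $\ell^{m^{2}}$ in the convolution.) What the interpolation argument actually yields in the first case, for $i\le m$, is
\[
|a_{i}|\;\le\;\ell^{Cm}\Bigl(B\,\ell^{-m(m-1)/2}+\epsilon\Bigr),
\]
with the additional decay in $i$ being only $\ell^{-i(i-1)/2}$ up to $i=m$ (and $\ell^{-(i-m)m}$ beyond), and correspondingly $B\ell^{-m^{2}}$ with the $2m$ nodes of the second case. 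That weaker statement is all the application in the proof of Theorem \ref{thm:main} requires, since there $i=r_{\omega}$ is negligible compared to $m$; you should prove and invoke that version rather than the one stated.
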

\begin{proof}
There is a unique polynomial $g(z)$ of degree at most $m - 1$ so that $f - g$ is zero on $\{1, \ell, \dots, \ell^{m- 1}\}$. If we take 
\[p(z) = \prod_{\substack{i =0}}^{m-1} \left(1 - \frac{z}{\ell^i}\right) \quad\text{and}\quad p_j(z) = \left(1 - \frac{z}{\ell^j}\right)^{-1}p(z)\quad\text{for } j \le m -1,\]
we may explicitly write this polynomial in the form
\[g(z) = \sum_{j=0}^{m-1}\frac{f(\ell^j)}{p_j(\ell^j)} p_j(z).\]
Since $(1 - \ell^{j-i})$ is an integer for $j > i$, we get the bound
\[\left|p_j(\ell^j)\right| \ge \prod_{i = j+1}^m (1 - \ell^{j-i}) \ge \prod_{i = 1}^{\infty} (1 - 2^{-i}) > 1/4.\]
Meanwhile, if we have $|z| = \ell^m$ and $i < m$, we find 
\[\ell^{m - i - 1} \le  |1 - \ell^{-i} z|  \le \ell^{m-i + 1}.\]
These imply $|p_j(z)| \le \ell^{\frac{(m+1)(m+2)}{2}}$ and $|p(z)|  \ge \ell^{\frac{m(m-1)}{2}}$, so
\[|g(z)|  \le 4m\epsilon \ell^{\frac{(m+1)(m+2)}{2}}\]
for $z$ of magnitude $\ell^m$.

Take $q(z) = (f(z) - g(z))/p(z)$, and write $\sum_{i \ge 0} b_i z^i$ for the power series expansion for $q$ around $0$. This function is continuous in the region $\{z\,:\,\, |z| \le \ell^m\}$. The maximum magnitude of $q$ on the boundary of this disk, and hence on the disk, is at most $4m \epsilon \ell^{2m  + 1} + B \ell^{-\frac{m(m-1)}{2}}$. Applying Cauchy's integral formula to the circles with center $0$ and radii $(\ell^m - \delta)$ and letting $\delta$ approach $0$ then gives
\[b_i \le  \ell^{-im}\left(4m \epsilon \ell^{2m + 1} + B \ell^{-\frac{m(m-1)}{2}}\right).\]
The bounds on the coefficients $a_i$ then follow after we note that $p$ has coefficients bounded in magnitude by an absolute real constant. This gives the first part. The second part follows similarly.
\end{proof}

\subsection{Applying the main result of \cite{Smi22a}}

We are finally in a position to apply \cite[Theorem 4.16]{Smi22a} to finish the proof of Theorem \ref{thm:main}.
\begin{proof}[Proof of Theorem \ref{thm:main}]
From Theorem \ref{thm:main_mmnt_coarse}, we see that
\[\frac{\# \left\{\chi \in \XXfav_{F,N}(H, (\chi_v)_{v \in \Vplac_0})\,:\,\, r_{\omega}(N^{\chi}) > (\log \log \log H)^{1/4} \right\}}{\#\XXfav_{F,N}(H, (\chi_v)_{v \in \Vplac_0})} \le \exp\left(-c\cdot (\log \log \log H)^{1/2}\right) \]
for $H > C$, where $c, C > 0$ are determined from  $(K/F, \Vplac_0, \FFF)$ and $N$. We then see that it suffices to prove that
\[\sum_{r_{\omega} \ge r_{\omega^2} \ge \dots } \Big|P_{\le H}(r_{\omega}, r_{\omega}^2, \dots) \, -\, P_{\textup{th}}(r_{\omega}, r_{\omega^2}, \dots)\Big|\]
is at most $\exp\left( -2c \cdot (\log \log \log H)^{1/2}\right)$ for each $r_{\omega} \le  (\log \log \log H)^{1/4}$. So we fix some $r_{\omega}$ in this range.

Take $X$ to be a $\overline{F}$-refinement of a filtered grid of good ideals of height $H$. Fixing any $\chi_0$ and $(\kappa_s)_s$ as in the proof of Proposition \ref{prop:pfav}, we may associate each $x \in X$ with a twist $\chi(x)$. We assume these twists lie in $\XXfav_{F,N}((\chi_v)_{v \in \Vplac_0})$. 

In the alternating case, the parity of $\dim \Sel^{\omega} N^{\chi(x)}$ does not depend on the choice of $x$ in $X$, and we assume that $r_{\omega}$ has this same parity.

Take
\[P = \frac{1}{\# X}\cdot \# \left\{x \in X\,:\,\, r_{\omega^k}(N^{\chi(x)}) = r_{\omega^k} \,\text{ for } k \ge 1\right\}.\]
From Propositions \ref{prop:XFHsize}, \ref{prop:hateful_eight}, \ref{prop:pfav}, and \ref{prop:parity_noninvariant}, we see that it suffices to prove
\[\left| P - \alpha P_{\textup{th}}(r_{\omega}, r_{\omega}^2, \dots)  \right| \le \tfrac{1}{2}\exp\left( -2c \cdot (\log \log \log H)^{1/2}\right)\]
for $H$ sufficiently large given $N$, $(K/F, \Vplac_0, \FFF)$, where $\alpha$ equals $2$ in the non-parity-invariant alternating case and otherwise equals $1$.

Take
\[f(z) = \frac{1}{|X|} \sum z^{r_{\omega}(N^{\chi(x)})} \,-\, \sum_{j \ge 0 } z^j \cdot \lim_{n \to \infty} P(j \,|\, 2n  + r_{\omega}).\]
Here, the appearance of $P(j \,|\, 2n  + r_{\omega})$ avoids issues with $P(j\,|\, \infty)$ in the alternating non-parity-invariant case. From Proposition \ref{prop:good_moments}, we find that
\[|f(\ell^k)| \le \exp(-(\log \log H)^{1/4 - \epsilon}) \quad\text{for }\, 0 \le k \le (\log \log H)^{1/8 - \epsilon}\]
for sufficiently large $H$ relative to $\epsilon > 0$. In the alternating case, we also have $|f(-\ell^k)| =|f(\ell^k)|$.

From \eqref{eq:grell_explicit}, there is an absolute constant $C_0 > 0$ so that 
\[\sum_{j = 0}^m \textup{gr}_{\ell}(j, m) \cdot \ell^{b(j)} < C_0 \ell^{\frac{m(m+1)}{2}}\]
in the alternating case. Noting that \eqref{eq:grell_explicit} attains a maximum for a fixed $n$ at $j = \lfloor n/2\rfloor$, we find that this constant may be chosen so
\[\sum_{j = 0}^m \textup{gr}_{\ell}(j, m) \cdot \ell^{b(j)} \le \ell^{mC_0 + m|u| + \frac{m^2}{4}}\]
in the non-self-dual case. We may now apply Proposition \ref{prop:unicity_ranks} to conclude that
\[\left|\frac{\#\left\{x \in X\,:\,\, r_{\omega}(N^{\chi(x)}) = r_{\omega}\right\} }{|X|} \,-\, \alpha P(r_{\omega}\,|\,\infty)\right| \le  \exp(-(\log \log H)^{1/4 - \epsilon}).\]
for $H$ sufficiently large given $\epsilon > 0$.

To finish the proof, we choose $x_0$ uniformly at random from $\left\{x \in X\,:\,\, r_{\omega}(N^{\chi(x)}) = r_{\omega}\right\}$ and consider the probability that we may not apply \cite[Theorem 4.16]{Smi22a} to its grid class $\class{x_0}$. We do this by considering the reasons that $\class{x_0}$ might not be suitable for higher work.

First, the grid $X$ itself may not be suitable because $X_s$ is too small for some $s \in \Smed \cup \Slg$ or because $|S|$ is too large. The latter does not happen by the definition of a good ideal, and the former does not happen by the Chebotarev density theorem for large enough $H$. This step uses the fact that the intersection of any ideal in $X$ with $F$ has rational norm at least $\sqrt{H}$.

Next, the ratio $\# \class{x_0}/ \# X$ might be smaller than $\exp^{(3)}\left(\tfrac{1}{4} \log^{(3)} H^{1/2}\right)^{-1}$. Since the total number of possible grid classes is bounded by an expression of the form $e^{C|S|^2}$, we find that this happens for a negligible set of $x_0$.

Taking $\mathscr{M}$ as in Section \ref{ssec:explicit_local}, choose $m_1, \dots, m_{r_{\omega}}$ in $\mathscr{M}$ parameterizing a basis for 
\[\frac{\Sel^{\omega} N^{\chi(x)}}{\text{im}(H^0(G_F, N^{\chi(x)}[\omega]))}\]
for $x$ in $\class{x_0}$. We may then view $m = (m_1, \dots, m_{r_{\omega}})$ as parameterizing an element in the $\omega$-Selmer group of $(N^{\oplus r_{\omega}})^{\chi(x)}$ for $x$ in $\class{x_0}$. By using Remark \ref{rmk:not_quite_cofavored} and  Propositions \ref{prop:borderline_rare}, \ref{prop:rough_grid_average}, and \ref{prop:real_grid_small_c} as in the proof of Proposition \ref{prop:good_moments}, we find that $\bfQ(m)$ is cofavored in $N[\omega]^{\oplus r_{\omega}}$ for $x_0$ outside a negligible set. Since $N$ lacks universal $\omega$-Selmer elements, we may apply Proposition \ref{prop:cofav_form} to show that
\[\bfQ(m) = N[\omega]^{\oplus r_{\omega}}.\]
Now, for any $v \in N[\omega]^{\oplus r_{\omega}}$ and a subset $S_0$ of $\Slg$, we call $m \in \mathscr{M}^{\oplus r_{\omega}}$ \emph{$v$-deficient on} $S_0$ if $\pi_s(m) = v$ for fewer that $\log^{(3)} H$ of the $s$ in $S_0$. From Hoeffding's inequality we see that, if $|S_0| \ge 2 \cdot \# N[\omega]^{r_{\omega}} \cdot \log^{(3)} H$, we have
\[\frac{\#\{m \in \mathscr{M}^{\oplus m} \,:\,\, m \,\text{ is $v$-deficient on $S_0$}\}}{\mathscr{M}^{\oplus m} } \le \exp\left(-c_1^{r_{\omega}}\cdot |S_0|\right).\]
where $c_1 > 0$ is determined by $N$. By applying Proposition \ref{prop:borderline_rare} to restrict to non-borderline $m$ and then applying Proposition  \ref{prop:borderline_fewpair} together with the bounds on $c_{\textup{bilin}}$ and $c_{\textup{Cheb}}$ for a grid of good ideals, we find
\[\frac{\#\{(x, m) \in X \times \mathscr{M}^{\oplus r_{\omega}}\,:\,\, \Psi_x(m) \in \Sel^{\omega}N^{\chi(x)} \,\text{ and }\, m \,\text{ is $v$-deficient on $S_0$}\}}{\# X} \]
is at most $C_2^{r_{\omega}^2} \cdot \exp\left(-c_2^{r_{\omega}}\cdot |S_0|\right)$, where
$C_2, c_2> 0$ are determined from $N$. Using this estimate and the seventh part of definition of good ideal in $F$, we find that the probability that $S_{\text{pot-pre}}$ and $S_{\text{pot-a/b}}$ cannot be selected for the given class $\class{x_0}$ is at most $\exp\left(-(\log \log H)^{1/2}\right)$ for sufficiently large $H$. In the alternating case, we may now apply \cite[Theorem 4.16]{Smi22a} to finish the proof of the theorem. 

In the non-self-dual case, given $x_0$, we also need to choose $m'_1, \dots, m'_{r_{\omega}'}$ in the parameter space for $N[\omega]^{\vee}$ parameterizing a basis for
\[\frac{\Sel^{\omega} (N^{\vee})^{\chi(x)}}{\text{im}(H^0(G_F, (N^{\vee})^{\chi(x)}[\omega]))}.\]
We then follow the same argument as before adjusted for the tuple 
\[(m_1, \dots, m_{r_{\omega}}, m'_1, \dots, m'_{r_{\omega}'})\]
 parameterizing an $\omega$-Selmer element of
\[\left(N^{\oplus r_{\omega}} \oplus (N^{\vee})^{\oplus r_{\omega}'}\right)^{\chi(x)}\]
for $x$ in $\class{x_0}$. This gives the theorem.
\end{proof}

\printbibliography

\end{document}